%

\documentclass[aos, preprint]{imsart}

\RequirePackage[OT1]{fontenc}
\RequirePackage{amsthm,amsmath}
\RequirePackage[numbers]{natbib}
\RequirePackage[colorlinks,citecolor=blue,urlcolor=blue]{hyperref}

\startlocaldefs
\numberwithin{equation}{section}
\theoremstyle{plain}
\newtheorem{thm}{Theorem}[section]
\endlocaldefs

\usepackage{siunitx}

\usepackage{xr-hyper}

\usepackage{amsfonts}
\usepackage{amsmath,amssymb,color}
\usepackage{graphicx}
\usepackage{subcaption}
\usepackage{array,multirow}
\usepackage{float}
\usepackage{afterpage}
\usepackage{arydshln}
\usepackage{natbib}
\usepackage{titlesec}

\usepackage{blkarray}
\usepackage{mathtools}
\usepackage[toc,page]{appendix}
\usepackage{extarrows}
\usepackage{xr}
\externaldocument{supp}
\externaldocument{supp.pdf}

\usepackage{makecell}

\makeatletter
\newcommand{\doublewidetilde}[1]{{%
  \mathpalette\double@widetilde{#1}%
}}
\newcommand{\double@widetilde}[2]{%
  \sbox\z@{$\m@th#1\widetilde{#2}$}%
  \ht\z@=.9\ht\z@
  \widetilde{\box\z@}%
}
\makeatother

\newcommand{\aaa}{\boldsymbol \alpha}

\newcommand{\ttt}{\boldsymbol \theta}
\newcommand{\TT}{\boldsymbol \Theta}
\newcommand{\nnu}{\boldsymbol \nu}

\newcommand{\rr}{\boldsymbol r}
\newcommand{\ms}{\mathcal S}
\newcommand{\mo}{\mathbf 1}
\newcommand{\mz}{\mathbf 0}
\newcommand{\mb}{\mathcal B}
\newcommand{\bq}{\boldsymbol q}

\newcommand{\mm}{\boldsymbol m}
\newcommand{\nn}{\boldsymbol n}
\newcommand{\uu}{\boldsymbol u}

\newcommand{\ba}{\mathbf A}

\newcommand{\ma}{\mathcal A}

\newcommand{\vv}{\boldsymbol v}

\newcommand{\mc}{\mathcal C}
\newcommand{\ee}{\boldsymbol e}

\newcommand{\pp}{{\boldsymbol p}}

\newcommand{\ww}{\boldsymbol w}

\newcommand{\RR}{\boldsymbol R}
\newcommand{\bo}{\boldsymbol}

\newcommand{\mcr}{\mathcal R}
\newcommand{\boa}{\boldsymbol a}
\newcommand{\bob}{\boldsymbol b}
\newcommand{\wt}{\widetilde}

\newcommand*{\Cdot}{\raisebox{-0.5ex}{\scalebox{1.5}{$\cdot$}}}

\newtheorem{corollary}{Corollary}[section]

\newtheorem{definition}{Definition}[section]

\newtheorem{example}{Example}[section]

\newtheorem{lemma}{Lemma}[section]

\newtheorem{proposition}{Proposition}[section]

\newtheorem{remark}{Remark}[section]


%
%
%

\begin{document}

\begin{frontmatter}

\title{Partial Identifiability of Restricted Latent Class Models}
\runtitle{Identifiability of Restricted Latent Class Models}
\tableofcontents

\thankstext{T1}{This research is partially supported by National Science Foundation grants   SES-1659328 and DMS-1712717, and Institute of Education Sciences
grant R305D160010.}

\author{\fnms{Yuqi} \snm{Gu}\ead[label=e1]{yuqigu@umich.edu}}
\and
 \author{\fnms{Gongjun} \snm{Xu}\ead[label=e2]{gongjun@umich.edu}}
 \affiliation{University of Michigan}

\address{Department of Statistics, \\ 
University of Michigan,\\
West Hall, 1085 S University Ave, \\
 Ann Arbor, MI 48109\\
\printead{e1,e2}}

\runauthor{Gu and Xu}




\begin{abstract}
Latent class models have wide applications in social and biological sciences.
In many applications, pre-specified restrictions are imposed on the parameter space of  latent class models, through a design matrix, to reflect practitioners'  assumptions about how the observed responses depend on subjects' latent traits. Though widely used in various fields, such restricted latent class models  suffer from  nonidentifiability due to their discreteness nature and complex structure of restrictions. 
This work  addresses the fundamental identifiability  issue of restricted latent class models  by developing a general framework for strict and partial identifiability of the model parameters. 
Under correct model specification, the developed identifiability conditions only depend on the  design  matrix and are easily checkable, which provide useful practical guidelines for designing statistically valid diagnostic tests. 
Furthermore, the new theoretical framework is applied to establish, for the first time,  identifiability of several designs from cognitive diagnosis applications.

\end{abstract}

\begin{keyword}
\kwd{Identifiability}
\kwd{Restricted latent class models}
\kwd{Cognitive diagnosis}
\kwd{$Q$-matrix}
\end{keyword}

\end{frontmatter}

\section{Introduction and Motivation}\label{sec-intro}
Latent class models are widely used in social and biological sciences to model unobserved discrete latent attributes. These models often assume each latent class represents a configuration  of the targeted  latent attributes that can explain the observed responses.
In many  applications, pre-specified restrictions are imposed on the parameter space of the latent class model, through a design matrix. These restrictions reflect practitioners' understanding about how the responses depend on the underlying latent attributes. 
This paper studies such a family of {\it restricted} latent class models, 
which have been widely employed in various fields. The following are several examples.
\begin{enumerate}
\item[(1)] \textit{Cognitive Diagnosis in Educational Assessment.} 
Restricted latent class models play a key role  in cognitive diagnosis modeling in educational and psychological assessment.  Cognitive diagnosis aims to make a classification-based decision   on an individual's latent attributes, based on his or her observed responses to a set of designed diagnostic items (questions).
 In the majority of models, the latent classes are characterized by profiles of the binary states of mastery/deficiency of the targeted ability attributes, while there are models that allow polytomous ordinal attributes \citep{davier2008general}.
  The restricted structure usually comes from the design matrix that specifies what latent attributes each item measures  \citep[e.g.,][]{junker2001cognitive, HensonTemplin09, rupp2010diagnostic, dela2011}. See Section \ref{sec-real-q} for several data examples, including the  Test of English as a Foreign Language (TOEFL) \citep[e.g.,][]{davier2008general} and  Trends in   International Mathematics and Science Study.

 \item[(2)] \textit{Psychiatric Evaluation.} Restricted latent class models have also been used in psychiatric evaluation. Here the  responses are manifested symptoms and the latent classes represent the profiles of presence/absence  of a set of underlying psychological or psychiatric disorders. The restricted structure results from the fact that each symptom may be shared by multiple disorders, which are specified by psychiatric diagnosis guidelines.
See   examples in \cite{Templin}, \cite{jaeger2006distinguishing}, and \cite{de2018analysis}. 
 \item[(3)] \textit{Disease Etiology Detection.}  Another   application of restricted latent class models is the diagnosis of disease etiology in epidemiology \citep{wu2017nested}. Here the observed responses are imperfect measurements of subjects' biological samples, and the latent classes are the configurations of existence or non-existence of a set of pathogens underlying a certain disease. The restricted structure naturally arises from the fact that each  measurement may only target certain pathogens.  
\end{enumerate}

Despite the popularity of the restricted latent class models, the fundamental identifiability issue is challenging to address.
Model identifiability is a prerequisite for making   statistical inferences. 
The study of identifiability   of  latent class models dates back to decades ago \citep{mchugh1956efficient, teicher1967identifiability, goodman1974exploratory}.
For unrestricted latent class models, \cite{gyllenberg1994non} showed the model is not identifiable in the sense that, there always exists some set of parameters, such that one can construct a different set of parameters which lead to the same distribution of the responses.
Such nonidentifiablity has likely
impeded statisticians from looking further into this problem \citep{allman2009}.
Due to the difficulty of establishing strict identifiability in such scenarios,  \cite{elmore2005application} and \cite{allman2009} studied the {\it generic} identifiability of these models.
The idea of generic identifiability is closely related to concepts in algebraic geometry and implies that the model parameters are identifiable almost everywhere in the parameter space, excluding only a Lebesgue measure zero set. \cite{allman2009} established generic identifiability results for various latent variable models, including the unrestricted latent class models.

 The complex constraints  of the restricted latent class models pose  additional   challenge to the study of model identifiability.
The existing results of generic identifiability in \cite{allman2009} do not apply to restricted latent class models, because  the restrictions imposed by the design matrix already constrain the model parameters of a restricted latent class model into a measure-zero (and hence potentially unidentifiable) subset of the parameter space of an unrestricted latent class model.
To address the identifiability issue under restrictions,
\cite{xu2017} proposed a set of sufficient conditions for   identifiability of a family of restricted latent class models.
However, a key assumption in \cite{xu2017}  is that the    design matrix has to satisfy a certain structural constraint and that the latent class space has to be saturated (see Section \ref{sec-idf-issue} for more details), which is often difficult to meet and may even be unrealistic in practice; 
see examples in \cite{de2004higher}, \cite{jaeger2006distinguishing}, \cite{HensonTemplin09}, \cite{dela2011}, \cite{lee2011cognitive} and many others.
The same strong assumption is also imposed in \cite{xu2018jasa} for identifiability of the design matrix.
Therefore, the existing theory is hardly applicable to popular designs in the literature, and the previously proposed conditions may not serve as good guidelines for future test designing.
Moreover, the techniques developed as in \cite{xu2017} for specific presumable design structure are not applicable to general  designs. 
The fundamental identifiability issues of the restricted latent class models remain largely underexplored and call for new identifiability theory.

This paper proposes a general framework of strict and partial identifiability for restricted latent class models.
Practical sufficient conditions for strict and partial identifiability are proposed and their necessity is discussed. 
In particular, depending on the two different types of algebraic structures of restricted latent class models, 
we introduce and study two useful notions of partial identifiability, respectively (see Sections \ref{sec-two-para} and   \ref{sec-multi-para}). 
The established identifiability results are widely applicable in practice, by relaxing most of the constraints imposed on the design matrix.
Moreover,  under correct model specification, all the identifiability conditions  only depend on the design matrix and are easily checkable by practitioners.
We apply the new theory to several existing designs  and establish identifiability under them for the first time in the literature. 

The rest of the paper is organized as follows. Section \ref{sec-model-exp} introduces the general model setup of  restricted latent class models, including model and  data examples in cognitive diagnosis applications; and then discusses the limitations of the existing studies.
Sections \ref{sec-two-para} and   \ref{sec-multi-para} present our main identifiability results.
 Section \ref{sec-extend} includes extensions of the new theory to some more complicated models.
Section \ref{sec-disc} gives a further discussion, and proofs of the theoretical results  are presented in the Supplementary Material.

\section{Model Setup, Examples and Identifiability Issues}\label{sec-model-exp}
We start with the setup for a latent class model with binary responses. 
Suppose there are $J$  dichotomous items, denoted by the item set $\ms =\{1,\ldots,J\}$.
For any subject, the observed variables are his/her binary responses to the $J$ items,   denoted by  $\RR=(R_1,\ldots,R_J)^\top\in \{0,1\}^J$. 
To model the distribution of the responses, we assume there are $m$ latent classes existing  in the population denoted by $\ma =\{\aaa_0,\ldots,\aaa_{m-1}\}$, where $m>1$ is assumed known.
For any $\aaa\in \ma$, we use $p_{\aaa } =P(\ba=\aaa )$  to denote  the proportion of subjects in the population that belong to class $\aaa$. 
Under this specification, we have $p_{\aaa}\in(0,1)$ and $ \sum_{\aaa\in \ma}p_{\aaa} =1$.
In the application of cognitive diagnosis,   a latent class $\aaa$ usually  denotes a knowledge state characterized by a profile of mastery/deficiency of a set of   latent attributes, and is represented by a binary   vector (see Section \ref{sec-cdm}).
 
Assume that a subject's latent class membership $\ba$ follows a categorical distribution with  population  proportion parameters $\pp =(p_{\aaa},\aaa\in\ma)$.   
Given a subject's latent class membership $\ba$, the responses  $\RR=(R_1,\ldots,R_J)$     are assumed to be  conditionally independent  and   each $R_j$ follows a Bernoulli distribution  with  the positive response probability  $ \theta_{j,\aaa}= P(R_j=1\mid \ba=\aaa)$. 
This local independence  is a common assumption in latent class modeling   \citep[e.g.,][]{agresti2003categorical,allman2009}.
We call these $\theta_{j,\aaa}$'s as the item parameters, and write $\TT =(\theta_{j,\aaa};\,j\in\ms,\,\aaa\in\ma)$, which is a $J\times m$ matrix. The rows of $\TT$ are indexed by the $J$ items in $\ms$, and the columns by the $m$ latent classes in $\ma$.
The model parameters are then characterized by   $\pp$ and $\TT$.

We focus on a general family of restricted latent class models that are popularly used in various social and biological applications. Under these restricted latent class models,
the item parameters in $\TT$     are  restricted  by certain prespecified  structures to reflect  experts' understanding or hypotheses on how the responses to each diagnostic  item depend on the latent classes.  
In particular, the restricted latent class models    assume that for any item $j$, there exists an item-specific set of latent classes $\mc_j$; and the classes in $\mc_j$ share the same value of positive response probability, which is higher than those of the other latent classes. We denote such a set of latent classes by 
\begin{equation}\label{eq-dcj}
\mc_j = \Big\{\aaa\in\ma:\, \theta_{j,\aaa} = \max_{\aaa^\star\in\ma}\theta_{j,\aaa^\star}\Big\}.
\end{equation}
The latent classes in $\mc_j$ then correspond to those  subjects who are ``most capable" of giving a positive response to item $j$, and for each $j\in\ms$,
\begin{equation}\label{eq-constraints}
\max_{\aaa\in\mc_j} \theta_{j, \aaa} 
= \min_{\aaa\in\mc_j} \theta_{j, \aaa} 
> \theta_{j,\aaa'},
\quad 
\forall \aaa'\notin\mc_j.
\end{equation}  
Additionally, it is assumed that there exists a  universal ``least capable" class $\aaa_0$ such that 
$\theta_{j, \aaa} \geq \theta_{j,\aaa_0}$ for any $\aaa\in\ma$ and $j \in \ms$. 
Note that a latent class $\aaa'$ satisfying $\aaa'\notin\mc_j$ and  $\theta_{j, \aaa'} 
> \theta_{j,\aaa_0}$ can be viewed as   ``partially capable".

 Different restricted latent class models specify the  $\TT$ and the constraint sets $\mc_j$'s differently to respect the underlying scientific assumptions. 
\color{black} To  illustrate this, we present various model examples and real data examples in Sections \ref{sec-cdm} and \ref{sec-real-q}.  In Section \ref{sec-idf-issue} we discuss the identifiability issue and limitations of the existing works, which call for the new identifiability theory.

\subsection{Restricted Latent Class Models in Cognitive Diagnosis}\label{sec-cdm}
The restricted latent class models have recently gained great interests in cognitive diagnosis with applications in educational assessment, psychiatric evaluation and many other disciplines \citep[e.g.,][]{rupp2010diagnostic, dela2011, culpepper2015bayesian,wang2018tracking,chen2018hidden}. 
Cognitive diagnosis is the process of arriving at a classification-based decision about an individual's latent attributes, based on the observed surrogate responses. Such diagnostic information plays an  important role in constructing efficient, focused remedial strategies for improvement in individual performance.

The restricted latent class models are important statistical tools   in cognitive diagnosis to detect the presence or absence of multiple fine-grained   attributes.
Cognitive diagnosis models in the psychometrics literature mostly consist of binary attributes, while general diagnostic models with categorical attributes were also considered in \cite{davier2008general}. In this work, we focus on the case of binary attributes.
Specifically, consider a cognitive diagnosis test with $J$ items designed to measure $K$  binary latent attributes.
  Under the introduced model setup, a latent class $\aaa$ is represented by a configuration of the $K$ latent attributes, denoted by a $K$-dimensional binary vector $\aaa=(\alpha_1,\ldots,\alpha_K)$, where $\alpha_k\in\{0,1\}$ denotes the deficiency or mastery of the $k$th attribute. A latent class $\aaa$ is also called an attribute profile. 
The latent class space $\ma$ is a subset of $\{0,1\}^K$.  If $\ma = \{0,1\}^K$, we say   $\ma$ is \textit{saturated}, which means the population contain subjects with all the possible configurations of attribute profiles. The universal least capable latent class $\aaa_0$ corresponds to the all-zero attribute profile, i.e., $\aaa_0 =(0,\ldots,0)$. 

The restrictions in cognitive diagnosis models is encoded by the so-called $Q$-matrix \citep{Tatsuoka1983}. A $Q$-matrix $Q = (q_{j,k})$ is a $J\times K$ matrix with binary entries $q_{j,k}\in\{0,1\}$ indicating the absence or presence of the dependence of the $j$th item on the $k$th attribute. Generally, $q_{j,k}=1$ means that item $j$ requires the mastery of attribute $k$ to solve and $q_{j,k}=0$ means the opposite. The $j$th row vector $\bq_j$ of   $Q$,   called the $\bq$-vector, gives the attribute requirements of item $j$. 
 See examples of $Q$-matrices in Section \ref{sec-real-q}.

In the following, we review some popular  cognitive diagnosis models   and illustrate how they fall into the family of  restricted latent class models. We first introduce some notations. For two vectors $\boldsymbol a=(a_1,\ldots,a_K)$, $\boldsymbol b=(b_1,\ldots,b_K)$ of the same dimension $K$, we write $\boldsymbol a\succeq\boldsymbol b$  if $a_i\geq b_i$ for all $i=1,\ldots,K$; and $\boldsymbol a\succneqq\boldsymbol b$ if $\boldsymbol a\succeq\boldsymbol b$ and $\boldsymbol a\neq \boldsymbol b$. Denote $\boldsymbol a-\boldsymbol b =(a_1-b_1,\ldots,a_K-b_K)$ and $\boldsymbol a\vee\boldsymbol b =(\max\{a_1,b_1\},\ldots,\max\{a_K,b_K\})$. We also denote the all-zero- and all-one vectors  by $\mathbf 0$ and $\mathbf 1$, respectively. 
\begin{example}[Conjunctive DINA and Disjunctive DINO]\label{exp-dina}
\normalfont{The Deterministic Input Noisy output ``And" gate (DINA) model proposed in \cite{junker2001cognitive} and the Deterministic Input Noisy output ``Or" gate (DINO) model proposed in \cite{Templin} are popular and basic diagnostic models, which adopt the conjunctive   and   disjunctive assumptions, respectively. 
Specifically, under DINA, a subject needs to master all the required attributes of an item to be ``capable" of it, and mastering the attributes not required by the item will not compensate for the lack of required ones. That is, the required attributes of an item act ``conjunctively" and the positive response probability is
\begin{eqnarray*}
	\theta_{j,\aaa}^{DINA} =\left\{ \begin{array}{cl}
 	1-s_j, &\mbox{ if } \aaa\succeq \bq_j,\\
 	g_j, &\mbox{ otherwise. }
 \end{array}\right.
\end{eqnarray*}
where   
$s_j$ is the  slipping parameter, which denotes the probability that a capable subject slips the positive response, and $g_j$ is the guessing parameter, which denotes the probability that a non-capable subject coincidentally gives the positive response by guessing.
Under  DINO, 
a subject only needs to master one of the required attributes to be ``capable" of an item. That is, the required attributes of an item act ``disjunctively" and
\begin{eqnarray*}
	\theta_{j,\aaa}^{DINO} =\left\{ \begin{array}{cl}
 	1-s_j, &\mbox{ if } \exists k~\text{s.t.}~ \alpha_k = q_{j,k}=1,\\
 	g_j, & \mbox{ otherwise.}
 \end{array}\right.
\end{eqnarray*}
where
$s_j$ and $g_j$ are the slipping and guessing parameters.    
Both the DINA and DINO models assume  $1-s_j>g_j$ for all $j$.

The DINA and DINO models are restricted latent class models with appropriately defined constraint sets $\mc_j$'s. Specifically, under the conjunctive DINA model, the $\mc_j$ defined in \eqref{eq-dcj} takes the form of
 \begin{equation}\label{eq-gamma-conj}
 \mc_j = \{\aaa\in\ma:\,  \aaa\succeq \bq_j \}, \quad j\in\ms;
 \end{equation} 
 while under the disjunctive DINO model, the $\mc_j$ defined in \eqref{eq-dcj} becomes
$
 \mc_j = \{\aaa\in\ma:\, \mbox{if } \exists k~\text{s.t.}~ \alpha_k = q_{j,k}=1\}$ for  $ j\in\ms.
$}
  \end{example}

\begin{example}[Main-Effect Cognitive Diagnosis Models]
\label{exp-main-eff}
\normalfont{An important  family of cognitive diagnosis models assume that the
$\theta_{j,\aaa}$ depends on the main effects of those attributes required by item $j$, but not their interactions. This family include the popular reduced Reparameterized Unified Model \citep[reduced-RUM;][]{dibello1995unified}, Additive Cognitive Diagnosis Models   \citep[ACDM;][]{dela2011}, the Linear Logistic Model  
\citep[LLM;][]{maris1999estimating}, and the  General Diagnostic Model \citep[GDM;][]{davier2008general}.
We call them the Main-Effect Cognitive Diagnosis Models.
In particular, under the reduced-RUM,
$
\theta^{\,RUM}_{j,\aaa} = \theta^+_j{\prod}_{k=1}^K r_{j,k}^{q_{j,k}(1-\alpha_k)},
$
where $\theta^+_j=P(R_j=1| \aaa\succeq\bq_j)$ represents the positive response probability of a capable subject of $j$, and $r_{j,k}\in(0,1)$ is the parameter penalizing not possessing attribute $k$ required by item $j$. Equivalently, the item parameter in reduced-RUM can be written as $\log \theta^{\,RUM}_{j,\aaa} = \beta_{j,0} + \sum_{k=1}^K\beta_{j,k}(q_{j,k}\alpha_k)$, where $\beta_{j,k}\geq 0$ for $q_{j,k}=1$.
Similarly, the ACDM  assumes the parameter $\theta_{j,\aaa} $ can be   written as the linear combination of the main effects of the required attributes: 
$
\theta^{\,ACDM}_{j,\aaa} = \beta_{j,0} + {\sum}_{k=1}^K\beta_{j,k}(q_{j,k}\alpha_k).
$
The LLM assumes a logistic link function  with
$ 
\mbox{logit}(\theta^{\,LLM}_{j,\aaa}) =  \beta_{j,0} + {\sum}_{k=1}^K\beta_{j,k}(q_{j,k}\alpha_k) .
$ 
These Main-Effect models  are  restricted latent class models, and under them, the $\mc_j$ defined in \eqref{eq-dcj} takes the form of $\mc_j=\{\aaa\in\ma:\,\aaa\succeq\bq_j\}$.
}
 \end{example}

\begin{example}[All-Effect Cognitive Diagnosis Models]\label{exp-all-eff}
\normalfont{Another popular type  of cognitive diagnosis models   assume that the positive response probability depends on the main effects and the interaction effects of the required attributes of the item. We call these models   the All-Effect cognitive diagnosis models, of which the GDINA model  \citep{dela2011}, the log-linear cognitive diagnosis models   \citep[LCDM;][]{HensonTemplin09}, and the  general diagnostic model  \citep[GDM;][]{davier2008general} are examples.
It was recently shown in \cite{von2014log} and \cite{davier2014dina} that the GDINA and LCDM can be rewritten as GDMs with extended skill space.
In particular, given a $Q$-matrix, denote the set of attributes required by item $j$ by $\mathcal K_{\bq_j} = \{1\leq k\leq K: q_{j,k}=1\}$, then the item parameter under GDINA is
\begin{equation}\label{eq-gdina}
\theta_{j,\,\aaa}^{\,GDINA} = {\sum}_{S\subseteq \mathcal K_{\bq_j}}\beta_{j,\,S}{\prod}_{k\in S}\alpha_k,
\end{equation}
where $\beta_{j,\,S}\geq 0$.
Note that the DINA model is a submodel of the GDINA model by setting all the $\beta_{j,\,S}$ coefficients in \eqref{eq-gdina}, other than $\beta_{j,\,\varnothing}$ and $\beta_{j,\,\mathcal K_{\bq_j}}$, to zero. 
Similar to the GDINA model, the LCDM adopts the logistic link function  and assumes that  
$
\mbox{logit}(\theta_{j,\,\aaa}^{\,LCDM}) = 
 {\sum}_{S\subseteq \mathcal K_{\bq_j}}\beta_{j,\,S}
 {\prod}_{k\in S}\alpha_k.$
The All-Effect models  are  restricted latent class models, and under them the $\mc_j$  in \eqref{eq-dcj} also takes the form $\mc_j=\{\aaa\in\ma:\,\aaa\succeq\bq_j\}$.
}
 \end{example}

When the latent class space $\ma$ is saturated with $\ma=\{0,1\}^K$, we have $m=|\ma|=2^K$. In practice, however, this may not always hold. 
For instance, researchers may assume there exist additional restrictions on the dependence structure among the latent attributes, such as an attribute hierarchy with some attributes being the prerequisite for some others \citep{leighton2004attribute, templin2014hierarchical}. 
A hierarchical structure among the $K$ attributes would reduce the number of possible attribute profiles from  $2^K$ to $m$ ($m<2^K$),   by excluding those   not respecting the hierarchy. 
For example, consider a diagnostic test with $K=2$ attributes. If it is scientifically reasonable to assume the first attribute is the prerequisite for the second one, then 
 the latent class space is reduced to  $\mathcal A=\{(0,0),(1,0), (1,1)\}$ with $m=|\ma|=3$, since the attribute profile $(0,1)$ does not respect this hierarchy.  
 Note that as shown in \citep{von2014located}, a  cognitive diagnosis model with such a linear hierarchy can   equivalently reduce  to a located latent class model with $m<2^K$ classes.

In this work, we assume the latent class space $\ma$ is pre-specified.
This would be the case when  practitioners have solid scientific reasons or prior knowledge from exploratory data analysis to assume certain   structure among attributes. 
This work aims to answer the question that for an arbitrary $\ma\subseteq\{0,1\}^K$, what kind of conditions would guarantee identifiability of $\TT$ and $\pp = (p_{\aaa},\aaa\in\ma)$.

 All the cognitive diagnosis models reviewed in Examples \ref{exp-dina}--\ref{exp-all-eff} are restricted latent class models. 
 We call them the {\it $Q$-restricted latent class models}, since the  $\mc_j$'s  and model constraints are further determined by the $Q$-matrix. 
Moreover, we call the DINA and the DINO models the {\it two-parameter Q-restricted latent class models}, since each item has exactly two item parameters, and we call the Main-Effect and All-Effect   models   as {\it multi-parameter Q-restricted latent class models}.

\subsection{Real Data Examples}\label{sec-real-q}
To further illustrate  the constraints induced by the design  matrix, we present  several  applications that utilize restricted latent class models as  cognitive diagnosis modeling tools. 
  
 \begin{example}[TOEFL Internet-based Testing Data]\label{exp-toefl}

\normalfont{TOEFL, short for Test of English as a Foreign Language, is a standardized test to measure English language ability of non-native speakers. 
Restricted latent class models have been   used   to analyze the TOEFL data by researchers at Educational Testing Service \citep[ETS;  e.g.,][]{davier2005ETS,davier2008general}. 
For instance,  \cite{davier2008general} proposed a general diagnostic model (GDM), which was used to analyze the TOEFL     reading   section  of two parallel forms,   A and B, with their  $Q$-matrices analyzed and specified by content experts.
In particular, the   forms A and B contain 39 and 40 items with
four latent   attributes: {\it $\alpha_1$: Word meaning, $\alpha_2$: Specific information, $\alpha_3$: Connect information, and $\alpha_4$: Synthesize and organize}. Table \ref{tab-toefl} gives the   summary of the two $Q$-matrices by presenting each $\bq$-vector's frequencies in them. For instance, the first line in Table \ref{tab-toefl} reads (1, 0, 0, 0) for the row $\bq$-vector and (9, 9) for the frequencies. This means that there are nine   items with $\bq$-vector (1, 0, 0, 0) in form A and nine in form B, respectively. 
 Under the restrictions induced by the $Q$-matrices, the  diagnostic models used to analyze the TOEFL data fall in the family of   restricted latent class models.
\begin{table}[H]
\centering
\caption{TOEFL iBT field test $Q$-matrix entry frequencies, Reading Forms A and B}
\resizebox{12cm}{!}{%
\begin{tabular}{cccc|cc}
\hline
\multicolumn{4}{c|}{$Q$-matrix row $\bq$-vectors} & \multicolumn{2}{c}{$\bq$-vector frequency} \\
\hline
\multirow{2}{2cm}{\centering Word meaning} & \multirow{2}{2cm}{\centering Specific information} & \multirow{2}{2cm}{\centering Connect information} & \multirow{2}{2cm}{\centering Synthesize and
organize} & \multirow{2}{*}{\centering Form A} & \multirow{2}{*}{\centering Form B} \\
& & & & &\\
\hline
1 & 0 & 0 & 0 & 9 & 9\\
0 & 1 & 0 & 0 & 8 & 11\\
1 & 1 & 0 & 0 & 1 & 1\\
0 &  0  & 1  & 0  & 10 &  10\\
1 & 0 & 1 & 0 & 0 & 1\\
0 & 1 & 1 & 0 & 2 & 0\\
0 & 1 & 0 & 1 & 1 & 0\\
0 & 0 & 1 & 1 & 7 & 8\\
1 & 0 & 1 & 1 & 1 & 0\\
\hline
\end{tabular}
}
\label{tab-toefl}
\end{table}
}
\end{example}

\begin{example}[Trends in International Mathematics and Science Study]\label{exp-timss-8th}
\normalfont{Trends in International Mathematics and Science Study (TIMSS) is a large scale cross-country assessment, administered by the International Association for the Evaluation of Educational Achievement. TIMSS evaluates the mathematics and science abilities of fourth and eighth graders  every four years since 1995 and  covers more than 40 countries.  
The TIMSS data allows one to analyze trends in student progress that can provide feedback for future improvement in areas needing further instruction \citep{lee2011cognitive}. 
Researchers have used the cognitive diagnosis models to analyze the TIMSS data \citep[e.g.,][]{lee2011cognitive, choi2015cdm, yamaguchi}.
For instance, a $43\times 12$ $Q$-matrix constructed by mathematics educators and  researchers was specified for the TIMSS 2003 eighth grade mathematics assessment \citep{choi2015cdm}. A total number of $12$ fine-grained attributes are identified, which fall in five big categories of skill domains measured by the eighth grade exam, Number, Algebra, Geometry, Measurement, and Data. 
The  $Q$-matrix   is presented in Table 1 in the Supplementary Material.
\cite{choi2015cdm} used  DINA model  to fit the dataset   containing responses  sampled from   8912 U.S. and 5309 Korean students. 
Main-Effect and All-Effect diagnostic models have also been applied to analyze the TIMSS data \citep[e.g.,][]{yamaguchi}.
}
\end{example}

\begin{example}[Fraction Subtraction Data]\label{exp-frac}
\normalfont{
The dataset contains $536$ middle school students' binary responses to $20$ fraction subtraction items that were designed for diagnostic assessment.
The $Q$-matrix contains eight attributes (the $20\times 8$  $Q$-matrix   is presented in Table 2 in the Supplementary Material).
Many researchers have used various restricted latent class models models to fit this dataset   \citep[e.g.,][]{de2004higher,decarlo2011analysis,HensonTemplin09,dela2011}.
}
\end{example}

\subsection{Concept of Identifiability and Issues with Existing Works}\label{sec-idf-issue}
Though widely used in various applications, the identifiability issue of restricted latent class models remains largely unaddressed. 
We next introduce the concept of identifiability and discuss the limitations of the exiting theory.

For the introduced restricted latent class models, the probability mass function of the response pattern $\RR$ is 
\begin{equation}\label{eq-pmf}
P(\RR=\rr\mid \TT,\pp) = \sum_{\aaa\in\ma}p_{\aaa}\prod_{j=1}^J\theta_{j,\aaa}^{r_j}(1-\theta_{j,\aaa})^{1-r_j},
\quad \rr\in\{0,1\}^J.
\end{equation}
 Following the definition of identifiablity in the   literature \citep[e.g.,][]{casella2002statistical},  the model parameters $(\TT,\pp)$ of a restricted latent class model are identifiable  if for any $(\TT,\pp)$ in the parameter space $\mathcal T$, there is no $(\bar\TT,\bar\pp)\neq (\TT,\pp)$ such that
\begin{equation}\label{eq-orig}
\mathbb P(\RR=\rr\mid \TT,\pp) =\mathbb  P(\RR=\rr\mid \bar \TT,\bar\pp) ~\mbox{ for all } ~ \rr\in \{0,1\}^J. 
\end{equation}
 In the following, we also say that the model parameters are {\it strictly} identifiable if the above condition holds.

To establish model identifiability,
a strong and often impractical assumption made by previous works is that the $Q$-matrix must contain at least one $K\times K$ identity submatrix $I_K$ up to some row permutation, that is,   the $Q$-matrix must contain all $K$ distinct single-attribute $\bq$-vectors \citep{chen2015statistical,Xu15, xu2017, id-dina}.
A $Q$-matrix satisfying this requirement is also said to be complete under the DINA model \citep{Chiu}.  
For  general $Q$-restricted latent class models including the multi-parameter models, \cite{xu2017} requires at least two disjoint  $K\times K$  identity submatrices in $Q$ to establish identifiability.
However, in practice, 
in the existence of a large number of fine-grained attributes and complex cognitive process, a $Q$-matrix   rarely satisfies such requirements. For the TOEFL   data   in Example \ref{exp-toefl}, there does not exist any item that solely requires the fourth skill attribute  in both $Q$-matrices. 
 For the $Q$-matrix of the TIMSS data in Example \ref{exp-timss-8th},   only three attributes  (1, 7 and 8) out of twelve are measured by some single-attribute items. 
For the $Q$-matrix   in Example \ref{exp-frac},  there are only two attributes (2 and 7) out of eight measured by some single-attribute items.
Many other examples can be found in the literature \citep[e.g.,][]{jaeger2006distinguishing,HensonTemplin09, dela2011, lee2011cognitive}.
Moreover, another strong assumption   made in existing works \cite{xu2017,id-dina} is that $\ma=\{0,1\}^K$, i.e., $p_{\aaa}>0$ for any $\aaa\in\{0,1\}^K$, which   fails    when some attribute profiles  are deemed impossible to exist.

Such identifiability issues  of cognitive diagnosis models have   long been recognized \citep{de2004higher,davier2008general, TatsuokaC09, decarlo2011analysis, MarisBechger,zhang2013non,davier2014dina}. 
For instance, \cite{davier2008general} pointed out in the study of  the TOEFL data that larger numbers of skills (i.e., $K$) very likely pose problems with identifiability, unless the number of items per skill  is ``sufficiently" large. 
But given the complicated structure of constraints, 
how   the number of items and the form of the design matrix influence identifiability is still an open problem in the literature.

This work addresses  this open problem by developing a general theoretical framework based on a key technical tool, the \textit{indicator matrix} $\Gamma$.
Under an arbitrary restricted latent class model,
we define $\Gamma$ to be a $J\times m$ matrix using the sets $\mc_j$'s.
The $\Gamma$-matrix has the same size as the matrix $\TT$, with rows indexed by items in $\ms$, and columns by latent classes in $\ma$. 
The $(j,\aaa)$th entry of $\Gamma$ is
\begin{equation}\label{eq-def-gamma}
\Gamma_{j,\aaa}=I(\aaa\in \mc_j), \quad j\in\ms,~\aaa\in\ma, 
\end{equation}
which is a binary indicator of whether   $\aaa$ is   ``most capable" to give a positive response to  $j$.  
For $\aaa\in\ma$, denote the $\aaa$th column vector of $\Gamma$ by $\Gamma_{\Cdot,\aaa}$. 
The $\Gamma$-matrix defined this way turns out to be a useful tool for developing the identifiability theory, and it helps to relax many of the existing strong assumptions, as shown later in Sections \ref{sec-two-gamma} and \ref{sec-gen}. Indeed, most of our identifiability conditions can be  represented as requirements on the structure of $\Gamma$, since the information of which latent classes achieve the highest level of $\theta_{j,\aaa}$ of item $j$ is  what our theoretical derivations essentially rely on.
 Depending on two different algebraic structures of the restricted parameter spaces, we next consider two types of restricted latent class models and present  their identifiability  results  in Sections \ref{sec-two-para} and \ref{sec-multi-para}, respectively.

\section{Identifiability Results for Two-Parameter Models}\label{sec-two-para}
This section considers two-parameter restricted latent class models where each item $j$ has two item parameters, i.e., $|\{\theta_{j,\aaa}:\,\aaa\in\ma\}|=2$. 
Specifically, a two-parameter model assumes that for each item $j$, the latent classes in  $\mc_j$ share a same positive response probability, denoted by $\theta_j^+$,  and the latent classes in the complement set $\ma\setminus\mc_j$ share another same  positive response probability, denoted by $\theta_j^-$. We assume $\theta_j^+>\theta_j^-$.
Note that the unique item parameters in $\TT$   reduce to $(\ttt^+,\ttt^-)$, where  $\ttt^+=(\theta_1^+,\ldots,\theta_J^+)^\top$ and $\ttt^-=(\theta_1^-,\ldots,\theta_J^-)^\top$.
 The motivation for studying the two-parameter   models comes from the popular DINA   and DINO models in cognitive diagnosis, as introduced in Example \ref{exp-dina}. 
Moreover, the study of the two-parameter models  provides insight into understanding other restricted latent class models, as they serve as submodels for many multi-parameter models.

Under a two-parameter model, the $\Gamma$-matrix fully captures the model structure, in the sense that $\theta_{j,\aaa}=\theta_j^+$ if $\Gamma_{j,\aaa}=1$ and $\theta_{j,\aaa}=\theta_j^-$ if $\Gamma_{j,\aaa}=0$.
So in this scenario, 
if $\Gamma$ contains two identical columns, then the corresponding latent classes have the same item parameters across all items. Namely, if $\Gamma_{\Cdot,\aaa}=\Gamma_{\Cdot,\aaa'}$, then $\TT_{\Cdot,\aaa}=\TT_{\Cdot,\aaa'}$. Thus from an identifiability perspective, these two latent classes are equivalent and can not be distinguished based on their observed responses. 
This implies that in order to achieve strict identifiability of the proportion parameters $\pp=(p_{\aaa},\aaa\in\ma)$, it is necessary that each latent class in $\ma$ should correspond to a distinct column vector of $\Gamma$.
We shall call   such a $\Gamma$-matrix   separable.

\begin{definition}\label{def-gamma-com}
A $\Gamma$-matrix is said to be separable, if any two column vectors of $\Gamma$ are distinct. Otherwise, we say $\Gamma$ is inseparable.
\end{definition}

  To see how the separability of the $\Gamma$-matrix influences model identifiability, we start with an ideal case  with all the item parameters $(\ttt^+,\ttt^-)$ known.  
The following proposition characterizes the importance of  $\Gamma$'s separability.

\begin{proposition} \label{prop1}
Consider a two-parameter restricted latent class model with  known   $(\ttt^+,\ttt^-)$. 
Then the proportion parameters $\pp$ are  identifiable if and only if the $\Gamma$-matrix is separable.
\end{proposition}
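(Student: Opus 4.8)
The plan is to recast identifiability of $\pp$ as a linear-algebra question. Since $(\ttt^+,\ttt^-)$ is known, for each latent class $\aaa\in\ma$ I would collect the conditional response probabilities into a vector $\mathbf t_\aaa\in\mathbb R^{2^J}$ with entries $[\mathbf t_\aaa]_{\rr}=\prod_{j=1}^J\theta_{j,\aaa}^{r_j}(1-\theta_{j,\aaa})^{1-r_j}$ for $\rr\in\{0,1\}^J$, and assemble them as the columns of a $2^J\times m$ matrix $T=(\mathbf t_\aaa,\,\aaa\in\ma)$. Then \eqref{eq-pmf} says that the vector of response probabilities equals $T\pp$, so $\pp$ is identifiable exactly when $T$ is injective on the parameter simplex. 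Because every column of $T$ is a conditional distribution, $\mo^\top T=\mo^\top$; hence any $\vv$ with $T\vv=\mz$ automatically satisfies $\mo^\top\vv=0$, i.e.\ it is tangent to the simplex, and a nonzero such $\vv$ yields an admissible perturbation $\pp\pm\varepsilon\vv$ of any interior $\pp$. Consequently identifiability of $\pp$ is equivalent to $T$ having full column rank $m$. The crucial observation, valid in the two-parameter case, is that $\theta_{j,\aaa}$ depends on $\aaa$ only through $\Gamma_{j,\aaa}$, so $\mathbf t_\aaa=\bigotimes_{j=1}^J \uu_j^{(\Gamma_{j,\aaa})}$ is a product (tensor) vector determined solely by the column $\Gamma_{\Cdot,\aaa}$, where $\uu_j^{(1)}=(1-\theta_j^+,\theta_j^+)^\top$ and $\uu_j^{(0)}=(1-\theta_j^-,\theta_j^-)^\top$.

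For the ``only if'' direction I would argue contrapositively: if $\Gamma$ is inseparable, two columns coincide, $\Gamma_{\Cdot,\aaa}=\Gamma_{\Cdot,\aaa'}$ with $\aaa\neq\aaa'$, whence $\mathbf t_\aaa=\mathbf t_{\aaa'}$ and $T$ has a repeated column. Shifting a small mass $\varepsilon$ from $\aaa'$ to $\aaa$ in any interior $\pp$ then produces a distinct admissible $\bar\pp$ with $T\bar\pp=T\pp$, so $\pp$ is not identifiable.

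The substantive step is the ``if'' direction: I must upgrade distinctness of the $\Gamma$-columns to linear independence of the corresponding product vectors $\mathbf t_\aaa$. Since $\theta_j^+>\theta_j^-$, distinct binary columns give distinct product vectors, but distinctness alone is not linear independence; here the tensor structure does the work. Consider the $2\times2$ matrices $M_j$ whose columns are $\uu_j^{(0)}$ and $\uu_j^{(1)}$; each has $\det M_j=\theta_j^+-\theta_j^->0$ and is therefore invertible. The full $2^J\times 2^J$ array whose columns are the product vectors over all of $\{0,1\}^J$ is precisely the Kronecker product $M_1\otimes\cdots\otimes M_J$, and $\det(M_1\otimes\cdots\otimes M_J)=\prod_{j=1}^J(\det M_j)^{2^{J-1}}\neq0$. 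Hence the full family of $2^J$ product vectors is a basis of $\mathbb R^{2^J}$, so any subset---in particular the $m$ distinct columns indexed by a separable $\Gamma$---is linearly independent. This gives $T$ full column rank and thus identifiability of $\pp$. The main obstacle is exactly this last passage from distinctness to independence; the Kronecker/determinant argument is what makes it clean, and it hinges essentially on the strict inequality $\theta_j^+>\theta_j^-$, which guarantees each $M_j$ is nonsingular.
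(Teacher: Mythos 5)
Your proof is correct, and the key ``if'' direction is argued by a genuinely different route than the paper's. The paper works with the marginal-probability matrix $T_{\rr,\aaa}=\prod_j\theta_{j,\aaa}^{r_j}$ (equivalent to your pmf matrix via an invertible triangular transformation), and establishes full column rank through its Lemma~2: after subtracting the vector of maximal item parameters, one orders the distinct columns of $\Gamma$ lexicographically and exhibits, for each column, a response pattern $\rr_k=\sum_{j:\Gamma_{j,\aaa_k}=0}\ee_j$ so that the resulting $m\times m$ submatrix is lower triangular with nonzero diagonal entries $\prod_{j:\Gamma_{j,\aaa_k}=0}(\theta_j^--\theta_j^+)$. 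You instead exploit the tensor structure directly: each column of $T$ is $\bigotimes_j\uu_j^{(\Gamma_{j,\aaa})}$, the full family of $2^J$ such product vectors is the column set of the Kronecker product $M_1\otimes\cdots\otimes M_J$ with $\det M_j=\theta_j^+-\theta_j^->0$, hence a basis of $\mathbb R^{2^J}$, and separability of $\Gamma$ picks out $m$ distinct basis elements, which are automatically independent. Your argument is cleaner and more self-contained for the two-parameter case, since it bypasses the choice of response patterns and the triangularization entirely; the price is that it leans essentially on $\theta_{j,\aaa}$ taking only two values indexed by $\Gamma_{j,\aaa}\in\{0,1\}$, so unlike the paper's lemma it does not extend to the multi-parameter setting where the same rank statement is reused (there the classes outside $\mc_j$ may carry several distinct values and the Kronecker-basis indexing breaks down). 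The ``only if'' direction (repeated columns of $\Gamma$ force repeated columns of $T$, and a small mass transfer between the two classes preserves the distribution) matches the paper's construction. One small point worth making explicit if you write this up: the passage from ``$T$ rank-deficient'' to ``not identifiable'' needs the perturbed $\pp\pm\varepsilon\vv$ to remain a valid parameter, which holds because all $p_{\aaa}\in(0,1)$ and $\mo^\top\vv=0$; you note this, and in any case your concrete two-column construction suffices for the necessity claim.
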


 We use the following example as an illustration.
\begin{example}\label{ex7}
\normalfont{
	Consider the   $Q$-matrix in \eqref{eq-cc} with $K=2$ attributes. Under the DINA  model  with $\mc_j$ in the form of \eqref{eq-gamma-conj}, if $\ma=\{0,1\}^2=\{\aaa_0=(0,0)$, $\aaa_1=(1,0)$, $\aaa_2=(0,1)$, $\aaa_3=(1,1)\}$, then $\Gamma^{(1)}$ in \eqref{eq-cc} represents the corresponding $\Gamma$-matrix, which is inseparable. Specifically, we can see that $\Gamma_{\Cdot,\aaa_0}=\Gamma_{\Cdot,\aaa_2}$ and the two classes $\aaa_0$ and $\aaa_2$ have the same item parameters, $\TT_{\Cdot,\aaa_0}=\TT_{\Cdot,\aaa_2}=\ttt^-$. Thus  $\aaa_0$ and $\aaa_2$ are not distinguishable and equivalently, their proportion parameters $p_{\aaa_0}$ and $p_{\aaa_2}$ are not identifiable. 
\begingroup
\setlength{\belowdisplayskip}{0pt}
\begin{eqnarray}\label{eq-cc}
\quad Q =
\begin{pmatrix}
    1 & 0 \\
    1 & 1 \\
\end{pmatrix}
\begin{aligned}
&~~~\xLongrightarrow[\ma=\{0,1\}^2]{DINA} &
\Gamma^{(1)} = 
\begin{blockarray}{cccc}
 \aaa_0 & \aaa_1 & \aaa_2 & \aaa_3 \\
\begin{block}{(cccc)}
  0 & 1 & 0 & 1\\
  0 & 0 & 0 & 1\\
\end{block}
\end{blockarray}~; \\
& \xLongrightarrow[\ma=\{0,1\}^2\backslash\{0,1\}]{DINA} &
\Gamma^{(2)}  = 
\begin{blockarray}{ccc}
 \aaa_0 & \aaa_1   & \aaa_3 \\
\begin{block}{(ccc)}
  0 & 1   & 1\\
  0 & 0   & 1\\
\end{block}
\end{blockarray}~. 
\end{aligned}
\end{eqnarray}
\endgroup
On the other hand, if prior knowledge shows that the first attribute is the prerequisite for the second, then $\ma$ reduces to $\{0,1\}^2\backslash\{(0,1)\}$ and the   $\Gamma$-matrix becomes $\Gamma^{(2)}$ in \eqref{eq-cc}.   The  $\Gamma^{(2)}$ is separable, with each $\aaa$ having a distinct column vector in $\Gamma$ and $\TT_{\Cdot,\aaa_0}\neq \TT_{\Cdot,\aaa_1}\neq \TT_{\Cdot,\aaa_3}$.  Therefore Proposition \ref{prop1} gives that $\pp$ is identifiable in the ideal case with known $\TT$.
}
\end{example}

An inseparable $\Gamma$-matrix violates the necessary condition for identifying $\pp$ under the two-parameter models.
To study the ``partial" identifiability of $\pp$ when $\Gamma$ is inseparable, we next define an equivalence relation ``$\stackrel{}{\sim}$" of latent classes   induced  by the column vectors of $\Gamma$. 
Specifically, we define  $\aaa\stackrel{}{\sim}\aaa'$  if and only if $\Gamma_{\Cdot,\,\aaa} = \Gamma_{\Cdot,\,\aaa'}.$  
Let $C$ be the number of distinct column vectors of $\Gamma$  and $\ma_1,\ldots,\ma_C$ be the  $C$ equivalence classes under $\stackrel{}{\sim}$. 
Let $\aaa_{\ma_i}$ be a representative of $\ma_i$ and we write $[\aaa_{\ma_i}]=\ma_i$.
We define the grouped population proportion parameters to be 
\begin{equation}\label{eq-group}
\nu_{[\aaa_{\ma_i}]} :=  \sum_{\aaa:\,\aaa\in \ma_i }p_{\aaa},\quad 
 \mbox{ for } i=1,\ldots,C,
\end{equation}
and write  $\nnu= (\nu_{[\aaa_{\ma_1}]},\ldots, \nu_{[\aaa_{\ma_C}]})^\top $.
  When  $\Gamma$ is separable, we have $C=m$, $\nnu=\pp$ and
  each $\aaa$ represents a unique equivalence class.

The following result shows that under an inseparable $\Gamma$-matrix, though $\pp$ are not identifiable, the parameters $\nnu$ are identifiable. 

\begin{proposition}\label{prop1'}
Consider a two-parameter   model 
 with  known   $(\ttt^+,\ttt^-)$. 
When the $\Gamma$-matrix is inseparable, $\nnu$ is identifiable.
Moreover, the latent classes in the same equivalence class can not be distinguished in the sense that for any model parameters $ \pp\neq \bar\pp$, if 
 $\nu_{[\aaa_{\ma_i}]} = \bar \nu_{[\aaa_{\ma_i}]}, \mbox{ where }\bar \nu_{[\aaa_{\ma_i}]} = \sum_{\aaa:\,\aaa\in \ma_i}\bar p_{\aaa}$ for   $i=1,\ldots,C,$
 then $\mathbb P(\RR\mid\TT,\pp) = \mathbb P(\RR\mid\TT,\bar\pp)$. 
\end{proposition}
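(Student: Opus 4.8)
The plan is to exploit the one structural fact that drives the entire statement: under a two-parameter model, any two latent classes in the same equivalence class share the \emph{entire} column of item parameters. Indeed, if $\aaa\sim\aaa'$, i.e. $\Gamma_{\Cdot,\aaa}=\Gamma_{\Cdot,\aaa'}$, then for every item $j$ the indicators $\Gamma_{j,\aaa}=\Gamma_{j,\aaa'}$ agree, and since $\theta_{j,\aaa}$ equals $\theta_j^+$ or $\theta_j^-$ according to whether $\Gamma_{j,\aaa}$ is $1$ or $0$, we obtain $\TT_{\Cdot,\aaa}=\TT_{\Cdot,\aaa'}$. Thus the column of $\TT$ is constant on each equivalence class $\ma_i$, and I would write $\theta_{j,\,[\aaa_{\ma_i}]}$ for this common value.

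First I would regroup the mixture \eqref{eq-pmf} according to the partition $\ma=\bigcup_{i=1}^C\ma_i$. Setting $\phi_i(\rr)=\prod_{j=1}^J\theta_{j,[\aaa_{\ma_i}]}^{\,r_j}(1-\theta_{j,[\aaa_{\ma_i}]})^{1-r_j}$ for the product-form component attached to $\ma_i$ (well defined by the observation above), local independence gives $P(\RR=\rr\mid\TT,\pp)=\sum_{i=1}^C\phi_i(\rr)\sum_{\aaa\in\ma_i}p_{\aaa}=\sum_{i=1}^C\phi_i(\rr)\,\nu_{[\aaa_{\ma_i}]}$ for every $\rr\in\{0,1\}^J$, using the definition \eqref{eq-group}. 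This single identity already delivers the second assertion: the response distribution depends on $\pp$ only through the grouped vector $\nnu$, so whenever $\nu_{[\aaa_{\ma_i}]}=\bar\nu_{[\aaa_{\ma_i}]}$ for all $i$, the two sides agree and $\mathbb P(\RR\mid\TT,\pp)=\mathbb P(\RR\mid\TT,\bar\pp)$, regardless of how mass is redistributed within each $\ma_i$.

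For identifiability of $\nnu$, I would introduce a reduced two-parameter model whose latent classes are the $C$ equivalence classes, whose proportions are $\nnu$, and whose item parameters are the common columns $(\theta_{j,[\aaa_{\ma_i}]})$. By the displayed identity this reduced model has exactly the same response distribution as the original, so identifying $\nnu$ is equivalent to identifying the proportions of the reduced model. Its indicator matrix has columns $\Gamma_{\Cdot,[\aaa_{\ma_i}]}$ that are pairwise distinct by the very definition of the equivalence classes, hence the reduced $\Gamma$ is separable; moreover the class containing the universal least capable $\aaa_0$ carries the all-zero column, so the reduction remains a legitimate two-parameter restricted latent class model. Proposition \ref{prop1}, applied to the reduced model with its known $(\ttt^+,\ttt^-)$, then yields identifiability of its proportions, which are precisely $\nnu$.

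The only genuinely substantive step is the identifiability in the reduced model, and I am deliberately routing it through Proposition \ref{prop1} rather than reproving it. The mathematical heart hidden there is injectivity of the linear map $\nnu\mapsto(P(\RR=\rr))_{\rr}$, equivalently linear independence over $\{0,1\}^J$ of the product-Bernoulli components $\phi_1,\ldots,\phi_C$; this is exactly what separability guarantees, since $C$ distinct $\Gamma$-columns force $C$ distinct $\TT$-columns (as $\theta_j^+>\theta_j^-$). So the expected obstacle — that distinct parameter columns produce linearly independent mixture components — is already resolved inside Proposition \ref{prop1}, and the remaining work is the routine bookkeeping of the grouping and the verification that the reduced $\Gamma$ is separable.
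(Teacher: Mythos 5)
Your proof is correct and follows essentially the same route as the paper's: both arguments rest on collapsing the mixture over each equivalence class (using that $\TT_{\Cdot,\aaa}$ is constant on $\ma_i$) so that the distribution depends on $\pp$ only through $\nnu$, and then on the separability of the representative columns to identify $\nnu$. The paper phrases this via the $T$-matrix, writing $T(\TT)\pp=T^{eq}(\TT)\nnu$ and invoking Lemma \ref{lem-rk} directly on the representative submatrix, whereas you package the same step as Proposition \ref{prop1} applied to a reduced model on the $C$ equivalence classes — the same mathematics with one extra layer of indirection.
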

\noindent
\color{black} 
When $\Gamma$ is inseparable, Proposition \ref{prop1'} implies that even in the ideal case with known $(\ttt^+,\ttt^-)$, the identification of  $\nnu$ is the strongest identifiability result one can obtain for two-parameter restricted latent class models. 
This therefore motivates us to introduce  the following definition of the $\pp$-partial identifiability  when both $(\ttt^+,\ttt^-)$ and $\pp$ are unknown.

\begin{definition}[$\pp$-partial identifiability]
For a two-parameter restricted latent class model with a given $\Gamma$-matrix,  the model parameters $(\ttt^+,\ttt^-,\pp)$ are said to be $\pp$-partially identifiable  if   $(\ttt^+,\ttt^-,\nnu)$ are  identifiable.
\end{definition}

 We point out that when the $\Gamma$-matrix is separable, the $\pp$-partial identifiability exactly becomes the strict identifiability. 
When $\Gamma$ is inseparable, the definition of $\pp$-partial identifiability here refers to partially identifying the proportion parameters $\pp$, while the item parameters still need to be strictly identifiable. 
Such definition suits for the needs of cognitive diagnosis applications, by ensuring   the  identification of   the  equivalent   attribute profiles of interest, and also ensuring the estimability of all   item parameters  so that the  quality of the items can be accurately evaluated and validated.

In the framework of $\pp$-partial identifiability, the following Section \ref{sec-two-gamma} presents a general  identifiability result,  allowing  $\ma$ to be arbitrary and $\Gamma$ to be inseparable. 
Section \ref{sec-2p-q} further focuses on the   family of  $Q$-restricted latent class models
and discusses the necessity of the proposed conditions.  Section \ref{sec-2p-exp} includes the applications of the new theory.

\begin{remark}\label{remark1}
 For the   family of two-parameter $Q$-restricted latent class models, the $\Gamma$-induced equivalence classes can be obtained as follows. 
 We define two sets of attribute profiles under the conjunctive DINA  and disjunctive  DINO assumptions, respectively:
\begin{eqnarray}\label{eq-group-q}
   &&  ~~\mathcal R^{Q,conj} =
\{\aaa=\vee_{h\in S}\,\bq_h:  S \subseteq \mathcal S\},~
\mathcal R^{Q,disj}  =\{\mo-\aaa: \aaa\in\mathcal R^{Q,conj}\}, 
\end{eqnarray}
where $\vee_{h\in S\,}\bq_h=(\max_{h\in S}\{q_{h,1}\},\ldots,\max_{h\in S}\{q_{h,K}\})$, and $\vee_{h\in\varnothing}\,\bq_h$ is defined to be the all-zero vector.
We claim that when  $\ma=\{0,1\}^K$, the  $\mathcal R^{Q,conj}$ or $\mathcal R^{Q,disj}$ is a complete set of representatives of the conjunctive or disjunctive equivalence classes, respectively; 
the proof of this result is given in Section B of the Supplementary Material.
Moreover, for any latent class space $\ma\subseteq\{0,1\}^K$, define a map $f(\Cdot):\, \ma \rightarrow \mathcal R^{Q,conj}$ (or $R^{Q,disj}$)   which sends each attribute pattern $\aaa\in\ma$ to the element in $\mathcal R^{Q,conj}$  (or $R^{Q,disj}$)  equivalent to $\aaa$. Then $f(\ma)$ forms a complete set of conjunctive  or disjunctive representatives. 
A similar grouping operation in the saturated and conjunctive case   was introduced in \cite{zhang2013non}. 
Consider Example \ref{ex7} for an illustration. If $\ma=\{0,1\}^2$,  $\Gamma^{(1)}$ is inseparable. The equivalence class representatives are $\mathcal R^{Q,conj} =\{(0,0),\allowbreak (1,0),\allowbreak (1,1)\}$  by \eqref{eq-group-q}  
and  $\nnu  =(\nu_{[0,0]},\allowbreak\nu_{[1,0]},\allowbreak \nu_{[1,1]})$ with $\nu_{[0,0]} = p_{(0,0)}+p_{(0,1)}$, $\nu_{[1,0]} = p_{(1,0)}$, $\nu_{[1,1]} = p_{(1,1)}$. 
On the other hand, $\Gamma^{(2)}$ is separable with latent class space $\ma=R^{Q,conj}$.
 This also illustrates that   a separable $\Gamma$-matrix does not necessarily correspond to a $Q$-matrix containing an identity submatrix $I_K$.
Therefore, compared with existing theory, 
 the $\Gamma$-matrix provides a more suitable tool than the $Q$-matrix for studying identifiability of $Q$-restricted models.
 \end{remark}

\subsection{Strict and Partial Identifiability}\label{sec-two-gamma}
This subsection presents conditions depending on the $\Gamma$-matrix that lead to the $\pp$-partial identifiability of a   two-parameter restricted latent class model.
 We first introduce some notations.
Based on the constraint sets $\mc_j$'s, we categorize the entire set of items $\ms=\{1,\ldots,J\}$ into two subsets, the set of \textit{non-basis} items $S_{non}$ and that of \textit{basis} items $S_{basis}$ as follows,
 \begin{equation}\label{basis}
	S_{non} = \{j: \exists h \in \ms\setminus\{j\}, \text{ s.t. } \mc_{h} \supseteq \mc_j\} ~\mbox{ and }~
S_{basis} = \ms\setminus S_{non}.
\end{equation}
By this definition, an item $j$ is a non-basis item if the capability of item $j$ implies capability of some other item, and a basis item otherwise. 
With a slight abuse of notation, for any subset of items $S\subseteq\ms$, let   
$\mc_{S} = \cap_{j\in S\,}\mc_j$ denote the set of latent classes that are most capable of all the items in $S$. 
We introduce the next definition of $S$-\textit{differentiable} to describe  the relation between an item and a set of items.

 \begin{definition}\label{def-diff}
For an item $j$ and a set of items $S$ that does not contain $j$, item $j$ is said to be \textit{$S$-differentible} if there exist two subsets $S^+_j$, $S^-_j$ of $S$, which are not necessarily non-empty or disjoint, such that 
\begin{equation}\label{eq-defdiff}
\mc_{S^+_j}\varsubsetneqq \mc_{S^-_j} ~\mbox{ and }~  \mc_{S^-_j}\setminus \mc_{S^+_j}  \subseteq \ma\setminus\mc_j.\end{equation}
\end{definition}

When $j$ is $S$-differentiable, the set $S$ is said to be a \textit{separator set} of item $j$. 
An item $j$ is $S$-differentiable indicates that the items in the separator set $S$ can differentiate at least one incapable latent class of $j$ (i.e., one latent class in $\ma\setminus{\cal C}_j$) from the universal least capable class $\aaa_0$.

We need the following two conditions to establish identifiability.
\begin{enumerate}
\item[(C1)] \emph{Repeated Measurement Condition}: For each item $j$, there exist two disjoint sets of items $S_j^1$, $S_j^2\subset \ms\backslash\{j\}$ such that
$\mc_j\supseteq \mc_{S_j^1}$ and $\mc_j\supseteq \mc_{S_j^2}$.

\item[(C2)] \emph{Sequentially Differentiable Condition}: 
Start with the set $S_{sep}=S_{non}$. Expand $S_{sep}$ by including all items in $\ms\setminus S_{sep}$ that are $S_{sep}$-differentiable, and repeat the expanding procedure until no items can be added to $S_{sep}$. The sequentially expanding procedure ends up with $S_{sep}=\ms$. 
\end{enumerate}
Before presenting the formal theorem, we first give a simple illustration of how Condition (C2) can be checked.
 
\begin{example}
 \normalfont{
 Consider the following $3\times 4$  $\Gamma$-matrix,
\begingroup
\setlength{\belowdisplayskip}{0pt}
\begin{eqnarray*}
\Gamma = 
\begin{blockarray}{cccc}
\aaa_0 & \aaa_1 & \aaa_2 & \aaa_3 \\
\begin{block}{(cccc)}
0 & 0 & 1 & 1 \\
0 & 0 & 0 & 1 \\
0 & 1 & 0 & 0 \\
\end{block}
\end{blockarray}~,
\end{eqnarray*}
\endgroup then $\mc_1=\{\aaa_2,\aaa_3\}$, $\mc_2=\{\aaa_3\}$ and $\mc_3=\{\aaa_1\}$.
By \eqref{basis}, $S_{non}=\{2,3\}$ and $S_{basis}=\{1\}$.
To check condition (C2), we start with the separator set $S_{sep}=S_{non}=\{2,3\}$.
For basis item $1$, we define $S_1^+=\varnothing$ and $S_1^-=\{3\}$. Then $\mc_{S_1^+} = \{\aaa_0,\aaa_1,\aaa_2,\aaa_3\}$  and $\mc_{S_1^-}=\{\aaa_0,\aaa_2,\aaa_3\}$, so $\mc_{S_1^+}\setminus \mc_{S_1^-}=\{\aaa_1\}\subseteq\mc_1^c=\{\aaa_0,\aaa_1\}$, which means \eqref{eq-defdiff} holds for $j=1$. Besides, $S_1^+\cup S_1^- \subseteq S_{non}$. So by Definition \ref{def-diff}, item 1 is $S_{non}$-differentiable. 
Now we can expand the separator set $S_{sep}$ to be $S_{non}\cup\{1\}=\ms$. So the sequentially expanding procedure described in condition (C2) ends in one step with $S_{sep}=\ms$, and (C2) is satisfied.
}
 \end{example}

\vspace{-0.15in}
\begin{thm}
\label{thm1}
 Under the two-parameter restricted latent class models, 
condition (C1) is sufficient for  identifiability of $(\ttt^+,\ttt^-_{non})$, where $\ttt^-_{non} =(\theta^-_j,~j\in S_{non})$.
Moreover, conditions (C1) and (C2) are sufficient for $\pp$-partial identifiability of the model parameters $(\ttt^+,\ttt^-,\pp)$.
 \end{thm}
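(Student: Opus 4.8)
\section*{Proof proposal}

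The plan is to keep the structure matrix $\Gamma=\bar\Gamma$ (which is fixed by the design) and to recast strict/partial identifiability as equalities among the marginal ``all-positive'' probabilities. For $S\subseteq\ms$ write $t_S=\mathbb P(\RR_S=\mo)=\sum_{\aaa\in\ma}p_{\aaa}\prod_{j\in S}\theta_{j,\aaa}$, where $\RR_S=\mo$ denotes a positive response to every item in $S$. By inclusion--exclusion the family $\{t_S:S\subseteq\ms\}$ is an invertible linear image of the cell probabilities in \eqref{eq-pmf}, so \eqref{eq-orig} is equivalent to $t_S=\bar t_S$ for all $S$, and I would run the whole argument on these quantities. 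Writing $\delta_j=\theta_j^+-\theta_j^-$ and using $\theta_{j,\aaa}=\theta_j^-+\delta_j\Gamma_{j,\aaa}$ gives the workhorse identity
\begin{equation*}
t_S=\sum_{T\subseteq S}\Big(\prod_{j\in T}\delta_j\Big)\Big(\prod_{j\in S\setminus T}\theta_j^-\Big)\,P(\mc_T),\qquad P(\mc_T)=\sum_{\aaa\in\mc_T}p_{\aaa},
\end{equation*}
with the convention $\mc_\varnothing=\ma$, $P(\mc_\varnothing)=1$.

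For the first claim I would read condition (C1) as furnishing three conditionally independent ``views'' of each latent class. For item $j$ the two disjoint separator sets $S_j^1,S_j^2\subseteq\ms\setminus\{j\}$ make $R_j$, $I(\RR_{S_j^1}=\mo)$ and $I(\RR_{S_j^2}=\mo)$ three conditionally independent measurements, and the containments $\mc_{S_j^k}\subseteq\mc_j$ force item $j$ to respond at the high rate $\theta_j^+$ on every class capable of an entire separator set. Combined with the strict gap $\theta_j^+>\theta_j^-$, a three-way/Kruskal-type rank argument on this array pins down $\theta_j^+$ for every $j$ and, for a \emph{non-basis} item $j$ (where some $\mc_h\supseteq\mc_j$ supplies the extra leverage needed to reach the classes incapable of $j$), also $\theta_j^-$. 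This yields identifiability of $(\ttt^+,\ttt^-_{non})$, and in particular of all item parameters of the items in $S_{non}$.

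For the second claim I would induct along the expansion $S_{non}=S_{sep}^{(0)}\subseteq\cdots\subseteq S_{sep}^{(L)}=\ms$ produced by (C2), maintaining the hypothesis that every $\theta_h^\pm$ with $h\in S_{sep}^{(\ell)}$ is identified. The engine is a triangular-inversion lemma: if all $\theta_h^\pm$, $h\in S$, are known, then from $\{t_{\{j\}\cup T'}:T'\subseteq S\}$ one solves (the coefficient matrix is triangular over the subset lattice with nonzero diagonal $\prod_{h\in T'}\delta_h\neq0$) for the weighted moments $h_j(T):=\sum_{\aaa\in\mc_T}p_{\aaa}\theta_{j,\aaa}$, and likewise for the pure capability probabilities $P(\mc_T)$, for every $T\subseteq S$. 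When an item $j$ is adjoined because it is $S_{sep}^{(\ell)}$-differentiable, Definition \ref{def-diff} supplies $S_j^+,S_j^-\subseteq S_{sep}^{(\ell)}$ with $\mc_{S_j^+}\subsetneqq\mc_{S_j^-}$ and $\mc_{S_j^-}\setminus\mc_{S_j^+}\subseteq\ma\setminus\mc_j$, so that subtracting the corresponding moments isolates item $j$ on a set of classes where it fires at $\theta_j^-$:
\begin{equation*}
h_j(S_j^-)-h_j(S_j^+)=\sum_{\aaa\in\mc_{S_j^-}\setminus\mc_{S_j^+}}p_{\aaa}\theta_{j,\aaa}=\theta_j^-\big(P(\mc_{S_j^-})-P(\mc_{S_j^+})\big).
\end{equation*}
Since $P(\mc_{S_j^-})-P(\mc_{S_j^+})=P(\mc_{S_j^-}\setminus\mc_{S_j^+})$ is already identified and strictly positive (all $p_{\aaa}>0$ and the set is nonempty), this identifies $\theta_j^-$; adjoining $j$ and iterating, (C2) guarantees $S_{sep}=\ms$ at termination, so all of $\ttt^-$ is identified.

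Finally, with the full item matrix $(\ttt^+,\ttt^-)$ identified, the grouped $\Gamma$ (one representative column per equivalence class under $\sim$) is separable by construction, so Propositions \ref{prop1} and \ref{prop1'} identify the grouped proportions $\nnu$, giving $\pp$-partial identifiability of $(\ttt^+,\ttt^-,\pp)$. I expect the main obstacle to be the first claim: making the three-view argument under (C1) rigorous, cleanly explaining why two disjoint separator sets suffice to pin down $\theta_j^+$ for \emph{every} item yet only deliver $\theta_j^-$ for the non-basis items, and tracking exactly which capability probabilities are recovered so that the base case $S_{sep}=S_{non}$ of the induction is fully seeded.
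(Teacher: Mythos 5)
Your overall architecture matches the paper's: identify $(\ttt^+,\ttt^-_{non})$ from (C1), then propagate $\theta_j^-$ to the basis items along the (C2) expansion, then recover $\nnu$ from the separability of the grouped $\Gamma$. Your treatment of the second and third stages is sound and is in fact a cleaner packaging of what the paper does: your triangular inversion over the subset lattice of $S_{sep}$, recovering $h_j(T)=\sum_{\aaa\in\mc_T}p_{\aaa}\theta_{j,\aaa}$ and $P(\mc_T)$ and then isolating $\theta_j^-$ on $\mc_{S_j^-}\setminus\mc_{S_j^+}\subseteq\ma\setminus\mc_j$, is exactly the paper's construction $T_{\rr^-,\Cdot}+k\,T_{\rr^+,\Cdot}$ of transformed $T$-matrix rows, written in M\"obius-inversion form; the positivity $P(\mc_{S_j^-})-P(\mc_{S_j^+})>0$ and the agreement of the inverted quantities across the two parameter sets are handled correctly.

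The genuine gap is the first claim, which you yourself flag as ``the main obstacle'': you do not prove that (C1) identifies $\theta_j^+$ for every $j$ and $\theta_j^-$ for $j\in S_{non}$, and the route you sketch would not work as stated. A Kruskal-type three-view argument on $R_j$, $I(\RR_{S_j^1}=\mo)$, $I(\RR_{S_j^2}=\mo)$ treats each view as a $2\times m$ conditional-probability matrix, whose Kruskal rank is at most $2$; Kruskal's condition $\sum_i\mathrm{rank}_K(M_i)\geq 2m+2$ then fails whenever $m>2$, so no tensor-uniqueness theorem applies here. What actually makes Step 1 work in the paper is a targeted algebraic device: first a lemma showing that equality of the two response distributions forces the \emph{cross-parameter} inequalities $\theta_j^+\neq\bar\theta_j^-$ and $\theta_j^-\neq\bar\theta_j^+$ for every $j$ (a consequence of $T_{\ee_j,\Cdot}(\TT)\pp=T_{\ee_j,\Cdot}(\bar\TT)\bar\pp$ and the monotonicity constraints); then the mixed offset $\ttt^*=\sum_{h\in S_j^1}\theta_h^-\ee_h+\sum_{m\in S_j^2}\bar\theta_m^-\ee_m$ applied to both $T(\TT)$ and $T(\bar\TT)$, which thanks to $\mc_{S_j^1}\cap\mc_{S_j^2}\subseteq\mc_j$ and those inequalities produces two nonvanishing aggregates on which $j$ responds at rate $\theta_j^+$ (resp.\ $\bar\theta_j^+$), so that a ratio of two marginal moments yields $\theta_j^+=\bar\theta_j^+$; a parallel construction with $\ttt^*=\theta_h^+\ee_h$ for some $\mc_h\supseteq\mc_j$ isolates $\mc_h^c\subseteq\ma\setminus\mc_j$ and yields $\theta_j^-=\bar\theta_j^-$ for non-basis $j$. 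Your proposal never engages with the fact that the two candidate parameter sets are different, which is precisely where the difficulty lies; without the cross-parameter inequalities the isolating products cannot be shown to be nonzero, and without them the base case of your induction (all of $\ttt^+$ and $\ttt^-_{non}$ identified) is not seeded.
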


Theorem \ref{thm1} presents a general identifiability result with  strict identifiability being a special case. For instance, in the   case of $\mathcal A=\{0,1\}^K$, if the $J\times 2^K$  $\Gamma$-matrix is separable, then  $\nnu=\pp$ and the $\pp$-partial identifiability in Theorem \ref{thm1} exactly ensures strict identifiability of all the   parameters $(\ttt^+,\ttt^-,\pp)$. Similarly, in the case of $\mathcal A\subset\{0,1\}^K$, if the $J\times |\mathcal A|$ $\Gamma$-matrix  is separable, the $\pp$-partial identifiability ensures $(\ttt^+,\ttt^-)$ and $(p_{\aaa},\,\aaa\in\mathcal A)$ are strictly identifiable.
 Conditions (C1) and (C2) only depend on the structure of the $\Gamma$-matrix and are easily checkable.
Condition (C1)  implies that  at least one capable class  of each item is repeatedly measured by other items. 
Condition (C2) requires that for each basis item, at least one of its incapable classes should be differentiated from the universal least capable class  through a sequential procedure. 
From the proof of Theorem \ref{thm1},   (C1) suffices for identifiability of $(\ttt^+,\ttt^-_{non})$;  furthermore, the sequential procedure in condition (C2) ensures that as $S_{sep}$ sequentially expands its size, for any item $h$ included in $S_{sep}$, the parameter $\theta_h^-$ is  identifiable.
 If (C2) holds, i.e., the sequential procedure ends up with  $S_{sep} =\ms$,  we have the entire $\ttt^-$  identifiable, which further leads to  identifiability of $\nnu$. 
The sequential statement of (C2)  accurately 
characterizes the underlying structure of the  $\Gamma$-matrix needed for identifiability.
In particular, if there are no basis items, i.e., $\mathcal S = S_{non}$, then  (C2) automatically holds with zero  expanding step; while if there do exist basis items and each basis item is $S_{non}$-differentiable, then (C2) holds with one  expanding step.

The next proposition  further extends the result in Theorem \ref{thm1} to the case where the $\Gamma$-matrix may not satisfy (C1) and (C2). 
For any subset of items $S\subseteq \mathcal S$, define the \textit{$S$-adjusted $\Gamma$-matrix} $\Gamma(S)$ as follows, which has the same size as the original $\Gamma$. Its $j$th row $\{{\Gamma}(S)\}_{j,\Cdot}$ equals $\mo_m^\top-{\Gamma}_{j,\Cdot}$ if $j\in S$, and equals ${\Gamma}_{j,\Cdot}$ if $j\notin S$.
Here $\mathbf 1^\top_m$ denotes an all-one row vector of length $m$. 
\begin{proposition}\label{prop-after1}
 Consider a two-parameter restricted latent class model associated with a $\Gamma$-matrix. If there exist a subset of items $S\subseteq  \mathcal S$ such that the $S$-adjusted $\Gamma$-matrix ${\Gamma}(S)$
satisfies conditions (C1) and (C2), then the   two-parameter model is $\pp$-partially identifiable.
\end{proposition}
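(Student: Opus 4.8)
The plan is to reduce Proposition~\ref{prop-after1} to Theorem~\ref{thm1} by a response-flipping transformation that turns the original two-parameter model carrying the indicator matrix $\Gamma$ into an \emph{equivalent} two-parameter model whose indicator matrix is exactly $\Gamma(S)$. Fix the subset $S\subseteq\ms$ supplied by the hypothesis and define the coordinatewise flip $T_S:\{0,1\}^J\to\{0,1\}^J$ by $(T_S\rr)_j=1-r_j$ for $j\in S$ and $(T_S\rr)_j=r_j$ for $j\notin S$. This $T_S$ is an involution, hence a bijection on the response space. First I would compute the conditional law of the flipped responses: for $j\in S$ and any latent class $\aaa$, $P(1-R_j=1\mid\ba=\aaa)=1-\theta_{j,\aaa}$, so the flipped $j$th item parameter equals $1-\theta_{j,\aaa}$, while for $j\notin S$ it is unchanged. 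Since the flip is deterministic and the proportions $\pp$ are untouched, the law of $T_S\RR$ under the flipped parameters matches, pattern by pattern, the law of $\RR$ under $(\ttt^+,\ttt^-,\pp)$.

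Next I would identify this flipped model as a genuine two-parameter model carrying $\Gamma(S)$. Putting $\widetilde\theta_j^+=1-\theta_j^-$ and $\widetilde\theta_j^-=1-\theta_j^+$ for $j\in S$ (and $\widetilde\theta_j^{\pm}=\theta_j^{\pm}$ for $j\notin S$), the ordering $\theta_j^+>\theta_j^-$ yields $\widetilde\theta_j^+>\widetilde\theta_j^-$, so $(\widetilde\ttt^+,\widetilde\ttt^-)$ are legitimate two-parameter item parameters. For $j\in S$ the flipped high value $\widetilde\theta_j^+=1-\theta_j^-$ is attained precisely when $\theta_{j,\aaa}=\theta_j^-$, i.e.\ when $\Gamma_{j,\aaa}=0$, so the $j$th row of the flipped model's indicator matrix is $\mo_m^\top-\Gamma_{j,\Cdot}$; for $j\notin S$ it is $\Gamma_{j,\Cdot}$. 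This is exactly the $S$-adjusted matrix $\Gamma(S)$.

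The one point needing care is the grouped proportion vector $\nnu$. I would observe that flipping whole rows preserves column equalities: for every $j$ we have $\Gamma(S)_{j,\aaa}=\Gamma(S)_{j,\aaa'}$ if and only if $\Gamma_{j,\aaa}=\Gamma_{j,\aaa'}$, because negating both entries (or neither) keeps equality. Hence the equivalence relation $\sim$, the classes $\ma_1,\dots,\ma_C$, and therefore the grouped proportions $\nnu$ defined in \eqref{eq-group} are \emph{identical} for $\Gamma$ and for $\Gamma(S)$. Consequently the $\pp$-partial identifiability targets of the original and flipped models coincide.

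Finally I would invoke Theorem~\ref{thm1}. By hypothesis $\Gamma(S)$ satisfies (C1) and (C2), so the flipped model is $\pp$-partially identifiable, meaning $(\widetilde\ttt^+,\widetilde\ttt^-,\nnu)$ is determined by the law of $T_S\RR$. Concretely, if two original parameter tuples $(\ttt^+,\ttt^-,\pp)$ and $(\bar\ttt^+,\bar\ttt^-,\bar\pp)$ give the same law of $\RR$, then their flips give the same law of $T_S\RR$, so Theorem~\ref{thm1} forces $\widetilde\ttt^+=\bar{\widetilde\ttt}^+$, $\widetilde\ttt^-=\bar{\widetilde\ttt}^-$, and $\nnu=\bar\nnu$; inverting the invertible map $\theta\mapsto1-\theta$ (with the $\pm$ swap on $S$) returns $\ttt^+=\bar\ttt^+$ and $\ttt^-=\bar\ttt^-$. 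Thus $(\ttt^+,\ttt^-,\nnu)$ is identifiable from the law of $\RR$, which is precisely $\pp$-partial identifiability of the original model. The only real work is the bookkeeping of the first two steps---verifying that the flip produces a bona fide two-parameter model with correctly ordered parameters and indicator matrix $\Gamma(S)$; everything else is a transfer of Theorem~\ref{thm1} across the bijection $T_S$.
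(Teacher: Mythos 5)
Your proof is correct, but it takes a different route from the paper's. The paper does not flip the responses: it keeps the distribution of $\RR$ fixed and observes that the same item-parameter matrix $\TT$ can be re-described under $\Gamma(S)$ by simply swapping the labels $\tilde\theta_j^+=\theta_j^-$, $\tilde\theta_j^-=\theta_j^+$ for $j\in S$ (so that $T(\tilde\ttt^+,\tilde\ttt^-\mid\Gamma(S))=T(\ttt^+,\ttt^-\mid\Gamma)$ entrywise). The price is that the re-described parameters violate the ordering $\tilde\theta_j^+>\tilde\theta_j^-$ on $S$, so the paper must go back and re-verify that the entire proof of Theorem~\ref{thm1} (in particular the analogue of Lemma~\ref{lemmaS2}, i.e.\ $\theta_j^+\neq\bar\theta_j^-$ and $\theta_j^-\neq\bar\theta_j^+$) survives when the ordering is reversed on $S$. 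Your construction instead pushes the adjustment onto the data via the involution $T_S$, and the extra complementation $\theta\mapsto 1-\theta$ is exactly what restores the ordering $\widetilde\theta_j^+=1-\theta_j^->1-\theta_j^+=\widetilde\theta_j^-$; consequently the flipped model is an honest member of the two-parameter class with indicator matrix $\Gamma(S)$, and Theorem~\ref{thm1} applies as a black box with no re-examination of its proof. Your observation that row-wise complementation preserves column equalities, hence the equivalence classes and the target $\nnu$, is the right bookkeeping step and is needed in either approach. The trade-off: your argument is shorter and avoids reopening Theorem~\ref{thm1}, at the cost of introducing a transformation of the observables and of implicitly relying on the fact that Theorem~\ref{thm1}'s proof never actually uses the existence of a universal least capable class for $\Gamma(S)$ (the flipped $\aaa_0$ is most capable on the items in $S$); the paper's argument stays entirely at the level of the $T$-matrix identity but must explicitly re-run the earlier proof under relaxed constraints. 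Both are valid; it would be worth one sentence in your write-up noting that conditions (C1)--(C2) and the proof of Theorem~\ref{thm1} are stated purely in terms of the sets $\mc_j$ of the indicator matrix, so no structural assumption beyond those conditions is lost in passing to $\Gamma(S)$.
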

 
Proposition \ref{prop-after1}   relaxes the conditions of Theorem \ref{thm1}, by only requiring that (C1) and (C2) can be satisfied after switching the zeros and ones for some rows of in the $\Gamma$. 
The identifiability conditions in Theorem \ref{thm1} and Proposition \ref{prop-after1}    allow for a non-saturated latent class space $\ma$ and inseparability of the $\Gamma$-matrix, which relaxes the existing identifiability conditions in the literature.  
Moreover, the proposed conditions (C1) and (C2) would become necessary and sufficient in certain scenarios to be discussed in  the following subsection.

\subsection{Results for $Q$-Restricted Latent Class Models}\label{sec-2p-q}
To further illustrate the result in Theorem \ref{thm1}, we focus on the two-parameter $Q$-restricted latent class model with a saturated latent class space $\ma=\{0,1\}^K$. 
This includes the conjunctive DINA and disjunctive DINO models in Example  \ref{exp-dina} as special cases. 
Without loss of generality, we next only consider the two-parameter conjunctive model.
Nevertheless, all the  $\pp$-partial identifiability results presented in this subsection hold for both the conjunctive and the disjunctive models, due to the duality between them \citep{chen2015statistical}.

We introduce the following definitions adapted from Section \ref{sec-two-gamma}.
Under the conjunctive model assumption with $\mc_j$ taking the form of \eqref{eq-gamma-conj}, the  non-basis and basis items defined earlier in \eqref{basis} can be equivalently expressed in terms of the $\bq$-vectors as follows
\begin{equation}\label{eq-defdina}
S_{non} = \{j: \exists h\in \ms\setminus\{j\} \text{ s.t. } \bq_{h} \preceq \bq_j \} ~\mbox{ and }~ 
S_{basis} = \ms\setminus S_{non}.
\end{equation}
Moreover, item $j$ is set $S$-differentiable if there exist $S^+$, $S^-\subseteq S$ such that \begin{equation}\label{eq-defdiff-dina}
\mz\precneqq \vee_{h\in S^+}\bq_h - \vee_{h\in S^-}\bq_h\preceq \bq_j.
\end{equation}
In addition, conditions (C1) and (C2) are equivalent to:
\begin{enumerate}
\item[(C1$^*$)] \emph{Repeated Measurement Condition}: For each $j\in\ms$, there exist two disjoint item sets $S^1_j$, $S^2_j\subseteq \ms\setminus\{j\}$ such that 
$\bq_j\preceq \vee_{h\in S^1_j}\,\bq_h$ and $\bq_j\preceq \vee_{h\in S^2_j}\,\bq_h.$
\item[(C2$^*$)] \emph{Sequentially Differentiable Condition}: The same as condition (C2), but using definition \eqref{eq-defdiff-dina} of $S$-differentiable regarding the $\bq$-vectors.
\end{enumerate}

{Following Theorem \ref{thm1}, 
the next corollary shows that the derived  conditions on the $Q$-matrix suffice for the $\pp$-partial identifiability of both the conjunctive and  disjunctive two-parameter models.} 
\begin{corollary}\label{cor-dina-main}
Under the two-parameter $Q$-restricted latent class models, 
assuming $\nu_{[\aaa]}>0$ for any equivalence class $[\aaa]$, (C1$^*$) and (C2$^*$) are sufficient for the $\pp$-partial identifiability of $(\ttt^+,\ttt^-,\pp)$.
\end{corollary}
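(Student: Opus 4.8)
The plan is to deduce this corollary directly from Theorem \ref{thm1} by verifying that, in the saturated conjunctive setting $\ma=\{0,1\}^K$, the $\bq$-vector conditions (C1$^*$) and (C2$^*$) are exactly the specializations of the general conditions (C1) and (C2) of Section \ref{sec-two-gamma}. The first step is to record the elementary computation that, using $\mc_h=\{\aaa:\aaa\succeq\bq_h\}$ from \eqref{eq-gamma-conj}, the jointly most-capable set of any item set $S$ is $\mc_S=\cap_{h\in S}\mc_h=\{\aaa\in\ma:\,\aaa\succeq\vee_{h\in S}\bq_h\}$. Because $\ma=\{0,1\}^K$ contains every binary vector, a set of the form $\{\aaa:\aaa\succeq\uu\}$ is determined by and determines its threshold $\uu$; in particular $\mc_j\supseteq\mc_S$ holds if and only if $\bq_j\preceq\vee_{h\in S}\bq_h$, where the ``if'' direction is transitivity and the ``only if'' follows by feeding the admissible profile $\aaa=\vee_{h\in S}\bq_h$ into the inclusion. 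This identifies (C1) with (C1$^*$) verbatim, via the reformulation \eqref{eq-defdina} of the non-basis/basis split.

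The technical core is the analogous translation of the $S$-differentiable property of Definition \ref{def-diff}. Writing $\uu=\vee_{h\in S^+}\bq_h$ and $\vv=\vee_{h\in S^-}\bq_h$, the condition $\mc_{S^+}\varsubsetneqq\mc_{S^-}$ is equivalent to $\uu\succneqq\vv$, i.e. to $\mz\precneqq\uu-\vv$, since strict inclusion of the threshold sets forces strict domination of thresholds under saturation. It then remains to show, assuming $\uu\succneqq\vv$, that the requirement $\mc_{S^-}\setminus\mc_{S^+}\subseteq\ma\setminus\mc_j$ is equivalent to $\uu-\vv\preceq\bq_j$, which together yields precisely display \eqref{eq-defdiff-dina}. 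The forward implication is immediate: any $\aaa$ with $\aaa\succeq\vv$ but $\aaa\not\succeq\uu$ must satisfy $\alpha_k=0$ at some coordinate $k$ where $u_k-v_k=1$, and $\uu-\vv\preceq\bq_j$ forces $q_{j,k}=1$ there, so $\aaa\not\succeq\bq_j$. For the converse I would argue by contraposition, exhibiting an explicit witness: if some coordinate $k$ has $u_k-v_k=1$ yet $q_{j,k}=0$, then the profile $\aaa=\vv\vee\bq_j$ has $\alpha_k=\max(v_k,q_{j,k})=0$, whence $\aaa\succeq\vv$, $\aaa\succeq\bq_j$, and $\aaa\not\succeq\uu$; thus $\aaa\in(\mc_{S^-}\setminus\mc_{S^+})\cap\mc_j$, contradicting the inclusion. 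This establishes (C2)$\Leftrightarrow$(C2$^*$), so (C1$^*$) and (C2$^*$) imply (C1) and (C2), and Theorem \ref{thm1} delivers the $\pp$-partial identifiability of the conjunctive two-parameter model.

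Finally, I would dispatch the disjunctive model through the conjunctive--disjunctive duality recorded after \eqref{eq-group-q}: the complementation $\aaa\mapsto\mo-\aaa$ carries the disjunctive structure to a conjunctive one sharing the same $Q$-matrix, and since (C1$^*$) and (C2$^*$) are statements about $Q$ alone, they transfer unchanged and the same conclusion follows. The standing hypothesis $\nu_{[\aaa]}>0$ is what guarantees that each equivalence class is genuinely populated, so that the grouped proportions $\nnu$ are bona fide parameters to be identified. The only step demanding real care is the converse in the $S$-differentiable translation above, where saturation of $\ma$ is used essentially to construct the contradicting profile $\vv\vee\bq_j$; every remaining step is routine bookkeeping matching Definition \ref{def-diff} and displays \eqref{eq-defdina}--\eqref{eq-defdiff-dina} to their $\mc_j$-based counterparts.
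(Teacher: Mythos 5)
Your conjunctive argument is correct and is essentially the paper's route, worked out in more detail: the paper simply asserts that under $\ma=\{0,1\}^K$ with $\mc_j=\{\aaa:\aaa\succeq\bq_j\}$ the definitions \eqref{eq-defdina}--\eqref{eq-defdiff-dina} make (C1$^*$), (C2$^*$) coincide with (C1), (C2), and then invokes Theorem \ref{thm1}. Your verification of that reduction --- in particular the observation that $\mc_S=\{\aaa:\aaa\succeq\vee_{h\in S}\bq_h\}$, the threshold characterization of $\mc_j\supseteq\mc_S$, and the witness $\aaa=\vv\vee\bq_j$ in the converse of the $S$-differentiability translation (which is where saturation is genuinely used) --- is sound and fills in exactly what the paper leaves implicit. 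Note that for the corollary you only need the direction (C2$^*$)$\Rightarrow$(C2); the converse you prove is a bonus.

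The gap is in your disjunctive step. The complementation $\aaa\mapsto\mo-\aaa$ does \emph{not} carry the DINO structure to a conjunctive model with the same $Q$: one has $\aaa\in\mc_j^{disj}$ if and only if $\mo-\aaa\notin\mc_j^{conj}$, so the relabeling sends $\mc_j^{disj}$ to the \emph{complement} of $\mc_j^{conj}$. In the relabeled coordinates the classes with $\aaa\succeq\bq_j$ carry the \emph{lower} response probability $\theta_j^-$, which violates the defining constraint \eqref{eq-constraints} that $\mc_j$ be the arg-max set; the relabeled object is therefore not a two-parameter conjunctive restricted latent class model, and Theorem \ref{thm1} cannot be applied to it directly. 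This is precisely why the paper proves Proposition \ref{prop-after1} (the $S$-adjusted $\Gamma$-matrix result): its own proof of the corollary observes that $\Gamma^{conj}$ and $\mo_{J\times C}-\Gamma^{disj}$ differ only by a column permutation, so the $\ms$-adjusted matrix $\Gamma^{disj}(\ms)$ satisfies (C1) and (C2), and Proposition \ref{prop-after1} --- whose proof re-checks that the key inequalities $\theta_j^+\neq\bar\theta_j^-$ and $\theta_j^-\neq\bar\theta_j^+$ survive when some items have $\theta_j^+<\theta_j^-$ --- then delivers $\pp$-partial identifiability. Your argument needs this (or an equivalent response-flipping argument with the same re-verification); as written, the phrase ``carries the disjunctive structure to a conjunctive one sharing the same $Q$-matrix'' asserts something false, and the conclusion for the disjunctive model does not follow from Theorem \ref{thm1} alone.
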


We use the following example as an illustration of the identifiability result; see also real data examples   in Section \ref{sec-2p-exp}. 
\begin{example}\label{exp-q53}
\normalfont{
Under the DINA model, consider the following   $Q$-matrix.
\begin{equation}\label{eq-q53}
Q=
\begin{pmatrix}
1 & 0 & 0 \\
0 & 1 & 0 \\
\hdashline[2pt/2pt]
1 & 1 & 1 \\
0 & 1 & 1 \\
1 & 0 & 1
\end{pmatrix}
\end{equation}
 This $Q$-matrix lacks the single-attribute item $(0,0,1)$, and the corresponding $\Gamma$-matrix under $\ma=\{0,1\}^3$ is inseparable.
In this case, we have   the following $7$ equivalence classes  $ \{[0,0,0]$, $[1,0,0]$, $[0,1,0]$, $[1,1,0]$, $[0,1,1]$, $[1,0,1]$, $[1,1,1]\},$
with the equivalence class $[0,0,0]$ containing attribute profiles $(0,\,0,\,0)$ and $(0,\,0,\,1)$, while each of the other equivalence classes   contains one attribute profile.
 Following the definition in   \eqref{eq-defdina}, items $1$ and $2$ are basis items, and items $3$, $4$ and $5$ are non-basis items.
For all the five items, condition (C1$^*$) is satisfied by taking $(S_1^1,~ S_1^2)=(\{3\},~ \{5\})$, $(S_2^1,~ S_2^2)=(\{3\}, ~\{4\})$, $(S_3^1, ~S_3^2)=(\{1,4\},~ \{2,5\})$, $(S_4^1, ~S_4^2)=(\{3\},~ \{2,5\})$, and $(S_5^1,~ S_5^2)=(\{3\}, ~\{1,4\})$.
In addition, condition (C2$^*$) is also satisfied since  the basis items $1$ and $2$ are $(S_1^+\cup S_1^-)$- and $(S_2^+\cup S_2^-)$-differentiable, respectively, where $(S_1^+, S_1^- )=(\{3\},\{4\})$ and $(S_2^+,S_2^-)=(\{3\},\{5\})$.  
By Corollary \ref{cor-dina-main}, the DINA model parameters are $\pp$-partially identifiable. 
}
\end{example}

 As shown above, conditions (C1$^*$) and (C2$^*$) are sufficient conditions to ensure $\pp$-partial identifiability. 
 In the following, we discuss the necessity of   (C1$^*$) and (C2$^*$) and further provide procedures to establish  identifiability in certain cases when these conditions  fail to hold.

 For a general $Q$-matrix, condition (C1$^*$) implies that each attribute is required by at least three items.
In the next theorem, we show that it is necessary for each attribute to be required by at least two items; in particular,  if some attribute is required by only two items, the  identifiability conclusion would depend on the structure of the $\bq$-vectors of those two items.

\begin{thm}[Discussion of C1$^*$] \label{thm_cond}
Consider a two-parameter $Q$-restricted latent class model. 
\begin{enumerate}
\item[(a)] If some attribute is required by only one item, then the model is not $\pp$-partially identifiable.
\item[(b)] If some attribute is required by only two items, without loss of generality, suppose the first attribute is required by the first two items  and the $Q$-matrix takes the following form
\begin{equation}\label{eq2v}
Q=
\begin{pmatrix}
1 & \vv_1^\top \\
1 & \vv_2^\top \\
\mathbf 0 & Q'
\end{pmatrix}_{J\times K},
\end{equation}
where $Q'$ is a $(J-2)\times (K-1)$ sub-matrix of $Q$ and $\vv_1$, $\vv_2$ are $(K-1)$-dimensional vectors. 
\begin{enumerate}
\item[(b.1)] If $\vv_1= \mz$ or $\vv_2= \mz$, the model   is not $\pp$-partially identifiable.
\item[(b.2)] If $\vv_1\neq \mz$ and $\vv_2\neq \mz$, the model is $\pp$-partially identifiable if the sub-matrix $Q'$ satisfies conditions (C1$^*$) and (C2$^*$), and either (a) or (b) below holds  for $i=1$ and $2$: (a) There exists some $j\geq 3$ such that $\bq_{j,\,2:K}\nsucceq \vv_i$; (b) There does not exist any $j\geq 3$ such that $\bq_{j,\,2:K}\nsucceq \vv_i$, and among the attributes required by $\vv_i$, there exists at least one attribute $k$ that is not required by every item $j\in\{ 3,\ldots,J\}$.
\end{enumerate}
\end{enumerate}
\end{thm}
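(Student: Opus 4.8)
My plan is to handle the three parts by separate mechanisms: parts (a) and (b.1) are non-identifiability claims, which I would establish by constructing explicit observationally equivalent parameters, whereas (b.2) is a positive result requiring a staged identification argument. Throughout I work, without loss of generality, with the conjunctive (DINA) model and the saturated space $\ma=\{0,1\}^K$ of this subsection, the disjunctive case following by duality. For part (a), suppose attribute $k$ is required only by item $j_0$. Since no other item depends on $\alpha_k$, any profile $\aaa\succeq\bq_{j_0}$ and its flip $\aaa^{(0)}$ (the $k$-th coordinate reset to $0$) give identical columns of $\Gamma$ in every row except $j_0$. I would perturb $\theta_{j_0}^+\mapsto\theta_{j_0}^++\epsilon$ and, within each such pair, rescale $\bar p_{\aaa}=c\,p_{\aaa}$ with $c=(\theta_{j_0}^+-\theta_{j_0}^-)/(\theta_{j_0}^+-\theta_{j_0}^-+\epsilon)$, moving the freed mass onto $\aaa^{(0)}$. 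A short check for $r_{j_0}\in\{0,1\}$ (all other item factors being common to the pair) shows each pair's contribution to $P(\RR=\rr)$ and its total mass are preserved, so the whole distribution is unchanged while $\theta_{j_0}^+$ and $\nnu$ both move; hence the model is not $\pp$-partially identifiable.

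For part (b.1), take $\vv_1=\mz$ (the case $\vv_2=\mz$ being symmetric). If in addition $\vv_2\neq\mz$, then on the subpopulation $\{\aaa:\alpha_1=1,\ \aaa_{2:K}\not\succeq\vv_2\}$ item $2$ is incapable irrespective of $\alpha_1$, so flipping $\alpha_1$ alters only item $1$; this is exactly the situation of part (a) and its construction transfers verbatim. If instead $\vv_1=\vv_2=\mz$, items $1$ and $2$ are two redundant pure-$\alpha_1$ measurements, and here I would choose the degenerate proportion vector making $\alpha_1$ independent of $\aaa_{2:K}$. Under it the responses factor as $(R_1,R_2)\perp(R_3,\ldots,R_J)$ and the $(R_1,R_2)$-marginal is a single two-component product-Bernoulli mixture in the five parameters $(\lambda,\theta_1^\pm,\theta_2^\pm)$ constrained only by three free probabilities; the resulting two-dimensional fiber of equivalent $(\theta_1^\pm,\theta_2^\pm,\pp)$ supplies the required twin.

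The substance, and the main obstacle, is the positive result (b.2). Note first that (C1$^*$) genuinely fails for the full design, since attribute $1$ lies in only two items, so Theorem \ref{thm1} cannot be applied directly and a two-stage argument is needed. In the first stage I would pass to the marginal model on items $3,\ldots,J$, whose capabilities depend only on $\aaa_{2:K}$ through $Q'$; because $Q'$ satisfies (C1$^*$) and (C2$^*$), Corollary \ref{cor-dina-main} identifies every $\theta_j^\pm$ with $j\geq3$ together with the $Q'$-grouped proportions of $\aaa_{2:K}$ (marginalized over $\alpha_1$). In the second stage I would exploit that $\vv_i\neq\mz$ creates a reference subpopulation $\{\aaa_{2:K}\not\succeq\vv_i\}$ on which item $i$ is identically incapable: conditions (a)/(b) are precisely what let this subpopulation be resolved through the items $j\geq3$ (by the $S$-differentiable separator mechanism of Definition \ref{def-diff}), yielding a clean read-out of $\theta_i^-$, after which the complementary $\vv_i$-capable subpopulation decomposes the residual mixture to deliver $\theta_i^+$ and the split of each group according to $\alpha_1$.

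The hard step is this second stage: proving that two items, each requiring attribute $1$ on top of a nonempty set $\vv_i$, suffice to separate the $\alpha_1=0$ classes from the $\alpha_1=1$ classes once the $\aaa_{2:K}$-structure is known, and that branches (a) and (b) each deliver a usable separator for $\vv_i$. I expect to carry this out with the marginal-probability calculus underlying Theorem \ref{thm1}: write the identifiability equation $P(\RR=\rr\mid\TT,\pp)=P(\RR=\rr\mid\bar\TT,\bar\pp)$, form marginal-positive probabilities over item subsets containing $\{1\}$, $\{2\}$ or $\{1,2\}$ together with a separator set for $\vv_i$, and reduce to a full-column-rank condition after grouping. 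Branch (a) supplies the separator from a single item $j\geq3$ whose requirement dominates $\vv_i$, while branch (b) assembles one from several items, its extra free-attribute clause ruling out a rank deficiency; at every stage the ordering $\theta_i^+>\theta_i^-$ removes the spurious component-swap solution. Handling $i=1$ and $i=2$ in turn peels off $(\theta_1^\pm,\theta_2^\pm)$ and finally recovers the full $\nnu$.
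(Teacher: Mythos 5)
Your part (a) coincides with the paper's proof: perturb $\theta_{j_0}^+$ and rescale the mass of each capable class by $c=(\theta_{j_0}^+-\theta_{j_0}^-)/(\bar\theta_{j_0}^+-\theta_{j_0}^-)$, dumping the remainder onto its flipped partner, with the two-case check over $r_{j_0}\in\{0,1\}$. Your outline of (b.2) is also the paper's route: first identify $\theta_j^\pm$ for $j\ge 3$ and the $\alpha_1$-merged group masses from the $Q'$-subsystem via Theorem \ref{thm1}, then observe that the quantities $\mathbb P(R_1\ge r_1,R_2\ge r_2,\ba_{2:K}=\aaa_{2:K})$ are identified for every $(r_1,r_2)$ and every group, and finally use row-unions of $Q'$ positioned relative to $\vv_1,\vv_2$ (your ``separator for $\vv_i$'', the paper's type-(a$^*$)/(b$^*$) pairs) to peel off $\theta_1^\pm,\theta_2^\pm$; the computations you defer are exactly the ones the paper carries out, so while (b.2) is only a sketch, the plan is the right one.

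The genuine gap is in (b.1), subcase $\vv_1=\mz$, $\vv_2\neq\mz$: part (a)'s construction does \emph{not} transfer verbatim. The perturbation $\theta_1^+\mapsto\bar\theta_1^+$ is a change to a global item parameter, so it also alters the item-$1$ response of every class $(1,\aaa_{2:K})$ with $\aaa_{2:K}\succeq\vv_2$. Restoring the item-$1$ marginal forces you to rescale those masses as well, but on that subpopulation item $2$ also separates $(0,\aaa_{2:K})$ from $(1,\aaa_{2:K})$, and the four moment equations for $(r_1,r_2)\in\{0,1\}^2$ become over-determined: matching the total mass and the $\{1\}$-marginal forces $\bar\nu_{[1,\aaa_{2:K}]}=c\,\nu_{[1,\aaa_{2:K}]}$, matching the $\{2\}$-marginal then forces $\bar\theta_2^+=\theta_2^-+(\theta_2^+-\theta_2^-)/c$, and the joint $\{1,2\}$-marginal is then violated for any $c\neq 1$ (a numerical check, e.g.\ $\theta_1^\pm=(0.8,0.2)$, $\theta_2^\pm=(0.7,0.3)$, $c=1/2$, confirms the inconsistency). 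Restricting the redistribution to $\{\aaa_{2:K}\nsucceq\vv_2\}$ does not help either, since the change of $\theta_1^+$ is then uncompensated on the complementary classes. The paper's construction for (b.1) is genuinely different: it keeps $\theta_1^+$ fixed, perturbs the \emph{lower} parameter $\bar\theta_1^-$, compensates with a new $\bar\theta_2^+$, and crucially restricts to a true parameter point with $\nu_{[0,\aaa_{2:K}]}=\rho\,\nu_{[1,\aaa_{2:K}]}$ for all $\aaa_{2:K}$, so that a single common rescaling satisfies the (now consistent) system across all groups simultaneously. Your independence construction for the sub-subcase $\vv_1=\vv_2=\mz$ is correct and is precisely the $\vv=\mz$ specialization of this proportional-$\pp$ device, but you need the full version to cover $\vv_2\neq\mz$.
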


{Theorem \ref{thm_cond} characterizes the different situations when condition (C1$^*$) fails to hold for some attribute, 
 and provides sufficient conditions for identifiability when the $Q$-matrix falls in the scenario (B).
In addition, the result in Theorem \ref{thm_cond} can be easily extended to the case where there are multiple attributes that are required by only two items. 
 
The next theorem discusses the necessity of Condition (C2$^*$) and states that
if there exists some basis item that does not have any separator set, then the model parameters are not $\pp$-partially identifiable.  
\begin{thm}[Discussion of C2$^*$]\label{thm-com-necc}
Under the two-parameter $Q$-restricted   models, 
the condition that each basis item $j$ is $\left(\ms\setminus\{j\}\right)$-differentiable, is necessary for the $\pp$-partial identifiability.
\end{thm}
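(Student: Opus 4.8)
The plan is to prove the contrapositive: if some basis item $j_0$ fails to be $(\ms\setminus\{j_0\})$-differentiable, then $(\ttt^+,\ttt^-,\pp)$ is not $\pp$-partially identifiable. By the conjunctive--disjunctive duality already invoked for Corollary \ref{cor-dina-main} it suffices to treat the conjunctive saturated case $\ma=\{0,1\}^K$, where $\mc_{j}=\{\aaa:\aaa\succeq\bq_{j}\}$. I will exhibit two parameter points inducing the same response distribution but differing in $\theta^-_{j_0}$, hence in $\ttt^-$ and in $\nnu$. Since the distribution \eqref{eq-pmf} is equivalent (via inclusion--exclusion) to the collection of all-positive subset marginals $t_S:=\sum_{\aaa}p_{\aaa}\prod_{j\in S}\theta_{j,\aaa}$, it is enough to build an alternative parameter point leaving every $t_S$ unchanged.

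The second step isolates the role of $j_0$. For $T\subseteq\ms\setminus\{j_0\}$ write $\phi_{\aaa}(T)=\prod_{h\in T}\theta_{h,\aaa}$ and split the mass by capability of $j_0$:
\begin{equation*}
u_T=\sum_{\aaa\in\mc_{j_0}}p_{\aaa}\,\phi_{\aaa}(T),\qquad v_T=\sum_{\aaa\notin\mc_{j_0}}p_{\aaa}\,\phi_{\aaa}(T),\qquad t_T=u_T+v_T,
\end{equation*}
so that $t_{\{j_0\}\cup T}=\theta^+_{j_0}u_T+\theta^-_{j_0}v_T$. I will keep every $\theta_{h,\cdot}$ with $h\neq j_0$ and $\theta^+_{j_0}$ fixed, perturbing only $\pp$ and $\theta^-_{j_0}$. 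Matching $t_T$ for all $T\not\ni j_0$ forces $\delta u_T=-\delta v_T$, and matching $t_{\{j_0\}\cup T}$ then forces the incapable moments to transform as $\delta v_T=c\,v_T$ for a single constant $c$ tied to $\delta\theta^-_{j_0}$. Because $\phi_{\aaa}$ depends on $\aaa$ only through the restricted column $\{\Gamma_{h,\aaa}\}_{h\neq j_0}$, and (choosing the base point with $\ttt^{\pm}$ having distinct entries so that distinct restricted columns yield linearly independent $\phi$'s) such a proportion perturbation exists once the following matching condition holds: every restricted column realized by some class outside $\mc_{j_0}$ is also realized by some class inside $\mc_{j_0}$.

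The crux, and where I expect the main difficulty, is to derive this matching condition from non-differentiability. Given $\aaa'\notin\mc_{j_0}$, set $\aaa^{\flat}=\aaa'\vee\bq_{j_0}\in\ma$; then $\aaa^{\flat}\succeq\bq_{j_0}$ gives $\aaa^{\flat}\in\mc_{j_0}$, and $\aaa^{\flat}\succeq\aaa'$ gives $\Gamma_{h,\aaa^{\flat}}\ge\Gamma_{h,\aaa'}$ for all $h$. If $\aaa^{\flat}$ were capable of some extra item $h\neq j_0$ (i.e. $\bq_h\preceq\aaa^{\flat}$ but $\bq_h\not\preceq\aaa'$), then $\bq_h$ would require an attribute lying in $\bq_{j_0}$ yet absent from $\aaa'$; collecting the items below $\aaa'$ into $S^-$ and adjoining $h$ to form $S^+$ would produce $\mz\precneqq\vee_{h\in S^+}\bq_h-\vee_{h\in S^-}\bq_h\preceq\bq_{j_0}$, the witness of \eqref{eq-defdiff-dina} showing $j_0$ is $(\ms\setminus\{j_0\})$-differentiable --- a contradiction. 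Hence $\aaa^{\flat}$ carries the same restricted column as $\aaa'$, giving the matching condition. The delicate point is to choose $S^-$ for an arbitrary family of $\bq$-vectors so that the surviving difference is nonzero and coordinatewise dominated by $\bq_{j_0}$; the basis property of $j_0$ (definition \eqref{eq-defdina}) is what makes this clean, since by minimality of $\bq_{j_0}$ the capable class $\bq_{j_0}$ itself is capable of no other item and thus always matches the all-zero restricted column of $\aaa_0$.

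With the matching condition in hand the construction is explicit and exact. For each restricted column $\pi$ realized outside $\mc_{j_0}$ pick a representative $\kappa(\pi)\in\mc_{j_0}$ realizing $\pi$, let $b_{\pi}$ be the total mass outside $\mc_{j_0}$ carrying $\pi$, and for a small $c>0$ set $\bar p_{\aaa}=(1+c)p_{\aaa}$ for $\aaa\notin\mc_{j_0}$, $\bar p_{\kappa(\pi)}=p_{\kappa(\pi)}-c\,b_{\pi}$, and $\bar p_{\aaa}=p_{\aaa}$ otherwise; this yields $\delta v_T=c\,v_T$, $\delta u_T=-c\,v_T$ and $\sum_{\aaa}\bar p_{\aaa}=1$. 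Taking $\bar\theta^-_{j_0}=\theta^-_{j_0}+c\delta^{*}/(1+c)$ with $\delta^{*}=\theta^+_{j_0}-\theta^-_{j_0}$ makes every $t_{\{j_0\}\cup T}$ invariant, so all $t_S$ --- and hence the whole distribution \eqref{eq-pmf} --- are unchanged. For $c$ small the perturbed quantities lie in $(0,1)$, the proportions stay positive, and $\theta^+_{j_0}>\bar\theta^-_{j_0}$ persists, so the alternative is a legitimate parameter point; yet $\bar\theta^-_{j_0}\neq\theta^-_{j_0}$ and mass is moved between the $\Gamma$-equivalence class of $\kappa(\pi)$ (capable of $j_0$) and classes outside $\mc_{j_0}$, so $\nnu$ changes. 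Thus $(\ttt^+,\ttt^-,\nnu)$ is not identifiable, contradicting $\pp$-partial identifiability; this establishes the necessity, and the disjunctive case follows by duality.
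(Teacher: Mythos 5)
Your proposal takes essentially the same route as the paper's proof: the paper likewise pairs each incapable class $\ww_i$ with $\ww_i\vee\bq_j$, uses the failure of differentiability to show the pair agrees on every item other than $j$, and then exhibits an under-determined linear system in $(\bar\theta^-_j,\bar\nnu)$ (your construction is just an explicit one-parameter family of its solutions, which is arguably a cleaner way to finish). One step of your write-up does need tightening, at exactly the spot you flag as delicate: for an \emph{arbitrary} $\aaa'\notin\mc_{j_0}$, the join $\vee_{h\in S^-}\bq_h$ over the items below $\aaa'$ can be strictly smaller than $\aaa'$, so the difference $\vee_{h\in S^+}\bq_h-\vee_{h\in S^-}\bq_h$ may be nonzero in a coordinate $k$ with $\alpha'_k=1$ but $q_{j_0,k}=0$, and the witness of \eqref{eq-defdiff-dina} fails. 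The fix is to verify the matching condition only on the join-representatives: for each restricted column $\pi$ realized outside $\mc_{j_0}$, take $\ww=\vee_{h\in S}\bq_h$ with $S$ the set of items other than $j_0$ that such classes answer capably; this $\ww$ is itself outside $\mc_{j_0}$, realizes exactly $\pi$, and satisfies $\vee_{h\in S^-}\bq_h=\ww$, so your contradiction argument goes through verbatim and the column-level matching (which is all your perturbation uses) follows. This is precisely the role played by the $\ww_i$'s in the paper's argument.
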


 Furthermore, under the two-parameter $Q$-restricted   models with a separable $\Gamma$-matrix and a saturated latent class space $\ma$, the following theorem shows conditions (C1$^*$) and (C2$^*$) are exactly the minimal requirement for strict identifiability of the model.

\begin{thm}[Result on the Necessary and Sufficient Condition]\label{cor-suffnece}
Under the two-parameter $Q$-restricted   models, if $\ma$ is saturated and $\Gamma$ is separable, then conditions (C1$^*$) and (C2$^*$) are  {necessary and sufficient} for the strict identifiability of $(\ttt^+,\ttt^-,\pp)$.
\end{thm}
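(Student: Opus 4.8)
The plan is to establish the two implications separately: sufficiency is an immediate consequence of the earlier results, whereas necessity carries the technical weight and is argued by contraposition. For \emph{sufficiency} I would invoke Corollary \ref{cor-dina-main}. When $\ma=\{0,1\}^K$ is saturated and $\Gamma$ is separable (Definition \ref{def-gamma-com}), all columns of $\Gamma$ are distinct, so each $\Gamma$-induced equivalence class is a singleton; hence $C=m$ and $\nnu=\pp$, and the positivity hypothesis $\nu_{[\aaa]}>0$ of Corollary \ref{cor-dina-main} holds automatically since $p_{\aaa}\in(0,1)$ for all $\aaa$. Thus (C1$^*$) and (C2$^*$) yield $\pp$-partial identifiability, which for a separable $\Gamma$ is exactly strict identifiability of $(\ttt^+,\ttt^-,\pp)$.

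For \emph{necessity} I would first record a structural lemma: under saturation, separability of $\Gamma$ is equivalent to $Q$ containing an identity submatrix $I_K$, i.e.\ every single-attribute vector $\ee_k$ appearing as a row. This follows from Remark \ref{remark1}, since separability forces each $\aaa\in\{0,1\}^K$ to be its own representative in $\mathcal R^{Q,conj}$, hence to equal the join of the $\bq$-vectors it dominates; taking $\aaa=\ee_k$ makes $\ee_k$ itself a row of $Q$. Given $I_K\subseteq Q$, I would then show that (C1$^*$) is equivalent to each attribute being required by at least three items: the forward direction applies (C1$^*$) to the item $\ee_k$ to produce two disjoint separator sets each containing an item requiring $k$, which together with $\ee_k$ give three; the converse is a direct construction (single-attribute items for one separator, a third item per needed attribute for the other, with a routine check of disjointness and that $j$ is excluded). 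Consequently, if (C1$^*$) fails then some attribute is required by at most two items: if by a single item, Theorem \ref{thm_cond}(a) gives non-identifiability; if by exactly two items, separability forces one of them to be $\ee_k$, so one of $\vv_1,\vv_2$ in \eqref{eq2v} equals $\mz$ and Theorem \ref{thm_cond}(b.1) applies. Since $\Gamma$ is separable, these failures of $\pp$-partial identifiability are failures of strict identifiability.

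For the necessity of (C2$^*$) I may assume (C1$^*$) holds, and by Theorem \ref{thm-com-necc} it suffices to show that if every basis item $j$ is $(\ms\setminus\{j\})$-differentiable, then the expansion in (C2$^*$) terminates with $S_{sep}=\ms$. I would exploit two facts: differentiability in the sense of \eqref{eq-defdiff-dina} is monotone in the separator set (the same witnesses survive as $S$ grows), and once $S_{sep}$ contains a single-attribute item $\ee_{k_0}$ with $q_{j,k_0}=1$ and $\ee_{k_0}\neq j$, item $j$ becomes $S_{sep}$-differentiable via $S^+_j=\{\ee_{k_0}\}$, $S^-_j=\varnothing$; hence the procedure completes once all single-attribute items enter $S_{sep}$. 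I expect \emph{this last step to be the main obstacle}: a priori the closure could deadlock, with single-attribute basis items differentiable only through one another. I would rule this out using the redundancy from (C1$^*$) ($\ge 3$ items per attribute together with $I_K\subseteq Q$): each $\ee_{k_0}$ can be isolated as $\bq_h-\vee_{k'\in S^-}\ee_{k'}$ for a suitable item $h\neq\ee_{k_0}$ requiring $k_0$ and a separator set $S^-$ drawn from items already present, and an induction on the number of single-attribute items already added (ordered so that at each step a witness is available) shows the closure never stalls. Once (C2$^*$) is thereby seen to be equivalent, under these hypotheses, to the condition of Theorem \ref{thm-com-necc}, failure of (C2$^*$) forces some basis item to fail $(\ms\setminus\{j\})$-differentiability, and Theorem \ref{thm-com-necc} delivers non-identifiability, completing the contrapositive.
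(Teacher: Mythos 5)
Your sufficiency argument and the treatment of (C1$^*$) in the necessity direction are sound: under saturation, separability of $\Gamma$ does force $Q$ to contain $I_K$; the reduction of (C1$^*$) to ``each attribute is required by at least three items'' is correct; and the appeal to Theorem \ref{thm_cond}(a) and (b.1) correctly disposes of the case where (C1$^*$) fails. This part is close in spirit to the paper, which also passes through the explicit conditions (C1$'$) and (C2$'$).

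The necessity argument for (C2$^*$), however, has a genuine gap, precisely at the step you flagged as the main obstacle. You claim that, under (C1$^*$) and completeness, (C2$^*$) is equivalent to the condition of Theorem \ref{thm-com-necc} (every basis item $j$ is $(\ms\setminus\{j\})$-differentiable), and you propose to rule out deadlock of the closure using the redundancy supplied by (C1$^*$). This equivalence is false. Take $K=2$ and $Q$ with rows $(1,0)$, $(0,1)$, $(1,1)$, $(1,1)$, $(1,1)$. Here $\ma=\{0,1\}^2$, $\Gamma$ is separable, and (C1$^*$) holds (each attribute is required by four items). The basis items are items $1$ and $2$; item $1$ is $\{2,3\}$-differentiable (via $S^+=\{3\}$, $S^-=\{2\}$) and item $2$ is $\{1,3\}$-differentiable, so the hypothesis of Theorem \ref{thm-com-necc} is satisfied. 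Yet the expansion in (C2$^*$) starts from $S_{non}=\{3,4,5\}$, whose $\bq$-vectors are all $(1,1)$, so no difference of joins over subsets of $S_{sep}$ can equal $(1,0)$ or $(0,1)$: each of items $1$ and $2$ needs the other to enter $S_{sep}$ first, the closure deadlocks, and (C2$^*$) fails. Consequently you cannot derive non-identifiability from Theorem \ref{thm-com-necc} when (C2$^*$) fails, and the induction you sketch has no valid ordering to follow. (The model in this example is indeed non-identifiable, but for a different reason: the two columns of the submatrix $Q'$ lying below $I_2$ coincide.) The paper closes this gap by a different route: it proves that under saturation and completeness, (C1$^*$) and (C2$^*$) are jointly equivalent to the explicit conditions (C1$'$) and (C2$'$) (each attribute in at least three items; distinct columns of $Q'$), and then invokes the known result from the cited DINA identifiability literature that (C1$'$) and (C2$'$) are necessary and sufficient for a complete $Q$. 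The necessity of (C2$'$) rests on an explicit non-identifiability construction when two columns of $Q'$ coincide, which is not subsumed by Theorem \ref{thm-com-necc}. To repair your proof you would need either that external necessity result or a direct construction of non-identifiable parameters whenever the sequential expansion stalls.
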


Under the assumptions of Theorem \ref{cor-suffnece}, conditions (C1$^*$) and (C2$^*$) are equivalent to the following explicit conditions on the structure of the $Q$-matrix: (C1$'$)  Each attribute is required by at least three items;  (C2$'$)   With $Q$ in the form $Q=(I_K^\top, (Q')^\top)^\top$, 
any two different columns of the submatrix $Q'$ are distinct.
Please see the proof of Theorem \ref{cor-suffnece} for details.

\subsection{Applications}\label{sec-2p-exp}

One important implication of the established identifiability theory is the consistent estimability of the model parameters.  
Consider a sample of size $N$ and denote the $i$th subject's multivariate binary responses by $\bo R_i=(R_{i,1},\ldots,R_{i,J})^\top$. 
Assume $\bo R_1,\ldots,\bo R_N$ identically and independently follow the categorical distribution with the probability mass function \eqref{eq-pmf}. 
The likelihood based on the sample can be written as
$
L(\TT,\pp\mid \bo R_1,\ldots,\bo R_N) = \prod_{i=1}^N \mathbb P(\RR=\bo R_i\mid \TT,\pp).
$
We denote the true parameters by $(\TT^0,\pp^0)$ and the maximum likelihood estimators (MLE) by $(\widehat\TT,\widehat\pp)$, which may not be unique. We further define the corresponding parameters $\nnu^0$ and $\widehat\nnu$ as in \eqref{eq-group}.
We have the following conclusion on the estimability of a two-parameter model.
\begin{proposition}\label{prop-con1}
If a two-parameter model is $\pp$-partially identifiable, then   $(\widehat\TT,\widehat\nnu)\to(\TT^0,\nnu^0)$ almost surely as $N\to\infty$. 
In addition, if $\Gamma$-matrix is also separable, then $(\widehat\TT,\widehat\pp)\to(\TT^0,\pp^0)$ almost surely. 
On the other hand,  if $\Gamma$-matrix is inseparable, $\pp$ can not be  consistently estimated. 
 \end{proposition}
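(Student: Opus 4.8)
The plan is to obtain all three conclusions from a Wald-type argmax consistency argument, exploiting that $\RR$ takes values in the finite set $\{0,1\}^J$, so that the likelihood is a smooth function of finitely many cell probabilities. First I would reparameterize. Since any two latent classes in a common equivalence class share the same column of $\TT$, grouping the inner sum of \eqref{eq-pmf} over each $\ma_i$ collapses it to $\mathbb P(\RR=\rr\mid\TT,\pp)=\sum_{i=1}^{C}\nu_{[\aaa_{\ma_i}]}\prod_{j=1}^{J}\theta_{j,\aaa_{\ma_i}}^{r_j}(1-\theta_{j,\aaa_{\ma_i}})^{1-r_j}$, so the response distribution depends on $(\TT,\pp)$ only through the reduced parameter $\boldsymbol\eta:=(\ttt^+,\ttt^-,\nnu)$. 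Let $\Omega$ be the (compact) closure of the reduced parameter space and let $\Phi(\boldsymbol\eta)=\big(\mathbb P(\RR=\rr\mid\boldsymbol\eta)\big)_{\rr\in\{0,1\}^J}$ be the induced continuous map into the probability simplex $\Delta$ on $\{0,1\}^J$. By definition, $\pp$-partial identifiability says precisely that $\Phi$ is injective on the interior of $\Omega$.

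Next I would establish consistency of $\widehat{\boldsymbol\eta}$. Since the model is correctly specified, $P^0:=\Phi(\boldsymbol\eta^0)$ lies in the compact image $\Phi(\Omega)\subseteq\Delta$. With $\widehat P_N(\rr)=N^{-1}\sum_{i=1}^N I(\bo R_i=\rr)$, the MLE $\widehat{\boldsymbol\eta}$ maximizes $\sum_{\rr}\widehat P_N(\rr)\log\mathbb P(\RR=\rr\mid\boldsymbol\eta)$, equivalently it minimizes the Kullback--Leibler divergence $\mathrm{KL}\big(\widehat P_N\,\|\,\Phi(\boldsymbol\eta)\big)$, so $\Phi(\widehat{\boldsymbol\eta})$ is the information projection of $\widehat P_N$ onto $\Phi(\Omega)$. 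The strong law applied to each of the finitely many cells gives $\widehat P_N\to P^0$ almost surely; since $P^0\in\Phi(\Omega)$ and $\Phi(\Omega)$ is closed, the projection converges and $\Phi(\widehat{\boldsymbol\eta})\to P^0$ a.s. Finally, a continuous injection from the compact $\Omega$ into the Hausdorff space $\Delta$ is a homeomorphism onto its image, so $\Phi^{-1}$ is continuous and $\widehat{\boldsymbol\eta}\to\Phi^{-1}(P^0)=\boldsymbol\eta^0$ a.s., which is the first claim $(\widehat\TT,\widehat\nnu)\to(\TT^0,\nnu^0)$ a.s. The MLE need not be unique, but this argument controls every maximizing sequence simultaneously.

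The two remaining assertions follow quickly. When $\Gamma$ is separable we have $C=m$ and every equivalence class is a singleton, so $\nnu=\pp$ and the first claim reads verbatim $(\widehat\TT,\widehat\pp)\to(\TT^0,\pp^0)$ a.s. When $\Gamma$ is inseparable, some equivalence class has at least two members, so redistributing mass within that class produces $\bar\pp^0\neq\pp^0$ with the same grouped proportions $\nnu^0$; Proposition \ref{prop1'} then gives $\mathbb P(\RR=\rr\mid\TT^0,\pp^0)=\mathbb P(\RR=\rr\mid\TT^0,\bar\pp^0)$ for all $\rr$. Since the two data-generating laws coincide, any estimator has the same law under both, so a consistent $\widehat\pp$ would have to converge a.s.\ to both $\pp^0$ and $\bar\pp^0$, which is impossible; hence $\pp$ admits no consistent estimator.

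I expect the main obstacle to be the compactification underlying the argmax step: $\pp$-partial identifiability is stated only on the open parameter space, whereas the homeomorphism conclusion requires $\Phi$ to be injective on all of the compact $\Omega$, including its boundary, and it requires $\boldsymbol\eta^0$ to be the unique preimage of $P^0$. I would close this gap by ruling out degenerate boundary points (where some $\theta^+_j=\theta^-_j$, or some $\nu_{[\aaa]}=0$) as preimages of $P^0$: assuming the true parameters are interior, $P^0$ is strictly positive on $\{0,1\}^J$ and respects the strict order $\theta^{+}_j>\theta^{-}_j$, neither of which a boundary $\boldsymbol\eta$ can reproduce, so $\Phi^{-1}(P^0)\cap\Omega=\{\boldsymbol\eta^0\}$ and the unique, well-separated maximum is restored.
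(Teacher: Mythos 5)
Your proposal is correct and follows the same skeleton as the paper's proof: the strong law applied to the finitely many empirical frequencies, the observation that the MLE's fitted distribution tracks the empirical one, and then identifiability to convert convergence of distributions into convergence of parameters; the inseparable case likewise reduces to Proposition \ref{prop1'} exactly as you argue. The differences are in bookkeeping and in rigor. The paper works with the vector of marginal probabilities $\mathbb P(\RR\succeq\rr)$ via the $T$-matrix rather than the cell probabilities $\mathbb P(\RR=\rr)$ (an invertible linear change, so immaterial), and it passes directly from $\|T(\TT^0)\nnu^0-T(\widehat\TT)\widehat\nnu\|\to 0$ plus injectivity to $(\widehat\TT,\widehat\nnu)\to(\TT^0,\nnu^0)$ without comment. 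You correctly identify that this last step needs continuity of the inverse map, which you supply via compactness and the compact-to-Hausdorff homeomorphism lemma; this is a genuine improvement over the paper's presentation. The one soft spot is your plan for closing the boundary gap: strict positivity of $P^0$ on $\{0,1\}^J$ rules out boundary points with some $\theta_j^\pm\in\{0,1\}$, but it does not by itself exclude boundary strata with $\theta_j^+=\theta_j^-$ or $\nu_{[\aaa]}=0$, since those produce strictly positive cell probabilities too; excluding them requires extending the identifiability argument (e.g., the inequalities of Lemma \ref{lemmaS2} and the rank argument of Lemma \ref{lem-rk}) to comparisons between an interior point and a closure point, which you assert but do not carry out. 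The paper's own proof silently assumes the same thing, so this does not put you behind it, but it is the step you would still need to write down.
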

 \color{black} With the consistency result, we can directly establish the asymptotic normality of  $(\widehat\TT,\widehat\nnu)$ when the  model is $\pp$-partially identifiable, following a standard argument of asymptotic statistics \cite{van2000asymptotic}.

We next apply the newly developed theory to the data examples  introduced in Section \ref{sec-real-q}, and establish the $\pp$-partial identifiability of the two-parameter restricted latent class model under the $Q$-matrices.

For the TOEFL iBT data introduced in Example \ref{exp-toefl}, the two-parameter restricted latent class models associated with the $Q$-matrices corresponding to reading forms A and B, denoted by $Q_A$ and $Q_B$ respectively, are both $\pp$-partially identifiable. Specifically, under the conjunctive DINA model, the $Q_A$ and $Q_B$ specified in Table \ref{tab-toefl} induce 14 and 12 equivalence classes of attribute profiles respectively, for which the sets of  representatives  are 
$\mathcal R^{Q_A} = \{0,1\}^4\setminus\{(0,0,0,1),\allowbreak~(1,0,0,1)\}$ and 
$\mathcal R^{Q_B} = \{0,1\}^4\setminus\{(0,0,0,1),\allowbreak~(1,0,0,1),\allowbreak~(0,1,0,1),\allowbreak~(1,1,0,1)\}$.
The $\mathcal R^{Q_A}$ and $\mathcal R^{Q_B}$ are calculated following the procedure introduced in Remark \ref{remark1}.
It is straightforward to check that for both $Q_A$ and $Q_B$, condition (C1$^*$) holds and there is no basis item, which further implies the satisfaction of  condition (C2$^*$). Therefore Corollary \ref{cor-dina-main} gives the $\pp$-partial identifiability of the two-parameter models associated with both $Q_A$   and $Q_B$.
Furthermore, Proposition \ref{prop-con1} implies the consistent estimability of  $(\ttt^+,\ttt^-,\nnu)$.  
In particular, 
 the proportion parameters of the   equivalence classes $\nnu=(\nu_{[\aaa]},\,\aaa\in\mathcal R^{Q_A})$ can   be consistently estimated, while the proportion parameters of   attribute profiles in a same equivalent class cannot.  
For instance,  under $Q_A$,   attribute patterns $\aaa^\star=(0,0,0,1)$ and $\aaa^{\star\star}=(0,0,0,0)$ share the same equivalent class; so $p_{\aaa^\star}$ and $p_{\aaa^{\star\star}}$ are not estimable, and it is only possible and meaningful to estimate  $\nu_{[\aaa^\star]} = p_{\aaa^\star}+p_{\aaa^{\star\star}}$.

Other than the TOEFL data, our new results in Section \ref{sec-2p-q} also guarantee the $\pp$-partial identifiability of two-parameter models associated with the $Q_{43\times 12}$ for the TIMSS data, and the $Q_{20\times 8}$ for the fraction subtraction data. The details of checking our conditions for $Q_{43\times 12}$ and $Q_{20\times 8}$ are included in Section A of the Supplementary Material.

\section{Identifiability Results for Multi-Parameter   Models}\label{sec-multi-para}

 This section considers multi-parameter restricted latent class models where each item $j$ allows for  more than two item parameters, i.e., $|\{\theta_{j,\aaa}:\aaa\in\ma\}| \geq 2$. 
In a multi-parameter model, those latent classes in $\mc_j$ still have the same level of positive response probability, according to the definition of $\mc_j$ in \eqref{eq-dcj}; however, the classes in $\ma\setminus\mc_j$ can have multiple levels of positive response probabilities, depending on the extents of their ``partial" capability of item $j$.
Examples of multi-parameter models include the Main-Effect and the All-Effect models introduced in Examples \ref{exp-main-eff} and \ref{exp-all-eff}, respectively.

We would like to point out that the $\Gamma$-matrix defined in \eqref{eq-def-gamma} still provides a useful technical tool for studying identifiability of multi-parameter models, despite the fact that the entry $\Gamma_{j,\aaa}$ only indicates whether $\aaa$ belongs to the most-capable-set $\mc_j$ and it does not summarize all the structural assumptions in multi-parameter models.

On the one hand,   similar to  the two-parameter case, under a multi-parameter   model, the  separability  of the $\Gamma$-matrix is  still \textit{necessary} for the {\it strict} identifiability of $(\TT,\pp)$. 
This is   because a two-parameter model,  such as   DINA,   can be viewed as a submodel of a multi-parameter model, such as GDINA or GDM, by constraining  certain parameters in the multi-parameter model to zero. 
So in order to ensure identifiability of all possible model parameters in the parameter space of  a multi-parameter model, Proposition \ref{prop1} implies the $\Gamma$ must be separable.

On the other hand, 
 when the $\Gamma$-matrix is inseparable and contains identical columns, the item parameter vectors associated with different latent classes may still be distinct. 
This is because under the general constraints \eqref{eq-constraints}, when $\Gamma_{j,\aaa}=0$ under a multi-parameter model,   $\aaa$ could be either least capable or partially capable of item $j$, and hence the latent classes in the set $\ma\setminus\mc_j=\{\aaa:\,\Gamma_{j,\aaa}=0\}$ can still have different  positive response probabilities, as shown in Examples \ref{exp-main-eff} and \ref{exp-all-eff}. 
 Such a difference from the two-parameter models makes the $\pp$-partial identifiability theory    developed  in Section \ref{sec-two-para} not applicable to multi-parameter models.
To study   identifiability  of  multi-parameter models when $\Gamma$ is inseparable, we therefore need an alternative partial identifiability notion and technique.  
We use the next example to illustrate this and show how the separable requirement of the $\Gamma$-matrix in Proposition \ref{prop1} could be relaxed under multi-parameter models.

\color{black}
\begin{example}\label{incomplete-eg}
\normalfont{
Consider  the   $Q$-matrix in \eqref{eq-cc}.
Under a two-parameter conjunctive restricted latent class model, we have shown  attribute profiles $\aaa_0 =(0,0)$ and $\aaa_2 =(0,1)$ are  not distinguishable. However, a multi-parameter model   models the main effect of each required attribute for an item. Consider the Main-Effect   model with the identity link function as introduced in Example \ref{exp-main-eff} (the ACDM), 
one has 
$\TT_{\Cdot, \aaa_0}  = (\beta_{1,0},~\beta_{2,0})^\top$ and  
$\TT_{\Cdot, \aaa_2}  = (\beta_{1,0},~\beta_{2,0}+\beta_{2,2})^\top;
$
then $\TT_{\Cdot, \aaa_0} \neq \TT_{\Cdot, \aaa_2}$ as long as $\beta_{2,2}\neq 0$.
When this inequality constraint $\beta_{2,2}\neq 0$ holds, $\TT_{\Cdot, \aaa_0} \neq \TT_{\Cdot, \aaa_2}$
despite that $\Gamma_{\Cdot,\aaa_0} = \Gamma_{\Cdot,\aaa_2}$.
In such scenarios, the grouping operation of the proportion parameters introduced in Section \ref{sec-two-para}
is not appropriate, and one needs to treat these two latent classes $\aaa_0$ and $\aaa_2$ separately. Consider any possible $\TT$ for which the inequality constraint $\beta_{2,2}\neq 0$ does not hold, then all such $\TT$ indeed fall into a subset of the parameter space $\mathcal T$ with smaller dimension than $\mathcal T$, characterized by $\mathcal V=\{(\TT,\pp): \beta_{2,2}= 0\}$. This implies that for almost all valid model parameters $(\TT,\pp)$ in $\mathcal T$, except a Lebesgue measure zero set $\mathcal V$, the $\TT$ satisfy $\TT_{\Cdot, \aaa_0} \neq \TT_{\Cdot, \aaa_1}$. This observation naturally leads to   the following notion of generic identifiability.
}
\end{example}

Motivated by Example \ref{incomplete-eg}, when the $\Gamma$-matrix is inseparable, we shall study the 
\textit{generic identifiability} of the restricted latent class model. 
Let ${\mathcal T}$ denote the restricted parameter space of $(\TT,\pp)$ under the general constraints \eqref{eq-constraints}, and let $d$ denote the number of free parameters in $(\TT,\pp)$, so ${\mathcal T}$ is of full dimension in $\mathbb R^d$. Generic identifiability means that identifiability holds for almost all points except a subset of  $ {\mathcal T}$ that has Lebesgue measure zero.
Generic identifiability is closely related to the concept of algebraic variety in algebraic geometry. 
Following the definition in \cite{allman2009}, an algebraic variety $\mathcal V$ is defined as the simulateneous zero-set of a finite collection of multivariate polynomials $\{f_i\}_{i=1}^n\subseteq \mathbb R[x_1,x_2,\ldots,x_d]$,
$
\mathcal V =\mathcal V(f_1,\ldots,f_n) = \{\boldsymbol x\in\mathbb R^d\mid f_i(\boldsymbol x) = 0,~ 1\leq i\leq n.\}
$
 An algebraic variety ${\mathcal V}$ is all of $\mathbb R^d$ only when all the polynomials defining it are zero polynomials; otherwise, ${\mathcal V}$ is called a \textit{proper subvariety} and is of dimension less than $d$, hence necessarily of Lebesgue measure zero in $\mathbb R^d$. The same argument holds when $\mathbb R^d$ is replaced by the parameter space ${\mathcal T}\subseteq\mathbb R^d$ that has full dimension in $\mathbb R^d$.
We next present the   definition of generic identifiability for restricted latent class models.

\begin{definition}[Generic Identifiability]\label{def-gen}
A restricted latent class model is said to be generically identifiable on the  parameter space ${\mathcal T}$, if  $(\TT,\pp)$ 
are strictly identifiable on $\mathcal T\setminus \mathcal V$ where $\mathcal V$ is a proper algebraic subvariety of $\mathcal T$.
\end{definition}

 Generic identifiability  could be viewed as some ``partial" identification of  model parameters in the sense that, the non-identifiable parameters fall in a subset of the parameter space that can be characterized as solutions to some nonzero polynomial equations.
 As can be seen from the form of \eqref{eq-constraints}, the constraints on the parameter space introduced by the $\Gamma$-matrix already force the parameters fall into a proper algebraic subvariety of the unrestricted parameter space, 
 so previous results established in \cite{allman2009} for unrestricted latent class models do not apply to the models considered in this work. 

\begin{remark}\label{rmk-mult-pid}
Under  multi-parameter models, it is still possible that two latent classes  $\aaa$ and $\aaa'$  always have the same positive response probabilities, i.e.,  $\TT_{\Cdot,\aaa} = \TT_{\Cdot,\aaa'}$ and $\aaa$, $\aaa'$ are not distinguishable even generically. In this case one could have $\pp$-partial identifiability of the model.
However, this happens only when $\Gamma_{\Cdot, \aaa}= \Gamma_{\Cdot, \aaa'} =\mathbf 1$; moreover, under $Q$-restricted models,  
  this happens only if the  $Q$-matrix contains an all-zero column, which is a trivial case with a redundant column in $Q$. 
Under such a   $Q$-matrix,   we can simply remove these all-zero columns and study the (generic) identifiability under the reduced $Q$-matrix. Therefore, without loss of generality, in the following discussion we assume the  $Q$-matrix does not contain any all-zero column such that $\TT_{\Cdot,\aaa} = \TT_{\Cdot,\aaa'}$ would not happen.
\end{remark}
 
Based on the above discussions, to study  identifiability of multi-parameter restricted latent class models, we consider two   situations in Section \ref{sec-gen}: first, when the $\Gamma$-matrix is separable, we study the \textit{strict} identifiability of model parameters; second, when the $\Gamma$-matrix is inseparable, we study the \textit{generic} identifiability of  model parameters. 
Furthermore, in Section \ref{sec-gen-q} we present sufficient conditions for generic identifiability of the   family of $Q$-restricted latent class models, and discuss the necessity of the proposed conditions.

\color{black} 

\subsection{Strict and Generic Identifiability}\label{sec-gen}
First consider the case where the $\Gamma$-matrix is separable.
For a subset of items $S$, denote the corresponding $|S|\times m$ indicator matrix by $\Gamma^S=(\Gamma_{j,\aaa},~j\in S,~\aaa\in\ma)$, which is a submatrix of the previously defined $\Gamma$-matrix. 
We say  $\aaa$ succeeds $\aaa'$ with respect to $S$ and denote it by $\aaa\succeq_{S}\aaa'$, if
$ \Gamma_{j,\aaa}\geq\Gamma_{j,\aaa'}$ for any $j\in S;$
this means $\aaa$ is at least as capable as $\aaa'$ of items in set $S$. 
With this definition, any subset of items $S$ induces a partial order ``$\succeq_{S}$" on the set of latent classes $\ma$.
When two sets $S_1$ and $S_2$  induce the same partial order on $\ma$, that is, for any $\aaa'$ and $\aaa\in\ma$, 
$
\aaa'\succeq_{S_1}\aaa  \text{ if and only if }  \aaa'\succeq_{S_2}\aaa,
$     we write $``\succeq_{S_1}" = ``\succeq_{S_2}"$.
The following theorem gives conditions that lead to strict identifiability of multi-parameter restricted latent class models.
\begin{thm}\label{thm-order}
For a multi-parameter restricted latent class model,
if the $\Gamma$-matrix satisfies the following conditions, then the parameters $(\TT,\pp)$ are strictly identifiable.
\begin{enumerate}
\item[(C3)] There exist two disjoint item sets $S_1$ and $S_2$, such that $\Gamma^{S_i}$ is separable for $i=1,2$ and $``\succeq_{S_1}" = ``\succeq_{S_2}"$.

\item[(C4)] $\Gamma^{(S_1\cup S_2)^c}_{\Cdot,\aaa}\neq \Gamma^{(S_1\cup S_2)^c}_{\Cdot,\aaa'}$ for any $\aaa$, $\aaa'$ such that $\aaa'\succeq_{S_i} \aaa$ for $i=1$ or $2$.
\end{enumerate}
\end{thm}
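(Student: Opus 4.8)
The plan is to turn strict identifiability into the uniqueness of a three-way decomposition. Partition the items into the three blocks $S_1$, $S_2$, and $S_3 := (S_1\cup S_2)^c$, write $\rr = (\rr_{S_1},\rr_{S_2},\rr_{S_3})$, and use the local independence in \eqref{eq-pmf} to factor the response distribution as
$$
P(\RR=\rr\mid\TT,\pp)=\sum_{\aaa\in\ma}p_{\aaa}\,A_{\rr_{S_1},\aaa}\,B_{\rr_{S_2},\aaa}\,C_{\rr_{S_3},\aaa},
$$
where $A$, $B$, $C$ collect the within-block conditional probabilities (e.g. $A_{\rr_{S_1},\aaa}=\prod_{j\in S_1}\theta_{j,\aaa}^{r_j}(1-\theta_{j,\aaa})^{1-r_j}$), their columns indexed by the latent classes in $\ma$. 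Each column of $A$, $B$, $C$ sums to one, so the decomposition is a sum of rank-one tensors with probability-vector factors and any scaling ambiguity is automatically fixed. Recovering the triples $\bigl(A_{\Cdot,\aaa},B_{\Cdot,\aaa},C_{\Cdot,\aaa},p_{\aaa}\bigr)$ up to relabeling immediately returns each $\theta_{j,\aaa}$ (by marginalizing the appropriate block column) and each $p_{\aaa}$, which is exactly strict identifiability.

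First I would show that (C3) forces $A$ and $B$ to have full column rank $m=|\ma|$ at \emph{every} point of the restricted parameter space, not merely generically. Separability of $\Gamma^{S_1}$ guarantees that the ideal response patterns $\{\Gamma^{S_1}_{\Cdot,\aaa}:\aaa\in\ma\}$ are pairwise distinct, so they index $m$ distinct rows of $A$. After the invertible ``all-ones marginal'' change of basis on the rows, the corresponding $m\times m$ submatrix has entries $\prod_{j\in T_{\aaa'}}\theta_{j,\aaa}$ with $T_{\aaa'}=\{j\in S_1:\Gamma_{j,\aaa'}=1\}$; ordering the classes by a linear extension of $\succeq_{S_1}$, its diagonal is a product of strictly largest success probabilities and is positive. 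The submatrix is not literally triangular, because in a multi-parameter model an incapable class still has strictly positive success probability; instead I would rule out any nontrivial linear dependence among the columns by a determinant/vanishing-curve argument that uses the \emph{strict} gap in \eqref{eq-constraints} between the maximal value attained on $\mc_j$ and the smaller values off $\mc_j$. The same argument applied to $S_2$ gives $B$ full column rank, and the matching orders $``\succeq_{S_1}"=``\succeq_{S_2}"$ will make the two factorizations compatible in the next step.

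With $A$ and $B$ of full column rank, I would recover the rank-one terms by a Kruskal-type simultaneous-diagonalization argument using $S_3$ as the third mode. Two classes are separated in the third mode exactly when their block-$S_3$ distributions $C_{\Cdot,\aaa}$ differ; condition (C4) guarantees that their $\Gamma^{S_3}$-columns, and hence these distributions, differ for every pair that is comparable under the common order $\succeq_{S_1}=\succeq_{S_2}$, while every pair that is \emph{incomparable} is already separated inside the full-rank blocks $S_1$ and $S_2$. Combining these two sources of separation shows that the collection of rank-one tensors, and hence the parameters up to a class permutation $\sigma$, is uniquely determined; the distinctness of the columns of $\Gamma$ (equivalently separability of $\Gamma^{S_1}$) together with (C4) then forces $\sigma$ to be the identity when matched against any observationally equivalent $(\bar\TT,\bar\pp)$.

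The main obstacle is the uniform full-column-rank claim of the second step: separability only certifies that the $\Gamma$-columns differ, whereas for multi-parameter models the off-$\mc_j$ success probabilities range over an open set, so one must verify that no coincidental algebraic cancellation lowers the rank anywhere in $\mathcal T$ (a difficulty that does not arise in the two-parameter analysis, where the $\Gamma$-matrix already determines $\TT$). The determinant computation driven by the strict monotonicity of \eqref{eq-constraints} is the device I expect to carry this through. A secondary subtlety is that no single block need separate all class pairs, which is precisely why the theorem asks for two full-rank blocks with identical induced orders \emph{and} a distinguishing condition on the remaining block, rather than a single blanket rank hypothesis; reconciling the partial separations from $S_1$, $S_2$, and $S_3$ into one consistent labeling is the second place where care is needed.
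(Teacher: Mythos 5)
Your overall route is genuinely different from the paper's: you recast strict identifiability as uniqueness of a three-way tensor decomposition and invoke a Kruskal-type argument, whereas the paper never uses Kruskal for this theorem (it reserves that tool for the \emph{generic} identifiability result, Theorem \ref{thm-gen-q}) and instead runs an explicit elimination on the $T$-matrix: Lemma \ref{lem-rk} gives full column rank of $T(\TT_{S_i})$, Lemmas \ref{lem-neq} and \ref{lem-order-a} build row-transformation vectors $\uu_{\aaa},\vv_{\aaa}$ that isolate single latent classes, and the parameters are then recovered one class at a time by induction along the common partial order $\succeq_{S_1}=\succeq_{S_2}$. Your first step is essentially sound and matches Lemma \ref{lem-rk} — in fact you are overcautious there: after subtracting $\theta_{j,\mo}$ from the rows indexed by $S_1$ and ordering the classes lexicographically by their $\Gamma^{S_1}$-columns, the relevant $m\times m$ submatrix \emph{is} literally lower triangular with diagonal entries $\prod_{j:\Gamma_{j,\aaa}=0}(\theta_{j,\aaa}-\theta_{j,\mo})\neq 0$ by the strict gap in \eqref{eq-constraints}, so no vanishing-curve argument is needed and the full-rank claim holds at every point of $\mathcal T$.

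The genuine gap is in your second step. Kruskal's theorem (and equally the simultaneous-diagonalization argument) with two full-column-rank modes still requires the third-mode factor matrix to have Kruskal rank at least $2$, i.e.\ \emph{all} pairs of columns of $C$ must be distinct. Condition (C4) only forces $\Gamma^{(S_1\cup S_2)^c}_{\Cdot,\aaa}\neq\Gamma^{(S_1\cup S_2)^c}_{\Cdot,\aaa'}$ for pairs that are \emph{comparable} under $\succeq_{S_1}$; for an incomparable pair nothing prevents $C_{\Cdot,\aaa}=C_{\Cdot,\aaa'}$. In that case the two corresponding rank-one terms collapse into a single rank-two matrix $p_{\aaa}A_{\Cdot,\aaa}B_{\Cdot,\aaa}^\top+p_{\aaa'}A_{\Cdot,\aaa'}B_{\Cdot,\aaa'}^\top$ tensored with the common third factor, and a rank-two matrix admits a continuum of rank-one decompositions — so the tensor alone does \emph{not} determine the triples, and "the full-rank blocks already separate incomparable pairs" is not a valid substitute for the Kruskal rank condition. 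Ruling out the spurious decompositions requires exploiting the product form of the columns of $A$ and $B$ and the $\Gamma$-induced inequality constraints, which is precisely the content of the paper's Steps 2--3: the hypothesis $``\succeq_{S_1}"=``\succeq_{S_2}"$ is used there to construct, for each $\aaa$, linear combinations of rows from $S_1$ and $S_2$ whose elementwise product vanishes on every class strictly below $\aaa$ while staying nonzero at $\aaa$, and the induction along the order then recovers $\theta_{j,\aaa}$ and $p_{\aaa}$ class by class. As written, your proposal names this reconciliation as "a place where care is needed" but supplies no mechanism for it, and without one the argument does not go through.
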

Condition (C3) implies the entire $\Gamma$-matrix is separable, and it requires two disjoint sets of items $S_1$ and $S_2$  to have enough information to distinguish the latent classes, and it serves as a \textit{Repeated Measurement Condition} for  the  identifiability of multi-parameter restricted latent class models. Condition (C4) states that, for those pairs of latent classes $\aaa$ and $\aaa'$ such that $\aaa$  is  more capable than $\aaa'$ uniformly on either $S_1$ or $S_2$, the remaining items in $(S_1\cup S_2)^c$ should differentiate $\aaa$ and $\aaa'$ by  their column vectors in $\Gamma^{(S_1 \cup S_2)^c}$.

Strict identifiability can be achieved with a relaxation of Condition (C4) together with a stronger version of Condition (C3).
Before presenting this result, 
we define a latent class $\aaa$ as a basis latent class under an item set $S$,  if there does not exist
$\aaa'\in \ma$ such that
$\aaa'\preceq_{S}\aaa$.
Denote the set of all basis latent classes under $S$ by $\mb_{S}$.
Then ``$\succeq_{S_1}" = ``\succeq_{S_2}"$ implies $\mb_{S_1} = \mb_{S_2}$.

\begin{proposition}
\label{new}
Under a multi-parameter restricted latent class model,
if the $\Gamma$-matrix satisfies the following conditions, then $(\TT,\pp)$ are identifiable.
\begin{enumerate}
\item[(C3$^*$)] 
There exist two disjoint item sets $S_1$ and $S_2$, such that $\Gamma^{S_i}$ is separable for $i=1,2$ and $``\succeq_{S_1}" = ``\succeq_{S_2}"$.
Moreover, for any $j\in S_1\cup S_2$, there exists $\aaa\in \mb_{S_1}$ such that $\Gamma_{j,\aaa}=1$.

\item[(C4$^*$)] $\Gamma^{(S_1\cup S_2)^c}_{\Cdot,\aaa}\neq \Gamma^{(S_1\cup S_2)^c}_{\Cdot,\aaa_0}$ for any $\aaa\in \mb_{S_1}$ and $\aaa\neq\aaa_0$, where $\aaa_0$ is the universal least capable class.
\end{enumerate}
\end{proposition}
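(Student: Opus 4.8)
The plan is to follow the architecture of the proof of Theorem~\ref{thm-order} and to modify only the stage in which condition (C4) was used, since (C3$^*$) contains (C3) verbatim and differs from it solely by the extra \emph{covering} clause, while (C4$^*$) is a genuine weakening of (C4). Suppose $(\TT,\pp)$ and $(\bar\TT,\bar\pp)$ induce the same distribution on $\{0,1\}^J$. As in Theorem~\ref{thm-order}, I would treat the response distribution as a mixture of $m$ product-Bernoulli components and analyze it through the three item groups $S_1$, $S_2$ and $(S_1\cup S_2)^c$. Separability of $\Gamma^{S_1}$ and of $\Gamma^{S_2}$ makes the component matrices attached to $S_1$ and to $S_2$ of full column rank $m$, so these two conditionally independent ``views'', together with the third group $(S_1\cup S_2)^c$, supply the repeated-measurement structure that drives the argument, and the matched order $``\succeq_{S_1}"=``\succeq_{S_2}"$ fixes the correspondence between the latent classes of the two parameterizations. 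Everything up to this point is inherited unchanged from Theorem~\ref{thm-order}.

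The substitution occurs in the way the third group pins down the components. Under (C4) one is handed, for every pair $\aaa'\succeq_{S_i}\aaa$, an item in $(S_1\cup S_2)^c$ separating them, and the decomposition is resolved pairwise; under (C4$^*$) this is available only for the pairs $(\aaa,\aaa_0)$ with $\aaa\in\mb_{S_1}$. The idea is to replace the pairwise resolution by an induction along the common order. The base case treats the basis classes $\mb_{S_1}$: by (C4$^*$) each such $\aaa\neq\aaa_0$ is distinguished from $\aaa_0$ on $(S_1\cup S_2)^c$, while the covering clause of (C3$^*$) guarantees that every item of $S_1\cup S_2$ is positively indicated by some class of $\mb_{S_1}$, which anchors each basis class to a marginal of the form ``all-one responses on $T\subseteq S_1\cup S_2$'' that only the basis classes support. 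Having fixed the basis components and the masses they carry, and using $\mb_{S_1}=\mb_{S_2}$ forced by $``\succeq_{S_1}"=``\succeq_{S_2}"$, I would then propagate along the order: every remaining class is comparable to some basis class, so its component is recovered by subtracting the already identified contributions from the relevant marginal identities. With the components and the matched $\pp=\bar\pp$ in hand, for each complement item $j$ the identities $\mathbb P(\text{all-one on }T,\,R_j=1)=\sum_{\aaa}p_{\aaa}\,\theta_{j,\aaa}\prod_{h\in T}\theta_{h,\aaa}$, indexed by $T\subseteq S_1\cup S_2$, form a linear system in $(\theta_{j,\aaa})_{\aaa}$ whose coefficient columns are the distinct, hence linearly independent, product vectors of the classes, so each $\theta_{j,\aaa}$ is determined.

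The main obstacle is precisely the loss incurred in weakening (C4) to (C4$^*$): separating only the basis classes from $\aaa_0$ no longer furnishes the pairwise separators that made the argument of Theorem~\ref{thm-order} routine, and the crux is to show that the covering clause of (C3$^*$) supplies enough additional structure inside $S_1\cup S_2$ for the order-induction to close. The delicate step is to verify that at each level of the induction the coefficient matrix restricted to the not-yet-identified classes still has full column rank; I expect this to follow from the linear independence of distinct product-Bernoulli vectors together with the lattice structure of $``\succeq_{S_1}"$, with the covering clause guaranteeing that the anchoring marginals needed at each level are nondegenerate. Once every $\theta_{j,\aaa}$ and $\pp=\bar\pp$ are recovered, strict identifiability of $(\TT,\pp)$ follows.
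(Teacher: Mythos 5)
Your proposal follows essentially the same route as the paper's proof: Steps 1--2 of Theorem \ref{thm-order} are reused, the basis classes in $\mb_{S_1}$ are pinned down first via the (C4$^*$) separation from $\aaa_0$ on $(S_1\cup S_2)^c$ together with the covering clause of (C3$^*$), and the remaining classes are recovered by induction along the shared partial order $\preceq_{S_1}$. Two execution details to tighten (both handled by the paper's ordering of steps): the linear systems you invoke are invertible because separability of $\Gamma^{S_1}$ makes $T(\TT_{S_1})$ full column rank via a triangularization argument --- distinctness of the product vectors alone does not give linear independence --- and the (C4$^*$)-based contrast on the complement items can only be applied to both parameterizations simultaneously after the complement-item parameters of the basis classes have already been matched, which is why the paper identifies those before touching the $S_1\cup S_2$ parameters.
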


\begin{remark}
Theorem \ref{thm-order} and Proposition \ref{new} show the trade-off between the conditions on the separable submatrices part of $\Gamma$ and   on the remaining part. They establish identifiability  for a wide range of restricted latent class models, with the $\Gamma$-matrix ranging in the spectrum of different extents of inseparability. Specifically, for a  $Q$-restricted latent class model that lacks many single-attribute items,  (C3) is easier to satisfy than (C3$^*$)   and Theorem \ref{thm-order} would be more applicable; while for a $Q$-restricted  model     that   lacks few single-attribute items, Proposition \ref{new} would become more applicable as (C4$^*$) imposes a weaker condition on the set $(S_1\cup S_2)^c$.
\end{remark}

\begin{remark}\label{rmkstrctid}
 Theorem \ref{thm-order} and Proposition \ref{new} extend the existing work \cite{xu2017}. 
 Compared with the identifiability result in \cite{xu2017} that requires two copies of the identity submatrix $I_K$ to be included in the $Q$-matrix, in the special case  with $\ma = \{0,1\}^K$, the   proposed conditions (C3$^*$) and (C4$^*$)   reduce to the conditions in \cite{xu2017}. Furthermore, in general cases of an unsaturated latent class space with $|\ma|<2^K$, the conditions in Theorem \ref{thm-order} and Proposition \ref{new} impose much weaker requirements   than those in \cite{xu2017}, because a $Q$-matrix lacking some single-attribute items may suffice for a separable $\Gamma$-matrix and further suffice for strict identifiability.
 \end{remark}
 
Next, we consider the case where the multi-parameter restricted latent class model is associated with an inseparable $\Gamma$-matrix, which violates condition (C3). We study the generic identifiability of the model parameters.

\begin{thm}\label{thm-order-gen}
Consider a multi-parameter restricted latent class model. If there exist two disjoint item sets $S_1$ and $S_2$, such that altering some entries of zero to one in $\Gamma^{S_1\cup S_2}$ can yield a $\widetilde\Gamma^{S_1\cup S_2}$ that satisfies Condition (C3); and that the $\Gamma^{(S_1\cup S_2)^c}$ satisfies condition (C4),
then the model parameters $(\TT,\pp)$ under the original $\Gamma$-matrix are generically identifiable.
\end{thm}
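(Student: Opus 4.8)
The plan is to lift the strict identifiability of Theorem~\ref{thm-order}, applied to the flipped indicator matrix $\widetilde\Gamma$, to generic identifiability of the original $\Gamma$-model through a polynomial non-vanishing argument. I first note that flipping zeros to ones only touches rows in $S_1\cup S_2$, so $\widetilde\Gamma^{(S_1\cup S_2)^c}=\Gamma^{(S_1\cup S_2)^c}$; the hypotheses therefore say exactly that $\widetilde\Gamma$ itself obeys both (C3) and (C4). By Theorem~\ref{thm-order}, the multi-parameter model whose indicator matrix is $\widetilde\Gamma$ is strictly identifiable on its parameter space $\widetilde{\mathcal T}$.

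Next I would distill from the proof of Theorem~\ref{thm-order} an explicit algebraic certificate of identifiability. That argument recovers $(\TT,\pp)$ by forming matrices $M_{S_1},M_{S_2}$ whose columns are indexed by the latent classes in $\ma$ and whose full column rank --- guaranteed by the separability of each $\widetilde\Gamma^{S_i}$ together with the common order $\succeq_{S_1}=\succeq_{S_2}$ demanded by (C3) --- combines with the (C4)-distinctness on $(S_1\cup S_2)^c$ to pin down all parameters uniquely. Each of these rank and distinctness requirements is the non-vanishing of a fixed determinantal minor or column-difference; let $f$ be the product of all of them. Since every entry of $M_{S_1},M_{S_2}$ and of the relevant difference vectors is a polynomial in the response probabilities $\theta_{j,\aaa}$, $f$ is a single polynomial in the free parameters, and it is given by the same formula for the $\Gamma$-model and the $\widetilde\Gamma$-model; the two models differ only in whether the flipped entries are free partial-capability parameters or are frozen at the top level $\theta_j^+$.

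I would then argue that $f\neq 0$ certifies identifiability of the original $\Gamma$-model. Under the multi-parameter model with the inseparable $\Gamma$, a flipped entry $\Gamma_{j,\aaa}=0\mapsto\widetilde\Gamma_{j,\aaa}=1$ marks a latent class $\aaa$ that is merely \emph{partially} capable of item $j$; by \eqref{eq-constraints} and the Main-Effect/All-Effect parameterizations of Examples~\ref{exp-main-eff}--\ref{exp-all-eff}, the value $\theta_{j,\aaa}$ departs from the base level $\theta_j^-$ off the zero set of a single coefficient (such as some $\beta_{j,S}$). Hence, away from a measure-zero set, the columns of $M_{S_1},M_{S_2}$ formed from the true $\theta$-values inherit precisely the distinctness that separability of $\widetilde\Gamma^{S_i}$ supplies, so the recovery scheme of Theorem~\ref{thm-order} runs verbatim whenever $f(\TT,\pp)\neq 0$. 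It then remains only to check $f\not\equiv 0$ on the full-dimensional space $\mathcal T$. For this I would specialize $f$ by sending each flipped $\theta_{j,\aaa}$ to $\theta_j^+$: this substitution (a point on the closure of, though not inside, $\mathcal T$, harmless because it is used solely to test whether $f$ is the zero polynomial) turns $f$ into the corresponding certificate of the $\widetilde\Gamma$-model, which is nonzero because that model is strictly identifiable by the first step. A polynomial whose restriction to a slice is not identically zero cannot be the zero polynomial, so $f\not\equiv 0$; its zero set $\mathcal V=\{f=0\}$ is thus a proper algebraic subvariety of $\mathcal T$, and $(\TT,\pp)$ is strictly identifiable on $\mathcal T\setminus\mathcal V$, i.e.\ generically identifiable in the sense of Definition~\ref{def-gen}.

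The main obstacle I anticipate lies in the second step: faithfully extracting a single polynomial $f$ from the proof of Theorem~\ref{thm-order} whose non-vanishing is truly sufficient for identifiability, and confirming that this very polynomial specializes, on the limiting slice, to the certificate of the $\widetilde\Gamma$-model. In particular one must verify that the combinatorial content of (C3)--(C4) --- separability of each $\Gamma^{S_i}$ and the shared partial order --- converts into rank conditions that stay nondegenerate as the frozen top-level entries $\theta_j^+$ are relaxed to nearby partial-capability values; tracking the order $\succeq_{S_1}=\succeq_{S_2}$ and the (C4)-distinctness at the level of the actual $\theta$-values rather than their $0/1$ indicators is the delicate bookkeeping that makes the transfer from the slice into $\mathcal T$ rigorous.
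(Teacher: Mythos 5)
Your proposal is correct and follows essentially the same route as the paper: the paper likewise observes that the full-column-rank conditions on $T(\TT_{S_1})$ and $T(\TT_{S_2})$ are non-vanishing conditions on determinantal minors (polynomials in the parameters), uses the constrained slice corresponding to $\widetilde\Gamma$ as the witness point showing these polynomials are not identically zero, and then combines the resulting generic full-rank property with (C4) to rerun the argument of Theorem \ref{thm-order} off a proper subvariety. Your extra care about the witness slice lying in the closure of $\mathcal T$ rather than in $\mathcal T$ itself is a valid (and slightly more precise) reading of the same step.
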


  Theorem \ref{thm-order-gen} is established based on the theoretical development of Theorem  \ref{thm-order}. By relaxing the condition (C3) and allowing $\Gamma$ to be inseparable, we may not have strict identifiability, as discussed in Example  \ref{incomplete-eg}. We   use the following example to further illustrate the  results of Theorems  \ref{thm-order}--\ref{thm-order-gen}.

\begin{example}\label{exp-multi-hcdm}

\normalfont{
For  a multi-parameter restricted latent class model, if  $\Gamma = ((\Gamma^{sub})^\top, \, (\Gamma^{sub})^\top,\, (\Gamma^{sub})^\top)^\top$ contains three copies of the following $\Gamma^{sub}$, then  (C3) and (C4) are satisfied and $(\TT,\pp)$ under $\Gamma$ are strictly identifiable.  
$$
\Gamma^{sub}=
\begin{pmatrix}
0 &  1 & 1 & 1 \\
0 &  0 & 1 & 1 \\
0 &  0 & 0 & 1 \\
\end{pmatrix};\quad
\Gamma^{S_1}=
\begin{pmatrix}
0 &  \mathbf 0 & 1 & 1 \\
0 &  0 & 1 & 1 \\
0 &  0 & 0 & 1 \\
\end{pmatrix},\quad
\Gamma^{S_2}=
\begin{pmatrix}
0 &  1 & 1 & 1 \\
0 &  0 & \mathbf 0 & 1 \\
0 &  0 & 0 & 1 \\
\end{pmatrix}.
$$
Instead, consider $\Gamma_{\text{new}}=((\Gamma^{S_1})^\top, \, (\Gamma^{S_2})^\top,\, (\Gamma^{sub})^\top)^\top$ with two submatrices in the forms of $\Gamma^{S_1}$ and $\Gamma^{S_2}$ above, then neither of $\Gamma^{S_i}$ is separable. But by changing the $(1,2)$th entry of $\Gamma^{S_1}$ and $(2,3)$th entry of $\Gamma^{S_2}$ from zero to one, the resulting $\widetilde \Gamma^{S_1}$ and $\widetilde \Gamma^{S_2}$ are separable, so the conditions of Theorem \ref{thm-order-gen} are satisfied and   $(\TT,\pp)$ under $\Gamma_{\text{new}}$ are generically identifiable.
} 
\end{example}

\subsection{Results for $Q$-Restricted Latent Class Models}\label{sec-gen-q}
In this subsection we characterize how the $Q$-matrix    impacts the    identifiability of multi-parameter  models.
Similarly to Section \ref{sec-2p-q}, we consider the case $\ma=\{0,1\}^K$.
For strict identifiability, the result of either Theorem \ref{thm-order} or Proposition \ref{new} implies the result  of Theorem 1  in \cite{xu2017}, as discussed in Remark \ref{rmkstrctid}.
 Our next result gives a flexible structural condition on  $Q$ that leads to generic identifiability. 
\begin{thm}
\label{thm-gen-q}
Under a multi-parameter $Q$-restricted latent class model, 
if the $Q$-matrix satisfies the following conditions, then the model parameters are generically identifiable, up to label swapping among those latent classes that have identical column vectors in $\Gamma$.

\begin{enumerate}
\item[(C5)] $Q$ contains two $K\times K$ sub-matrices $Q_1$, $Q_2$, such that for $i=1, 2$, 
\begin{equation}\label{eq-diag}
Q = \begin{pmatrix}
Q_1 \\
Q_2 \\
Q'
\end{pmatrix}_{J\times K};\quad
Q_i =
\begin{pmatrix}
    1 & * & \dots  & * \\
    * & 1 & \dots  & * \\
    \vdots & \vdots & \ddots & \vdots \\
    * & * & \dots  & 1
\end{pmatrix}_{K\times K},\quad i=1,2,
\end{equation}
where each `$*$'   can be either zero or one.
\item[(C6)] 
With the $Q$-matrix taking the form of \eqref{eq-diag}, in the submatrix $Q'$ each attribute is required by at least one item.
\end{enumerate}
\end{thm}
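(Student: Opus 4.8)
The plan is to establish generic identifiability through the three-way tensor (CP/PARAFAC) decomposition framework of \cite{allman2009}, generalizing the argument of \cite{xu2017} from exact identity submatrices to the generalized-identity blocks permitted by (C5). Writing the joint distribution \eqref{eq-pmf} as a mixture over the $2^K$ latent classes in $\ma=\{0,1\}^K$, I would split the items into three disjoint groups according to the block form \eqref{eq-diag}: group $G_1$ consisting of the rows of $Q_1$, group $G_2$ of the rows of $Q_2$, and group $G_3$ of the remaining rows $Q'$. Each group $G_t$ yields a conditional-probability matrix $M_t$ whose columns are indexed by the latent classes $\aaa\in\{0,1\}^K$ and whose rows are indexed by the response patterns on $G_t$, so that $\mathbb P(\rr_1,\rr_2,\rr_3) = \sum_{\aaa} p_{\aaa}\, M_1[\rr_1,\aaa]\,M_2[\rr_2,\aaa]\,M_3[\rr_3,\aaa]$ is a CP decomposition of a three-way array with factors $M_1,M_2,M_3$ and weights $\pp$. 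By Kruskal's theorem, as applied in \cite{allman2009}, this decomposition is unique up to relabeling provided $\mathrm{rank}_K(M_1)+\mathrm{rank}_K(M_2)+\mathrm{rank}_K(M_3)\ge 2\cdot 2^K+2$, where $\mathrm{rank}_K$ denotes Kruskal rank.

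The key steps are to control these three ranks generically. First I would show $M_1$ and $M_2$ each attain full column rank $2^K$ off a proper subvariety. Since $\det M_t$ is a polynomial in the item parameters and a nonzero polynomial vanishes only on a Lebesgue-null set whose intersection with the full-dimensional space $\mathcal T$ is again null, it suffices to exhibit one point where $\det M_t\neq0$; crucially this evaluation point need not lie in $\mathcal T$. Here the diagonal ones of $Q_1$ are essential: for each item $j$ in $G_1$, attribute $j$ lies in its required set $\mathcal K_{\bq_j}$, so $\beta_{j,\{j\}}$ is a free coefficient, and setting every other coefficient of item $j$ to zero makes $\theta_{j,\aaa}$ depend on $\alpha_j$ alone, whereupon $M_1$ degenerates to a tensor product $\bigotimes_{j} B_j$ of $2\times2$ blocks with $\det B_j=\beta_{j,\{j\}}\neq0$, so $\det M_1\neq0$; the same holds for $M_2$ via $Q_2$. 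Second, I would use (C6) to show $\mathrm{rank}_K(M_3)\ge2$ generically: for any two classes $\aaa\neq\aaa'$ differing in some attribute $k$, condition (C6) supplies an item $j\in G_3$ with $q_{j,k}=1$, and in the difference $\theta_{j,\aaa'}-\theta_{j,\aaa}$ the coefficient of $\beta_{j,\{k\}}$ equals $\alpha'_k-\alpha_k=\pm1\neq0$, making this difference a nonzero polynomial; hence generically all columns of $M_3$ are distinct, and since each column sums to one, no two are proportional.

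Combining these, generically $\mathrm{rank}_K(M_1)+\mathrm{rank}_K(M_2)+\mathrm{rank}_K(M_3)\ge 2^K+2^K+2$, so Kruskal's theorem gives uniqueness of the decomposition up to a permutation of the $2^K$ components. The final step converts this permutation ambiguity into the stated conclusion. Because the $\Gamma$-matrix is determined by $Q$ and records exactly which classes are most capable of each item, each recovered component can be matched to its latent class by reading off, from its parameter column, the set of items on which it attains the column-wise maximum response probability; this recovers the $\Gamma$-column of that class and pins down its label uniquely except among classes sharing an identical column of $\Gamma$, which is precisely the asserted label-swapping indeterminacy, the degenerate case $\TT_{\Cdot,\aaa}=\TT_{\Cdot,\aaa'}$ having been excluded in Remark \ref{rmk-mult-pid}. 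Taking the union of the finitely many proper subvarieties arising above yields a single proper subvariety off which all conclusions hold, giving generic identifiability on $\mathcal T$ in the sense of Definition \ref{def-gen}.

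I expect the main obstacle to be the full-rank analysis of $M_1$ and $M_2$ under the generalized-identity structure: unlike the exact identity submatrices of \cite{xu2017}, the off-diagonal ones of $Q_1,Q_2$ obstruct a direct tensor factorization at a generic parameter, so the crux is the observation that the non-vanishing of $\det M_t$ may be verified at a conveniently degenerate point outside $\mathcal T$ at which the diagonal ones alone restore the tensor structure. A secondary point requiring care is that the Kruskal bound $2\cdot 2^K+2$ is met only with equality when the third group $G_3$ may be small, so that $\mathrm{rank}_K(M_3)\ge2$ is exactly what (C6) must deliver and no slack is available.
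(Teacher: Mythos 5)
Your proposal is correct and follows essentially the same route as the paper: both reduce the problem to Kruskal's theorem on the three-way array formed by the item groups $Q_1$, $Q_2$, $Q'$, establish generic full column rank of the first two factors by evaluating the relevant determinant at the degenerate parameter point where only the diagonal main-effect coefficients $\beta_{j,\{j\}}$ are nonzero (the paper reduces to the identity-$Q$ $T$-matrix and a triangular argument, you to a Kronecker product of $2\times2$ blocks — the same computation), use (C6) to get Kruskal rank at least $2$ for the third factor, and resolve the residual permutation ambiguity via the $\Gamma$-columns. Your explicit remark that the evaluation point need not lie in $\mathcal T$ is a slightly cleaner justification of the genericity step than the paper's phrasing, but the substance is identical.
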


The above identifiability result  does not require the $Q$ to contain an identity submatrix $I_K$ and provides a flexible new condition for generic identifiability that are satisfied by various $Q$-matrix structures; see   examples in Section \ref{sec-gen-app}. 
Under a multi-parameter restricted latent class model with all entries of the $Q$-matrix being ones,   conditions (C5) and (C6) in Theorem \ref{thm-gen-q} equivalently reduce to $J\geq 2K+1$, which is consistent with the result in \cite{allman2009} for unrestricted latent class models.

Next we discuss the necessity of the proposed sufficient conditions for generic identifiability.
Conditions (C5) and (C6)  imply that each attribute is required by at least three items.  
The next theorem  shows that it is necessary for each attribute to be required by at least two items. 

\begin{thm}\label{prop-vio-gen}
Consider a multi-parameter $Q$-restricted latent class model.
\begin{itemize}
\item[(a)]
If some attribute is required by only one item, then the model is not generically identifiable. 

\item[(b)]
If some attribute is required by only two items, without loss of generality assume $Q$ takes the following form 
\begin{equation}\label{eq-gen-v1v2}
Q = 
\begin{pmatrix}
1 & \vv_1^\top \\
1 & \vv_2^\top \\
0 & Q'
\end{pmatrix},
\end{equation}
then as long as $\vv_1\vee\vv_2\neq \mo_{K-1}$ and the sub-matrix $Q'$ satisfies conditions (C5) and (C6), then the model parameters $(\TT,\pp)$ are generically identifiable, up to label swapping among those latent classes that have identical column vectors in $\Gamma$.

\end{itemize}

\end{thm}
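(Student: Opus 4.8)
The two parts call for opposite strategies, and I would combine them in one proof. For part (a), I would exhibit non-identifiability that persists on an open set of parameters. Assume without loss of generality that attribute $K$ is required only by item $1$, so that $\theta_{j,\aaa}$ is independent of $\alpha_K$ for every $j\geq 2$. The first step is to show that $\mathbb P(\RR=\rr\mid\TT,\pp)$ factors through the ``collapsing'' map that merges each pair of classes $(\aaa_{-K},0)$ and $(\aaa_{-K},1)$: by local independence, conditioning on $\aaa_{-K}$ makes $R_1$ independent of $(R_2,\ldots,R_J)$ because only item $1$ sees $\alpha_K$, so the distribution depends on the parameters solely through the collapsed proportions $\widetilde p_{\aaa_{-K}}=p_{(\aaa_{-K},0)}+p_{(\aaa_{-K},1)}$ and the mixed item-$1$ probabilities $\widetilde\theta_{1,\aaa_{-K}}=(p_{(\aaa_{-K},0)}\theta_{1,(\aaa_{-K},0)}+p_{(\aaa_{-K},1)}\theta_{1,(\aaa_{-K},1)})/\widetilde p_{\aaa_{-K}}$. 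The second step is a degrees-of-freedom comparison: the full parameterization carries strictly more free coordinates than its collapsed image (the split of each collapsed proportion between $\alpha_K=0$ and $\alpha_K=1$, together with the attribute-$K$ coefficients of item $1$), so the fibers of the collapsing map are positive-dimensional on an open subset of $\mathcal T$. Concretely one re-splits the collapsed proportions and simultaneously perturbs the attribute-$K$ coefficients of item $1$ so that every $\widetilde\theta_{1,\aaa_{-K}}$ is preserved, producing a genuinely different $(\bar\TT,\bar\pp)$ with the same distribution. Hence the non-identifiable set is not contained in any proper subvariety, and the model is not generically identifiable.

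For part (b), the plan is to reduce to Theorem \ref{thm-order-gen} for the well-measured attributes and to treat attribute $1$ separately. Because $Q'$ satisfies (C5) and (C6), its rows contain two disjoint $(K-1)\times(K-1)$ unit-diagonal submatrices, supported on item sets $A_1,A_2\subseteq\{3,\ldots,J\}$, and each of the attributes $2,\ldots,K$ is required by a further item of $Q'$; after altering the off-diagonal ``$*$'' entries from zero to one, $A_1$ and $A_2$ each yield pure coordinate indicators for attributes $2,\ldots,K$, so they separate the $\aaa_{-1}$-part and induce the common coordinatewise order there. I would then place item $1$ in $S_1=A_1\cup\{1\}$ and item $2$ in $S_2=A_2\cup\{2\}$, and flip the zero entries of their rows so that each becomes the indicator of $\{\alpha_1=1\}$; this makes $\widetilde\Gamma^{S_1}$ and $\widetilde\Gamma^{S_2}$ separable with matching partial orders, verifying the altered-(C3) hypothesis of Theorem \ref{thm-order-gen}. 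The residual ambiguity lies exactly on attribute $1$: the complement $(S_1\cup S_2)^c$ consists of items not requiring attribute $1$, so it cannot distinguish $(1,\aaa_{-1})$ from $(0,\aaa_{-1})$, and (C4) can hold only up to such swaps.

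The main obstacle is therefore the separation on attribute $1$, seen by only two items, so the proof cannot be a black-box application of Theorem \ref{thm-order-gen}: a pair $(1,\aaa_{-1})$, $(0,\aaa_{-1})$ has distinct original $\Gamma$-columns precisely when $\aaa_{-1}\succeq\vv_1$ or $\aaa_{-1}\succeq\vv_2$, and for those pairs the proportions must be fully identified, whereas the genuinely inseparable pairs are exactly the label-swap classes allowed in the statement. I would argue this finer claim directly, in the spirit of the proof of Theorem \ref{thm-order}: once the attribute-$(2,\ldots,K)$ parameters and the collapsed-on-attribute-$1$ proportions are pinned down, the joint responses to items $1$ and $2$ give, within each $\aaa_{-1}$-block, a two-item two-state mixture whose parameters are tied across blocks through the GDINA structure, and this shared structure generically de-mixes the split on attribute $1$ for the distinguishable blocks. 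The hypothesis $\vv_1\vee\vv_2\neq\mo_{K-1}$ enters here: it rules out the degenerate configuration in which items $1$ and $2$ jointly require every remaining attribute, so that their attribute-$1$ information would collapse onto the single all-ones class and the de-mixing would fail. I expect the bookkeeping of which entries to flip, together with the verification that the two items' coupled partial-response probabilities are generically of full rank across blocks, to be the technically demanding step, with the requisite coefficient non-vanishing holding off a measure-zero set and thus absorbed into the ``generic'' qualifier.
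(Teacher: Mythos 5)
Your proposal is correct and follows essentially the same route as the paper: part (a) is exactly the paper's construction (collapse on the isolated attribute, then re-split the merged proportions while perturbing item 1's parameters so the mixed item-1 probabilities are preserved), and part (b) uses the same two-stage argument — first identify the parameters and collapsed proportions attached to $Q'$ generically, then de-mix the attribute-1 split from the joint $(R_1,R_2)$ moments by comparing blocks that share item-1 and item-2 parameters but generically have distinct mixing ratios, which is precisely where $\vv_1\vee\vv_2\neq\mo_{K-1}$ is used. The only cosmetic difference is that the paper runs the first stage by applying Theorem \ref{thm-gen-q} to $Q'$ on items $3,\ldots,J$ alone rather than adjoining items 1 and 2 to the separable sets, but the substantive cross-block mixture analysis you describe matches the paper's system \eqref{eq-allv}.
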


\begin{remark}\label{rmk-after-thm8}
As a notion of partial identification of model parameters, generic identifiability does not imply strict identifiability. For instance, if the $Q$-matrix is in the form of \eqref{eq-gen-v1v2} and $\vv_i=\mz$ for $i=1$ and $2$, then the model is not strictly identifiable, but generic identifiability can still hold as stated in Theorem \ref{prop-vio-gen}. This is also an analogue to the situations discussed in Theorem \ref{thm_cond} for two-parameter restricted latent class models.
Based on Theorems \ref{thm-gen-q} and \ref{prop-vio-gen},  
we would recommend  practitioners in diagnostic test designs to ensure each attribute is measured by {at least three items}.
\end{remark}

\subsection{Applications}\label{sec-gen-app}
Similar to the discussion in Section \ref{sec-2p-exp}, 
our results of generic identifiability also lead to the estimability of the model parameters. 
\begin{proposition}\label{prop-con2}
Suppose a restricted latent class model is generically identifiable on the parameter space $\mathcal T$ with a measure-zero non-identifiable set $\mathcal V$. If the true parameters $(\TT^0,\pp^0)\in\mathcal T\setminus\mathcal V$, then $(\widehat\TT,\widehat\pp)\to(\TT^0,\pp^0)$ almost surely as $N\to\infty$.
\end{proposition}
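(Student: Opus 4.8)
The plan is to follow the classical Wald-type consistency argument for maximum likelihood estimators, specialized to the present finite multinomial setting, mirroring the proof of Proposition \ref{prop-con1}. First I would observe that each response pattern $\RR$ takes values in the finite set $\{0,1\}^J$, so the sampling model is a multinomial on its $2^J$ cells with cell probabilities $\pi_{\rr}(\TT,\pp):=\mathbb P(\RR=\rr\mid\TT,\pp)$ given by \eqref{eq-pmf}. By the strong law of large numbers the empirical cell frequencies $\widehat\pi^{(N)}_{\rr}=N^{-1}\sum_{i=1}^N I(\RR_i=\rr)$ converge almost surely to $\pi_{\rr}(\TT^0,\pp^0)$ for every $\rr$; moreover, since the proportions and item parameters lie strictly inside their ranges, every $\pi_{\rr}(\TT^0,\pp^0)$ is strictly positive. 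The normalized log-likelihood is $N^{-1}\log L(\TT,\pp\mid\RR_1,\dots,\RR_N)=\sum_{\rr}\widehat\pi^{(N)}_{\rr}\log\pi_{\rr}(\TT,\pp)$, a finite sum of continuous functions of $(\TT,\pp)$ with coefficients converging almost surely.

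Next I would introduce the population criterion $\ell(\TT,\pp):=\sum_{\rr}\pi_{\rr}(\TT^0,\pp^0)\log\pi_{\rr}(\TT,\pp)$. Because $N^{-1}\log L$ differs from $\ell$ only through the coefficients $\widehat\pi^{(N)}_{\rr}\to\pi_{\rr}(\TT^0,\pp^0)$, the convergence upgrades to \emph{uniform} almost sure convergence on every compact subset of $\mathcal T$ on which the cell probabilities are bounded away from zero. The information inequality (nonnegativity of the Kullback--Leibler divergence) then gives $\ell(\TT,\pp)\le\ell(\TT^0,\pp^0)$, with equality exactly when $\pi_{\rr}(\TT,\pp)=\pi_{\rr}(\TT^0,\pp^0)$ for all $\rr$, i.e. when $(\TT,\pp)$ and $(\TT^0,\pp^0)$ induce the same response distribution. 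This is where identifiability enters: since $(\TT^0,\pp^0)\in\mathcal T\setminus\mathcal V$, Definition \ref{def-gen} guarantees that $(\TT^0,\pp^0)$ is strictly identifiable, so no $(\TT,\pp)\neq(\TT^0,\pp^0)$ in $\mathcal T$ can match its distribution. Hence $(\TT^0,\pp^0)$ is the \emph{unique} maximizer of $\ell$ over $\mathcal T$.

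Finally I would invoke the standard argmax/M-estimation step (see, e.g., \cite{van2000asymptotic}): uniform almost sure convergence of the objective, continuity of $\ell$, and a unique, well-separated maximizer together force any sequence of maximizers $(\widehat\TT,\widehat\pp)$ of $N^{-1}\log L$ to converge almost surely to $(\TT^0,\pp^0)$. The remaining care is that $\mathcal T$ is not compact, its defining constraints ($p_{\aaa}\in(0,1)$, $\theta^+_j>\theta^-_j$, and item parameters strictly between $0$ and $1$) being open. I would handle this by confining the estimator: since every $\pi_{\rr}(\TT^0,\pp^0)$ is positive, for large $N$ the empirical criterion tends to $-\infty$ as any $\pi_{\rr}(\TT,\pp)\to0$, so the MLE is eventually bounded away from the boundary and lives in a fixed compact set on which the above uniform convergence applies, while the interior point $(\TT^0,\pp^0)$ remains the unique maximizer there.

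The main obstacle is establishing uniqueness of the population maximizer, which is precisely the payoff of excluding $\mathcal V$: without the hypothesis $(\TT^0,\pp^0)\notin\mathcal V$ the level set $\{\ell=\ell(\TT^0,\pp^0)\}$ could contain the positive-dimensional non-identifiable fiber, and the argmax step would deliver convergence only to that set rather than to the point $(\TT^0,\pp^0)$. When the underlying generic-identifiability theorem determines the parameters only up to label swapping among latent classes with identical columns of $\Gamma$, the conclusion is read modulo that relabeling, which is absorbed into the definition of $\mathcal V$.
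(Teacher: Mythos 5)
Your argument is correct, but it takes a genuinely different route from the paper's. The paper proves Proposition \ref{prop-con2} (jointly with Proposition \ref{prop-con1}) by a moment-matching argument built on the $T$-matrix machinery: it forms the empirical vector $\boldsymbol\gamma$ of marginal tail frequencies $N^{-1}\sum_i I(\RR_i\succeq\rr)$, applies the strong law of large numbers to get $\boldsymbol\gamma\to T(\TT^0)\pp^0$, notes that the MLE asymptotically matches these moments so that $\|T(\TT^0)\pp^0-T(\widehat\TT)\widehat\pp\|\to0$, and then invokes strict identifiability of $(\TT^0,\pp^0)\in\mathcal T\setminus\mathcal V$ (injectivity of $(\TT,\pp)\mapsto T(\TT)\pp$ at the true point) to conclude. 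You instead run the classical Wald consistency argument: multinomial cell probabilities, uniform almost-sure convergence of the normalized log-likelihood to the population criterion $\ell$, the Kullback--Leibler inequality to identify the maximizers of $\ell$ with the distributional fiber of $(\TT^0,\pp^0)$, and the exclusion of $\mathcal V$ to make that fiber a singleton, followed by the argmax theorem. The paper's route is shorter given the $T$-matrix infrastructure already developed for the identifiability proofs; yours is self-contained, makes the role of the hypothesis $(\TT^0,\pp^0)\notin\mathcal V$ completely explicit (uniqueness of the population maximizer), and is more careful about the fact that $\mathcal T$ is open. One residual point, which your proof shares with the paper's: your boundary-exclusion step only rules out limit points where some cell probability vanishes, whereas the closure of $\mathcal T$ also contains points where the strict inequalities in \eqref{eq-constraints} (e.g.\ $\theta_j^+>\theta_j^-$) degenerate to equalities without any cell probability tending to zero; a fully rigorous treatment would note that any subsequential limit in $\overline{\mathcal T}$ must reproduce the true response distribution and then argue (by continuity of the constraints and identifiability) that it must equal $(\TT^0,\pp^0)$. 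This is a minor tightening rather than a flaw, and the paper's own proof elides the same point.
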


We apply the new theory of generic identifiability to the designs introduced in Section \ref{sec-real-q}, and establish  generic identifiability of the multi-parameter restricted latent class models. 
Consider the TOEFL iBT Data.
Both $Q$-matrices corresponding to TOEFL reading forms A and B can be transformed into the form of \eqref{eq-diag} through some row rearrangements, with the corresponding  $Q'$  requiring each attribute at least once. Therefore  both $Q$-matrices  satisfy conditions (C5) and (C6) and any multi-parameter $Q$-restricted models  associated with them are generically identifiable and estimable.
Our   results in this section also guarantee the generic identifiability of multi-parameter models associated with the $Q_{43\times 12}$ for the TIMSS data, and the $Q_{20\times 8}$ for the fraction subtraction data; please see Section A in the Supplementary Material for details of checking the conditions.

\section{Extensions to More Complex Models}
\label{sec-extend}
In this section, we   extend   our identifiability theory   to some more complicated latent variable models.

\subsection{Mixed-items Restricted Latent Class Models}\label{sec-mixed}
Our identifiability theory based on $\Gamma$ directly applies to the case of mixed types of items, where the $J$ items can conform to different models, including two-parameter conjunctive, two-parameter disjunctive, or multi-parameter.

First consider the \textit{two-parameter-mixed} restricted latent class model, where each item is either two-parameter conjunctive or disjunctive. For any $Q$-matrix and   latent class space $\ma$,  denote the $\Gamma$-matrix under the two-parameter conjunctive model  by $\Gamma^{conj}(Q,\ma)$, and that under the two-parameter disjunctive model by $\Gamma^{disj}(Q,\ma)$.
The following is a corollary of Theorem \ref{thm1}.
\begin{corollary}\label{prop-ao}
Consider a two-parameter-mixed restricted latent class model with $Q=(Q_{disj}^\top, \allowbreak Q_{conj}^\top)^\top$,
where  $Q_{disj}$ and $Q_{conj}$ correspond to disjunctive and conjunctive items, respectively. 
If the following condition (E1) holds, then $(\ttt^+,\ttt^-,\pp)$ are $\pp$-partially identifiable.
\begin{itemize}
\item[(E1)] The $J\times |\ma|$ matrix $\Gamma=(\Gamma^{disj}(Q_{disj},\ma)^\top,~ \allowbreak\Gamma^{conj}(Q_{conj},\ma)^\top)^\top$
satisfies conditions (C1) and (C2) in Theorem \ref{thm1}.
\end{itemize}
In particular, if $\ma=\{0,1\}^K$ and the $\Gamma$ defined in (E1) is separable, then $(\ttt^+,\ttt^-,\pp)$ are strictly identifiable.
\end{corollary}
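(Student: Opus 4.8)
The plan is to recognize that the two-parameter-mixed model is itself a two-parameter restricted latent class model in the sense of Section \ref{sec-two-para}, so that Theorem \ref{thm1} applies directly once its combined $\Gamma$-matrix is identified.

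First I would verify that the mixed model satisfies the general assumptions of Section \ref{sec-model-exp}. Each item $j$, whether conjunctive or disjunctive, takes exactly the two values $\theta_j^+>\theta_j^-$ across the latent classes, so $|\{\theta_{j,\aaa}:\aaa\in\ma\}|=2$ and the model is indeed two-parameter. The constraint set $\mc_j$ is well-defined for each item: $\mc_j=\{\aaa\in\ma:\aaa\succeq\bq_j\}$ for a conjunctive item and $\mc_j=\{\aaa\in\ma:\exists k,\ \alpha_k=q_{j,k}=1\}$ for a disjunctive item, and in both cases the general constraint \eqref{eq-constraints} holds with $\theta_{j,\aaa}=\theta_j^+$ on $\mc_j$ and $\theta_{j,\aaa}=\theta_j^-$ off $\mc_j$. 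The one point worth checking is the existence of a universal least capable class. Because each item genuinely has two distinct parameter values, no $\bq$-vector is all-zero; hence the all-zero profile $\mz$ satisfies $\mz\notin\mc_j$ for every item $j$ of either type, giving $\theta_{j,\mz}=\theta_j^-\le\theta_{j,\aaa}$ for all $\aaa$ and $j$. Thus $\aaa_0=\mz$ serves as the universal least capable class, and the mixed model falls squarely into the framework of Theorem \ref{thm1}.

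Next I would identify the $\Gamma$-matrix of the mixed model. By \eqref{eq-def-gamma}, $\Gamma_{j,\aaa}=I(\aaa\in\mc_j)$; applying the disjunctive rule to the rows of $Q_{disj}$ reproduces exactly $\Gamma^{disj}(Q_{disj},\ma)$, while applying the conjunctive rule to the rows of $Q_{conj}$ reproduces $\Gamma^{conj}(Q_{conj},\ma)$. Therefore the $\Gamma$-matrix of the mixed model is precisely the stacked matrix appearing in condition (E1). Since conditions (C1) and (C2) of Theorem \ref{thm1} are phrased entirely through the sets $\mc_j$ and $\mc_S=\cap_{j\in S}\mc_j$---that is, entirely through $\Gamma$---and (E1) asserts that this combined $\Gamma$ satisfies (C1) and (C2), Theorem \ref{thm1} yields the $\pp$-partial identifiability of $(\ttt^+,\ttt^-,\pp)$ at once. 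For the strict identifiability claim, when $\ma=\{0,1\}^K$ and $\Gamma$ is separable we have $C=m$ distinct columns, so each equivalence class is a singleton and $\nnu=\pp$; the $\pp$-partial identifiability then coincides with strict identifiability of $(\ttt^+,\ttt^-,\pp)$, exactly as noted after Theorem \ref{thm1}.

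The main obstacle is conceptual rather than computational: one must be confident that the proof of Theorem \ref{thm1} nowhere exploits the conjunctive or disjunctive origin of the items and genuinely relies only on the abstract binary matrix $\Gamma$ and the induced sets $\mc_S$. Because the theorem's hypotheses and its conclusion are stated purely in $\Gamma$-terms, mixing item types introduces no new algebraic structure---the combined $\Gamma$ is an ordinary binary matrix to which Theorem \ref{thm1} applies verbatim. Once this model-agnostic character of Theorem \ref{thm1} is recognized, the corollary is immediate.
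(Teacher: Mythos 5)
Your proposal is correct and follows essentially the same route as the paper: both reduce the corollary to the observation that Theorem \ref{thm1} (and its proof) depends only on the two-level item-parameter structure and the combined $\Gamma$-matrix, never on whether an item is conjunctive or disjunctive, so the stacked $\Gamma$ in (E1) can be fed into Theorem \ref{thm1} verbatim. Your additional check that $\aaa_0=\mz$ remains the universal least capable class for both item types is a sensible detail the paper leaves implicit.
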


One implication of Corollary \ref{prop-ao} is that when a diagnostic test contains both conjunctive and disjunctive items, the underlying $Q$-matrix does not need to 
include a submatrix $I_K$
for $(\ttt^+,\ttt^-,\pp)$ to be strictly identifiable. This is in contrary to the case of a purely conjunctive or purely disjunctive two-parameter model, where this requirement is indeed necessary \cite{Xu15,id-dina}.
The following application of Corollary \ref{prop-ao} illustrates this point.
\begin{example}\label{ex-22}
\normalfont{
Consider a diagnostic test with 4 conjunctive items and 2 disjunctive items with the following  $Q$-matrix
\begingroup
\setlength{\belowdisplayskip}{0pt}
\begin{align*}
Q = 
\begin{pmatrix}
Q^{conj}_{ 4\times 2} \\
\hline
Q^{disj}_{ 2\times 2}
\end{pmatrix}
=
\begin{pmatrix}
1 & 0\\
1 & 1\\
1 & 1\\
1 & 1\\
\hline
1 & 1\\
1 & 1\\
\end{pmatrix}
~\Rightarrow~
\Gamma
= 
\begin{blockarray}{cccc}
(0,0) & (0,1) & (1,0) & (1,1)\\
\begin{block}{(cccc)}
0 & 0 & 1 & 1\\
0 & 0 & 0 & 1\\
0 & 0 & 0 & 1\\
0 & 0 & 0 & 1\\
\cline{1-4}
0 & 1 & 1 & 1\\
0 & 1 & 1 & 1\\
\end{block}
\end{blockarray}~.
\end{align*}
\endgroup
Then if $\ma=\{0,1\}^2$, the corresponding $\Gamma$-matrix as shown above is separable, and conditions (C1$^*$) and (C2$^*$) are satisfied. So $\ttt^+=(\theta^+_1,\ldots,\theta^+_6)^\top$, $\ttt^-=(\theta^-_1,\ldots,\theta^-_6)^\top$ and $\pp=(p_{(0,0)},~p_{(0,1)},~p_{(1,0)},~p_{(1,1)})^\top$ are strictly identifiable, despite that $Q$ does not contain a submatrix $I_2$.
}
\end{example}

If there exist both two-parameter items and multi-parameter items in the model, we have the following identifiability result, the part (a) of which directly results from Theorem  \ref{thm-order} and Proposition \ref{new}. 
Please see Section D in the Supplementary Material for   details.

\begin{corollary}\label{prop-mix-multi} 
Assume 
$Q =(Q_{disj}^\top,\allowbreak Q_{conj}^\top,\allowbreak Q_{mult}^\top)^\top$
where $Q_{disj}$, $Q_{conj}$ and $Q_{multi}$  correspond to the two-parameter disjunctive, two-parameter conjunctive, and multi-parameter items, respectively. 
\begin{itemize}\setlength\itemsep{1mm}
\item[(a)]
If  $\Gamma = (\Gamma^{disj}(Q_{disj},\ma)^\top,~ \allowbreak\Gamma^{conj}(Q_{conj},\ma)^\top,~ \allowbreak\Gamma^{conj}(Q_{mult},\ma)^\top)^\top$ satisfies conditions (C3) and (C4) in Theorem \ref{thm-order}; or conditions (C3*) and (C4*) in Proposition \ref{new}, then $(\TT,\pp)$ are strictly identifiable.
\item[(b)]
If    $\Gamma$ satisfies condition  (E2) in Section D of the Supplementary Material, then $(\TT,\pp)$ are generically identifiable.
\end{itemize}
\end{corollary}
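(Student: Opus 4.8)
The plan is to exploit the fact that, unlike the underlying item response functions, the $\Gamma$-matrix is defined purely through the most-capable sets $\mc_j$ in \eqref{eq-def-gamma}, and that all of the identifiability results invoked here—Theorem \ref{thm-order}, Proposition \ref{new}, and Theorem \ref{thm-order-gen}—are stated and proved for a general restricted latent class model whose structure is encoded by $\Gamma$ together with the order constraints \eqref{eq-constraints}. I would first observe that a two-parameter conjunctive item, a two-parameter disjunctive item, and a multi-parameter conjunctive item all satisfy \eqref{eq-constraints}, so a mixed-items model is itself a legitimate restricted latent class model. Its $\Gamma$-matrix is exactly the stacked matrix $\Gamma=(\Gamma^{disj}(Q_{disj},\ma)^\top, \Gamma^{conj}(Q_{conj},\ma)^\top, \Gamma^{conj}(Q_{mult},\ma)^\top)^\top$, because each block records the correct $\mc_j$ for its item type (the disjunctive rows using the disjunctive most-capable sets, the remaining rows using $\{\aaa:\aaa\succeq\bq_j\}$).

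For part (a), I would then argue that no step in the proofs of Theorem \ref{thm-order} or Proposition \ref{new} uses the specific algebraic form of $\theta_{j,\aaa}$; these proofs operate only on the induced partial orders $\succeq_{S_1}$, $\succeq_{S_2}$ read off from the separable submatrices $\Gamma^{S_1}$, $\Gamma^{S_2}$, and on the residual distinguishing power of $\Gamma^{(S_1\cup S_2)^c}$, all of which are properties of the stacked $\Gamma$ alone. Consequently, once the stacked $\Gamma$ is verified to satisfy (C3)--(C4) (respectively (C3$^*$)--(C4$^*$)), the conclusions of Theorem \ref{thm-order} (respectively Proposition \ref{new}) apply verbatim and yield strict identifiability of $(\TT,\pp)$. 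An equivalent way to see this, which I would use as a sanity check, is that the mixed-items model is a submodel of the fully multi-parameter model on the same $\Gamma$ (obtained by freeing the constrained item parameters of the two-parameter items), and injectivity of the response-distribution map on the larger parameter space forces injectivity on the smaller one.

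For part (b), the condition (E2) in the Supplementary Material is the mixed-items analogue of the hypotheses of Theorem \ref{thm-order-gen}: after altering some zero entries to one in the two repeated-measurement blocks of the stacked $\Gamma$ to render them separable, the adjusted matrix satisfies (C3)--(C4). I would transfer the generic argument of Theorem \ref{thm-order-gen} to the stacked $\Gamma$: strict identifiability holds on the adjusted $\widetilde\Gamma$ by part (a), while the coincidences among columns of the original inseparable $\Gamma$ correspond to pairs of classes whose item-parameter vectors differ on all but a proper algebraic subvariety of $\mathcal T$, exactly as illustrated in Example \ref{incomplete-eg}. Collecting these finitely many nonzero polynomial constraints defines the exceptional variety $\mathcal V$, off of which $(\TT,\pp)$ is identifiable, giving generic identifiability.

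The main obstacle I anticipate is not a deep new inequality but the bookkeeping needed to confirm that the $\Gamma$-based machinery is genuinely indifferent to item type, and in particular that the disjunctive blocks—whose most-capable sets have a structurally different shape from the conjunctive ones—interact correctly with the partial orders $\succeq_{S_1}$, $\succeq_{S_2}$ and with the basis-class set $\mb_{S_1}$ used in (C3$^*$). I would therefore spend the care verifying that $\succeq_S$ and $\mb_S$ are well-defined and behave as required when $S$ mixes disjunctive and conjunctive rows, after which both parts follow by direct appeal to the previously established theorems.
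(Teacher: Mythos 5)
Your proof of part (a) is correct and coincides with the paper's: the paper likewise observes that the stacked $\Gamma$ encodes the correct constraint sets $\mc_j=\{\aaa:\Gamma_{j,\aaa}=1\}$ for every item type, that all three item types satisfy the general constraints \eqref{eq-constraints}, and that the proofs of Theorem \ref{thm-order} and Proposition \ref{new} use only $\Gamma$ and \eqref{eq-constraints}, never the algebraic form of $\theta_{j,\aaa}$. Your submodel ``sanity check'' is also sound for \emph{strict} identifiability, since injectivity of the distribution map on the larger constraint set restricts to the smaller one.

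Part (b) has a genuine gap: you have guessed the content of condition (E2) rather than using it, and your guess does not match. (E2) is not ``the adjusted $\Gamma$ satisfies (C3)--(C4).'' It splits $Q_{mult}=(Q_{mult,1}^\top,Q_{mult,2}^\top)^\top$ and requires (E2.a) that after a \emph{unit-shrinkage} of $Q_{mult,1}$ (replacing some multi-parameter $\bq$-vectors by unit vectors $\ee_k\preceq\bq_j$) the resulting $\widetilde\Gamma$ contains two disjoint separable submatrices, and (E2.b) that every attribute is required by at least one item of the reserved block $Q_{mult,2}$. Condition (E2.b) is a (C6)-style coverage condition, not condition (C4) on $\Gamma^{(S_1\cup S_2)^c}$; it generically separates the \emph{item-parameter} columns of the reserved block but need not separate their $\Gamma$-columns. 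Accordingly the paper's proof finishes with the Kruskal-rank three-way tensor argument of Theorem \ref{thm-gen-q} (full column rank of $T(\TT_{\Gamma_1})$ and $T(\TT_{\Gamma_2})$, Kruskal rank at least $2$ for the reserved block), not with the (C4)-based completion of Theorem \ref{thm-order-gen}. Your route, which requires (C4) on the complement, proves a statement with a strictly stronger hypothesis than (E2) and therefore does not establish the corollary as stated.

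A second, related omission: the zero-to-one alterations can only be made in \emph{multi-parameter} rows. The genericity mechanism requires that the adjusted $\widetilde\Gamma$ be realized by an actual point of the original parameter space $\mathcal T$ (the paper exhibits the witness $\widetilde\theta_{j,\aaa}=f(\beta_{j,0}+\beta_{j,k_j}\alpha_{k_j})$, obtained by zeroing all other coefficients of item $j$, so that the full-rank minors of $T(\TT_{\Gamma_i})$ are polynomials not identically zero on $\mathcal T$). For a two-parameter conjunctive or disjunctive item, changing a $0$ to a $1$ in its $\Gamma$-row yields a different model, not a sub-parameterization: the two-parameter constraint pins $\theta_{j,\aaa}=\theta_j^-$ on all of $\{\aaa:\Gamma_{j,\aaa}=0\}$, so no parameter choice realizes the altered row. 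Your phrase ``altering some zero entries to one in the two repeated-measurement blocks of the stacked $\Gamma$'' does not respect this restriction, and without it the witness construction underlying the proper-subvariety argument fails. To repair part (b) you should state (E2) as given, construct the unit-shrinkage witness only on $Q_{mult,1}$, deduce generic full column rank of the two $T$-submatrices from the non-vanishing of a minor at the witness, and conclude via the tensor-decomposition argument of Theorem \ref{thm-gen-q} using (E2.b).
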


\subsection{Restricted Latent Class Models with Categorical Responses}\label{sec-cat}
 We next study restricted latent class models with multiple levels of  responses per item, i.e., categorical responses, instead of binary responses considered in previous sections. These models have been considered in \cite{davier2008general}, \cite{ma2016seq} and \cite{chen2018poly}. 
We consider the setting in \cite{chen2018poly}. Suppose for each item $j$ out of the $J$ items in a diagnostic test, there are $L_j$ categories of responses.
For each item $j$ and each category of response $l\in\{0,\ldots,L_j-1\}$, there are a set of positive response parameters of the latent classes $\ttt^{(l)}_j=\{\theta^{(l)}_{j,\aaa}:\,\aaa\in\mathcal A\}$ with   $\ttt^{(0)}_{j}=\mathbf 1-\sum_{l>0}\ttt^{(l)}_{j}$. Further, for each item $j$, the $\bq$-vector $\bq_j$ constrains the vector $\ttt^{(l)}_j$ based on \eqref{eq-constraints} for each category $l\in\{1,\ldots,L_j-1\}$ independently, other than the basic level $l=0$. Namely, for any $j\in\ms$,
\begin{align*}
\max_{\aaa\in\mc_j} \theta^{(l)}_{j, \aaa} 
=&~ \min_{\aaa\in\mc_j} \theta^{(l)}_{j, \aaa} 
> \theta^{(l)}_{j,\aaa'},
\quad \forall l\in\{1,\ldots,L_j-1\}~\text{and}~ \forall \aaa'\notin\mc_j.
\end{align*}
We collect all the model parameters in $(\TT^{\text{cat}},\pp)$ with $\TT^{\text{cat}}=\{\ttt_j^{(l)}:\,j=1,\ldots,J;\,l=0,\ldots,L_j-1\}$. 
Then we have the following identifiability result.

\begin{proposition}\label{prop-poly}
For a given $Q$-matrix, consider the following cases.
\begin{itemize}\setlength\itemsep{1mm}
\item[(a)] 
If for any $j\in\ms$ and   $l\in\{1,\ldots,L_j\}$, item parameters  $\{\theta_{j,\aaa}^{l},\aaa\in\ma\}$ follow the two-parameter   assumption, and $Q$ satisfies   (C1*) and (C2*) in Corollary \ref{cor-dina-main}, then $(\TT^{\text{cat}},\pp)$ are $\pp$-partially identifiable. 
\item[(b)] 
If for any $j\in\ms$ and   $l\in\{1,\ldots,L_j\}$, item parameters  $\{\theta_{j,\aaa}^{l},\aaa\in\ma\}$ follow the multi-parameter assumption, and $Q$ satisfies conditions (C5) and (C6) in Theorem \ref{thm-gen-q}, then $(\TT^{\text{cat}},\pp)$ are generically identifiable. 
\end{itemize}
\end{proposition}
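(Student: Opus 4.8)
The plan is to reduce both parts to the already-established binary-response results by \emph{binarizing} each item in a way that preserves its most-capable set $\mc_j$, so that the induced $\Gamma$-matrix --- and hence the combinatorial conditions (C1$^*$)/(C2$^*$) and (C5)/(C6), which depend only on $Q$ --- are left unchanged. For an item $h$ let $b_h^{\geq 1}(R_h)=I(R_h\geq 1)$, and for a target category $l_0$ let $b_{j_0}^{=l_0}(R_{j_0})=I(R_{j_0}=l_0)$. The first thing to record is a short lemma: under the per-category constraint in the model setup, for every $l_0\in\{1,\dots,L_{j_0}-1\}$ the probability $\theta_{j_0,\aaa}^{(l_0)}$ is maximized exactly on $\mc_{j_0}$ (strictly off $\mc_{j_0}$), and likewise $\sum_{l\geq 1}\theta_{h,\aaa}^{(l)}=1-\theta_{h,\aaa}^{(0)}$ is maximized exactly on $\mc_h$, since each summand is. Hence both binarizations turn item $h$ into a \emph{binary} item whose most-capable set in the sense of \eqref{eq-dcj} is still $\mc_h$; the $\Gamma$-matrix of any such binarized model therefore coincides with the original one, and the two-parameter (resp.\ multi-parameter) structure is inherited. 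Because conditional independence is preserved under coordinatewise maps of the responses, each binarized model is a genuine binary restricted latent class model with pmf of the form \eqref{eq-pmf}.

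For part (a), I would fix a target pair $(j_0,l_0)$ with $l_0\geq 1$, binarize item $j_0$ by $b_{j_0}^{=l_0}$ and every other item $h$ by $b_h^{\geq 1}$, and apply Corollary~\ref{cor-dina-main} to the resulting two-parameter binary model. Since it shares the original $\Gamma$ and $Q$ satisfies (C1$^*$) and (C2$^*$), it is $\pp$-partially identifiable, which recovers the two values $\theta_{j_0}^{+,(l_0)},\theta_{j_0}^{-,(l_0)}$ of the target item together with $\nnu$. Letting $(j_0,l_0)$ range over all items and all $l_0\geq 1$ identifies every $\theta_{j}^{\pm,(l)}$ with $l\geq 1$, and the base level is then forced by $\ttt_j^{(0)}=\mo-\sum_{l>0}\ttt_j^{(l)}$. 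All these binarizations induce the same $\Gamma$ and hence the same equivalence classes, so the $\nnu$ returned is consistent across targets; this gives $\pp$-partial identifiability of $(\TT^{\text{cat}},\pp)$.

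For part (b), the same binarization turns the model into a binary \emph{multi}-parameter $Q$-restricted model with the original $Q$, so Theorem~\ref{thm-gen-q} applies and yields identifiability of that binary model outside a proper subvariety $\mathcal V_{j_0,l_0}$ of its parameter space, up to label swapping among latent classes with identical $\Gamma$-columns. I would then transfer this to the categorical space: the binarization is the linear (hence polynomial) map $(\theta_{h,\aaa}^{(l)})_{l\geq 1}\mapsto\sum_{l\geq1}\theta_{h,\aaa}^{(l)}$ on the non-target items and the identity on the target category, and it is dominant, its image containing a nonempty open subset of the binary parameter space (obtained by spreading the positive mass across categories with small strict gaps). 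Consequently the preimage of $\mathcal V_{j_0,l_0}$ is a \emph{proper} algebraic subvariety of the categorical parameter space $\mathcal T$, and the finite union over all targets $(j_0,l_0)$ is again proper, hence Lebesgue-null. Off this exceptional set every binarization is identifiable, so all $\theta_{j,\aaa}^{(l)}$ with $l\geq1$ and $\pp$ are recovered (and $l=0$ by the sum-to-one constraint), giving generic identifiability in the sense of Definition~\ref{def-gen}.

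The hard part will be the bookkeeping in part (b): checking that the exceptional set truly pulls back to a \emph{proper} subvariety rather than to all of $\mathcal T$ (which is exactly where dominance of the binarization map is needed), and gluing the per-target identifications into one consistent labeling. Since Theorem~\ref{thm-gen-q} only pins down latent classes up to permutation within each block of identical $\Gamma$-columns, I would fix one arbitrary labeling of each such block using the $b^{\geq 1}$ coordinates --- which are re-identified by every binarization and thus serve as a common anchor --- and verify that the target-category parameters are then assigned to classes consistently across all choices of $(j_0,l_0)$; this reproduces precisely the ``up to label swapping'' caveat in the statement. The preservation-of-$\mc_j$ lemma from the first paragraph, although elementary, is the linchpin that keeps $Q$ and all the checkable conditions fixed throughout, and should be stated carefully.
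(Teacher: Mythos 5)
Your proposal is correct, and it rests on the same underlying idea as the paper's proof---collapse the categorical responses to binary ones and invoke the binary-response theorems---but the two reductions are genuinely different in execution. The paper fixes an entire vector of target categories $\rr^H\in\prod_{j}\{1,\ldots,L_j-1\}$ and binarizes \emph{every} item simultaneously by $I(R_j=r_j^H)$: summing the pmf identities over the non-target categories yields a generalized $T$-matrix identity $T^{\rr^H}(\TT^{\text{cat}})\pp=T^{\rr^H}(\bar\TT^{\text{cat}})\bar\pp$ of exactly the form \eqref{eq1}, whose item parameters are literally the per-category parameters $\theta^{(r_j^H)}_{j,\aaa}$; the binary proofs then run verbatim and varying $\rr^H$ recovers all levels. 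You instead binarize one item at a time by $I(R_{j_0}=l_0)$ and collapse the remaining items via $I(R_h\geq 1)$, which obliges you to prove the small lemma that $\sum_{l\geq1}\theta^{(l)}_{h,\aaa}=1-\theta^{(0)}_{h,\aaa}$ inherits the constraint structure \eqref{eq-constraints} (true, since each summand attains its maximum exactly on $\mc_h$), and, in part (b), to pull the exceptional subvariety back to the categorical parameter space through the binarization map. Both routes are sound. The paper's choice makes the genericity bookkeeping in (b) nearly invisible because the binarized parameters are coordinates of $\TT^{\text{cat}}$, so the defining polynomials of the bad set live directly on the categorical space; your version is more modular---it uses Corollary \ref{cor-dina-main} and Theorem \ref{thm-gen-q} as black boxes---at the cost of the surjectivity/dominance argument, which you correctly flag as the step needing care and which does go through (split any target binary value evenly across the $L_h-1$ nonzero categories to realize it). The one place where your black-box invocation is not quite literal is applying Theorem \ref{thm-gen-q} to the collapsed items: the summed parameters need not follow the same parametric (e.g., logistic-link) form, but the theorem's proof only uses generic full rank and generic separation $\theta_{j,\aaa}\neq\theta_{j,\aaa'}$ for profiles differing in a required attribute, both of which survive summation; this is worth a sentence in a final write-up.
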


\subsection{Deep Restricted Boltzmann Machines.}\label{sec-rbm}
Restricted latent class models share great similarities with Restricted Boltzmann Machines (RBM) \citep{goodfellow2016deep}.
We use a simple example to illustrate how the RBM architecture can be used as a special restricted latent class model for cognitive diagnosis. The RBM on the right panel of Figure \ref{fig-rbm} consists of two latent layers $\aaa^{(1)}$ and $\aaa^{(2)}$ and one observed layer $\RR$. In a diagnostic test, the $\RR$ represents multivariate binary responses to test items, the first latent layer $\aaa^{(1)}$  represents the  fine-grained binary skill attributes measured by the items, while the second binary latent layer $\aaa^{(2)}$ helps to model the dependence among $\aaa^{(1)}$  and may be interpreted as more general skill domains. Denote the lengths of vectors $\RR$, $\aaa^{(1)}$ and $\aaa^{(2)}$ by $J$, $K_1$ and $K_2$.
Under RBM assumptions, the probability distribution of all the observed and latent variables is
\begin{align}\label{eq-rbm}
\mathbb P(\RR, \aaa^{(1)},\aaa^{(2)}) = \frac{1}{Z}\exp\Big( 
&- \RR^\top \bo W^Q \aaa^{(1)}  - (\aaa^{(1)})^\top \bo U \aaa^{(2)} \Big),
\end{align}
where $Z$ is the normalization constant, and $\bo W^Q $, $\bo U$ are parameter matrices, of size $J\times K_1$ and $K_1\times K_2$, respectively. We drop the bias terms in the above energy function without loss of generality \citep{goodfellow2016deep}.
We can impose a $Q$-matrix of size $J\times K_1$ to restrict the parameters $\bo W^Q$ in \eqref{eq-rbm}. Specifically, $Q$ specifies which entries of $\bo W^Q=(w_{j,k})$ are zero, i.e., $w_{j,k}= 0$ if $q_{j,k}=0$. The form of  $Q$ underlying the $\bo W^Q$ in Figure \ref{fig-rbm} is on the left panel of the figure.
\begin{figure}[h!]
\centering
\begin{minipage}{0.4\textwidth}
\normalsize
$$
Q=\begin{pmatrix}
1 & 0 & 1 & 0 \\
1 & 1 & 0 & 0 \\
0 & 1 & 1 & 0 \\
0 & 0 & 1 & 1 \\
0 & 1 & 0 & 1 \\
\end{pmatrix};
$$
\end{minipage}
\begin{minipage}{0.6\textwidth}
\centering
\includegraphics[width=0.9\textwidth]{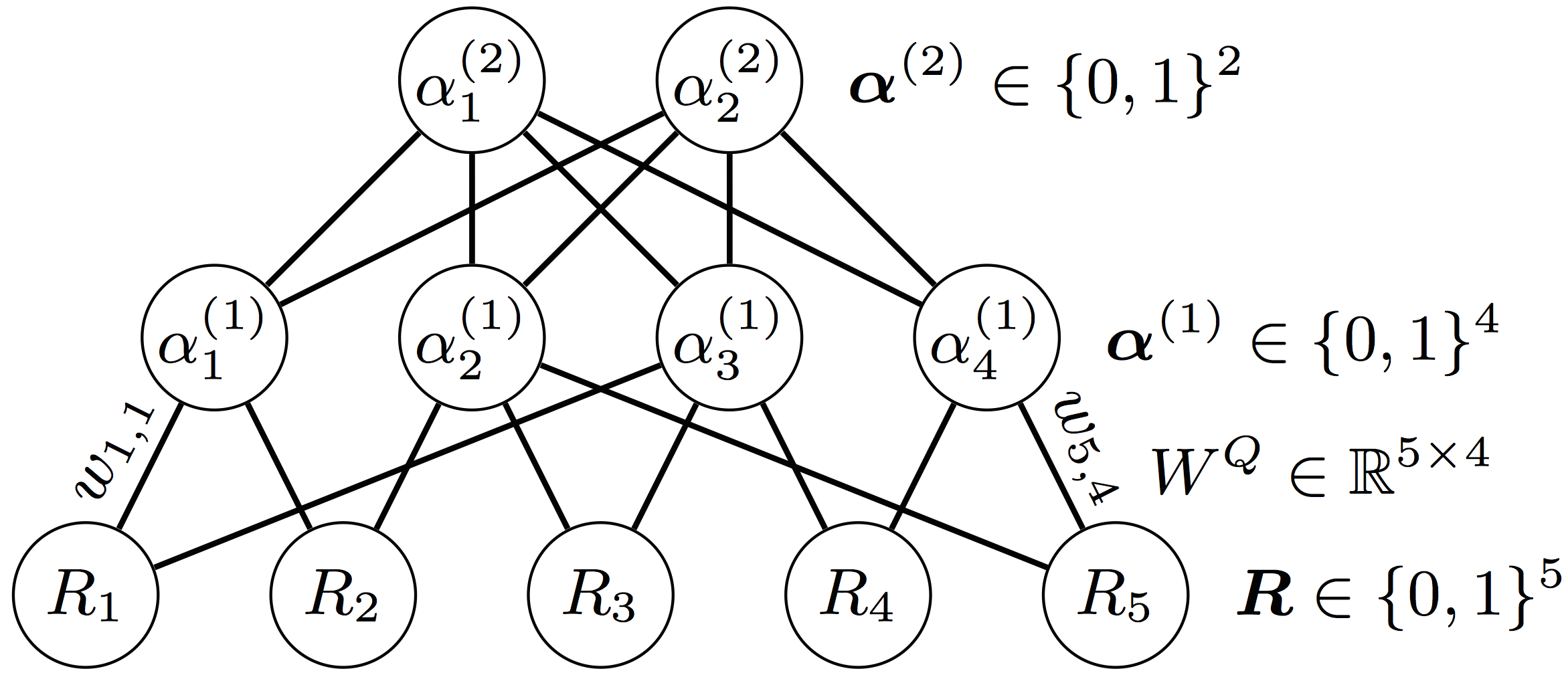}
\end{minipage}
\caption{(Deep) Restricted Boltzmann Machine}
\label{fig-rbm}
\end{figure}

We call $\bo W^Q$ the \textit{item parameters} of a RBM, since these parameters relate to the observed responses to items; and call a RBM with a $Q$-matrix  structure an \textit{item-parameter-restricted} RBM.
Then an item-parameter-restricted RBM can be viewed as a multi-parameter main-effect restricted latent class model, with $\aaa^{(1)}$ belonging to the latent class space $\{0,1\}^{K_1}$. 
The  next proposition establishes  identifiability of the item parameters $\bo W^Q$.

\begin{proposition}\label{prop-rbm}
For a given $Q$-matrix, consider the following cases.
\begin{itemize}
\item[(a)] If there is no sparsity structure in $\bo W^Q$ (i.e.,   $Q={\mathbf 1}_{J\times K}$), then as long as $J\geq 2K_1+1$, the item parameters $\bo W^Q$ are generically identifiable.
\item[(b)] If the $Q$-matrix satisfies the sufficient conditions for strict  or generic identifiability in Section \ref{sec-multi-para}, then $\bo W^Q$ are strictly or generically identifiable, respectively.
\end{itemize}
\end{proposition}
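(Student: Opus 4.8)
The plan is to reduce an item-parameter-restricted RBM to a multi-parameter main-effect $Q$-restricted latent class model and then invoke the results of Section \ref{sec-multi-para}. First I would marginalize out the second latent layer $\aaa^{(2)}$. Because the energy in \eqref{eq-rbm} contains no $\RR$--$\RR$ and no $\RR$--$\aaa^{(2)}$ interaction, the responses $R_1,\ldots,R_J$ are conditionally independent given $\aaa^{(1)}$ and do not depend on $\aaa^{(2)}$, so summing \eqref{eq-rbm} over $\aaa^{(2)}$ yields a latent-class-type joint law of $(\RR,\aaa^{(1)})$ with $\aaa^{(1)}\in\{0,1\}^{K_1}$ as the latent class. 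A direct computation gives $\text{logit}\,\theta_{j,\aaa^{(1)}}=-\sum_k w_{j,k}\alpha^{(1)}_k$ with $w_{j,k}=0$ whenever $q_{j,k}=0$, which is exactly the main-effect (LLM) form of Example \ref{exp-main-eff} with $\beta_{j,0}=0$ and $\beta_{j,k}=-w_{j,k}$. Summing once more over $\aaa^{(1)}$ recovers the marginal \eqref{eq-pmf}, where the proportions $p_{\aaa^{(1)}}=\mathbb P(\aaa^{(1)})$ are the strictly positive marginals induced by $(\bo W^Q,\bo U)$.

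Second, I would record the bijection between the item-parameter matrix $\TT=(\theta_{j,\aaa^{(1)}})$ and $\bo W^Q$. Since $\theta_{j,\aaa^{(1)}}$ depends only on $\bo W^Q$ (not on $\bo U$) and $\theta_{j,\mz}=1/2$, evaluating at the single-attribute profiles $\ee_k$ gives $w_{j,k}=-\text{logit}\,\theta_{j,\ee_k}$. Hence identifying $\TT$, which is precisely what the theorems of Section \ref{sec-multi-para} deliver jointly with $\pp$, immediately identifies $\bo W^Q$; in particular we never need $\bo U$ to be identifiable.

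Third, I would apply the theorems to each case. For (a), taking $Q=\mo_{J\times K_1}$, conditions (C5)--(C6) reduce to $J\geq 2K_1+1$ (as noted after Theorem \ref{thm-gen-q}), so Theorem \ref{thm-gen-q} gives generic identifiability of $(\TT,\pp)$, hence of $\bo W^Q$. For (b), a $Q$ satisfying (C3)--(C4) or (C3$^*$)--(C4$^*$) yields strict identifiability of $(\TT,\pp)$ via Theorem \ref{thm-order} or Proposition \ref{new}, while a $Q$ satisfying (C5)--(C6) (or the weaker condition of Theorem \ref{prop-vio-gen}) yields generic identifiability via Theorem \ref{thm-gen-q}; in every case $\bo W^Q$ follows from the bijection above. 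The strict case transfers to the RBM without loss, since RBM parameters form a subset of the main-effect parameter space and strict identifiability of the larger model is inherited by any subset: two RBM parameters giving the same $\mathbb P(\RR)$ induce two main-effect parameters with the same law, which must coincide, forcing $\bo W^Q=\bar{\bo W}^Q$.

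The hard part is the generic case. The RBM item parameters satisfy the bias-free constraint $\theta_{j,\mz}=1/2$, and $\pp$ is a determined function of $(\bo W^Q,\bo U)$ rather than a free parameter, so the image $\mathcal W$ of the parametrization $\Phi:(\bo W^Q,\bo U)\mapsto(\TT,\pp)$ is a proper subvariety of the full main-effect parameter space. Consequently the non-identifiable set $\mathcal V$ of Theorem \ref{thm-gen-q}, although of measure zero in the full space, need not be of measure zero within $\mathcal W$, and generic identifiability on the full space does not by itself restrict to $\mathcal W$. To close this gap I would show $\mathcal W\not\subseteq\mathcal V$, equivalently exhibit one $(\bo W^Q,\bo U)$ whose image is identifiable; since $\Phi$ is analytic, $\Phi^{-1}(\mathcal V)$ is then a \emph{proper} subvariety of the RBM parameter space and identifiability holds off a measure-zero set, as required. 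This in turn reduces to verifying that the Kruskal-rank conditions underlying Theorem \ref{thm-gen-q} already hold at a parameter realizable by the RBM, which is true because letting $\bo W^Q$ range over its sparse support makes the relevant Kruskal factor matrices attain full column rank generically.
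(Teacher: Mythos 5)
Your reduction is the same one the paper uses: marginalize out the deeper layers, observe that the $R_j$ are conditionally independent given $\aaa^{(1)}$ with $\mathrm{logit}\,\theta_{j,\aaa^{(1)}}$ linear in the entries of $\aaa^{(1)}$ supported on $\bq_j$, and thereby view the item-parameter-restricted RBM as a multi-parameter main-effect $Q$-restricted latent class model on $\{0,1\}^{K_1}$, to which Theorem \ref{thm-order}, Proposition \ref{new}, and Theorem \ref{thm-gen-q} apply; recovering $\bo W^Q$ from $\TT$ via $w_{j,k}=-\mathrm{logit}\,\theta_{j,\ee_k}$ (using $\theta_{j,\mz}=1/2$) matches the paper's implicit step. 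Where you genuinely go beyond the paper is the final paragraph on the generic case. The paper's proof simply asserts that generic identifiability of $(\TT,\pp)$ transfers to $\bo W^Q$, without noting that the RBM-realizable pairs $(\TT,\pp)$ form a lower-dimensional image $\mathcal W$ inside the main-effect parameter space (because of the bias-free constraint $\theta_{j,\mz}=1/2$ and because $\pp$ is a determined function of $(\bo W^Q,\bo U)$), so that ``measure zero in $\mathcal T$'' does not automatically mean ``measure zero in $\mathcal W$.'' Your fix --- exhibit one RBM-realizable witness outside $\mathcal V$ and conclude that $\Phi^{-1}(\mathcal V)$ is a proper subvariety of the RBM parameter space --- is exactly the right patch, and the witness is available: the parameter used in the proof of Theorem \ref{thm-gen-q} (one nonzero main effect per item in $Q_1,Q_2$, zero bias, all interactions zero) is realizable by a bias-free RBM, and the induced $\pp$ has all entries positive since the Boltzmann weights never vanish. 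Two caveats you share with the paper rather than introduce: the ``up to label swapping'' qualification in Theorem \ref{thm-gen-q} is silently absorbed in both arguments when reading off $w_{j,k}$ from $\theta_{j,\ee_k}$, and your claim that the Kruskal factors attain full rank ``generically'' over the sparse support should really be phrased as the single-witness verification you describe, not as a separate genericity claim. Overall the proposal is correct and, on the generic case, more rigorous than the published proof.
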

Proposition \ref{prop-rbm} establishes  identifiability of the item parameters $\bo W^Q$, which provides the theoretical guarantee  in the application of item calibration to assess the quality of the items. 
It would also be interesting to further investigate identifiability of other parameters besides the item parameters in a deep restricted Boltzmann machine, which we leave for future study.

\section{Discussion}\label{sec-disc}
This paper proposes a general framework of strict and partial identifiability of restricted latent class models.

We provide a flowchart in Figure \ref{fig-flowchart} to summarize our main theoretical results in Sections \ref{sec-two-para} and \ref{sec-multi-para}. The flowchart illustrates how to apply the new theory in cognitive diagnosis. Specifically, given the specification of the $Q$-matrix,  the latent class space $\ma\subseteq\{0,1\}^K$, and the diagnostic model assumptions, one can   construct the corresponding $J\times |\ma|$ $\Gamma$-matrix based on the $\mc_j$'s defined in \eqref{eq-dcj}. 
Then  in the case of a separable $\Gamma$-matrix, if the model is two-parameter, the  $\pp$-partial identifiability exactly reduces to strict identifiability  and one can   use results in Section \ref{sec-two-para} to establish strict identifiability; and if the model is multi-parameter, one can use results Theorem \ref{thm-order} and Proposition \ref{new} in Section \ref{sec-gen} for strict identifiability.
On the other hand, if the  $\Gamma$-matrix is inseparable, depending on whether the model is two-parameter or multi-parameter, one can use the results in Section \ref{sec-2p-q} or those in Section \ref{sec-multi-para}  to check whether the model is $\pp$-partially identifiable or generically identifiable, respectively.
Note that in the special case of  $\ma=\{0,1\}^K$,   the $\Gamma$-matrix with $2^K$ columns  is separable if and only if the $Q$-matrix contains an identity submatrix $I_K$, a key condition assumed in previous works \citep[e.g.,][]{xu2017,xu2018jasa}. 
Hence, this work not only largely relaxes these  existing conditions for strict identifiability by allowing more flexible attribute structures with an arbitrary $\ma$,   but also provides the first study on partial identifiability when the $Q$-matrix does not include an $I_K$ (the $\Gamma$-matrix is inseparable).
We give easily-checkable identifiability conditions to ensure estimability of the model parameters, and these conditions serve as practical guidelines for designing statistically valid diagnostic tests.

\begin{figure}[h!] 
\centering
\includegraphics[width=0.95\textwidth]{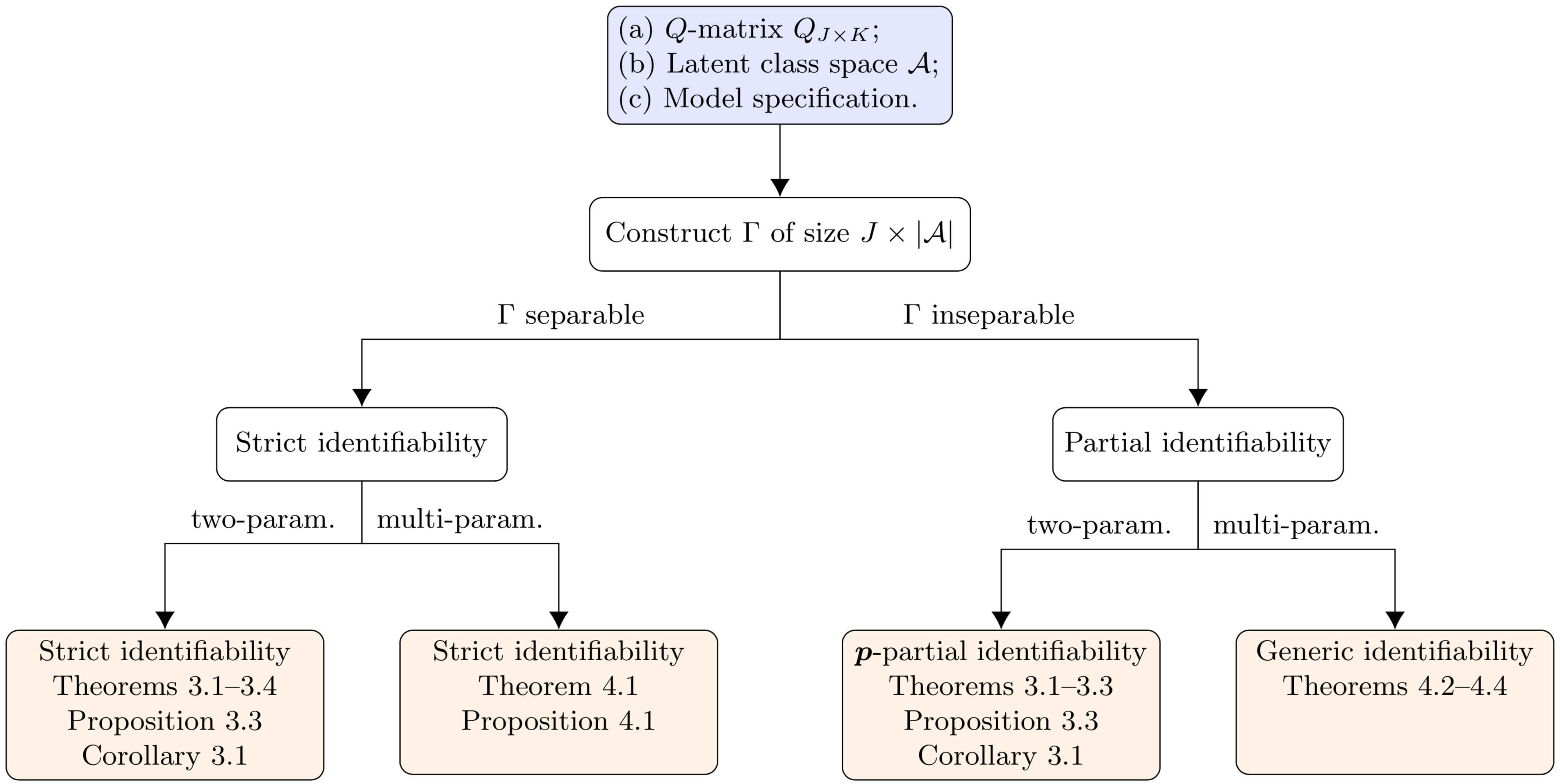}
\caption{Flowchart of the results in Sections \ref{sec-two-para} and \ref{sec-multi-para}}
\label{fig-flowchart}
\end{figure}

We point out that the strict identifiability results in Section \ref{sec-gen} (Theorem \ref{thm-order} and Proposition \ref{new})  apply to  the general family of restricted latent class models   satisfying constraints \eqref{eq-constraints}, including not only multi-parameter  but also two-parameter models;
on the other hand, since these results are established under the general constraints \eqref{eq-constraints},
their conditions  are stronger than those in Section \ref{sec-two-para} under two-parameter models.
In contrast, the generic identifiability results in  Sections \ref{sec-gen} and \ref{sec-gen-q} (Theorem \ref{thm-order-gen}--\ref{prop-vio-gen}) only apply  to multi-parameter models. This is because under generic identifiability, the nonidentifiable measure-zero subset of a multi-parameter model's parameter space (such as GDINA), could still contain the  parameter space corresponding to a two-parameter submodel (such as DINA), making these generic identifiability results  not applicable to   two-parameter models.
Nevertheless, generic identifiability is a general concept not just restricted to the multi-parameter models.
An interesting future direction to study is the generic identifiability of two-parameter models under the introduced $\pp$-partial identifiability framework; that is, one can study what conditions lead to the generic identifiability of $(\ttt^+,\ttt^-,\nnu)$.
We also point that a multi-parameter model can also be $\pp$-partially identifiable, as discussed in Remark \ref{rmk-mult-pid}.

For the  $\pp$-partial identifiability and generic identifiability results in Sections \ref{sec-two-para}--\ref{sec-extend},   we assume that the model specification for each item, the design matrix and    latent class space $\ma$   are available as prior knowledge.
{
In practice, there can be scenarios where not all of such information is available.
As pointed out by one reviewer, in applications of cognitive diagnostic modeling, both the advances in modeling capacity and computing flexibility, and the recent real-data examples provide ground for adopting a model with mixed type of items, which are determined in a data-driven way. 
To this end, our strict identifiability results in Section \ref{sec-gen} and those in Section \ref{sec-mixed} for mixed-items models can  be applied to assess identifiability \textit{a posteriori}.}
When deciding which model to use in practice, one can use the response data to determine the number of latent classes and determine which diagnostic model an item conforms to. For instance, 
one may employ the  popular information criteria such as AIC and BIC to perform model selection; or one may first fit a general cognitive diagnostic model, such as GDINA or GDM, then use the Wald test to determine which submodel an item follows \cite{dela2011}.  Alternatively, one may  use a penalized likelihood method \cite{xu2018jasa} or Bayesian method \cite{chen2018bayesian} to directly estimate the structure of the item parameters for each item; such structure informs the model specification of the item.
For the selected candidate models, we would recommend further applying our identifiability theory to assess their identifiability and validity.
The general theoretical framework developed in this paper would be a useful tool to develop the identifiability and estimability conditions for learning the item-level model structure and the population-level latent class space $\ma$.
{This is an  interesting and important  direction that we plan to pursue   in the future.}

\vspace{4mm}
\textbf{Acknowledgements.} The authors thank the editors, an   associate editor, and two   reviewers for their helpful and constructive comments.

\begin{supplement}\label{suppA}
  \sname{Supplement to ``Partial Identifiability of Restricted Latent Class Models''}
    \stitle{}
   \slink[doi]{XXX}
  \sdatatype{supp.pdf}
  \sdescription{The supplementary material contains the proofs of main results and some technical lemmas.}
\end{supplement}

\bibliographystyle{imsart-number}
\bibliography{ref}

\newpage

~\\

\begin{appendix}
\begin{center}
\textbf{SUPPLEMENT TO ``PARTIAL IDENTIFIABILITY OF RESTRICTED LATENT CLASS MODELS"}
\end{center}

\bigskip

	This supplementary material is organized as follows.   Section A   presents the $Q$-matrices associated with the two real datasets in Example \ref{exp-timss-8th} and \ref{exp-frac} and details of establishing model identifiability of them.   Section B and C   provide the proofs of the main theoretical results for the two-parameter and multi-parameter restricted latent class models in Sections \ref{sec-two-para} and \ref{sec-multi-para} of the main text, respectively. Section D gives the proofs of the results in Section \ref{sec-extend} in the main text.  Section E  gives the proofs of some technical lemmas. \\


\appendix
\section*{Section A: $Q$-matrices associated with real data}

\subsection{TIMSS Data $Q$-matrix and its identifiability.}
Table \ref{tab-tim-43} presents the full $43\times 12$  $Q$-matrix $Q_{43\times 12}$ for the TIMSS data, which is introduced in Example \ref{exp-timss-8th} of the main text. 
 The $Q$-matrix was constructed by mathematics educators and  researchers and its form was specified in \cite{choi2015cdm}. Please refer to \cite{choi2015cdm}  for more details about the   test items and fine-grained attributes. We next show how our theoretical results guarantee $\pp$-partial identifiability of two-parameter models and generic identifiability of multi-parameter models under this $Q_{43\times 12}$.
 
 
 \paragraph{$\pp$-partial identifiability.}
 We show that the $Q_{43\times 12}$ satisfies conditions (C1$^*$) and (C2$^*$).
 The $Q_{43\times 12}$ in Table \ref{tab-tim-43}
 contains 9 basis items $S_{basis}=\{4$,  8,  15, 16, 19, 24,  30,  34,  $38\}$ and the remaining $34$    non-basis items. We can check that each basis item is $S_{non}$-differentiable and conditions (C1$^*$) and (C2$^*$) hold. Thus Corollary \ref{cor-dina-main} implies $\pp$-partial identifiability of the two-parameter restricted latent class models, and also guarantees estimability of $(\ttt^+,\ttt^-,\nnu)$.
 
 \paragraph{Generic identifiability.}
We show that the $Q_{43\times 12}$ satisfies conditions (C5) and (C6). 
In particular,   let $S_1=\{1$,   3,    4,  5,   7,    8,  12, 13, 15, 17,  19, $38\}$ and $S_2=\{2$,   11,   16,  20, 22,  23,  24,  26,    30, 31, 33,   $34
\}$, then items in each of $S_1$ and $S_2$ can be arranged in a way such that the sub-$Q$-matrices $Q_1$ and $Q_2$ take the form of \eqref{eq-diag}, which implies   condition (C5). In addition,  each attribute is required by at least one item in $(S_1\cup S_2)^c$  and thus condition (C6) is also satisfied.
 Theorem \ref{thm-gen-q} then gives the generic identifiability of any  multi-parameter   model associated with this  $Q$-matrix.

\begin{table}[h!]
\centering
\caption{$Q$-matrix, TIMSS 2003 $8$th Grade Data}
\begin{tabular}{l|cccccccccccc}
\hline
Item  &  $\alpha_1$ & $\alpha_2$ & $\alpha_3$ & $\alpha_4$ & $\alpha_5$ & $\alpha_6$ & $\alpha_7$ & $\alpha_8$ & $\alpha_9$ & $\alpha_{10}$ & $\alpha_{11}$ & $\alpha_{12}$ \\
\hline
1 &  1 & 0 & 1 & 1 & 0 & 0& 0 & 0& 0 & 0& 0 & 0\\
2 &  1 & 0 & 0 & 0 & 0 & 0& 0 & 0& 0 & 0& 0 & 0\\
3 &  1 & 0 & 1 & 0 & 0 & 0& 0 & 0& 0 & 1& 0 & 0\\
4 &  0 & 1 & 1 & 0 & 0 & 0& 0 & 0& 0 & 1& 0 & 0\\
5 &  0 & 0 & 0 & 0 & 0 & 0& 1 & 0& 0 & 0& 1 & 0\\
6 &  0 & 0 & 0 & 0 & 0 & 0& 1 & 0& 0 & 1& 1 & 0\\
7 &  0 & 0 & 0 & 0 & 0 & 0& 1 & 0& 0 & 0& 1 & 0\\
8 &  0 & 1 & 0 & 1 & 0 & 0& 0 & 0& 0 & 0& 0 & 0\\
9 &  0 & 0 & 0 & 1 & 0 & 0& 1 & 0& 0 & 0& 0 & 0\\
10 &  0 & 0 & 0 & 1 & 1 & 1& 0 & 0& 0 & 0& 0 & 0\\
11 & 0 & 0 & 0 & 1 & 1 & 1 & 0 & 0 & 0 & 0 & 0 & 0\\
12 & 1 & 0 & 0 & 0 & 1 & 0 & 0 & 0 & 0 & 0 & 0 & 0\\
13 & 0 & 0 & 0 & 0 & 0 & 0 & 0 & 1 & 1 & 0 & 0 & 0\\
14 & 0 & 0 & 0 & 0 & 0 & 0 & 0 & 1 & 1 & 0 & 0 & 0\\
15 & 0 & 0 & 0 & 0 & 0 & 0 & 0 & 1 & 0 & 0 & 0 & 0\\
16 & 0 & 0 & 0 & 1 & 1 & 0 & 0 & 0 & 0 & 0 & 0 & 0\\
17 & 1 & 0 & 0 & 0 & 0 & 0 & 0 & 0 & 0 & 1 & 0 & 0\\
18 & 1 & 1 & 1 & 0 & 0 & 0 & 0 & 0 & 0 & 1 & 0 & 0\\
19 & 0 & 0 & 0 & 1 & 0 & 0 & 0 & 0 & 0 & 0 & 0 & 1\\
20 & 1 & 0 & 0 & 0 & 0 & 0 & 1 & 0 & 0 & 0 & 0 & 0\\
21 & 1 & 1 & 0 & 0 & 0 & 0 & 0 & 0 & 0 & 0 & 0 & 0\\
22 & 0 & 0 & 1 & 1 & 1 & 0 & 0 & 0 & 0 & 0 & 0 & 0\\
23 & 1 & 0 & 0 & 0 & 0 & 0 & 0 & 0 & 0 & 1 & 0 & 0\\
24 & 0 & 0 & 0 & 0 & 0 & 0 & 0 & 0 & 1 & 1 & 0 & 0\\
25 & 0 & 0 & 0 & 0 & 0 & 0 & 1 & 0 & 0 & 0 & 0 & 0\\
26 & 1 & 0 & 0 & 0 & 0 & 0 & 0 & 0 & 0 & 0 & 0 & 1\\
27 & 1 & 0 & 0 & 0 & 0 & 0 & 0 & 0 & 0 & 0 & 0 & 0\\
28 & 1 & 1 & 0 & 0 & 0 & 0 & 0 & 0 & 0 & 0 & 0 & 0\\
29 & 1 & 1 & 0 & 0 & 0 & 0 & 0 & 0 & 0 & 0 & 0 & 0\\
30 & 0 & 0 & 0 & 0 & 0 & 0 & 0 & 0 & 0 & 1 & 1 & 0\\
31 & 0 & 0 & 1 & 0 & 0 & 0 & 1 & 0 & 0 & 0 & 1 & 0\\
32 & 1 & 0 & 0 & 0 & 0 & 0 & 0 & 0 & 0 & 0 & 0 & 0\\
33 & 0 & 0 & 0 & 0 & 0 & 0 & 1 & 1 & 0 & 0 & 0 & 0\\
34 & 0 & 1 & 0 & 0 & 1 & 0 & 0 & 0 & 0 & 0 & 0 & 0\\
35 & 0 & 0 & 0 & 0 & 0 & 0 & 1 & 0 & 0 & 1 & 0 & 0\\
36 & 0 & 0 & 0 & 0 & 1 & 0 & 0 & 1 & 1 & 0 & 0 & 0\\
37 & 0 & 0 & 0 & 0 & 0 & 0 & 1 & 0 & 0 & 0 & 0 & 0\\
38 & 0 & 0 & 0 & 0 & 1 & 1 & 0 & 0 & 0 & 0 & 0 & 0\\
39 & 1 & 0 & 0 & 0 & 0 & 0& 0 & 0& 0 & 0& 0 & 0\\
40 & 1 & 1 & 1 & 0 & 0 & 0& 0 & 0& 0 & 0& 0 & 0\\
41 & 1 & 0 & 0 & 0 & 0 & 0& 0 & 0& 0 & 0& 0 & 0\\
42 & 1 & 1 & 0 & 1 & 0 & 0& 0 & 0& 0 & 0& 0 & 0\\
43 & 1 & 0 & 0 & 0 & 0 & 0& 0 & 0& 0 & 0& 0 & 1\\
\hline 
\end{tabular}
\label{tab-tim-43}
\end{table}

\subsection{Fraction Subtraction Data $Q$-matrix and its identifiability.}\label{sec-qmat-frac}
Table \ref{tab-qfrac} presents the full $20\times 8$ $Q$-matrix $Q_{20\times 8}$ for the fraction subtraction data, which is introduced in Example \ref{exp-frac} in the main text.
We next show how our theoretical results guarantee $\pp$-partial identifiability of two-parameter models and generic identifiability of multi-parameter models under this $Q_{20\times 8}$.

\begin{table}[h!]
\centering
\caption{$Q$-matrix, Fraction Data}
\begin{tabular}{ll|cccccccc}
\hline
Item ID &Content &  $\alpha_1$ & $\alpha_2$ & $\alpha_3$ & $\alpha_4$ & $\alpha_5$ & $\alpha_6$ & $\alpha_7$ & $\alpha_8$\\
\hline
1& $\frac{5}{3} - \frac{3}{4}$ & 0 &	0&	0&	1&	0&	1&	1&	0\\[1mm]
2& $\frac{3}{4} - \frac{3}{8}$ & 0 &   0&	0&	1&	0&	0&	1&	0\\[1mm]
3& $\frac{5}{6} -\frac{1}{9}$ & 0&	0&	0&	1&	0&	0&	1&	0\\[1mm]
4& $3 \frac{1}{2} - 2\frac{3}{2}$ & 0&  1&	1&	0&	1&	0&	1&	0\\[1mm]
5& $4\frac{3}{5} -3 \frac{4}{10}$ & 0&	1&	0& 	1&	0&	0&	1&	1\\[1mm]
6& $\frac{6}{7} -\frac{4}{7}$ & 0&	0&	0&	0&	0&	0&	1&	0\\[1mm]
7& $ 3 -2 \frac{1}{5}$  & 1&	1&	0&	0&	0&	0&	1&	0\\[1mm]
8& $\frac{2}{3} -\frac{2}{3}$ & 0&	0&	0&	0&	0&	0&	1&	0\\[1mm]
9&  $3 \frac{7}{8} - 2$ & 0&	1&	0&	0&	0&	0&	0&	0\\[1mm]
10& $4 \frac{4}{12} - 2 \frac{7}{12}$ & 0&	1&	0&	0&	1&	0&	1&	1\\[1mm]
11&  $4 \frac{1}{3} -2\frac{4}{3}$  & 0&	1&	0&	0&	1&	0&	1&	0\\[1mm]
12& $\frac{11}{8}- \frac{1}{8}$ & 0&	0&	0&	0&	0&	0&	1&	1\\[1mm]
13&  $3\frac{3}{8} - 2\frac{5}{6}$ & 0&	1&	0&	1&	1&	0&	1&	0\\[1mm]
14&  $3 \frac{4}{5} - 3\frac{2}{5}$ & 0&	1&	0&	0&	0&	0&	1&	0\\[1mm]
15&  $2 - \frac{1}{3}$ & 1&	0&	0&	0&	0&	0&	1&	0\\[1mm]
16&  $4 \frac{5}{7} - 1\frac{4}{7}$ & 0&	1&	0&	0&	0&	0&	1&	0\\[1mm]
17&  $7\frac{3}{5} - \frac{4}{5}$ & 0&	1&	0&	0&	1&	0&	1&	0\\[1mm]
18& $4\frac{1}{10} - 2\frac{8}{10}$ & 0&	1&	0&	0&	1&	1&	1&	0\\[1mm]
19& $4 - 1\frac{4}{3}$ & 1&	1&	1&	0&	1&	0&	1&	0\\[1mm]
20& $ 4 \frac{1}{3} -1\frac{ 5}{3}$ & 0&	1&	1&	0&	1&	0&	1&	0\\[1mm]
\hline
\end{tabular}
\label{tab-qfrac}
\end{table}

\paragraph{$\pp$-partial identifiability.}
We apply Theorem \ref{thm_cond} since attribute $k=6$ is only required by two items $\{1,18\}$ and condition (C1$^*$) is violated.
Specifically, we transform the original $Q$-matrix to the form of \eqref{eq2v} with $\vv_1 = (0,\, 0,\, 0,\, 1,\, 0,\, 1,\, 0)$, $\vv_2 = (0,\, 1,\, 0,\, 0,\, 1,\, 1,\, 0)$ and submatrix $Q'$   as specified in \eqref{eq-Qdetail}, by first exchanging the second and the eighteenth rows and  then exchanging the first and the sixth columns. The transformed $Q$-matrix falls into the case (a) of (B.1)   in Theorem \ref{thm_cond}, and it suffices to show that the   $Q'$-matrix in \eqref{eq-Qdetail} satisfy condition (C1$^*$) and (C2$^*$). We can check that (C1$^*$) holds for $Q'$ that attributes required by each   $\bq$-vector in $Q'$ are repeatedly measured by at least two disjoint sets of other items. In addition,  (C2$^*$) is satisfied because $Q'$ only has one   basis item   $S_{basis}(Q')=\{9\}$ and the   item $j=9$ is $S_{non}$-differentiable.
\begin{equation}\label{eq-Qdetail}
Q' = 
\begin{blockarray}{cccccccc}
& \alpha_1 & \alpha_2 & \alpha_3 & \alpha_4 & \alpha_5 & \alpha_7 & \alpha_8\\
\begin{block}{l(ccccccc)}
2& 0&   0&  0&	1&	0&	    1&	0\\
3& 0&	0&	0&	1&	0&		1&	0\\
4& 0&   1&	1&	0&	1&		1&	0\\
5& 0&	1&	0& 	1&	0&		1&	1\\
6& 0&	0&	0&	0&	0&		1&	0\\
7& 1&	1&	0&	0&	0&		1&	0\\
8& 0&	0&	0&	0&	0&		1&	0\\
9& 0&	1&	0&	0&	0&		0&	0\\
10& 0&	1&	0&	0&	1&		1&	1\\
11& 0&	1&	0&	0&	1&		1&	0\\
12& 0&	0&	0&	0&	0&		1&	1\\
13& 0&	1&	0&	1&	1&		1&	0\\
14& 0&	1&	0&	0&	0&		1&	0\\
15& 1&	0&	0&	0&	0&		1&	0\\
16& 0&	1&	0&	0&	0&		1&	0\\
17& 0&	1&	0&	0&	1&		1&	0\\
19& 1&	1&	1&	0&	1&		1&	0\\
20& 0&	1&	1&	0&	1&		1&	0\\
\end{block}
\end{blockarray}
\begin{array}{l}
\preceq \bq_3,~\bq_5 \\ 
\preceq \bq_2,~\bq_5 \\ 
\preceq \bq_9,~ \bq_{20}\\ 
\preceq \bq_3\vee\bq_{4}\vee\bq_{10},~\bq_{12}\vee\bq_{13} \\
\preceq \bq_2,~\bq_8; \\
\preceq \bq_{14}\vee\bq_{15},~\bq_{19} \\
\preceq \bq_2,~\bq_6 \\
\preceq \bq_4,~\bq_5; \\
\preceq \bq_{4}\vee\bq_{5},~ \bq_7\vee\bq_{12}\vee\bq_{13} \\
\preceq \bq_{4}\vee\bq_{5},~ \bq_7\vee\bq_{12}\vee\bq_{13} \\
\preceq \bq_{4}\vee\bq_{5},~ \bq_7\vee\bq_{12}\vee\bq_{13} \\
\preceq \bq_{4}\vee\bq_{5},~ \bq_{13} \\
\preceq \bq_4,~\bq_5 \\
\preceq \bq_{14}\vee\bq_{15},~\bq_{19} \\
\preceq \bq_4,~\bq_5 \\
\preceq \bq_{13},~\bq_{18} \\
\preceq \bq_{4}\vee\bq_{7},~ \bq_{15}\vee\bq_{19} \\
\preceq \bq_{4}\vee\bq_{5},~ \bq_{13} \\
\end{array}
\end{equation}
Theorem \ref{thm_cond} therefore gives the $\pp$-partial identifiability of the two-parameter models.

\paragraph{Generic identifiability.}
We apply Theorem \ref{prop-vio-gen} since attribute $6$ is required by only two items, item 6 and item 18. Rearranging the columns and rows of this $Q$-matrix to the form of \eqref{eq-gen-v1v2} with $\vv_1 = (0,\, 0,\, 0,\, 1,\, 0,\, 1,\, 0)$ and $\vv_2 = (0,\, 1,\, 0,\, 0,\, 1,\, 1,\, 0)$, we have $\vv_1\vee\vv_2\neq \mo_{K-1}$ and the sub-matrix $Q'$ part satisfies conditions (C5) and (C6), so Theorem \ref{prop-vio-gen} gives the generic identifiability of      multi-parameter $Q$-restricted latent class models.

\medskip
\section*{Section B: Proof of Main Results in Section 3}\label{sec-two-proof}

In this section we first introduce some technical quantities and their properties which will be useful in later proofs, then present the proofs of the main results in Section \ref{sec-two-para} for two-parameter restricted latent class models.

To facilitate the study of parameter identifiability of restricted latent class models, we consider a marginal probability matrix $T(\TT)$ of size $2^J\times m$ as follows, where $J=|\ms|$ denotes the number of items and $m=|\ma|$ denotes the number of classes. Rows of $T(\TT)$ are indexed by the $2^J$ possible response patterns $\rr = (r_1,\ldots,r_J)^\top\in\{0,1\}^J$ and columns of $T(\TT)$ are indexed by latent classes $\aaa\in\ma$, while the $(\rr, \aaa)$th entry of $T(\TT)$, denoted by $T_{\rr, \aaa}(\TT)$, represents the marginal probability that subjects in latent class $\aaa$ provide positive responses to the set of items $\{j: r_j=1\}$, namely 
\[
T_{\rr,\aaa}(\TT) = P(\boldsymbol R \succeq \rr\mid\TT, \aaa) = \prod_{j=1}^J \theta_{j,\aaa}^{r_j}.
\]
Denote the $\aaa$th column vector and the $\rr$th row vector of the $T$-matrix by $T_{\Cdot,\aaa}(\TT)$ and $T_{\rr,\Cdot}(\TT)$ respectively. Let $\ee_j$ denote the $J$-dimensional unit vector with the $j$th element being one and all the other elements being zero, then any response pattern $\rr$ can be written as a sum of some $\ee$-vectors, namely $\rr=\sum_{j:r_j=1}\ee_j$.
The $\rr$th element of the $2^J$-dimensional vector $T(\TT)\pp$ is
\[
\{T(\TT)\pp\}_{\rr}= T_{\rr,\Cdot}(\TT)\pp = \sum_{\aaa\in\ma} T_{\rr,\aaa}(\TT) p_{\aaa} = P(\boldsymbol R \succeq \rr \mid \TT).
\]
Based on the   $T$-matrix, we have the following   definition of identifiability  for model parameters ($\TT, \pp$), equivalent to definition  \eqref{eq-orig} in Section 2.3 of the main text. The equivalence of the two definitions comes from that two sets of model parameters lead to the same marginal distribution of  responses $\{P(\boldsymbol R \succeq \rr \mid \TT), \forall \rr\in\{0,1\}^J\}$ if and only if they  lead to the same distribution of the responses $\{P(\boldsymbol R = \rr \mid \TT), \forall \rr\in\{0,1\}^J\}$.
\begin{proposition}\label{equiv-def}
Under a restricted latent class model, the model parameters are identifiable if and only if for any ($\TT$, $\pp$) and  ($\bar\TT$, $\bar\pp$), 
\begin{equation}\label{eq1}
T(\TT)\pp = T(\bar\TT)\bar\pp
\end{equation}
implies $(\TT,\pp) = (\bar\TT,\bar\pp)$.
\end{proposition}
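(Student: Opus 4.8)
The plan is to show that the two characterizations of identifiability coincide by establishing that the vector of cumulative probabilities $T(\TT)\pp = (P(\RR\succeq\rr\mid\TT,\pp))_{\rr\in\{0,1\}^J}$ and the vector of exact probabilities $(P(\RR=\rr\mid\TT,\pp))_{\rr\in\{0,1\}^J}$ are related by a single fixed invertible linear map on $\mathbb R^{2^J}$ that does not depend on the parameters. Once this is in hand, the equality $T(\TT)\pp = T(\bar\TT)\bar\pp$ will be seen to hold if and only if the two exact response distributions agree for every $\rr$, and the equivalence of the two definitions of identifiability follows immediately.

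First I would write the defining relation between the two vectors. By local independence and the definition $T_{\rr,\aaa}(\TT) = \prod_{j} \theta_{j,\aaa}^{r_j}$, summing against $\pp$ gives
\[
\{T(\TT)\pp\}_{\rr} = \sum_{\aaa\in\ma} p_{\aaa} \prod_{j:\,r_j=1}\theta_{j,\aaa} = P(\RR\succeq\rr\mid\TT,\pp) = \sum_{\rr'\succeq\rr} P(\RR=\rr'\mid\TT,\pp),
\]
where the last equality holds because the event $\{\RR\succeq\rr\}$ is the disjoint union of the events $\{\RR=\rr'\}$ over all $\rr'\succeq\rr$. Collecting these identities over all $\rr\in\{0,1\}^J$ yields the matrix form $T(\TT)\pp = Z\,\boldsymbol\pi$, where $\boldsymbol\pi = (P(\RR=\rr\mid\TT,\pp))_{\rr}$ is the exact response distribution and $Z$ is the fixed $2^J\times 2^J$ zeta matrix of the Boolean lattice $\{0,1\}^J$ with entries $Z_{\rr,\rr'} = I(\rr'\succeq\rr)$.

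Next I would argue that $Z$ is invertible and parameter-free. Ordering the response patterns by the number of ones $|\rr|=\sum_{j=1}^J r_j$, or by any linear extension of the partial order $\succeq$, the matrix $Z$ is upper triangular with unit diagonal, hence nonsingular; its inverse is the M\"obius matrix of the Boolean lattice, giving the inclusion–exclusion recovery
\[
P(\RR=\rr\mid\TT,\pp) = \sum_{\rr'\succeq\rr} (-1)^{|\rr'|-|\rr|}\,\{T(\TT)\pp\}_{\rr'}.
\]
Because $Z$ does not depend on $(\TT,\pp)$, the identity $T(\TT)\pp = T(\bar\TT)\bar\pp$ is equivalent to $Z^{-1}T(\TT)\pp = Z^{-1}T(\bar\TT)\bar\pp$, that is, to $P(\RR=\rr\mid\TT,\pp)=P(\RR=\rr\mid\bar\TT,\bar\pp)$ for every $\rr\in\{0,1\}^J$.

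Finally I would conclude by comparing with definition \eqref{eq-orig}. Strict identifiability there means that equality of the exact response distributions for all $\rr$ forces $(\TT,\pp)=(\bar\TT,\bar\pp)$; by the equivalence just established, this is the same as requiring that $T(\TT)\pp = T(\bar\TT)\bar\pp$ forces $(\TT,\pp)=(\bar\TT,\bar\pp)$, which is precisely the condition stated in the proposition. I do not anticipate a genuine obstacle: the argument is essentially bookkeeping, and the only point deserving care is verifying that the cumulative-to-exact map is M\"obius inversion on the Boolean lattice and in particular invertible, so that passing between the two representations loses no information.
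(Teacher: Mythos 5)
Your proposal is correct and takes essentially the same route as the paper: the paper simply asserts that agreement of the marginal probabilities $\{P(\RR\succeq\rr)\}$ is equivalent to agreement of the exact probabilities $\{P(\RR=\rr)\}$, which is precisely the equivalence you establish. Your M\"obius-inversion argument just makes explicit the invertible, parameter-free linear map that the paper leaves implicit.
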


 Together with this equivalent definition, the following proposition, which was introduced in \cite{xu2017}, describes an important algebraic property of the $T$-matrix and  will be used in our proofs.
\begin{proposition}\label{prop-ltrans}
For any $\ttt^*=(\theta_1,\ldots,\theta_J)^\top\in\mathbb R^J$, there exists an invertible lower triangular matrix $D(\ttt^*)$ depending solely on $\ttt^*$, such that the diagonal elements of $D(\ttt^*)$ are all 1, 
and 
\[
T(\TT-\ttt^*\mathbf1^\top) = D(\ttt^*)T(\TT).
\]
\end{proposition}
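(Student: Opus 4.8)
The plan is to construct $D(\ttt^*)$ explicitly by expanding each entry of $T(\TT-\ttt^*\mathbf 1^\top)$ as a linear combination of entries of $T(\TT)$, and then simply read off the coefficients. Writing $\mathrm{supp}(\rr)=\{j:\ r_j=1\}$, the $(\rr,\aaa)$ entry of the left-hand side is $\prod_{j\in\mathrm{supp}(\rr)}(\theta_{j,\aaa}-\theta_j)$, since subtracting $\ttt^*\mathbf 1^\top$ shifts every column of $\TT$ by the same vector $\ttt^*=(\theta_1,\dots,\theta_J)^\top$. First I would apply the distributive law to this product of binomials,
\[
\prod_{j\in\mathrm{supp}(\rr)}(\theta_{j,\aaa}-\theta_j)
=\sum_{S\subseteq\mathrm{supp}(\rr)}\Big(\prod_{j\in S}\theta_{j,\aaa}\Big)\Big(\prod_{j\in\mathrm{supp}(\rr)\setminus S}(-\theta_j)\Big),
\]
and recognize $\prod_{j\in S}\theta_{j,\aaa}=T_{\rr_S,\aaa}(\TT)$, where $\rr_S\in\{0,1\}^J$ is the indicator vector of $S$. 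The remaining factor $\prod_{j\in\mathrm{supp}(\rr)\setminus S}(-\theta_j)$ depends only on $\ttt^*$, so it is the natural candidate for the matrix coefficient.

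This motivates defining
\[
D_{\rr,\rr'}(\ttt^*)=
\begin{cases}
\prod_{j\in\mathrm{supp}(\rr)\setminus\mathrm{supp}(\rr')}(-\theta_j), & \mathrm{supp}(\rr')\subseteq\mathrm{supp}(\rr),\\
0, & \text{otherwise.}
\end{cases}
\]
With this definition the expansion reads $\prod_{j\in\mathrm{supp}(\rr)}(\theta_{j,\aaa}-\theta_j)=\sum_{\rr'}D_{\rr,\rr'}(\ttt^*)\,T_{\rr',\aaa}(\TT)$ for every class $\aaa$; since this holds column by column, it is exactly the claimed identity $T(\TT-\ttt^*\mathbf 1^\top)=D(\ttt^*)T(\TT)$, and $D(\ttt^*)$ manifestly depends only on $\ttt^*$.

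It then remains to verify the structural properties. For the unit diagonal, I would note that $D_{\rr,\rr}$ corresponds to $S=\mathrm{supp}(\rr)$, so the product is empty and equals $1$. For lower triangularity, the key point is that $D_{\rr,\rr'}(\ttt^*)\neq 0$ forces $\mathrm{supp}(\rr')\subseteq\mathrm{supp}(\rr)$; I would therefore order the $2^J$ response patterns by any linear extension of the subset partial order (for instance, by nondecreasing Hamming weight $\sum_j r_j$, breaking ties arbitrarily), so that whenever $\mathrm{supp}(\rr')\subsetneq\mathrm{supp}(\rr)$ the index of $\rr'$ strictly precedes that of $\rr$. Under such an ordering all nonzero off-diagonal entries lie strictly below the diagonal, so $D(\ttt^*)$ is lower triangular with unit diagonal, whence $\det D(\ttt^*)=1$ and $D(\ttt^*)$ is invertible.

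The computation itself is elementary, so the only real care lies in the triangularity step: one must confirm that the support-inclusion structure of the nonzero coefficients is compatible with a single fixed ordering of all $2^J$ patterns that simultaneously places every proper subset before its superset. Choosing the Hamming-weight ordering settles this cleanly, and the empty-product convention on the diagonal is precisely what pins each diagonal entry to $1$, delivering invertibility for free regardless of the values of $\ttt^*$.
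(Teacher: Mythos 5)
Your proof is correct and complete: the binomial expansion of $\prod_{j\in\mathrm{supp}(\rr)}(\theta_{j,\aaa}-\theta_j)$, the identification of $\prod_{j\in S}\theta_{j,\aaa}$ with $T_{\rr_S,\aaa}(\TT)$, the empty-product diagonal, and the Hamming-weight ordering (which matches the ordering of response patterns the paper uses elsewhere) together give exactly the claimed factorization. Note that this paper does not prove the proposition itself but imports it from Xu (2017); your argument is the standard one used there, so there is nothing to compare beyond saying the approaches coincide.
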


\noindent 
Another useful property of the $T$-matrix  is given by the following lemma, whose proof is given in Section D.
\begin{lemma}\label{lem-rk}
Denote the $T$-matrix corresponding to a subset of items $S$ by $T(\TT_{S})$, where $\TT_S=(\theta_{j,\aaa},~j\in S,~\aaa\in\ma)$. 
If for an item set $S$, the $\Gamma$-matrix $\Gamma^S$ of size $|S|\times m$ is separable, then the corresponding $T$-matrix $T(\TT_{S})$ of size $2^{|S|}\times m$  has full column rank $m$.
\end{lemma}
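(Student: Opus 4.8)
The plan is to work from the equivalent identifiability formulation of Proposition \ref{equiv-def} and to exploit the tensor-product structure of $T(\TT_S)$. Ordering the $2^{|S|}$ rows by the subsets $S'\subseteq S$ that support the response patterns, the $\aaa$th column of $T(\TT_S)$ is the Kronecker product $\bigotimes_{j\in S}(1,\theta_{j,\aaa})^\top$, so $T_{S',\aaa}(\TT_S)=\prod_{j\in S'}\theta_{j,\aaa}$. Proving full column rank $m$ is then equivalent to showing that the only $c=(c_\aaa)_{\aaa\in\ma}$ with $\sum_{\aaa}c_\aaa\prod_{j\in S'}\theta_{j,\aaa}=0$ for every $S'\subseteq S$ is $c=\mz$, i.e.\ to exhibiting $m$ response patterns whose rows form an invertible $m\times m$ submatrix. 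A preliminary observation I would record is that separability of $\Gamma^S$ already forces the columns of $\TT_S$ to be pairwise distinct: if $\aaa\neq\aaa'$ then some $j\in S$ has, say, $\aaa\in\mc_j$ and $\aaa'\notin\mc_j$, and the strict-max constraint \eqref{eq-constraints} gives $\theta_{j,\aaa}=\theta_j^{+}>\theta_{j,\aaa'}$.

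For the two-parameter models that are the focus of this section I can finish cleanly. Applying the triangular transform of Proposition \ref{prop-ltrans} with $\ttt^{*}=\ttt^{-}_S=(\theta_j^-)_{j\in S}$ preserves the column rank while replacing each $\theta_{j,\aaa}$ by $\delta_j\,\Gamma_{j,\aaa}$, where $\delta_j=\theta_j^{+}-\theta_j^{-}>0$. Hence the transformed row indexed by $S'$ equals $\big(\prod_{j\in S'}\delta_j\big)\cdot\big(I(S'\subseteq T_\aaa)\big)_{\aaa}$, where $T_\aaa:=\{j\in S:\aaa\in\mc_j\}$. After rescaling each row by the positive constant $\prod_{j\in S'}\delta_j$, the matrix becomes the $0$--$1$ incidence matrix $B_{S',\aaa}=I(S'\subseteq T_\aaa)$ of the subset lattice. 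Separability says the sets $T_\aaa$ are distinct, so the columns of $B$ are distinct columns of the full subset-lattice zeta matrix $\big(I(S'\subseteq T)\big)_{S',T}$, which is invertible (its inverse is the M\"obius matrix); equivalently, ordering the classes by a linear extension of $\subseteq$ on $\{T_\aaa\}$ and selecting the rows $S'=T_\aaa$ makes $B$ upper triangular with unit diagonal. Either way the $m$ columns are independent, which proves the claim.

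For a general restricted latent class model I would argue by induction on $|S|$ through the block recursion $T(\TT_S)=\big[\,T(\TT_{S'})^\top,\ \big(T(\TT_{S'})\Lambda\big)^\top\big]^\top$, obtained by separating the response patterns according to whether item $j_s$ is answered positively, where $S'=S\setminus\{j_s\}$ and $\Lambda=\mathrm{diag}\big((\theta_{j_s,\aaa})_{\aaa\in\ma}\big)$. The key combinatorial fact is that deleting the single binary row $\Gamma_{j_s,\Cdot}$ can only coalesce latent classes into equivalence groups of size at most two under $\Gamma^{S'}$, since any classes sharing a $\Gamma^{S'}$-column must be separated in $\Gamma^S$ using only the binary coordinate $j_s$. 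I would track the invariant that $\mathrm{rank}\,T(\TT_S)$ is at least the number of distinct columns of $\Gamma^S$; the stacked block should add at least one independent direction for each two-element group that item $j_s$ splits apart, because on such a group $j_s$ takes the distinct values $\theta_{j_s}^{+}$ and $\theta_{j_s,\aaa}<\theta_{j_s}^{+}$, so $\Lambda$ moves the corresponding kernel direction of $T(\TT_{S'})$ outside the kernel. When $\Gamma^S$ is separable this count equals $m$, giving full column rank.

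The main obstacle is exactly this last step in the multi-parameter regime. Unlike the two-parameter case, distinctness of the $\Gamma$-columns does not by itself force the Kronecker-product columns to be linearly independent -- a configuration of ``collinear'' value-vectors can make the multilinear monomials fail to separate the classes -- so the argument must actively use the strict inequalities in \eqref{eq-constraints}, not merely the combinatorics of $\Gamma$. Concretely, when two classes share a $\Gamma^{S'}$-column their $T(\TT_{S'})$-columns need not coincide, since their ``partial'' response probabilities may differ, so the clean description of $\ker T(\TT_{S'})$ as spanned by differences of coalesced pairs (available in the two-parameter case) is no longer valid. Making the rank-increment bookkeeping rigorous, while invoking $\theta_{j_s}^{+}>\theta_{j_s,\aaa}$ to guarantee each split genuinely raises the rank, is the delicate part of the proof.
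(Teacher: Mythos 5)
Your argument for the two-parameter case is correct and is a genuine alternative to the paper's: you subtract the \emph{lower} level $\ttt^-_S$, reduce the rows to multiples of the incidence vectors $I(S'\subseteq T_{\aaa})$, and invoke the invertibility of the subset-lattice zeta matrix (equivalently, a linear extension of $\subseteq$ on the distinct sets $T_{\aaa}$ giving a unitriangular submatrix). That is clean, but it only covers the special case, and the lemma is invoked in its full multi-parameter generality (e.g.\ for $T(\bar\TT_{S_2})$ in the proof of Proposition \ref{new} and in the proof of Theorem \ref{thm-gen-q}), so the part you leave unfinished is precisely the part that is needed.

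The gap is real but the fix is a single change of reference level that you did not try: subtract the \emph{top} level rather than the bottom one. Set $\ttt^*=\sum_{j\in S}\theta_{j,\mo}\,\ee_j$, where $\theta_{j,\mo}$ denotes the common value of $\theta_{j,\aaa}$ on $\mc_j$. By the equality part of \eqref{eq-constraints}, every $\aaa\in\mc_j$ satisfies $\theta_{j,\aaa}-\theta_{j,\mo}=0$ \emph{exactly}, and by the strict inequality every $\aaa\notin\mc_j$ satisfies $\theta_{j,\aaa}-\theta_{j,\mo}\neq 0$; the intermediate ``partially capable'' levels are irrelevant because they are all strictly below $\theta_{j,\mo}$. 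Hence after the transform of Proposition \ref{prop-ltrans} the support pattern of each entry is governed purely by $\Gamma^S$ again, even in the multi-parameter regime. Ordering the $m$ distinct columns of $\Gamma^S$ lexicographically and taking the response patterns $\rr_k=\sum_{j:\,\Gamma_{j,\aaa_k}=0}\ee_j$ gives the entry $\prod_{j:\,\Gamma_{j,\aaa_k}=0}(\theta_{j,\aaa_h}-\theta_{j,\mo})$ in row $k$, column $h$; for $h>k$ the first coordinate where the two columns differ supplies a $j$ with $\Gamma_{j,\aaa_k}=0$ and $\Gamma_{j,\aaa_h}=1$, killing the product, while the diagonal entries are products of strictly nonzero factors. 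This yields an $m\times m$ lower-triangular submatrix with nonzero diagonal, hence full column rank, with no induction and no rank-increment bookkeeping. Your proposed induction, by contrast, founders exactly where you say it does: once classes merged in $\Gamma^{S'}$ can still carry distinct partial-capability values, the kernel of $T(\TT_{S'})$ is no longer spanned by differences of coalesced pairs, and the claimed invariant does not propagate.
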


 Equipped with the above   developments, now we are ready to prove the main results.


\begin{proof}[Proof of Proposition \ref{prop1} and Proposition \ref{prop1'}] 
When $\TT$ is known, by Proposition \eqref{equiv-def}, we only need to show that if   $T(\TT)\pp = T(\TT)\bar\pp$, then  $\pp=\bar\pp$. 
This directly follows from the result in Lemma \ref{lem-rk} that  when $\Gamma$ is separable,      the $T$-matrix $T(\TT)$ has full column rank $m$.

We next prove the necessity part of Proposition \ref{prop1} that the separability of the $\Gamma$-matrix   is    necessary  for  identifiability of  $\pp$.
Suppose $\Gamma$ is inseparable and consider the representatives $\aaa_{\ma_1},\ldots,\aaa_{\ma_C}$ from the $C$ equivalence classes, respectively.
It suffices to show that for any $ \pp\neq \bar\pp$, if 
$\nnu= \bar \nnu$, where $\bar \nnu = (\bar \nu_{[\aaa_{\ma_i}]}, i=1,\ldots,C)$ and $\bar \nu_{[\aaa_{\ma_i}]} = \sum_{\aaa:\,\aaa\in \ma_i}\bar p_{\aaa}$, 
then $T(\TT)\pp = T( \TT)\bar\pp.$
 Note that under the two-parameter restricted latent class models, any two equivalence latent classes $\aaa\stackrel{\Gamma}{\sim}\aaa'$    have identical item parameter vectors, i.e. $\ttt_{\Cdot,\aaa} = \ttt_{\Cdot,\aaa'}$. This further implies  $T_{\Cdot,\aaa}(\TT) = T_{\Cdot,\aaa'}(\TT)$ by the definition of the $T$-matrix.
Let $\Gamma^{eq}$ be the $J\times C$ submatrix of $\Gamma$ that consists of the   column vectors indexed by 
$\aaa_{\ma_i}$, $i=1,\ldots,C$, 
and $T^{eq}(\TT)$ be the corresponding $2^J\times C$ submatrix of $T(\TT)$. 
Then if 
$\nnu= \bar \nnu$,
\[\begin{aligned}
T(\TT)\bar\pp &= \sum_{i=1}^C \sum_{\aaa\in\ma_i} T_{\Cdot,\aaa}(\TT)\bar p_{\aaa} 
= \sum_{i=1}^C T_{\Cdot,\aaa_{\ma_i}}(\TT) \bar\nu_{\ma_i}\\
& =T^{eq}(\TT)\bar\nnu = T^{eq}(\TT) \nnu\\
&
= \sum_{i=1}^C T_{\Cdot,\aaa_{\ma_i}}(\TT) \nu_{\ma_i}= \sum_{i=1}^C \sum_{\aaa\in\ma_i} T_{\Cdot,\aaa}(\TT)  p_{\aaa}  = T(\TT)\pp,\\
\end{aligned}\]
This proves that given an inseparable $\Gamma$-matrix, $\pp$ is not identifiable. 

Lastly, we prove Proposition \ref{prop1'} that when the $\Gamma$-matrix is inseparable,   
the grouped proportion parameters  $\nnu$ is identifiable.   
By Proposition \ref{equiv-def}, we only need to show that if   $T(\TT)\pp = T(\TT)\bar\pp$, then $\nnu=\bar\nnu$. 
  From the   calculation in the previous paragraph, we know
  $T(\TT)\pp= T^{eq}(\TT) \nnu$ and  $T(\TT)\bar\pp= T^{eq}(\TT) \bar\nnu.$ 
  Since $\Gamma^{eq}$ is separable by its construction, Lemma \ref{lem-rk} gives that   $T^{eq}(\TT)$ has full column rank $C$. Therefore,   $T(\TT)\pp = T(\TT)\bar\pp$ implies $\nnu=\bar\nnu$ and   $\nnu$ is identifiable.
This completes the proof.
\end{proof}


\begin{proof}[Proof of Equation \eqref{eq-group-q} in Remark \ref{remark1}]
We introduce a notation first. For any set of items $S\subset\{1,\ldots,J\}$, we denote $\bq_S =\vee_{h\in S}\,\bq_h$.
To prove the first part of \eqref{eq-group-q}, it suffices to show that under the conjunctive DINA model, (i) for any $\aaa_1,\aaa_2\in\mathcal R^{Q,conj}$ and $\aaa_1\neq\aaa_2$, we have $\Gamma_{\Cdot,\aaa_1}\neq\Gamma_{\Cdot,\aaa_2}$; and (ii) for any $\aaa\in\{0,1\}^K$, there exists $\aaa'\in\mathcal R^{Q,conj}$ such that $\Gamma_{\Cdot,\aaa}=\Gamma_{\Cdot,\aaa'}$. 

For any $\aaa_1,\aaa_2\in\mathcal R^{Q,conj}$, without loss of generality, we can denote $\aaa_1 = \vee_{h\in S_1}\bq_h$ and $\aaa_2 = \vee_{h\in S_2}\bq_h$ where $S_1, S_2\subset\{1,\ldots,J\}$ are two different sets of items. Then by definition, in the vector $\Gamma_{\Cdot,\aaa_1}$, the entry $\Gamma_{j,\aaa_1}=1$ if and only if $j\in S_1$; and similarly in the vector $\Gamma_{\Cdot,\aaa_2}$, the entry $\Gamma_{j,\aaa_2}=1$ if and only if $j\in S_2$. Since $S_1\neq S_2$, we must have the two vectors different, i.e., $\Gamma_{\Cdot,\aaa_1}\neq \Gamma_{\Cdot,\aaa_2}$. This proves (i). Next, for any $\aaa\in\{0,1\}^K$,  
we collect the items that $\aaa$ is capable of in the set $S_{\aaa} = \{j\in\ms:\aaa\succeq\bq_j\}$, and just define 
$
\aaa' = \bq_{S_{\aaa}}. 
$
then clearly $\aaa'\in\mathcal R^{Q,conj}$. Additionally, the set of items that $\aaa'$ is capable of is also $S_{\aaa}$, so $\Gamma_{\Cdot, \aaa} = \Gamma_{\Cdot, \aaa'} $. This proves (ii). So the first part of \eqref{eq-group-q} holds.

To prove the second part of (9), it suffices to show that under the disjunctive DINO model, (iii) for any $\aaa_1,\aaa_2\in\mathcal R^{Q,comp}$ and $\aaa_1\neq\aaa_2$, we have $\Gamma^{disj}_{\Cdot,\aaa_1}\neq\Gamma^{disj}_{\Cdot,\aaa_2}$; and (iv) for any $\aaa\in\{0,1\}^K$, there exists $\aaa'\in\mathcal R^{Q,disj}$ such that $\Gamma^{disj}_{\Cdot,\aaa}=\Gamma^{disj}_{\Cdot,\aaa'}$.
First, for $\aaa_1,\aaa_2\in\mathcal R^{Q,disj}$ and $\aaa_1\neq\aaa_2$, they can be written as $\aaa_1 = \mathbf1_K^\top - \bq_{S_1}$ and $\aaa_2 = \mathbf1_K^\top - \bq_{S_2}$ where $S_1,S_2$ are two different item sets. Then 
\begin{align}\label{eq-disj}
\Gamma^{disj}_{j,\aaa_1} = &~ I(\aaa_1\nprec\bq_j) = I(\mathbf1_K^\top - \bq_{S_1}\nprec\bq_j)\\ \notag
=&~ I(\exists k~s.t.~q_{j,k}=1,~q_{S_1,k}=0) 
= I(j\not\in S_1),
\end{align}
and similarly $\Gamma^{disj}_{j,\aaa_2} = I(j\not\in S_2)$. Since $S_1\neq S_2$, we have the inequalities of the two column vectors $\Gamma^{disj}_{\Cdot,\aaa_1}\neq\Gamma^{disj}_{\Cdot,\aaa_2}$. This proves (iii). 
Next, for any $\aaa\in\{0,1\}^K$, we define $S^{\star}_{\aaa} = \{j\in\ms:\, \aaa\prec\bq_j\}$, which is the set of items $\aaa$ is \textit{not }capable of under the disjunctive model. Define 
$$
\aaa' = \mathbf1^\top_K - \bq_{S^\star_{\aaa}},
$$
then clearly $\aaa'\in \mathcal R^{Q,disj}$. Further, similar to the derivation in \eqref{eq-disj}, we have 
$$
\Gamma^{disj}_{j,\aaa'} = I(j\not\in S^\star_{\aaa}),
$$
which implies the set of items $\aaa'$ is \textit{not} capable of is also $S^\star_{\aaa}$. This means $\Gamma^{disj}_{\Cdot,\aaa}=\Gamma^{disj}_{\Cdot,\aaa'}$ and proves (iv).
\end{proof}
\color{black}

In the following proofs of the results for two-parameter restricted latent class models, for any latent class $\aaa$, we   use $[\aaa]$ to denote the $\Gamma$-induced equivalence class containing $\aaa$. 
Then by definition, with $j$ ranging in the set of all items and $[\aaa]$ ranging in the set of all equivalence classes, $(\theta_{j,[\aaa]})$ give all the item parameters of interest while $(\nu_{[\aaa]})$ give all the grouped proportion parameters of interest, under the framework of $\pp$-partial identifiability.
In the following, when there is no ambiguity, we write the item parameters as $\TT=(\theta_{j,[\aaa]})$; and write  $T^{eq}(\bar\TT)\bar\nnu = T^{eq}(\TT) \nnu$ as $T(\bar\TT)\bar\nnu = T(\TT) \nnu$, for  $\TT=(\theta_{j,[\aaa]})$, $\nnu=(\nu_{[\aaa]})$, and $\bar\TT=(\bar\theta_{j,[\aaa]})$, $\bar\nnu=(\bar\nu_{[\aaa]})$.

\begin{proof}[Proof of Theorem \ref{thm1}]
To show the $\pp$-partial identifiability, Proposition \ref{equiv-def} implies that we only need to show for any ($\TT$, $\nnu$) and  ($\bar\TT$, $\bar\nnu$), 
$ T(\TT)\nnu = T(\bar\TT)\bar\nnu$
implies $(\TT,\nnu) = (\bar\TT,\bar\nnu)$.
We prove this in two steps: in Step 1, we   show  the Repeated Measurement Condition (C1) ensures  identifiability of $(\ttt^+,\ttt^-_{non})$; in Step 2, we show the Sequentially Differentiable Condition (C2) additionally ensures  identifiability of the remaining  parameters $(\ttt^-_{basis}, \nnu)$, where $\ttt^-_{basis}=(\theta^-_j,~j\in S_{basis})$.
In both steps, we  frequently use the following lemma, whose proof is postponed to Section D. 
\begin{lemma}\label{lemmaS2}
	Under the two-parameter restricted latent class models,   Equation \eqref{eq1} implies that $\theta_j^+\neq \bar\theta_j^-$ and $\theta_j^-\neq \bar\theta_j^+$ for any item $j$.
\end{lemma}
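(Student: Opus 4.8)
The plan is to reduce the statement to the marginal distribution of the single response $R_j$, which can be read off directly from Equation \eqref{eq1}, and then to exploit the strict ordering $\theta_j^+>\theta_j^-$ together with the positivity of the proportions. The guiding observation is that $P(R_j=1)$ is a \emph{strict} convex combination of the two item parameters of $j$, so it must lie strictly between them; matching this single number across the two parameterizations will then force the two claimed inequalities.

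First I would evaluate \eqref{eq1} at the unit response pattern $\rr=\ee_j$. Since $T_{\ee_j,\aaa}(\TT)=\theta_{j,\aaa}$, the $\ee_j$th coordinate of \eqref{eq1} reads $\sum_{\aaa\in\ma}\theta_{j,\aaa}\,p_{\aaa}=\sum_{\aaa\in\ma}\bar\theta_{j,\aaa}\,\bar p_{\aaa}$, i.e.\ $P(R_j=1\mid\TT,\pp)=P(R_j=1\mid\bar\TT,\bar\pp)$, a common value I will call $m_j$. Next I would write $m_j=\theta_j^+\,\nu_j^+ +\theta_j^-\,(1-\nu_j^+)$ with $\nu_j^+=\sum_{\aaa\in\mc_j}p_{\aaa}$, and likewise $m_j=\bar\theta_j^+\,\bar\nu_j^+ +\bar\theta_j^-\,(1-\bar\nu_j^+)$. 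Because the two-parameter model fixes the same constraint set $\mc_j$ for both parameterizations, because the defining property $|\{\theta_{j,\aaa}:\aaa\in\ma\}|=2$ makes both $\mc_j$ and $\ma\setminus\mc_j$ nonempty, and because every $p_{\aaa}\in(0,1)$, the weight $\nu_j^+$ lies strictly in $(0,1)$, and the same holds for $\bar\nu_j^+$. Together with $\theta_j^+>\theta_j^-$ and $\bar\theta_j^+>\bar\theta_j^-$, this gives the strict sandwiching
\[
\theta_j^-<m_j<\theta_j^+\qquad\text{and}\qquad \bar\theta_j^-<m_j<\bar\theta_j^+.
\]

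The concluding step is a one-line contradiction. If $\theta_j^+=\bar\theta_j^-$ held, then $m_j<\theta_j^+=\bar\theta_j^-<m_j$, which is impossible, so $\theta_j^+\neq\bar\theta_j^-$; symmetrically, if $\theta_j^-=\bar\theta_j^+$ held, then $m_j>\theta_j^-=\bar\theta_j^+>m_j$, again impossible, so $\theta_j^-\neq\bar\theta_j^+$. As $j$ is arbitrary, this would establish the lemma. I do not anticipate a genuine obstacle; the only point needing care is the second step, namely certifying that $\nu_j^+$ is \emph{strictly} interior to $(0,1)$ rather than merely in $[0,1]$, since this strictness is precisely what makes the sandwiching inequalities strict and hence what drives the whole argument. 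This is guaranteed by the nonemptiness of both $\mc_j$ and its complement (with $\aaa_0\in\ma\setminus\mc_j$) together with $p_{\aaa}>0$ for every $\aaa\in\ma$.
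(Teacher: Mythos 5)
Your proof is correct and follows essentially the same route as the paper's: evaluate \eqref{eq1} at the single-item pattern $\ee_j$ and sandwich the common marginal $m_j$ between the two item parameters. The only cosmetic difference is how strictness is certified — you invoke nonemptiness of both $\mc_j$ and $\ma\setminus\mc_j$ (valid under the two-parameter definition $|\{\theta_{j,\aaa}\}|=2$) to make both inequalities strict at once, whereas the paper argues that at least one of the two inequalities in the chain must be strict; both yield the same conclusion.
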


\paragraph{\bf Step 1.}
To show  identifiability of $(\ttt^+,\ttt^-_{non})$ under (C1), we start with two identifiability results in the following two cases (a) and (b).
\begin{enumerate}
 \item[(a)] If for item $j$, there exist two disjoint sets of items $S_1$ and  $S_2$, both not containing $j$, such that 
\begin{equation}\label{casea}
\mc_j \supseteq \mc_{S_1},\quad \mc_j \supseteq \mc_{S_2},	
\end{equation}
then we have the identifiability of $\theta^+_j$, as proved in the following.

\smallskip
Define
\[\ttt^* = \sum_{h\in S_1}\theta^-_{h}\ee_{h} + \sum_{m\in S_2}\bar\theta^-_{m}\ee_{m};\]
then consider the two row vectors corresponding to response pattern $\rr^*=\sum_{h\in S_1\cup S_2}\ee_{h}$ in the transformed $T$-matrices $T(\TT-\ttt^*\mathbf1^\top) $ and $T(\bar\TT-\ttt^*\mathbf1^\top) $, respectively, and we have the following expressions:
\begin{equation}
\begin{aligned}\label{eq-thm1}
&T_{\rr^*,[\aaa]}(\TT-\ttt^*\mathbf1^\top) =
 T_{\rr^*,[\aaa]}(\ttt^+ - \ttt^*, \ttt^- - \ttt^*) \\ 
& =  \begin{cases}  
\prod_{h\in S_1}(\theta^+_h - \theta^-_h)\prod_{m\in S_2}(\theta^+_m - \bar\theta^-_m), &  \mbox{for } [\aaa]\in\mc_{S_1}\cap \mc_{S_2}; \\
0, & \text{otherwise}.
\end{cases}\\
& T_{\rr^*,[\aaa]}(\bar\TT-\ttt^*\mathbf1^\top)
=T_{\rr^*,[\aaa]}(\bar\ttt^+ - \ttt^*, \bar\ttt^- - \ttt^*) \\ 
& =  \begin{cases}  
\prod_{h\in S_1}(\bar\theta^+_h - \theta^-_h)\prod_{m\in S_2}(\bar\theta^+_m - \bar\theta^-_m), & \mbox{for } [\aaa]\in\mc_{S_1}\cap \mc_{S_2}; \\
0, & \text{otherwise}.
\end{cases}
\end{aligned}
\end{equation} 
Since $\theta^+_h\neq \bar\theta^-_h$ and $\bar\theta^+_h\neq \theta^-_h$ for all $h$ by Lemma \ref{lemmaS2}, we have
\begin{eqnarray*}
	&& T_{\rr^*,\Cdot}(\ttt^+-\ttt, \ttt^- -\ttt)\nnu \\
	&=& \Big( \sum_{[\aaa]\in\mc_{S_1}\cap \mc_{S_2}} \nu_{[\aaa]}\Big)
\prod_{h\in S_1}(\bar\theta^+_h - \theta^-_h)\prod_{m\in S_2}(\bar\theta^+_m - \bar\theta^-_m)\neq0,
\end{eqnarray*}
and similarly $T_{\rr^*,\Cdot}(\bar\ttt^+-\ttt, \bar\ttt^- -\ttt)\bar\nnu\neq0$.
Therefore, $\mc_{S_1}\subseteq\mc_j$, $\mc_{S_2}\subseteq\mc_j$ together with Equation (\ref{eq1}) indicates
\begin{eqnarray}\label{eq-thm1-denom1}
\theta^+_j
&=& 
\frac{T_{\rr^*+\ee_j,\Cdot}(\ttt^+-\ttt, \ttt^--\ttt)\nnu}{T_{\rr^*,\Cdot}(\ttt^+-\ttt, \ttt^- -\ttt)\nnu} \notag\\
&=& 
\frac{T_{\rr^*+\ee_j,\Cdot}(\bar\ttt^+-\ttt, \bar\ttt^--\ttt)\bar\nnu}{T_{\rr^*,\Cdot}( \bar\ttt^+-\ttt, \bar\ttt^- -\ttt)\bar\nnu}
~=~ \bar \theta^+_j.
\end{eqnarray}

\item[(b)] If for item $j$, there exist another item $h$ and an item set $S_2$ not containing $j$ or $h$, such that 
\begin{equation}\label{caseb}
\mc_{h}\supseteq\mc_j\supseteq\mc_{S_2},	
\end{equation}
then we have the identifiability of $\theta^-_j$, as proved in the following.

\smallskip
From the proof of (a) we can obtain the identifiability of $\theta^+_{h}$, i.e., 
$\theta^+_{h} = \bar\theta^+_{h}$.
Define $\ttt^* = \theta^+_{h} \ee_{h}$.
Consider the two row vectors corresponding to response pattern $\rr^*=\ee_h$ in the transformed $T$-matrices $T(\TT-\ttt^*\mathbf1^\top) $ and $T(\bar\TT-\ttt^*\mathbf1^\top) $, respectively, and we have
\begin{equation}\label{eq-thm11}
\begin{aligned}
 T_{\ee_{h},[\aaa]}(\ttt^+ - \ttt^*, \ttt^- - \ttt^*) 
=  \begin{cases}  
\theta^-_h - \theta^+_h, &  \mbox{for } [\aaa]\in \mc_{h}^c; \\
0, & \text{otherwise}.
\end{cases}\\
 T_{\ee_{h},[\aaa]}(\bar\ttt^+ - \ttt^*, \bar\ttt^- - \ttt^*)  
=  \begin{cases}  
\bar\theta^-_h - \theta^+_h, &  \mbox{for } [\aaa]\in \mc_{h}^c; \\
0, & \text{otherwise}.
\end{cases}
\end{aligned}
\end{equation}
Moreover, we have
$$T_{\ee_{h},\Cdot}(\ttt^+ - \ttt^*, \ttt^- - \ttt^*)\nnu=T_{\ee_{h},\Cdot}(\bar\ttt^+ - \ttt^*, \bar\ttt^- - \ttt^*)\bar\nnu\neq 0.$$
Since 
$\mc_h^c\subseteq \mc_j^c$, Equation (\ref{eq1}) indicates
\begin{eqnarray}\label{eq-thm1-denom2}
\theta^-_j &=& 
\frac{T_{\ee_{h}+\ee_j,\Cdot}(\ttt^+-\ttt, \ttt^--\ttt)\nnu}{T_{\ee_{h},\Cdot}(\ttt^+-\ttt, \ttt^- -\ttt)\nnu} \notag\\
&=& 
\frac{T_{\ee_{h}+\ee_j,\Cdot}(\bar\ttt^+-\ttt, \bar\ttt^--\ttt)\bar\nnu}{T_{\ee_{h},\Cdot}(\bar\ttt^+-\ttt, \bar\ttt^- -\ttt)\bar\nnu}
~=~ \bar \theta^-_j.
\end{eqnarray}
\end{enumerate}
With the above results in cases (a) and (b), we show that (C1) ensures the identifiability of $(\ttt^+,\ttt^-_{non})$. Specifically, if condition (C1) is satisfied, then for each item $j$, there exist two item sets $S_1$ and $S_2$ satisfying \eqref{casea}. Thus the   result  for case  (a)  implies that  
   the items parameters  $\ttt^+$     are identifiable.
Moreover, for any non-basis item $j$, by definition there must exist an item $h$ such that $\mc_{h}\supseteq\mc_j$; condition (C1) further guarantees that there exists another set $S_2$ not containing $j$ such that $\{h\}\cap S_2=\varnothing$ and $\mc_j\supseteq \mc_{S_2}$. Therefore,   \eqref{caseb} is satisfied  and the   result for case  (b) implies   
 that  $\theta_j^-$ is identifiable for all $j\in S_{non}$.

\paragraph{\bf Step 2.}
This step proves that when   (C2) additionally holds, the parameter $\theta_j^-$ of each basis item $j$ is identifiable.  
Following the definition of the sequentially expanding procedure in (C2),
we first prove that in each expanding step,  $\theta_j^- = \bar\theta_j^-$ for all $j\in S_{sep}$, namely, every item $j$ included into the separator set through the expanding procedure has its lower level parameter $\theta_j^-$ identifiable.
To show this, it suffices  to prove the result that if an item $j$ is set $S$-differentiable and $\theta_h^- = \bar\theta_h^-$, $\theta_h^+ = \bar\theta_h^+$ for any $h\in S$, then $\theta_j^- = \bar\theta_j^-$.

If $j$ is $S$-differentiable, by definition there exist two item sets $S^+_j$, $S^-_j\subseteq S$ that are not necessarily disjoint such that $\mc_{S^-_j}\backslash \mc_{S^+_j}\subseteq\mc_j^c$.
Define 
\[
\ttt^* = \sum_{h\in S_j^+\cup S_j^-} \theta^-_{h} \ee_{h},
\]
and define response patterns
\[
\rr^+ = \sum_{h\in S_j^+} \ee_{h},\quad \rr^- = \sum_{h\in S_j^-} \ee_{h}.
\]
Note that the nonzero entries of the row vectors $T_{\rr^+,\Cdot}(\ttt^+-\ttt^*,\ttt^--\ttt^*)$  and $T_{\rr^-,\Cdot}(\ttt^+-\ttt^*,\ttt^--\ttt^*)$ correspond to the capable classes of $S_j^+$ and $S_j^-$, respectively. Specifically, 
\[\begin{aligned}
T_{\rr^+,[\aaa]}(\ttt^+-\ttt^*,\ttt^--\ttt^*) &= 
\begin{cases}
\prod_{j\in S_j^+} (\theta_j^+ - \theta_j^-), & [\aaa]\in\mc_{S_j^+};\\
0, & [\aaa]\notin\mc_{S_j^+}.\\
\end{cases}\\
T_{\rr^-,[\aaa]}(\ttt^+-\ttt^*,\ttt^--\ttt^*) &= 
\begin{cases}
\prod_{j\in S_j^-} (\theta_j^+ - \theta_j^-), & [\aaa]\in\mc_{S_j^-};\\
0, & [\aaa]\notin\mc_{S_j^-}.\\
\end{cases}
\end{aligned}\]
We define a linear transformation of the above   vectors $T_{\rr^+,\Cdot}(\ttt^+-\ttt^*,\ttt^--\ttt^*)$  and $T_{\rr^-,\Cdot}(\ttt^+-\ttt^*,\ttt^--\ttt^*)$   as
\[
T_{(\rr^- + k\cdot\rr^+),\Cdot}(\ttt^+-\ttt^*,\ttt^--\ttt^*) : = T_{\rr^-,\Cdot}(\ttt^+-\ttt^*,\ttt^--\ttt^*) + k\cdot T_{\rr^+,\Cdot}(\ttt^+-\ttt^*,\ttt^--\ttt^*)
\] 
where 
\[
k = -\frac{\prod_{j\in S_j^-} (\theta_j^+ - \theta_j^-)}{\prod_{j\in S_j^+} (\theta_j^+ - \theta_j^-)}\neq 0.
\]
Since  the capable classes of $S_j^+$ must also be capable classes of $S_j^-$, 
  we have 
\[
T_{(\rr^- + k\cdot\rr^+),[\aaa]}(\ttt^+-\ttt^*,\ttt^--\ttt^*) = 
\begin{cases}
\prod_{j\in S_j^-} (\theta_j^+ - \theta_j^-), & [\aaa]\in\mc_{S^-_j}\setminus \mc_{S^+_j};\\
0, & \text{otherwise}.\\
\end{cases}
\]
Under the assumption that    
$\theta_h^- = \bar\theta_h^-$, $\theta_h^+ = \bar\theta_h^+$ for any $h\in S$, we also have
\[
T_{(\rr^- + k\cdot\rr^+),[\aaa]}(\bar\ttt^+-\ttt^*,\bar\ttt^--\ttt^*) = 
\begin{cases}
\prod_{j\in S_j^-} (\theta_j^+ - \theta_j^-), & [\aaa]\in\mc_{S^-_j}\setminus \mc_{S^+_j};\\
0, & \text{otherwise}.\\
\end{cases}
\]
Note that the condition  $\mc_{S^-_j}\setminus \mc_{S^+_j} \subseteq \mc_j^c$ implies for any $[\aaa]\in\mc_{S^-_j}\backslash \mc_{S^+_j}$, one must have $[\aaa]\in\mc_j^c$. Since 
$$T_{(\rr^- + k\cdot\rr^+),\Cdot}(\ttt^+-\ttt^*,\ttt^--\ttt^*)\nnu = T_{(\rr^- + k\cdot\rr^+),\Cdot}(\bar\ttt^+-\ttt^*,\bar\ttt^--\ttt^*)\bar\nnu \neq0,$$ 
Since $j\notin (S_j^-\cup S_j^+)$, Equation (\ref{eq1}) implies
\[\begin{aligned}
\theta_j^-
=& ~\frac{\{T_{ \ee_j,\Cdot}(\ttt^+-\ttt^*,\ttt^--\ttt^*)\odot         
T_{ (\rr^- + k\cdot\rr^+),\Cdot}(\ttt^+-\ttt^*,\ttt^--\ttt^*)\}\nnu}
{T_{(\rr^- + k\cdot\rr^+),\Cdot}(\ttt^+-\ttt^*,\ttt^--\ttt^*)\nnu}\\
=& ~\frac{\{T_{ \ee_j,\Cdot} (\bar\ttt^+-\ttt^*,\bar\ttt^--\ttt^*)
\odot
T_{(\rr^- + k\cdot\rr^+),\Cdot}(\bar\ttt^+-\ttt^*,\bar\ttt^--\ttt^*)\}\bar\nnu}
{T_{(\rr^- + k\cdot\rr^+),\Cdot}(\bar\ttt^+-\ttt^*,\bar\ttt^--\ttt^*)\bar\nnu}\\
= &~\bar\theta_j^-,
\end{aligned}\]
where $\odot$ denotes the element-wise product of two vectors.
This proves the claim that if item $j$ is set $S$-differentiable and $\theta_h^- = \bar\theta_h^-$, $\theta_h^+ = \bar\theta_h^+$ for any $h\in S$, then $\theta_j^- = \bar\theta_j^-$.
Together with the result in Step 1 and  the definition of the sequentially expanding procedure in (C2), we therefore have the identifiability of  $(\ttt^+, \ttt^-)$.

With   $(\ttt^+, \ttt^-) = (\bar\ttt^+, \bar\ttt^-)$, Equation \eqref{eq1} simplifies   to $ T(\ttt^+, \ttt^-) \pp = T(\ttt^+, \ttt^-)\bar\pp=0$. The last part of the proof of Propositions 1 and 4 then gives the identifiability of $\nnu.$ 
This completes the proof of the theorem.
\end{proof}

\smallskip
\begin{proof}[Proof of Proposition \ref{prop-after1}]
For ease of discussion, in this proof we  use $T(\ttt^+, \ttt^-\mid\Gamma(S))$ to denote the $T$-matrix   associated with any $S$-adjusted design matrix $\Gamma(S)$ and  item parameters $(\ttt^+, \ttt^-)$.

For any $S$-adjusted $\Gamma(S)$-matrix, we define another set of item parameters    $\tilde\ttt^+ =(\tilde\theta_1^+,\ldots,\tilde\theta_J^+) \mbox{ and } \tilde\ttt^- =(\tilde\theta_1^-,\ldots,\tilde\theta_J^-),$  where  
$\tilde\theta^-_j = \theta^+_j$, $\tilde\theta^+_j = \theta^-_j$ for all $j\in S$, and $\tilde\theta^-_j = \theta^-_j$, $\tilde\theta^+_j = \theta^+_j$ for all $j\notin S$. 
We first show that   the $T$-matrix $T(\tilde \ttt^+, \tilde \ttt^-\mid\Gamma(S))$   can be viewed as the $T$-matrix associated with the original $\Gamma$-matrix  with  item parameters  $(\ttt^+, \ttt^-)$, i.e.,  
\begin{equation}\label{feb}
T_{\rr,\aaa}(\tilde\ttt^+, \tilde\ttt^-\mid\Gamma(S)) = T_{\rr,\aaa}( \ttt^+,  \ttt^-\mid\Gamma).	
\end{equation}
To show this, 
note that for any response pattern $\rr\in\{0,1\}^J$, we have
$$
 T_{\rr,\aaa}(\ttt^+, \ttt^-\mid \Gamma) =
\prod_{j: r_j=1} \Big[\Gamma_{j,\aaa}\theta^+_j + (1-\Gamma_{j,\aaa})\theta_j^- \Big]$$ and 
\[\begin{aligned}
& T_{\rr,\aaa}(\tilde\ttt^+, \tilde\ttt^-\mid\Gamma(S))\\
=&  
\prod_{j: r_j=1} \Big[\{\Gamma(S)\}_{j,\aaa}\tilde\theta^+_j + (1-\{\Gamma(S)\}_{j,\aaa})\tilde\theta_j^- \Big] \\
=& \prod_{j\in S: r_j=1} \Big[(1-\Gamma_{j,\aaa})\tilde\theta^+_j + \Gamma_{j,\aaa}\tilde\theta_j^- \Big]\times  \prod_{j\notin S: r_j=1} \Big[\Gamma_{j,\aaa}\tilde\theta^+_j + (1-\Gamma_{j,\aaa})\tilde\theta_j^- \Big] \\
=& \prod_{j\in S: r_j=1} \Big[(1-\Gamma_{j,\aaa})\theta^-_j + \Gamma_{j,\aaa}\theta_j^+ \Big]\times \prod_{j\notin S: r_j=1} \Big[\Gamma_{j,\aaa}\theta^+_j + (1-\Gamma_{j,\aaa})\theta_j^- \Big] \\
=&
\prod_{j: r_j=1} \Big[\Gamma_{j,\aaa}\theta^+_j + (1-\Gamma_{j,\aaa})\theta_j^- \Big]\\
=& ~T_{\rr,\aaa}(\ttt^+, \ttt^-\mid \Gamma).
\end{aligned}\]
   
 With the result in  \eqref{feb}, to prove Proposition \ref{prop-after1},  it suffices to show that the identifiability  argument in Theorem \ref{thm1} still holds if the $\Gamma$-induced restrictions  of the item parameters,  $\theta_j^+ > \theta_j^- $ for all $j=1,\ldots,J$, are  replaced by the constraints that  $ \theta_j^+ < \theta_j^-$ for  any $j\in S$ and $ \theta_j^+ > \theta_j^-$ for any $j\notin S$.

We next prove this claim. 
If   $\theta_j^+<\theta_j^-$ for some items $j$, the conclusion  in Lemma \ref{lemmaS2} still holds. In particular, following the proof of Lemma \ref{lemmaS2}, if   $\theta_j^+<\theta_j^-$, then 
\[
\theta_j^+ = \sum_{\aaa\in\ma} \theta_j^+ p_{\aaa} \leq 
\sum_{\aaa\in\ma} \theta_{j,\aaa} p_{\aaa} = \sum_{\aaa\in\ma} \bar\theta_{j,\aaa}\bar p_{\aaa} \leq \sum_{\aaa\in\ma}\bar\theta_j^-\bar p_{\aaa} = \bar\theta_j^-,
\]
where among the two ``$\leq$" there is at least a strict ``$<$". This implies  $\theta_j^+ \neq \bar\theta_j^-$ for all $j=1,\ldots,J$, and a similar argument gives $\theta_j^- \neq \bar\theta_j^+$ for all $j=1,\ldots,J$.  
With these results, we can check that  all the needed inequalities in 
the proof of Theorem \ref{thm1} still hold and all the proof steps proceed with no changes.
This proves the conclusion of the proposition.
\end{proof}

\smallskip
Next we prove  the identifiability results for the two-parameter $Q$-restricted models. We say a $Q$-matrix of size $J\times K$ is \textit{complete} for the two-parameter model, if after some row permutation it contains an identity submatrix $\mathcal I_K$.
Under   the conjunctive model assumption, let 
\begin{equation}\label{RU}
	\mathcal R^Q=\mathcal R^{Q,conj}=\{\mz_K^\top\}\cup\{\aaa=\vee_{h\in S}\,\bq_h: \forall S \subset \mathcal S\} 
\end{equation}
 be defined as in Remark 1 of the main text.
 Since elements of $\mathcal R^Q$ are $K$-dimensional binary vectors, they can be viewed as attribute profiles and  $\mathcal R^Q \subseteq \{0,1\}^K$. When $Q$ is complete, clearly $\mathcal R^Q = \{0,1\}^K$. The row-union space $\mathcal R^Q$ has the following two properties.
First, every two attribute profiles in $\mathcal R^Q$ have different ideal response vectors, i.e.
\begin{equation}\label{eq-gamma1}
\forall \aaa_1,\aaa_2\in\mathcal R^Q,~~\aaa_1\neq\aaa_2,\quad \Gamma_{\Cdot,\,\aaa_1} = \Gamma_{\Cdot,\,\aaa_2}.
\end{equation}
Second, when $Q$ is incomplete, for any attribute profile $\aaa\in\{0,1\}^K$, there must exist some $\aaa'\in\mathcal R^Q$ that has the same ideal response vector as $\aaa$, i.e.
\begin{equation}\label{eq-gamma2}
\forall \aaa\in\{0,1\}^K,\quad \exists \aaa'\in\mathcal R^Q~~\text{such that}~~\aaa\succeq\aaa'~\text{and}~\Gamma_{\Cdot,\,\aaa} = \Gamma_{\Cdot,\,\aaa'}.
\end{equation}
 Based on the above two properties, when $\ma$ is saturated, $\mathcal R^{Q}$   is a complete set of representatives of the conjunctive   equivalence classes.
 Similarly, we can show $\mathcal R^{Q,comp}  =\{\mo_K^\top-\aaa: \aaa\in\mathcal R^{Q}\}$ gives a complete set of representatives of the compensatory   equivalence classes.
Therefore, this proves the claims in Remark 1 of the main text.
In the following proofs of Corollary \ref{cor-dina-main}, Theorem \ref{thm_cond}, Theorem \ref{thm-com-necc} and Theorem \ref{cor-suffnece} for the two-parameter $Q$-restricted models, when there is no ambiguity, we will exchangeably say an equivalence class $[\aaa]$ is induced by the $\Gamma$-matrix or is induced by the corresponding $Q$-matrix.
 



\smallskip

\begin{proof}[Proof of Corollary \ref{cor-dina-main}]
With definitions of non-basis and basis items introduced in \eqref{eq-defdina} and definition of $S$-differentiable item introduced in \eqref{eq-defdiff-dina}, conditions (C1) and (C2) exactly reduce to the new conditions (C1$^*$) and (C2$^*$) regarding the $Q$-matrix for the two-parameter conjunctive model, therefore by Theorem \ref{thm1}, (C1$^*$) and (C2$^*$) are sufficient for the $\pp$-partial identifiability of the conjunctive models.

On the other hand, for the two-parameter compensatory model, if the $Q$-matrix satisfies the new conditions (C1$^*$) and (C2$^*$), then we have that $\Gamma^{conj}$ satisfies the original conditions (C1) and (C2).
Given an arbitrary $Q$-matrix, by the definition of the conjunctive $\Gamma^{conj}$ and compensatory $\Gamma^{comp}$, for any item $j$ and any attribute profile $\aaa\in\{0,1\}^K$, we can obtain
\begin{equation}\label{eq-gg}
\Gamma^{comp}_{j,\,\aaa} = 1 - \Gamma^{conj}_{j,\,\mo-\aaa} = I(\alpha_k=1~\text{for some}~k~\text{s.t.}~q_{j,k}=1),
\end{equation}
where $\mo-\aaa=(1-\alpha_1,\ldots,1-\alpha_K)$.
This means the two matrices $\Gamma^{conj}$ and $\mo_{J\times C}-\Gamma^{comp}$ only differ by a column permutation. 
Noting that conditions (C1) and (C2) do not depend on the order of the column vectors, so if $\Gamma^{conj}$ satisfies (C1) and (C2), then $\mo_{J\times C}-\Gamma^{comp}$ also satisfies (C1) and (C2). Then Proposition \ref{prop-after1} implies the two parameter compensatory model with design matrix $\Gamma^{comp}$ is $\pp$-partially identifiable. 
\end{proof}

\smallskip
\begin{proof}[Proof of Theorem \ref{thm_cond}]
Without loss of generality, we focus on the proof of the conclusion for the two-parameter conjunctive model,  and all the arguments also hold for the two-parameter disjunctive model, following the similar argument in the proof of Proposition \ref{prop-after1}. 
In the following, we first present the proof of part (a), then that of part (b.2), and finally that of part (b.1).

\vspace{2mm}
\noindent
\textbf{Proof of  part (a)}.
 Without loss of generality, assume the $Q$-matrix takes the following form
\[
Q=\left(
\begin{array}{cc}
1 & \vv_1 \\
\mz & Q'
\end{array}
\right),
\]
where $Q'$ is a submatrix of size $(J-1)\times(K-1)$ and $\vv_1$ is a $(K-1)$-dimensional vector.
For any attribute profile $\aaa=(0,\aaa_{2:K})$, denote $\aaa+\ee_1 = (1,\aaa_{2:K})$; and for any $\aaa=(1,\aaa_{2:K})$, denote $\aaa-\ee_1 = (0,\aaa_{2:K})$.
Consider any valid set of parameters $(\ttt^+,\ttt^-,\nnu)$. To prove the conclusion in (A), we next construct another set of parameters $(\bar\ttt^+,\bar\ttt^-,\bar\nnu)\neq (\ttt^+,\ttt^-,\nnu)$ but $T(\ttt^+, \ttt^-)\nnu = T(\bar\ttt^+,\ttt^-)\bar{\nnu}$.
In particular, we set  $\bar\ttt^- = \ttt^-$, $\bar\theta^+_j=\theta^+_j$ for $j=2,\ldots,J$, and choose $\bar\theta^+_1$ close enough but not equal to $\theta^+_1$. 
Define
\[\begin{aligned}
\mathcal R_0 &= \{\aaa\in\mathcal R^Q: \alpha_1=0, \aaa\succeq(0,\vv_1)\},\\ 
\mathcal R_1 &= \{\aaa\in\mathcal R^Q: \alpha_1=1, \aaa\succeq(0,\vv_1)\},
\end{aligned}\]
then we can see that the two sets $\mathcal R_0$ and $\mathcal R_1$ are disjoint and their elements are paired in the sense that for any $\aaa\in\mathcal R_0$, one has $\aaa+\ee_1\in\mathcal R_1$ and for any $\aaa\in\mathcal R_1$, one has $\aaa-\ee_1\in\mathcal R_0$.
To construct the proportion parameters $\bar\nnu$, we set
\begin{equation}\label{eq-2p-cons}
\begin{cases}
\bar\nu_{[\aaa]} = \nu_{[\aaa]} + \Big(1-\frac{\theta^+_1 -\theta_1^- }{ \bar\theta^+_1-\theta_1^-} \Big)\nu_{[\aaa+\ee_1]}, & \forall\aaa\in\mathcal R_0; \\
\bar\nu_{[\aaa]} = \frac{\theta^+_1 -\theta_1^- }{ \bar\theta^+_1-\theta_1^-} {\nu}_{[\aaa]},  & \forall\aaa\in\mathcal R_1; \\
\bar\nu_{[\aaa]} =\nu_{[\aaa]}, & \forall\aaa\in\mathcal R^Q\setminus(\mathcal R_0\cup \mathcal R_1).
\end{cases}\end{equation}
For notational simplicity, denote $\mathcal R_c =\mathcal R^Q\setminus(\mathcal R_0\cup \mathcal R_1)$.
Next we show that under the two different sets of parameters $(\ttt^+,\ttt^-,\nnu)$ and $(\bar\ttt^+,\ttt^-,\bar\nnu)$, for any response pattern $\rr\in\{0,1\}^J$,
\begin{equation}\label{eq-2par}
T_{\rr,\Cdot}(\bar\ttt^+-\ttt^-,\boldsymbol{0})\bar{\nnu} = T_{\rr,\Cdot}(\ttt^+-\ttt^-,\boldsymbol{0})\bar{\nnu},
\end{equation}
which will complete the proof.
To this end, we consider two types of response patterns $\rr=(r_1,\ldots,r_J)$ respectively in the following: (a) $r_1=0$; and (b)  $r_1=1$.

\begin{itemize}
\item[(a)]
Firstly, for any $\rr\in\{0,1\}^J$ such that $r_1=0$, $T_{\rr,\Cdot}(\ttt^+-\ttt^-,\boldsymbol{0})=T_{\rr,\Cdot}(\bar\ttt^+-\ttt^-,\boldsymbol{0})$, so by our construction,
\begin{align*}
& T_{\rr,\Cdot}(\bar\ttt^+-\ttt^-,\boldsymbol{0})\bar{\nnu} = T_{\rr,\Cdot}(\ttt^+-\ttt^-,\boldsymbol{0})\bar{\nnu} \\
=& \sum_{\aaa\in\mathcal R^Q}T_{\rr,[\aaa]}(\ttt^+-\ttt^-,\boldsymbol{0})\bar\nu_{[\aaa]} \\
=& \sum_{\aaa\in\mathcal R_0}T_{\rr,[\aaa]}(\ttt^+-\ttt^-,\boldsymbol{0}) \bar\nu_{[\aaa]} 
+ \sum_{\aaa\in\mathcal R_1}T_{\rr,[\aaa]}(\ttt^+-\ttt^-,\boldsymbol{0})\bar\nu_{[\aaa]} \\
&+  \sum_{\aaa\in\mathcal R_c}T_{\rr,[\aaa]}(\ttt^+-\ttt^-,\boldsymbol{0})\bar\nu_{[\aaa]}\allowdisplaybreaks\\ 
=& \sum_{\aaa\in\mathcal R_0}T_{\rr,[\aaa]}(\ttt^+-\ttt^-,\boldsymbol{0}) \left(\nu_{[\aaa]} + \left(1-\frac{\theta^+_1 -\theta_1^- }{ \bar\theta^+_1-\theta_1^-} \right)\nu_{[\aaa+\ee_1]} \right) \\
&+ \sum_{\aaa\in\mathcal R_1}T_{\rr,[\aaa]}(\ttt^+-\ttt^-,\boldsymbol{0})\left(\frac{\theta^+_1 -\theta_1^- }{ \bar\theta^+_1-\theta_1^-}{\nu}_{[\aaa]}\right)  \\
&+  \sum_{\aaa\in\mathcal R_c}T_{\rr,[\aaa]}(\ttt^+-\ttt^-,\boldsymbol{0})\nu_{[\aaa]}\\
:=& I_0 + I_1 + I_c.
\end{align*}
Note that the elements in $\mathcal R_0$ and $\mathcal R_1$ are paired, and moreover, for any pair of attribute profiles $(\aaa,\aaa+\ee_1)$ where $\aaa\in\mathcal R_0$ and $\aaa+\ee_1\in\mathcal R_1$, we have 
\begin{equation}\label{eq-e1}
T_{\rr,[\aaa]}(\ttt^+-\ttt^-,\boldsymbol{0}) = T_{\rr,[\aaa+\ee_1]}(\ttt^+-\ttt^-,\boldsymbol{0}) = \prod_{j: r_j=1}(\theta_{j,[\aaa]} - \theta_j^-)\end{equation}
for any type-(a) response pattern $\rr$, namely $\rr\in\{0,1\}^J$ such that $r_1=0$. Equation \eqref{eq-e1} leads to
\[\begin{aligned}
I_1 = & \sum_{\aaa\in\mathcal R_0}T_{\rr,[\aaa+\ee_1]}(\ttt^+-\ttt^-,\boldsymbol{0})\left(\frac{\theta^+_1 -\theta_1^- }{ \bar\theta^+_1-\theta_1^-}{\nu}_{[\aaa+\ee_1]}\right)  \\
= & \sum_{\aaa\in\mathcal R_0}T_{\rr,[\aaa]}(\ttt^+-\ttt^-,\boldsymbol{0})\left(\frac{\theta^+_1 -\theta_1^- }{ \bar\theta^+_1-\theta_1^-} {\nu}_{[\aaa+\ee_1]}\right). \\
\end{aligned}\]
Therefore we have
\[
\begin{aligned}
&I_0+I_1 \\
=& \sum_{\aaa\in\mathcal R_0}T_{\rr,[\aaa]}(\ttt^+-\ttt^-,\boldsymbol{0}){\small\left(\nu_{[\aaa]} + \left(1-\frac{\theta^+_1 -\theta_1^- }{ \bar\theta^+_1-\theta_1^-}\right)\nu_{[\aaa+\ee_1]} + \frac{\theta^+_1 -\theta_1^- }{ \bar\theta^+_1-\theta_1^-} {\nu}_{[\aaa+\ee_1]} \right)} \\
=& \sum_{\aaa\in\mathcal R_0}T_{\rr,[\aaa]}(\ttt^+-\ttt^-,\boldsymbol{0})\Big(\nu_{[\aaa]} +  {\nu}_{[\aaa+\ee_1]} \Big) \\
=& \sum_{\aaa\in\mathcal R_0}T_{\rr,[\aaa]}(\ttt^+-\ttt^-,\boldsymbol{0})\nu_{[\aaa]} + \sum_{\aaa\in\mathcal R_1}T_{\rr,[\aaa]}(\ttt^+-\ttt^-,\boldsymbol{0})\nu_{[\aaa]},
\end{aligned}
\]
where the last equality also results from \eqref{eq-e1}. This further results in
\[
I_0+I_1+I_c = \sum_{\aaa\in\mathcal R^Q}T_{\rr,[\aaa]}(\ttt^+-\ttt^-,\boldsymbol{0})\nu_{[\aaa]} = T_{\rr,\Cdot}(\ttt^+-\ttt^-,\boldsymbol{0})\nnu.
\]
This proves that for any $\rr$ such that $r_1=0$, Equation \eqref{eq-2par} holds.

\smallskip
\item[(b)]
Secondly, consider the type-(b) response pattern, namely those $\rr=(1,r_2,\allowbreak\ldots,r_J)$. For such $\rr$, denote $\rr-\ee_1 = (0,r_2,\ldots,r_J)$, then 
\[
T_{\rr,[\aaa]}(\bar\ttt^+-\ttt^-,\boldsymbol{0}) =
\begin{cases}
(\bar\theta_1^+ - \theta_1^-)\cdot T_{\rr-\ee_1,[\aaa]}(\ttt^+-\ttt^-,\boldsymbol{0}), & \aaa\succeq(1,\vv_1);\\
0, & \aaa\succeq(1,\vv_1),
\end{cases}
\]
which indicates  $T_{\rr,[\aaa]}(\bar\ttt^+-\ttt^-,\boldsymbol{0})=0$ for all $\aaa\in\mathcal R_0\cup \mathcal R_c$. This is because for $\aaa\in\mcr_0$, $\alpha_1=0\ngeq 1$; and for $\aaa\in\mcr_c$, $(\alpha_2,\ldots,\alpha_K)\nsucceq\vv_1$ by our definitions.
Therefore,
\[\begin{aligned}
&T_{\rr,\Cdot}(\bar\ttt^+-\ttt^-,\boldsymbol{0})\bar{\nnu} 
= \sum_{\aaa\in\mathcal R^Q} T_{\rr,[\aaa]}(\bar\ttt^+-\ttt^-,\boldsymbol{0})\bar{\nu}_{[\aaa]} \\
=& \sum_{\aaa\in\mathcal R^Q\atop\,\aaa\succeq(1,\vv_1)} T_{\rr-\ee_1,[\aaa]}(\ttt^+-\ttt^-,\boldsymbol{0}) (\bar\theta_1^+ - \theta_1^-) \bar{\nu}_{[\aaa]} \\
=& \sum_{\aaa\in\mathcal R_1}T_{\rr-\ee_1,[\aaa]}(\ttt^+-\ttt^-,\boldsymbol{0}) (\bar\theta_1^+ - \theta_1^-)\bar{\nu}_{[\aaa]}\\
=& \sum_{\aaa\in\mathcal R_1}T_{\rr -\ee_1,[\aaa]}(\ttt^+-\ttt^-,\boldsymbol{0})(\theta^+_1-\theta_1^-)\nu_{[\aaa]}\\
=& T_{\rr,\Cdot}(\ttt^+-\ttt^-,\boldsymbol{0})\nnu,
\end{aligned}\]
where our previous construction $(\bar\theta^+_1-\theta_1^-)\bar{\nu}_{[\aaa]}=(\theta^+_1-\theta_1^-){\nu}_{[\aaa]}$ for $\aaa\in\mathcal R_1$ defined in \eqref{eq-2p-cons} is used to obtain the last but second equality. This proves that for any $\rr$ such that $r_1=1$, Equation \eqref{eq-2par} holds.
\end{itemize}

Now that we have proved Equation \eqref{eq-2par} holds for any $\rr\in\{0,1\}^J$, we have found two different sets of parameters $(\ttt^+,\nnu)\neq(\bar{\ttt}^+,\bar{\nnu})$ that give $T(\ttt^+, \ttt^-)\nnu = T(\bar\ttt^+,\ttt^-)\bar{\nnu}$. This shows the non-identifiability of the parameters $(\ttt^+,\ttt^-,\nnu)$, and concludes the proof of part (A).

\vspace{2mm}
\noindent
\textbf{Proof of Part (b.2)}.
Equation \eqref{eq1} is equivalent to
\begin{equation}\label{eq-tpr}
 T_{\rr,\Cdot}(\TT) \nnu =  T_{\rr,\Cdot}(\bar\TT) \bar\nnu\quad\text{for all}\quad \rr=(r_1,\ldots,r_J)^\top\in\{0,1\}^J.
\end{equation}
The detailed form of \eqref{eq-tpr} can be written as follows, for any $\rr\in\{0,1\}^J$,
\begin{equation}\label{eq-tprr}
\sum_{\aaa\in\mathcal R^{Q}}\prod_{r_j=1}\theta_{j,\,[\aaa]}\cdot \nu_{[\aaa]} = 
\sum_{\aaa\in\mathcal R^{Q}}\prod_{r_j=1}\bar\theta_{j,\,[\aaa]}\cdot\bar \nu_{[\aaa]},
\end{equation}
where $\mathcal R^Q$ denotes the row-union space of the $Q$-matrix $Q$ as in \eqref{RU}. 
For any attribute profile $\aaa\in\{0,1\}^K$,   $[\aaa]$ denotes the equivalence class containing $\aaa$ that is induced by $Q$.
Let $\aaa_{2:K}$ denote the vector containing last $K-1$ elements of $\aaa$, so $\aaa$ can be written as $\aaa=(\alpha_1,\aaa_{2:K})$ and $[\alpha_1,\aaa_{2:K}]$ represents the equivalence class $\aaa$ belongs to.
 Recall that we use $\RR=(R_1,\ldots,R_J)$ to denote a random response vector ranging in $\{0,1\}^J$, and use $\mathbf A=(A_1,\ldots,A_K)$ to denote a random attribute profile ranging in the latent class space $\ma\subseteq\{0,1\}^K$. Denote $\mathbf A_{2:K}:=(A_2,\ldots,A_K)$. 

Under the assumptions of part (B), the $Q$-matrix takes the following form
\begin{equation*}
Q=
\left(\begin{array}{cc}
1 & \vv_1^\top \\
1 & \vv_2^\top \\
\mathbf 0 & Q'
\end{array}\right).
\end{equation*}
 For any two different equivalence classes $[0,\aaa_{2:K}]$ and $[1,\aaa_{2:K}]$ where $\aaa_{2:K}\in\{0,1\}^{K-1}$, their corresponding item parameters to any item $j>2$ are the same, i.e., for any $j>2$ and any $\aaa_{2:K}\in\{0,1\}^{K-1}$,
\begin{eqnarray}\label{eq-pp}
\mathbb P(R_j=1\mid \ba=(1,\aaa_{2:K}))
&=&\mathbb P(R_j=1\mid \ba=(0,\aaa_{2:K}))\\
&=&\theta_{j,[0,\aaa_{2:K}]}.\notag
\end{eqnarray}
Therefore for any response pattern in the form $\rr=(0,0,r_3,\ldots,r_J)$, \eqref{eq-tprr} for such $\rr$ can be equivalently written as 
\begin{align}\label{eq-qprime00}
 &\sum_{\aaa_{2:K}\in\mathcal R^{Q'}}\prod_{j>2\atop \,r_j=1}\theta_{j,\,[0,\aaa_{2:K}]}\cdot (\nu_{[0,\aaa_{2:K}]}+\nu_{[1,\aaa_{2:K}]})\\
  = 
&\sum_{\aaa_{2:K}\in\mathcal R^{Q'}}\prod_{j>2\atop\,r_j=1}\bar\theta_{j,\,[0,\aaa_{2:K}]}\cdot(\bar\nu_{[0,\aaa_{2:K}]}+\bar\nu_{[1,\aaa_{2:K}]}),\notag
\end{align}
where   $\mathcal R^{Q'} $ is the row-union space of  $Q'$, i.e.,  
\[\begin{aligned}
\mathcal R^{Q'} &= \{\mz_{K-1}^\top\}\cup\{\aaa=\vee_{h\in S}\,\bq'_h: \forall S \subseteq \{3,\ldots,J\}\}. 
\end{aligned}\]
\eqref{eq-qprime00} involves $2^{J-2}$ equations with $(r_3,\ldots,r_j)$ freely ranging in $\{0,1\}^{J-2}$, which indicates that $\theta_{j,\,[0,\aaa_{2:K}]}$ and $(\nu_{[0,\aaa_{2:K}]}+\nu_{[1,\aaa_{2:K}]})$ can be viewed as item parameter and proportion parameter associated with the model under the $(J-2)\times(K-1)$ sub-matrix $Q'$.
Since the sub-matrix $Q'$ satisfies conditions (C1$^*$) and (C2$^*$), Theorem \ref{thm1} and the set of equations \eqref{eq-qprime00} lead to
\[
\forall j\geq 3,\quad
\theta_{j,\,[0,\aaa_{2:K}]} = \bar\theta_{j,\,[0,\aaa_{2:K}]},\quad \nu_{[0,\aaa_{2:K}]}+\nu_{[1,\aaa_{2:K}]} = \bar \nu_{[0,\aaa_{2:K}]}+\bar \nu_{[1,\aaa_{2:K}]}.
\]
This implies for any item $j\geq 3$, the item parameters $\theta^+_j$ and $\theta^-_j$ associated with the original $Q$-matrix are identifiable.

Now consider an arbitrary response pattern $\rr=(r_1,r_2,r_3,\ldots,\allowbreak r_J)$. 
We claim that \eqref{eq-tprr} for $\rr$ can be equivalently written as
\begin{align}\label{eq-qprime-n0}
&\sum_{\aaa_{2:K}\in\mathcal R^{Q'}}\prod_{j>2\atop\,r_j=1}\theta_{j,\,[0,\aaa_{2:K}]}\cdot \mathbb P(R_1\geq r_1,\,R_2\geq r_2,\, \ba_{2:K}=\aaa_{2:K})\\
=
&\sum_{\aaa_{2:K}\in\mathcal R^{Q'}}\prod_{j>2\atop\,r_j=1}\bar\theta_{j,\,[0,\aaa_{2:K}]}\cdot\overline {\mathbb P}(R_1\geq r_1,\,R_2\geq r_2,\,\ba_{2:K}=\aaa_{2:K}),\notag
\end{align}
where $\mathbb P(R_1\geq r_1,\,R_2\geq r_2,\, \ba_{2:K}=\aaa_{2:K})$ represents the probability of $\{R_1\geq r_1,\,R_2\geq r_2\}$ and the attribute profile $\ba$ has its last $K-1$ entries being $\aaa_{2:K}$ under the set of model parameters $(\ttt^+,\ttt^-,\nnu)$, while $\overline {\mathbb P}(R_1\geq r_1,\,R_2\geq r_2,\, \ba_{2:K}=\aaa_{2:K})$ represents that under model parameters $(\bar\ttt^+,\bar\ttt^-,\bar\nnu)$.
The reason \eqref{eq-tprr} can be equivalently written as \eqref{eq-qprime-n0} is that, given any $\aaa_{2:K}\in\mathcal R^{Q'}$ and any item $j\in\{3,\ldots,J\}$, the positive response probability of $[\alpha_1,\aaa_{2:K}]$ to item $j$ only depends on $\aaa^*$ part, regardless of the value of $\alpha_1$,  as shown in \eqref{eq-pp}. 
Therefore the terms in $T(\TT)_{\rr,\Cdot}\nnu$ can be grouped in such a way that it becomes the summation over all the $\aaa_{2:K}\in\mathcal R^{Q'}$, exactly as presented in Equation \eqref{eq-qprime-n0}.

A key observation is that, taking $(r_1,r_2)$ to be $(0,1)$, $(1,0)$, $(1,1)$ in \eqref{eq-tpr} respectively, we obtain another three sets of equations expressed in the form of \eqref{eq-qprime-n0}, which are exactly in the same form as \eqref{eq-qprime00} by just replacing $\nu_{[0,\aaa_{2:K}]}$ by $\mathbb P(R_1\geq r_1,\,R_2\geq r_2, \ba_{2:K}=\aaa_{2:K})$. Actually, taking $(r_1,r_2)=(0,0)$ gives $\mathbb P(R_1\geq 0,\,R_2\geq 0, \ba_{2:K}=\aaa_{2:K})=\nu_{[0,\aaa_{2:K}]}$.
By Theorem \ref{thm1}, this key observation results in that, for any $(r_1,r_2)\in\{0,1\}^2$ and any $\aaa_{2:K}\in\mathcal R^{Q'}$,
\begin{equation}\label{eq-f}
\begin{aligned}
& \mathbb P(R_1\geq r_1,\,R_2\geq r_2,\, \ba_{2:K}=\aaa_{2:K})\\
&= ~
\overline {\mathbb P}(R_1\geq r_1,\,R_2\geq r_2,\, \ba_{2:K}=\aaa_{2:K}). \\
\end{aligned}
\end{equation}
We will rely on  \eqref{eq-qprime-n0} and the above equality \eqref{eq-f} to proceed with the proof.
Now consider two types of combinations of row vectors of $Q'$, categorized based on their relationships with $\vv_1$ and $\vv_2$. In the following proof, write $R_1\geq r_1,R_2\geq r_2$ succinctly as $\RR_{1:2}\succeq\rr_{1:2}$. 
We consider the following cases (a$^*$) and (b$^*$).
\begin{enumerate}
\item[(a$^*$)] In this case, there exists two row vectors $\vv_0$ and $\vv'_0$ of $Q'$ s.t. $\vv_0\succeq \vv_1$, $\vv_0\nsucceq \vv_2$, and $\vv'_0\nsucceq \vv_1$, $\vv'_0\succeq \vv_2$.
\newline
Consider $\ba_{2:K}=\vv_0$, then $\vv_0\succeq \vv_1$, $\vv_0\nsucceq \vv_2$ imply that
\begin{eqnarray*}
	&&\mathbb P(\RR_{1:2}\succeq \rr_{1:2},\,\ba_{2:K}=\vv_0) \\
	&&=
\begin{cases}
\nu_{[0,\vv_0]} + \nu_{[1,\vv_0]}, & (r_1,r_2)=(0,0);\\
\theta_1^-\cdot\nu_{[0,\vv_0]} + \theta^+_1\cdot\nu_{[1,\vv_0]}, & (r_1,r_2)=(1,0);\\
\theta_2^-\cdot(\nu_{[0,\vv_0]} + \nu_{[1,\vv_0]}), & (r_1,r_2)=(0,1);\\
\theta_2^-\cdot(\theta_1^-\cdot\nu_{[0,\vv_0]} + \theta^+_1\cdot\nu_{[1,\vv_0]}), &(r_1,r_2)=(1,1).\\
\end{cases}
\end{eqnarray*}
Note that $\overline {\mathbb P}(\RR_{1:2}\succeq \rr_{1:2},\ba_{2:K}=\aaa_{2:K})$ takes the similar form as $\mathbb P(\RR_{1:2}\succeq \rr_{1:2},\ba_{2:K}=\aaa_{2:K})$, so in order to ensure \eqref{eq-f} the following equations must hold
\begin{eqnarray} \label{cond_c1}
 & & \quad   \begin{cases}
\nu_{[0,\vv_0]} + \nu_{[1,\vv_0]} = \bar\nu_{[0,\vv_0]} + \bar\nu_{[1,\vv_0]};\\
\theta^-_1\cdot\nu_{[0,\vv_0]} + \theta^+_1\cdot\nu_{[1,\vv_0]} = \bar \theta^-_1\cdot\bar\nu_{[0,\vv_0]} + \bar \theta^+_1\cdot\bar\nu_{[1,\vv_0]};\\
\theta^-_2\cdot(\nu_{[0,\vv_0]} + \nu_{[1,\vv_0]}) = \bar \theta^-_2\cdot(\bar\nu_{[0,\vv_0]} + \bar\nu_{[1,\vv_0]});\\
\theta^-_2\cdot(\theta^-_1\nu_{[0,\vv_0]} + \theta^+_1\nu_{[1,\vv_0]}) = \bar \theta^-_2\cdot(\bar \theta^-_1\bar\nu_{[0,\vv_0]} + \bar \theta^+_1\bar\nu_{[1,\vv_0]}).\\
\end{cases}
\end{eqnarray}
Taking the ratio of the third and the first equation above gives $\theta^-_2 = \bar \theta^-_2$. Similarly, $\vv'_0\nsucceq \vv_1$, $\vv'_0\succeq \vv_2$ also imply $\theta^-_1 = \bar \theta^-_1$. Plugging $\theta^-_1 = \bar \theta^-_1$ back to the second equation in (\ref{cond_c1}) gives $\theta^+_1 = \bar \theta^+_1$, and similarly $\theta^+_2 = \bar \theta^+_2$.

\item[(b$^*$)] In case (b$^*$), there exist two row vectors $\vv_0$, $\vv'_0$ of $Q'$ such that $\vv_0\nsucceq \vv_1$, $\vv_0\nsucceq \vv_2$, and $\vv'_0\succeq \vv_1$, $\vv'_0\succeq \vv_2$.
\newline
Consider $\ba_{2:K}=\vv_0$, then $\vv_0\nsucceq \vv_1$, $\vv_0\nsucceq \vv_2$ imply that the attribute profiles $(1,\vv_0)$, $(0,\vv_0)$ both belong to the same equivalence class $[1,\vv_0]$ induced by $Q$, and hence
\[\mathbb P(\RR_{1:2}\succeq \rr_{1:2},\,\ba_{2:K}=\vv_0)=
\begin{cases}
\nu_{[0,\vv_0]}, & (r_1,r_2)=(0,0);\\
\theta^-_1\cdot\nu_{[0,\vv_0]}, & (r_1,r_2)=(1,0);\\
\theta^-_2\cdot\nu_{[0,\vv_0]}, & (r_1,r_2)=(0,1);\\
\theta^-_1 \theta^-_2\cdot\nu_{[0,\vv_0]}, &(r_1,r_2)=(1,1).\\
\end{cases}
\]
With $f_{(r_1,r_2),\,\vv_0}$ taking the above form, \eqref{eq-f} implies $\theta^-_1=\bar \theta^-_1$ and $\theta^-_2=\bar \theta^-_2$. Then consider $\ba_{2:K}=\vv'_0$, then $\vv'_0\succeq \vv_1$ and $\vv'_0\succeq \vv_2$ imply that
\begin{align*}
&\mathbb	P(\RR_{1:2}\succeq \rr_{1:2},\,\ba_{2:K}=\vv'_0)\\
&=
\begin{cases}
\nu_{[0,\vv_0]} + \nu_{[1,\vv_0]}, & (r_1,r_2)=(0,0);\\
\theta^-_1\cdot\nu_{[0,\vv_0]} + \theta^+_1\cdot\nu_{[1,\vv_0]}, & (r_1,r_2)=(1,0);\\
\theta^-_2\cdot\nu_{[0,\vv_0]} + \theta^+_2\cdot\nu_{[1,\vv_0]}, & (r_1,r_2)=(0,1);\\
\theta^-_1 \theta^-_2\cdot\nu_{[0,\vv_0]} + \theta^+_1 \theta^+_2\cdot \nu_{[1,\vv_0]}, &(r_1,r_2)=(1,1).\\
\end{cases}
\end{align*}
With the above form of $\mathbb P(\RR_{1:2}\succeq \rr_{1:2},\,\ba_{2:K}=\vv'_0)$, \eqref{eq-f} gives that
\[\begin{cases}
\nu_{[0,\vv_0]} + \nu_{[1,\vv_0]} = \bar\nu_{[0,\vv_0]} + \bar\nu_{[1,\vv_0]};\\
\theta^-_1\cdot\nu_{[0,\vv_0]} + \theta^+_1\cdot\nu_{[1,\vv_0]} = \theta^-_1\cdot\bar\nu_{[0,\vv_0]} + \bar \theta^+_1\cdot\bar\nu_{[1,\vv_0]};\\
\theta^-_2\cdot\nu_{[0,\vv_0]} + \theta^+_2\cdot\nu_{[1,\vv_0]} = \theta^-_2\cdot\bar\nu_{[0,\vv_0]} + \bar \theta^+_2\cdot\bar\nu_{[1,\vv_0]};\\
\theta^-_1\theta^-_2\cdot\nu_{[0,\vv_0]} + \theta^+_1\theta^+_2\cdot\nu_{[1,\vv_0]} = \theta^-_1\theta^-_2\cdot\nu_{[0,\vv_0]} + \bar \theta^+_1\bar \theta^+_2\cdot\nu_{[1,\vv_0]}.\\
\end{cases}\]
where $\theta^-_1=\bar \theta^-_1$ and $\theta^-_2=\bar \theta^-_2$ are used. Solving the above equations gives $\theta^+_1=\bar \theta^+_1$ and $\theta^+_2=\bar \theta^+_2$.
\end{enumerate}
Based on the above discussion, if $Q'$ contains either of the type-(a$^*$) or type-(b$^*$) combinations of row vectors $\vv_0$ and $\vv'_0$, then we have $\theta^-_1 = \bar \theta^-_1$, $\theta^-_2 = \bar \theta^-_2$, $\theta^+_1 = \bar \theta^+_1$ and $\theta^+_2 = \bar \theta^+_2$,  and hence by Proposition \ref{prop1}, the grouped proportion parameters $\nnu$ are identifiable. 

Note that the arguments in (a$^*$) and (b$^*$) above do not depend on the assumption that $\vv_0$ or $\vv'_0$ are single row vectors of $Q'$. Actually, if there exist two disjoint sets of items $S_1, S_2\subseteq \{3,\ldots,J\}$ such that 
\[
\vv_0=\vee_{h\in S_1\,} \bq'_{h},\quad \vv'_0=\vee_{h\in S_2\,} \bq'_{h},
\]
and the pair $(\vv_0,\vv'_0)$ satisfy either the type-(a$^*$) or the type-(b$^*$) constraint (namely {Either} $\vv_0\succeq \vv_1$, $\vv_0\nsucceq \vv_2$ and $\vv'_0\nsucceq \vv_1$, $\vv'_0\succeq \vv_2$;  {Or} $\vv_0\nsucceq \vv_1$, $\vv_0\nsucceq \vv_2$ and $\vv'_0\succeq \vv_1$, $\vv'_0\succeq \vv_2$), 
then the arguments in (a*), (b*) still hold, and the conclusion of partial identifiability follows. Next we show such pair $(\vv_0,\vv'_0)$ must exist.
The item set  $\{3,\ldots,J\}$ can be decomposed as $\{3,\ldots,J\}:=S_{00}\cup S_{10}\cup S_{02}\cup S_{12}$ where
\[\begin{aligned}
S_{00}&=\{3\leq j\leq J: \bq_{j}'\nsucceq\vv_1,~ \bq'_{j}\nsucceq\vv_2 \},\\
S_{10}&=\{3\leq j\leq J: \bq_{j}'\succeq\vv_1,~ \bq'_{j}\nsucceq\vv_2 \},\\
S_{02}&=\{3\leq j\leq J: \bq_{j}'\nsucceq\vv_1,~ \bq'_{j}\succeq\vv_2 \},\\
S_{12}&=
\{3,\ldots, J\}\setminus(S_{00}\cup S_{10}\cup S_{02}).
\end{aligned}\]
The assumption that $Q'$ satisfies condition (C$1^*$), 
implies that there exists $\vv'_0\in\mathcal R^{Q'}$ such that $\vv'_0\succeq \vv_1$, $\vv'_0\succeq \vv_2$. So if for $i=1,2$, (a) is satisfied, then the type-(b$^*$) combinations of row vectors exist in $Q'$. While if (a) is not satisfied and (b) is satisfied, then we claim that $S_{10}\neq\varnothing$ and $S_{02}\neq\varnothing$. This is because if $S_{10}=\varnothing$, then together with the fact that $S_{00}=\varnothing$ implied by the failure of (a), we will have $\{3,\ldots,J\}=S_{02}\cup S_{12}$. But this means for any item $j\geq 3$, $\bq_{j}'\succeq\vv_2$, contradictory to the assumption of case (b). So $S_{10}\neq\varnothing$ must hold, and similarly $S_{02}\neq\varnothing$ must hold. This ensures the type-(b$^*$) combinations of row vectors exist in $Q'$. In either scenarios, $Q'$ contains at least one of type-(a$^*$) or type-(b$^*$) combinations of row vectors, so we obtain the identifiability of all the item parameters. Applying Proposition \ref{prop1} gives the identifiability of the grouped proportion parameters $\nnu$, which completes the proof of part (B.2).

\vspace{3mm}

\noindent
\textbf{Proof of Part (b.1)}. Under the assumptions in part (B.1), the $Q$-matrix takes the following form
\begin{equation}\label{eq-q0v}
Q=
\left(\begin{array}{cc}
1 & \mz^\top \\
1 & \vv^\top \\
\mathbf 0 & Q'
\end{array}\right).
\end{equation}
Since there exists a single-attribute item with $\bq$-vector being $(1,\mz^\top)$,  for any $\aaa_{2:K}\in\mathcal R^{Q'}$ we have $[0,\aaa_{2:K}]\neq[1,\aaa_{2:K}]$, where the equivalence class notation $[\Cdot]$ represents that induced by the $J\times K$ $Q$-matrix $Q$.
Then following the similar arguments as in the proof of part (B.2), Equation \eqref{eq-tpr} hold as long as the following set of equations hold
\begin{align}\label{eq-ctex}
\begin{cases}
\nu_{[0,\aaa_{2:K}]} + \nu_{[1,\aaa_{2:K}]} = \bar\nu_{[0,\aaa_{2:K}]} + \bar\nu_{[1,\aaa_{2:K}]},& \forall \aaa_{2:K}\in\mathcal R^{Q'};\\
\theta^-_1\cdot\nu_{[0,\aaa_{2:K}]} + \theta^+_1\cdot\nu_{[1,\aaa_{2:K}]} \\
\qquad\qquad~  = \theta^-_1\cdot\bar\nu_{[0,\aaa_{2:K}]} + \bar \theta^+_1\cdot\bar\nu_{[1,\aaa_{2:K}]},& \forall \aaa_{2:K}\in\mathcal R^{Q'};\\
\theta^-_2\cdot\nu_{[0,\aaa_{2:K}]} + \theta^+_2\cdot\nu_{[1,\aaa_{2:K}]}  & \\
\qquad\qquad~  = \theta^-_2\cdot\bar\nu_{[0,\aaa_{2:K}]} + \bar \theta^+_2\cdot\bar\nu_{[1,\aaa_{2:K}]},& \forall \aaa_{2:K}\succeq \vv,~ \aaa_{2:K}\in\mathcal R^{Q'} ;\\
\theta^-_1\theta^-_2\cdot\nu_{[0,\aaa_{2:K}]} + \theta^+_1\theta^+_2\cdot\nu_{[1,\aaa_{2:K}]}  & \\
\qquad\qquad~  = \theta^-_1\theta^-_2\cdot\nu_{[0,\aaa_{2:K}]} + \bar \theta^+_1\bar \theta^+_2\cdot\nu_{[1,\aaa_{2:K}]},& \forall \aaa_{2:K}\succeq \vv,~ \aaa_{2:K}\in\mathcal R^{Q'} .\\
\end{cases}
\end{align}
Now consider a set of parameters $(\ttt^+,\ttt^-,\nnu)$ such that $\nu_{[0,\aaa_{2:K}]} = \rho\cdot\nu_{[1,\aaa_{2:K}]}$ for any $\aaa_{2:K}\in\mathcal R^{Q'}$, where $\rho$ is a positive constant.
Setting $\theta^+_1 = \bar \theta^+_1$, $\theta^-_2 = \bar \theta^-_2$, $\theta^+_j=\bar \theta^+_j$ and $\theta^-_j = \bar \theta^-_j$ for $j=3,\ldots,J$ and freely choosing any valid $\bar \theta^-_1$ which is not equal to $\theta^-_1$, we construct the remaining parameters $(\bar \theta^+_2, \bar\nnu)$ as follows. Let
\[
\bar \theta^+_2 = \frac{(\theta^+_1-\bar \theta^-_1)(\theta^+_2- \theta^-_2)}{(\theta^+_1-\bar \theta^-_1)+\rho(\theta^-_1-\bar \theta^-_1)} +\bar \theta^-_2, \]
and for any $\aaa_{2:K}\in\mathcal R^{Q'}$ let
\[
\begin{aligned}
\bar\nu_{[1,\aaa_{2:K}]} &= \frac{(\theta^+_1-\bar \theta^-_1) +\rho(\theta^-_1-\bar \theta^-_1)}{\theta^+_1-\bar \theta^-_1} \nu_{[1,\aaa_{2:K}]},\\
\bar\nu_{[0,\aaa_{2:K}]} &= \nu_{[0,\aaa_{2:K}]} + \nu_{[1,\aaa_{2:K}]} -\bar\nu_{[1,\aaa_{2:K}]},
\end{aligned}
\]
then by direct calculations one can check \eqref{eq-ctex} hold.
Therefore, we have found another set of parameters $(\bar\ttt^+,\bar\ttt^-,\bar\nnu)$ such that $(\bar\ttt^+,\bar\ttt^-,\bar\nnu) \neq (\ttt^+,\ttt^-,\nnu)$ and $T(\ttt^+,\ttt^-)\nnu = T(\bar\ttt^+,\bar\ttt^-)\bar\nnu$, which shows the non-identifiability of the model parameters  under the $Q$ in the form of \eqref{eq-q0v}. This completes the proof of part (B.1).
\end{proof}

\smallskip
\begin{proof}[Proof of Theorem \ref{thm-com-necc}]
Without loss of generality, we again focus on the proof of the conclusion for the two-parameter conjunctive models since all the arguments also hold for the compensatory models, following the similar argument in the proof of Proposition \ref{prop-after1}. 
Suppose condition (C$1^*$) holds.
Without loss of generality, suppose condition (C2$^{**}$) does not hold for some basis item $j$, and suppose that the first $K_1$ entries of the row vector $\bq_j$ in the $Q$-matrix corresponding to this basis item are 1's and the remaining $K-K_1$ entries of $\bq$ are 0's, i.e. 
\[
\bq_j = (\underbrace{1,\,\ldots,\, 1,}_\text{columns $1,\ldots,K_1$} 0,\,\ldots, \,0).
\]
Denote $S_{-j} = \{1,\ldots,J\}\setminus \{j\}$. Since $j$ is a basis item, any item in $S_{-j}$ requires some attribute not required by $j$, i.e.
\[
\forall h\in S_{-j},\quad q_{h,k} = 1\text{ for some } k\in\{K_1+1,\ldots,K\}.
\]
We claim, the assumption that (C2$^{**}$) does not hold for item $h$, implies that row vectors of items in $S_{-j}$ can be arranged in a way $\{\uu_1,\ldots,\uu_{J-1}\}$ such that for any $2\leq i\leq J-1$, $\uu_i$ requires at least one more attribute in $\{K_1+1,\ldots,K\}$ that is not required by $\cup_{1\leq s\leq i-1}\{\uu_s\}$. This claim is true since otherwise for some $h\in S_{-j}$ and $S_0\subseteq S_{-j}\backslash\{h\}$, the difference of attributes required by $\{h\}$ and $S_{0}$ are only among $\{1,\ldots,K_1\}$, then taking $S^-_j = S_{0}$ and $S^+_j = S_{0}\cup \{h\}$ makes (C2$^{**}$) hold for item $j$. 
In other words, for some $1\leq k_1<k_2<\ldots<k_{J-1}\leq K-K_1$ we have that
\[
\ww_1 = \vv_{k_1},\quad \ww_2 = \vv_{k_2},\quad \ldots,\quad \ww_{J-1} = \vv_{k_{J-1}},
\]
where $\vv_1,\vv_2,\ldots,\vv_{m}$ takes the form as follows
\begin{equation}\label{eq-necc-form}
\begin{aligned}
\begin{array}{cccc|ccccc}
\bq_j: & 1 & \cdots & 1 & 0 & 0 & \cdots & \cdots & 0\\
\hline
\vv_1: & * & \cdots & * & 1 & 0 & \cdots & \cdots & 0  \\
\vv_2: & * & \cdots & * & * & 1 & \cdots & \cdots & 0 \\
\vdots & \vdots & \vdots & \vdots & \vdots & \vdots & \vdots & \vdots & \vdots \\
\vv_{K-K_1-1}: & * & \cdots & * & * & * & * & 1 & 0 \\
\vv_{K-K_1}: & * & \cdots & * & * & * & * & * & 1  \\
\end{array},
\end{aligned}
\end{equation}
Now we are ready to construct two different sets of parameters $(\ttt^+,\ttt^-,\nnu)$ $\neq$ $(\bar\ttt^+,\bar\ttt^-,\bar\nnu)$ that give (\ref{eq1}), i.e.
\[T(\ttt^+,\ttt^-)\nnu = T(\bar\ttt^+,\bar\ttt^-)\bar\nnu.\]
Given $(\ttt^+,\ttt^-,\nnu)$, condition (C$1^*$) guarantees $\ttt^+=\bar\ttt^+$ and $\theta^-_j=\bar \theta^-_j$ for $j\in S_{non}$. 
Equation (\ref{eq1}) holds if for another set of parameters $(\bar\ttt^+,\bar\ttt^-,\bar\nnu)$, the following equations hold for any $\ww_i$ such that $\ww_i\stackrel{\Gamma}{\nsim}\ww_i\vee\bq_j$
\begin{equation}\label{eq-count}
\begin{cases}
\nu_{[\ww_i]} + \nu_{[\bq_j\vee\ww_i]} = \bar \nu_{[\ww_i]} + \bar \nu_{[\bq_j\vee\ww_i]}; \\
\theta^-_j\cdot \nu_{[\ww_i]} + \theta^+_j\cdot \nu_{[\bq_j\vee\ww_i]} = \bar \theta^-_j\cdot \nu_{[\ww_i]} + \theta^+_j\cdot \nu_{[\bq_j\vee\ww_i]}, \\
\end{cases}
\end{equation}
with any other parameter not specified in \eqref{eq-count} equal to its counterpart in the original set of parameters $(\ttt^+,\ttt^-,\nnu)$.
Denote the cardinality of the set 
$\mathcal W = \{\ww_i:\ww_i\stackrel{\Gamma}{\nsim}\ww_i\vee\bq_j\}$ by $|\mathcal W|$. The set $\mathcal W$ is nonempty since $\mz\stackrel{\Gamma}{\nsim}\bq_j$ and $\mz\in\mathcal W$, where $\Gamma$ is the $\Gamma$-matrix corresponding to the saturated latent class space $\ma=\{0,1\}^K$.
Note that (\ref{eq-count}) involve $2|\mathcal W|+1$ free parameters $\{\bar \theta^-_j\}\cup\{\bar \nu_{[\ww_i]},~ \bar \nu_{[\ww_i\vee\bq]}:\ww_i\in\mathcal W\}$ while only contain $2|\mathcal W|$ equations, so there are infinitely many solutions to (\ref{eq-count}). This proves the non-identifiability of the model parameters.
\end{proof}

\smallskip
\begin{proof}[Proof of Theorem \ref{cor-suffnece}]
First prove the claim that conditions (C$1^*$) and (C$2^*$) are equivalent to conditions (C1$'$) and (C2$'$) under the assumption that the $Q$-matrix is complete and $p_{\aaa}>0$ for any $\aaa\in\{0,1\}^K$. Theorem 1 in \cite{id-dina} established that if $Q$ is complete and $p_{\aaa}>0$ for any $\aaa\in\{0,1\}^K$, then conditions (C1$'$) and (C2$'$) combined is sufficient and necessary for the identifiability of the DINA model parameters $(\ttt^+,\ttt^-,\pp)$. 
Since (C$1^*$) and (C$2^*$) are sufficient conditions for identifiability, they must imply the necessary conditions (C1$'$) and (C2$'$). In the following we prove the other direction, i.e., conditions (C1$'$) and (C2$'$) imply conditions (C$1^*$) and (C$2^*$).

When $Q$ is complete, if condition (C1$'$) holds that attribute $k$ is required by at least three items in the $Q$-matrix, then for each unit vector $\ee_k$ as the $\bq$-vector, there must exist two other items $j_k^1$ and $j_k^2$ that also measure attribute $k$.
Let $S_k^i=\{j_k^i\}$, $i=1,2$, then $S_k^1$ and $S_k^2$ are the two disjoint item sets that satisfy condition (C$1^*$) that $\ee_k = \bq_k \preceq \vee_{h\in S_k^i} \bq_h = \bq_{j_k^i}$ for $i = 1$ and $2$. This shows (C1$'$) implies (C$1^*$).

Assume without loss of generality that $Q$ takes the form
\begin{equation}\label{eq-complete-q}
Q = \left(\begin{array}{c}
\mathcal I_K \\
Q'
\end{array}\right).
\end{equation}
If condition (C2$'$) is satisfied, we next explicitly construct a  procedure that sequentially expands the separator set $S_{sep}$ until $S_{sep}=\ms$ finally, which by Theorem \ref{thm1} would establish identifiability of all the model parameters. The existence of such sequential procedure would ensure the Sequentially Differentiable Condition (C$2^*$) holds. Theorem \ref{thm1} has already established that condition (C$1^*$) suffices for the identifiability of all the slipping parameters, and that of the guessing parameters of the non-basis items. Specifically for the complete $Q$-matrix in the form of \eqref{eq-complete-q}, this conclusion implies $\theta^+_j = \bar \theta^+_j$ for all $j=1,\ldots,J$ and $\theta^-_j = \bar \theta^-_j$ for all $j=K+1,\ldots,J$, because any item $j>K$ must be a non-basis item in the sense that there always exists some item $k\in\{1,\ldots,K\}$ such that $\bq_k=\ee_k\preceq\bq_j$. It remains to show the guessing parameters of the first $K$ items are identifiable, i.e., $\theta^-_k = \bar \theta^-_k$ for $k=1,\ldots,K$.  
For any binary vectors $\boa=(a_1,\ldots,a_L)$, $\bob=(b_1,\ldots,b_L)$ of the same length, we say $\boa$ is lexicographically smaller than $\bob$, denoted by $\boa\prec_{\text{lex}}\bob$, if either $a_1<b_1$; or there exists some $2\leq i\leq l$ such that $a_i<b_i$ and $a_j=b_j$ for all $j<i$.
Now that the $K$ column vectors of $Q'$ are mutually distinct,
there is a unique permutation $(m_1,m_2,\ldots,m_K)$ of $(1,2,\ldots,K)$ such that 
$Q'_{\Cdot,m_1}\prec_{\text{lex}}Q'_{\Cdot,m_2}\prec_{\text{lex}}\ldots\prec_{\text{lex}}Q'_{\Cdot,m_K}$. For any $1\leq i<j\leq K$, since $Q'_{\Cdot,m_i}\prec_{\text{\text{lex}}}Q'_{\Cdot,m_j}$, we must have $Q'_{\Cdot,m_i}\nsucceq Q'_{\Cdot,m_j}$. This fact will be useful in the following proof.

We start with the initial separator set $S_{sep}:=S_{0} = \{K+1,\ldots,J\}$. Note that at this starting stage $S_{sep}\subseteq S_{non}$. 
We next argue that item $m_1$ is $S_{0}$-differentiable, and further, $m_{i}$ is $\left(S_0\cup \{m_1,\ldots,m_{i-1}\}\right)$-differentiable for all $i=2,\ldots,K$. Noting that $Q_{\Cdot,m_1}$ is of the smallest lexicographic order among all the column vectors of the submatrix $Q'$, define 
\[\begin{aligned}
S_{m_1}^- &= \{j\in S_0: q_{j, m_1}=0\},
\end{aligned}\]
then $\vee_{h\in S_0}\bq_{h}$ equals the all-one vector under condition (C1$'$) while $\vee_{h\in S_{m_1}^-}\bq_{h}$ equals the vector that is zero in the $m_1$th entry and one otherwise, i.e., 
\[\begin{aligned}
\vee_{h\in S_0}\bq_{h} &= (1,\ldots,1),\\
\vee_{h\in S_{m_1}^-}\bq_{h}&=(1,\ldots,1,\underbrace{0}_\text{column $m_1$},1,\ldots,1),
\end{aligned}\] 
so $\vee_{h\in S_0}\bq_{h} - \vee_{h\in S_{m_1}^-}\bq_{h} = \ee_{m_1}^\top = \bq_{m_1}$. By definition of $S$-differentiable, this means item $m_1$ is $S_0$-differentiable. Then expand the separator set by including item $m_1$ in it, i.e. let $S_{sep}:=S_0\cup \{m_1\}$. Now further define $S_{m_2}^- = \{j\in S_0: Q_{j, m_2}=0\}\cup\{m_1\}$, then $S_{m_2}^-\subseteq S_{sep}$. Similarly it is easy to check 
\[\begin{aligned}
\vee_{h\in S_{sep}}\bq_{h} &= (1,\ldots,1),\\
\vee_{h\in S_{m_2}^-}\bq_{h}&=(1,\ldots,1,\underbrace{0}_\text{column $m_2$},1,\ldots,1),
\end{aligned}\]
and this implies item $m_2$ is $S_{sep}$-differentiable. The similar  argument would give that $m_{i}$ is $\left(S_0\cup \{m_1,\ldots,m_{i-1}\}\right)$-differentiable for all $i=2,\ldots,K$, so the sequential expanding procedure ends up with $S_{sep} = \{1,\ldots,J\}=\ms$. Note that we start with an initial separator set $S_0$ that is a subset of $S_{non}$ and in each expanding step we included exactly one more item into $S_{sep}$ even if we might have included more (all the items that are $S_{sep}$-differentiable could be included, which can be more than one), the fact that in our procedure $S_{sep}$  finally equals $\ms$ actually proves a stronger conclusion than the existence of a sequential procedure described in condition (C$2^*$), so the Sequentially Differentiable Condition (C$2^*$) holds.
By now we have shown conditions (C1$'$) and (C2$'$) also imply conditions (C$1^*$) and (C$2^*$). 

Since (C1$'$) and (C2$'$) combined is necessary, (C$1^*$) and (C$2^*$) combined is also necessary.
This completes the proof of the theorem that (C$1^*$) and (C2$^*$) are sufficient and necessary for strict identifiability of the two-parameter model when the $Q$-matrix is complete and $p_{\aaa}>0$ for all $\aaa\in\{0,1\}^K$.
\end{proof}

\medskip
\section*{Section C: Proof of Main Results in Section 4}\label{sec-multi-proof}

We introduce a useful lemma before proving Theorem  \ref{thm-order} and Theorem \ref{thm-gen-q}, the results of strict identifiability of multi-parameter restricted latent class models. 
The proof of the following lemma is given in Section D. For notational simplicity, we denote $\theta_{j, \mo}:=
\max_{\aaa: \Gamma_{j,\aaa} = 1} \theta_{j, \aaa} 
= \min_{\aaa: \Gamma_{j,\aaa} = 1} \theta_{j, \aaa}$ in the following discussion.

\begin{lemma}\label{lem-neq}
For an arbitrary restricted latent class model satisfying constraints \eqref{eq-constraints},
if Equation (\ref{eq1}) holds, then for any $j\in S_1\cup S_2$ and any $\aaa$ such that $\Gamma_{j,\aaa}=0$,
\[\theta_{\ee_j,\aaa} \neq \bar\theta_{\ee_j,\mo},\quad
\theta_{\ee_j,\mo} \neq \bar\theta_{\ee_j,\aaa}.\]
\end{lemma}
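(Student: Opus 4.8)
The plan is to upgrade the argument of Lemma \ref{lemmaS2} from the two-parameter to the multi-parameter setting, the essential new ingredient being that the hypothesis $j\in S_1\cup S_2$ places item $j$ inside a \emph{separable block} of $\Gamma$. Because Equation \eqref{eq1} is symmetric under interchanging $(\TT,\pp)$ with $(\bar\TT,\bar\pp)$, the two displayed inequalities are images of one another, so it suffices to prove $\theta_{\ee_j,\mo}\neq\bar\theta_{\ee_j,\aaa}$ for every $\aaa$ with $\Gamma_{j,\aaa}=0$. First I would record the two devices inherited from Section B. Equation \eqref{eq1} is preserved under the lower-triangular shift of Proposition \ref{prop-ltrans}, giving $T(\TT-\ttt^*\mo^\top)\pp=T(\bar\TT-\ttt^*\mo^\top)\bar\pp$ for every $\ttt^*$; and, since $j\in S_i$ with $\Gamma^{S_i}$ separable, Lemma \ref{lem-rk} shows that both $T(\TT_{S_i})$ and $T(\bar\TT_{S_i})$ have full column rank $m$. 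Marginalizing \eqref{eq1} to the items of $S_i$ then yields $T(\TT_{S_i})\pp=T(\bar\TT_{S_i})\bar\pp$.

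\textbf{Isolating the most-capable classes.} The key structural point I would exploit is that separability of $\Gamma^{S_i}$ forces every equivalence class of the coarser profile $\Gamma^{S_i\setminus\{j\}}$ to contain at most two latent classes, which can differ only in coordinate $j$; within each such group item $j$ therefore behaves like a two-valued item, one member lying in $\mc_j$ (value $\theta_{\ee_j,\mo}$) and the other outside it. Using the full column rank, I would then build, exactly as in the $T_{(\rr^- + k\cdot\rr^+),\Cdot}$ construction of Step 2 in the proof of Theorem \ref{thm1}, a row-combination of the (shifted) $T$-matrix whose class-profile is supported on $\mc_j$; on the unbarred side the associated data functional evaluates to $\theta_{\ee_j,\mo}$ times a strictly positive proportion mass. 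Feeding the same functional through \eqref{eq1} re-expresses this quantity on the barred side as a convex-combination-type average of the barred item-$j$ parameters $\{\bar\theta_{j,\aaa'}\}$, bounded above by $\bar\theta_{\ee_j,\mo}$, with the bound strict whenever positive mass sits off $\mc_j$. Constraint \eqref{eq-constraints} supplies this strictness (and gives $\bar\theta_{\ee_j,\aaa}<\bar\theta_{\ee_j,\mo}$ for the non-capable $\aaa$), and running the monotone bound in both directions via the symmetry above would pin down $\theta_{\ee_j,\mo}=\bar\theta_{\ee_j,\mo}$; since $\bar\theta_{\ee_j,\aaa}<\bar\theta_{\ee_j,\mo}=\theta_{\ee_j,\mo}$, the desired inequality follows, and its partner follows by symmetry.

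\textbf{The main obstacle.} The hard part is precisely the feature distinguishing this from Lemma \ref{lemmaS2}: when $\aaa\notin\mc_j$, the value $\theta_{j,\aaa}$ is no longer forced to equal a single minimal level, so the elementary one-item marginal bound — that $\sum_{\aaa'}\theta_{j,\aaa'}p_{\aaa'}$ lies between the two item values — no longer isolates the extremal parameter $\theta_{\ee_j,\mo}$, since intermediate-valued classes contaminate the average. This is exactly where the separable-block hypothesis $j\in S_1\cup S_2$ and the full-rank decoding of Lemma \ref{lem-rk} become indispensable, as they let me strip away the intermediate classes and reduce to a clean extremal comparison. The one delicate point I would verify with care is that the isolating row-combination, although built from the unknown true parameters, defines a genuinely data-determined functional, so that applying it to the barred side of \eqref{eq1} is legitimate.
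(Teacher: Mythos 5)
There is a genuine gap, and it sits exactly at the point you flag as ``delicate'' but do not resolve. Your plan is to build a signed row-combination of the shifted $T$-matrix (via the full column rank from Lemma \ref{lem-rk}) that isolates $\mc_j$ on the unbarred side, and then to read the same functional on the barred side as ``a convex-combination-type average of the barred item-$j$ parameters, bounded above by $\bar\theta_{\ee_j,\mo}$.'' That bound is unavailable: any vector $\mm$ with $\mm^\top T(\TT_{S_i})$ supported on prescribed columns necessarily has coefficients of both signs, so $\mm^\top T(\bar\TT_{S_i})\bar\pp$ is a \emph{signed} combination of barred quantities, not a convex one, and no monotonicity from \eqref{eq-constraints} applies to it. The elementary monotone bound of Lemma \ref{lemmaS2} works only for the raw one-item marginal $T_{\ee_j,\Cdot}(\TT)\pp$, precisely because there the weights are the true proportions $p_{\aaa}>0$; the moment you invert the $T$-matrix to strip away the intermediate classes you destroy the positivity that the bound needs. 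The paper's proof avoids this by never bounding: it tracks where the isolating functional lands on the barred side (a nonzero entry at some unknown column $\aaa^*$, not necessarily $\aaa$), derives the transfer identity $\theta_{\ee_j,\aaa}=\bar\theta_{\ee_j,\aaa^*}$ for all items outside $S_1\cup S_2$, and then invokes condition (C4) to build a further functional vanishing at $\aaa_0$, which yields $\theta_{\ee_j,\mo}=\bar\theta_{\ee_j,\aaa^*}\le\bar\theta_{\ee_j,\mo}$ and hence the strict inequality $\theta_{\ee_j,\aaa'}<\bar\theta_{\ee_j,\mo}$. Your proposal never uses (C4), yet the paper's argument genuinely needs it at this step.

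A second, related problem is your intermediate target $\theta_{\ee_j,\mo}=\bar\theta_{\ee_j,\mo}$. This equality is strictly stronger than the lemma and is not established in the paper until Step 3 of the proof of Theorem \ref{thm-order} --- a step whose earlier stages (Step 1 in particular) already consume Lemma \ref{lem-neq} to certify that certain products $\prod(\theta_{\ee_k,\aaa_0}-\bar\theta_{\ee_k,\mo})$ are nonzero. Routing the proof of the lemma through $\theta_{\ee_j,\mo}=\bar\theta_{\ee_j,\mo}$ therefore either requires reproducing most of the theorem's argument inside the lemma or risks circularity; the paper deliberately settles for the weaker conclusion $\theta_{\ee_j,\mo}=\bar\theta_{\ee_j,\aaa^*}\le\bar\theta_{\ee_j,\mo}$, which suffices for the stated inequalities without identifying the most-capable level itself.
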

To prove Theorem  \ref{thm-order}, we also need the following lemma, whose proof is given in Section D.
\begin{lemma}\label{lem-order-a} 
Under the assumptions of Theorem \ref{thm-order}, for any $\aaa$ there exists vectors $\uu_{\aaa}$ and $\vv_{\aaa}$ such that 
\begin{equation}\label{eq-order}
\begin{aligned}
\{\vv_{\aaa}^\top \cdot T(\TT_{S_2})\}_{\aaa} \neq 0;\qquad 
\{\vv_{\aaa}^\top \cdot T(\TT_{S_2})\}_{\aaa'} &= 0,\quad \forall\aaa'\precneqq_{S_1}\aaa.\\
\{\uu_{\aaa}^\top \cdot T(\bar\TT_{S_1})\}_{\aaa} \neq 0;\qquad 
\{\uu_{\aaa}^\top \cdot T(\bar\TT_{S_1})\}_{\aaa'} &= 0,\quad \forall\aaa'\precneqq_{S_2}\aaa.\\ 
\end{aligned}
\end{equation}
\end{lemma}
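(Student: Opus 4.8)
The plan is to recast the assertion as a statement about linear independence of the columns of the submatrices $T(\TT_{S_2})$ and $T(\bar\TT_{S_1})$, and then to read off the conclusion from the full-rank property supplied by Lemma \ref{lem-rk}. Fix a latent class $\aaa$. For the first line, I would observe that a row vector $\vv_{\aaa}$ (indexed by response patterns on $S_2$) satisfies $\{\vv_{\aaa}^\top T(\TT_{S_2})\}_{\aaa'} = 0$ for every $\aaa'\precneqq_{S_1}\aaa$ precisely when $\vv_{\aaa}$ lies in the orthogonal complement of $V_{\aaa} := \mathrm{span}\{T_{\Cdot,\aaa'}(\TT_{S_2}) : \aaa'\precneqq_{S_1}\aaa\}$; and among such vectors one with $\{\vv_{\aaa}^\top T(\TT_{S_2})\}_{\aaa}\neq 0$ exists if and only if the column $T_{\Cdot,\aaa}(\TT_{S_2})$ does not lie in $V_{\aaa}$. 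Indeed, writing the orthogonal decomposition $T_{\Cdot,\aaa}(\TT_{S_2}) = w + w^{\perp}$ with $w\in V_{\aaa}$ and $w^{\perp}\in V_{\aaa}^{\perp}$, the choice $\vv_{\aaa} = w^{\perp}$ yields $\{\vv_{\aaa}^\top T(\TT_{S_2})\}_{\aaa} = \|w^{\perp}\|^2 \neq 0$ as soon as $w^{\perp}\neq 0$, while annihilating every column indexed by $\aaa'\precneqq_{S_1}\aaa$. Thus the whole task reduces to the non-membership $T_{\Cdot,\aaa}(\TT_{S_2})\notin V_{\aaa}$.

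This non-membership follows from separability. By Condition (C3) the submatrix $\Gamma^{S_2}$ is separable, so Lemma \ref{lem-rk} gives that $T(\TT_{S_2})$ has full column rank $m$; in particular all of its $m$ columns are linearly independent, so $T_{\Cdot,\aaa}(\TT_{S_2})$ cannot be a linear combination of the remaining columns, let alone of the strictly-smaller subset $\{T_{\Cdot,\aaa'}(\TT_{S_2}):\aaa'\precneqq_{S_1}\aaa\}$. Here I would invoke the identification $``\succeq_{S_1}" = ``\succeq_{S_2}"$ from (C3), which makes the down-set in the statement well defined and lets one probe the $\succeq_{S_1}$-order with the $S_2$-matrix. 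The construction of $\uu_{\aaa}$ is entirely symmetric: since $\bar\TT$ obeys the same restricted model with the same separable indicator matrix $\Gamma^{S_1}$ under the constraints \eqref{eq-constraints}, Lemma \ref{lem-rk} shows that $T(\bar\TT_{S_1})$ also has full column rank $m$, and the same orthogonal-complement argument produces $\uu_{\aaa}$ that is nonzero on column $\aaa$ and annihilates all columns indexed by $\aaa'\precneqq_{S_2}\aaa$.

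The real content is just the reduction to column independence, and the heavy lifting is already packaged inside Lemma \ref{lem-rk}, so I do not expect a genuine obstacle; the only points needing care are (i) verifying that the indicator structure used for $\bar\TT$ is the \emph{same} separable $\Gamma^{S_1}$ as for $\TT$, which holds because both candidate parameter values follow the identical model design, and (ii) matching the two coincident partial orders so that $T(\TT_{S_2})$ may be tested against the $\succeq_{S_1}$-down-set. I would also remark that this argument in fact proves slightly more than the stated claim — independence from \emph{all} other columns, not merely the strictly smaller ones — and that the triangular phrasing in \eqref{eq-order} is retained only because that is the form consumed later in the proof of Theorem \ref{thm-order}. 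A more pedestrian alternative would build $\vv_{\aaa}$ explicitly as an inclusion–exclusion combination of rows of the transformed matrix $T(\TT_{S_2}-\ttt^{*}\mathbf 1^\top) = D(\ttt^{*})T(\TT_{S_2})$ from Proposition \ref{prop-ltrans}, but this is markedly more involved than the rank argument and I would avoid it.
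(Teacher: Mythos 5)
Your argument is valid for the lemma exactly as it is printed: since $\Gamma^{S_2}$ is separable, Lemma \ref{lem-rk} gives $T(\TT_{S_2})$ full column rank, so the column indexed by $\aaa$ cannot lie in the span of the columns indexed by $\{\aaa':\aaa'\precneqq_{S_1}\aaa\}$, and the orthogonal-projection choice of $\vv_{\aaa}$ does the job; the symmetric argument for $\uu_{\aaa}$ is also fine because $\bar\TT$ lives in the same restricted parameter space, so $T(\bar\TT_{S_1})$ is likewise full rank. This is considerably more elementary than the paper's proof, and notably it uses only (C3).

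That last observation is the warning sign you half-noticed yourself when you remarked that your construction ``proves slightly more than the stated claim.'' The statement is underspecified relative to what the paper actually proves and what Theorem \ref{thm-order} consumes. The paper's $\vv_{\aaa}$ is not an arbitrary vector with the triangularity property against $T(\TT_{S_2})$: it is the specific vector $\nn_{\aaa^*}$ constructed in Lemma \ref{lem-neq} to be an exact dual of a column of the \emph{barred} matrix, $\nn_{\aaa^*}^\top T(\bar\TT_{S_2})=(\mz,1,\mz)$, and the lemma's content is that this \emph{same} vector, which couples the two candidate parameter sets, additionally kills the columns $\aaa'\precneqq_{S_1}\aaa$ of $T(\TT_{S_2})$. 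Proving that requires Condition (C4): if $\{\nn_{\aaa^*}^\top T(\TT_{S_2})\}_{\aaa'}\neq 0$ for some $\aaa'\precneqq_{S_1}\aaa$, one derives $\theta_{j,\aaa'}=\bar\theta_{j,\aaa^*}=\theta_{j,\aaa}$ for all $j\in(S_1\cup S_2)^c$, contradicting $\Gamma^{(S_1\cup S_2)^c}_{\Cdot,\aaa}\neq\Gamma^{(S_1\cup S_2)^c}_{\Cdot,\aaa'}$. Your orthogonal-complement $\vv_{\aaa}$ gives no control whatsoever over $\vv_{\aaa}^\top T(\bar\TT_{S_2})$, so it cannot be substituted into Step 2 of the proof of Theorem \ref{thm-order}, where both properties of $\vv_{\aaa}$ are invoked simultaneously (e.g.\ in \eqref{eq-t7-5} and \eqref{eq-s3-3}--\eqref{eq-s3-4}). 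In short: you have proved the literal statement by a cleaner route, but the literal statement is not the operative one, and the coupled version genuinely needs (C4) and the machinery of Lemma \ref{lem-neq}, which your proof bypasses entirely.
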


\begin{proof}[Proof of Theorem \ref{thm-order}]
Equipped with  Lemmas \ref{lem-neq} and \ref{lem-order-a},   we  prove Theorem \ref{thm-order} in the following  three steps. Without loss of generality, assume $S_1=\{1,\ldots,M_1\}$ and $S_2=\{M_1+1,\ldots,M_1+M_2\}$, namely item set $S_1$ contains the first $M_1$ items and item set $S_2$ contains the next $M_2$ items.
\vspace{4mm}
\newline
\textit{Step 1:}
$\theta_{\ee_j,\aaa_0} = \bar \theta_{\ee_j,\aaa_0}$ for $j>M_1+M_2$.
\vspace{2mm}
\newline
\textit{Step 2:}
$\theta_{\ee_j,\aaa} = \bar \theta_{\ee_j,\aaa}$ for $j>M_1+M_2$ and any $\aaa$.
\vspace{2mm}
\newline
\textit{Step 3:}
$\theta_{\ee_j,\aaa} = \bar \theta_{\ee_j,\aaa}$ and $p_{\aaa}=\bar p_{\aaa}$ for $1\leq j\leq M_1+M_2$ and any $\aaa$.
\vspace{4mm}

\noindent
Now we start the proof of the result step by step. 
\vspace{4mm}

\noindent
\textbf{Step 1}.
Define $\ttt^*\in\mathbb R^J$ to be 
\[
\ttt^* = (\bar\theta_{\ee_1,\mo},\ldots,\bar\theta_{\ee_{M_1},\mo},
          \theta_{\ee_{M_1+1},\mo},\ldots,\theta_{\ee_{M_1+M_2},\mo},
          \mz_{J-M_1-M_2})^\top,
\]
and consider the row vector of the transformed $T$-matrix $T(\TT-\ttt^*\mathbf1^\top)$ corresponding to $\rr=\sum_{k=1}^{M_1+M_2}\ee_k$ is
\[\begin{aligned}
&T_{\sum_{k=1}^{M_1+M_2}\ee_k,\cdot}(\TT-\ttt^*\mathbf1^\top) 
= \bigodot_{k=1}^{M_1+M_2} T_{\ee_k,\cdot}(\TT-\ttt^*\mathbf1^\top)  \\
= &\left(\prod_{k=1}^{M_1} (\theta_{\ee_k,\aaa_0}-\bar\theta_{\ee_k,\mo}) 
\prod_{k=1}^{M_2}(\theta_{\ee_{M_1+k},\aaa_0}-\theta_{\ee_{M_1+k},\mo}), 
\mathbf0^\top_{M}\right),
\end{aligned}\]
where the last $M$ elements of this row vector are all zero.
By Lemma \ref{lem-neq}, the first element is nonzero, i.e.,
\[
\prod_{k=1}^{M_1} (\theta_{\ee_k,\aaa_0}-\bar\theta_{\ee_k,\mo}) 
\prod_{k=1}^{M_2} (\theta_{\ee_{M_1+k},\aaa_0}-\theta_{\ee_{M_1+k},\mo})
\neq 0.
\]
Then similarly for parameters $(\bar\TT,\bar\pp)$ we have
\[\begin{aligned}
&T_{\sum_{k=1}^{M_1+M_2}\ee_k}(\bar\TT-\ttt^*\mathbf1^\top) \\
& = \biggr(\prod_{k=1}^{M_1} (\bar\theta_{\ee_k,\aaa_0}-\bar\theta_{\ee_k,\mo}) 
\prod_{k=1}^{M_2}(\bar\theta_{\ee_{M_1+k},\aaa_0}-\theta_{\ee_{M_1+k},\mo}), 
\mathbf0^\top_{M}\biggr) 
\end{aligned}\]
and 
\[
\prod_{k=1}^{M_1} (\bar\theta_{\ee_k,\aaa_0}-\bar\theta_{\ee_k,\mo}) 
\prod_{k=1}^{M_2}(\bar\theta_{\ee_{M_1+k},\aaa_0}-\theta_{\ee_{M_1+k},\mo})
\neq 0.
\]

Now consider $\theta_{\ee_j,\aaa_0}$ for any $j>M_1+M_2$. The row vectors of $T(\TT-\ttt^*\mathbf1^\top)$ and $T(\bar\TT-\ttt^*\mathbf1^\top)$ corresponding to the response pattern $\rr=\sum_{k=1}^{M_1+M_2}\ee_k + \ee_j$ are
\[\begin{aligned}
&T_{\sum_{k=1}^{2M}\ee_k+\ee_j,\cdot}(\TT-\ttt^*\mathbf1^\top) \\
&= \left(\theta_{\ee_j,\aaa_0} \prod_{k=1}^{M_1} (\theta_{\ee_k,\aaa_0}-\bar\theta_{\ee_k,\mo}) 
\prod_{k=1}^{M_2}(\theta_{\ee_{M_1+k},\aaa_0}-\theta_{\ee_{M_1+k},\mo}),
\mathbf0^\top_{M}\right),
\end{aligned}\]
and
\[\begin{aligned}
&T_{\sum_{k=1}^{2M}\ee_k+\ee_j,\cdot}(\bar\TT-\ttt^*\mathbf1^\top) \\
&= \left(\bar\theta_{\ee_j,\aaa_0} \prod_{k=1}^{M_1} (\bar\theta_{\ee_k,\aaa_0}-\bar\theta_{\ee_k,\mo}) 
\prod_{k=1}^{M_2}(\bar\theta_{\ee_{M_1+k},\aaa_0}-\theta_{\ee_{M_1+k},\mo}),
\mathbf0^\top_{M}\right),
\end{aligned}\]
respectively. Note Equation (\ref{eq1}) implies that
\begin{eqnarray*}
	\theta_{\ee_j,\aaa_0} &=&
\frac{T_{\sum_{k=1}^{M_1+M_2}\ee_k+\ee_j,\cdot}(\TT-\ttt^*\mathbf1^\top)\pp}{T_{\sum_{k=1}^{M_1+M_2}\ee_k,\cdot}(\TT-\ttt^*\mathbf1^\top)\pp}\\
&=& \frac{T_{\sum_{k=1}^{M_1+M_2}\ee_k+\ee_j,\cdot}(\bar\TT-\ttt^*\mathbf1^\top)\bar\pp}{T_{\sum_{k=1}^{M_1+M_2}\ee_k,\cdot}(\bar\TT-\ttt^*\mathbf1^\top)\bar\pp}
~= ~\bar\theta_{\ee_j,\aaa_0}.
\end{eqnarray*}
 \vspace{2mm}

\noindent
\textbf{Step 2}.
First consider any $j\in (S_1\cup S_2)^c$. For any $\aaa$, define
\[\ttt_{\aaa} = 
\sum_{h\in S_1: \Gamma_{h,\aaa}=0} \theta_{\ee_h,\mo}\ee_h +
\sum_{h\in S_2: \Gamma_{h,\aaa}=0} \bar\theta_{\ee_h,\mo}\ee_h,\]
and consider the row vector corresponding to response pattern $\rr = \sum_{h\in S_1} \ee_h$ in the transformed $T$-matrix, then we have
\[
\begin{aligned}
T_{\sum_{h\in S_1}\ee_h,\aaa'}(\TT-\ttt_{\aaa}\mo^\top) &\neq 0 \quad\text{iff}\quad \aaa'\preceq_{S_1}\aaa, \\
T_{\sum_{h\in S_2}\ee_h,\aaa'}(\bar\TT-\ttt_{\aaa}\mo^\top) & \neq 0 \quad\text{iff}\quad \aaa'\preceq_{S_2}\aaa.
\end{aligned}
\]
We only prove the first inequality above and the second is just similar. Note
\begin{equation}\label{eq-t7-3}
T_{\sum_{h\in S_1}\ee_h,\aaa'}(\TT-\ttt_{\aaa}\mo^\top) = 
\prod_{h\in S_1: \Gamma_{h,\aaa}=0} (\theta_{\ee_h,\aaa'} - \theta_{\ee_h,\mo}),
\end{equation}
and if $\aaa'\npreceq\aaa$, then there exists some $h$ such that $\Gamma_{h,\aaa'}=1$, $\Gamma_{h,\aaa}=0$ and hence $\theta_{\ee_h,\aaa'}-\theta_{\ee_h,\mo}=0$, which makes the product in (\ref{eq-t7-3}) equal to 0; while if $\aaa'\preceq\aaa$, then for all $h\in S_1$ such that $\Gamma_{h,\aaa}=0$, we have $\Gamma_{h,\aaa'}\leq \Gamma_{h,\aaa}=0$ and hence $\theta_{\ee_h,\aaa'}-\theta_{\ee_h,\mo}\neq 0$, so the product in (\ref{eq-t7-3}) is nonzero.

Then we use the properties of $\uu_{\aaa}$ and $\vv_{\aaa}$ to continue with the proof. First note that the existence of $\uu_{\aaa}$ and $\vv_{\aaa}$ satisfying (\ref{eq-order}) only rely on the full-column-rank property of $T(\TT_{S_i})$ and $T(\bar\TT_{S_i})$, so for some full-rank linear transformation matrix $A$ there still exists some $\uu_{\aaa}$ and $\vv_{\aaa}$ such that
\[\begin{aligned}
\vv_{\aaa}^\top \cdot A\cdot T(\bar\TT_{S_2}) &= (\mz,\underbrace{1}_\text{column $\aaa$},\mz),\\
\uu_{\aaa}^\top \cdot A\cdot T(\TT_{S_1}) &= (\mz,\underbrace{1}_\text{column $\aaa$},\mz),
\end{aligned}\]
and
\begin{eqnarray}\label{eq-order2}
&~~~~ \{\vv_{\aaa}^\top \cdot A\cdot T(\TT_{S_2})\}_{\aaa} \neq 0;\quad 
\{\vv_{\aaa}^\top \cdot A\cdot T(\TT_{S_2})\}_{\aaa'}  = 0, ~ \forall\aaa'\precneqq_{S_1}\aaa; \\
&~~~~ \{\uu_{\aaa}^\top \cdot A\cdot T(\bar\TT_{S_1})\}_{\aaa} \neq 0;\quad 
\{\uu_{\aaa}^\top \cdot A\cdot T(\bar\TT_{S_1})\}_{\aaa'} = 0,~ \forall\aaa'\precneqq_{S_2}\aaa.\notag 
 \end{eqnarray}
Now note that $T_{\sum_{h\in S_2}\ee_h,\cdot}(\TT-\ttt_{\aaa}\mo^\top)$ can just be expressed as $D(\ttt_{\aaa})\cdot T(\TT_{S_2})$ indicated by Proposition \ref{prop-ltrans}, so we have
\begin{eqnarray}\label{eq-t7-4}
 &&\{\uu_{\aaa}^\top \cdot T_{\sum_{h\in S_1}\ee_h,\Cdot}(\TT-\ttt_{\aaa}\mo^\top)\}
\odot \{\vv_{\aaa}^\top \cdot T_{\sum_{h\in S_2}\ee_h,\Cdot}(\TT-\ttt_{\aaa}\mo^\top)\} \\
&&= (\mz,\underbrace{x_{\aaa}}_\text{column $\aaa$},\mz),\quad\mbox{ with } x_{\aaa}\neq 0,\notag
\end{eqnarray}
\begin{eqnarray}\label{eq-t7-5}
 &&\{\uu_{\aaa}^\top \cdot T_{\sum_{h\in S_1}\ee_h,\Cdot}(\bar\TT-\ttt_{\aaa}\mo^\top)\}
\odot \{\vv_{\aaa}^\top \cdot T_{\sum_{h\in S_2}\ee_h,\Cdot}(\bar\TT-\ttt_{\aaa}\mo^\top)\}\\
&&= (\mz,\underbrace{\bar y_{\aaa}}_\text{column $\aaa$},\mz),\quad\mbox{ with } \bar y_{\aaa}\neq 0.\notag
 \end{eqnarray}
 Note that the left hand sides of equations \eqref{eq-t7-4} and \eqref{eq-t7-5} are both row transformations of the $T$-matrix, namely there exists a matrix $M_1$ such that 
\[\eqref{eq-t7-4}=M_1\cdot T(\TT),\quad\eqref{eq-t7-5}=M_1\cdot T(\bar\TT),\]
so by Equation \eqref{eq1}, we have $\eqref{eq-t7-4}\cdot\pp= \eqref{eq-t7-5}\cdot\bar\pp \neq 0$. 
Now consider any item $j\in (S_1\cup S_2)^c$, since \eqref{eq-t7-4} and \eqref{eq-t7-5} involve rows of the $T$-matrices only with respect to items included in $S_1\cup S_2$, Equation \eqref{eq1} further implies 
$\{T_{\ee_j,\aaa}(\TT)\odot (\ref{eq-t7-4})\}\cdot \pp=\{T_{\ee_j,\aaa}(\bar\TT)\odot (\ref{eq-t7-5})\}\cdot \bar\pp$, therefore we have the equality
 \begin{align}\label{aaaa}
\frac{\{T_{\ee_j,\aaa}(\TT)\odot (\ref{eq-t7-4})\}\cdot \pp}{(\ref{eq-t7-4})\cdot \pp}
= \frac{\{T_{\ee_j,\aaa}(\bar\TT)\odot (\ref{eq-t7-5})\}\cdot \bar\pp}{(\ref{eq-t7-5})\cdot \bar\pp}.
\end{align}
Note that the left and right hand sides of the above equation can be written as
\[\begin{aligned}
\mbox{LHS of \eqref{aaaa} } &= \frac{\theta_{\ee_j,\aaa}\cdot (\ref{eq-t7-4})\cdot \bar\pp}{(\ref{eq-t7-4})\cdot \bar\pp} = \theta_{j,\aaa},\\
\mbox{RHS of \eqref{aaaa} } &= \frac{\bar\theta_{\ee_j,\aaa}\cdot (\ref{eq-t7-5})\cdot \bar\pp}{(\ref{eq-t7-5})\cdot \bar\pp} =\bar \theta_{j,\aaa},
\end{aligned}\]
so $\theta_{j,\aaa}=\bar\theta_{j,\aaa}$. 
\vspace{2mm}

\noindent
\textbf{Step 3}.
{\it First we prove $\theta_{\ee_j,\mo} = \bar\theta_{\ee_j,\mo}$ for any} $j\in S_1\cup S_2$. 
Given $\aaa$, define
\[
\ttt^* = \sum_{h\in S_1:\Gamma_{h,\aaa}=0}\theta_{\ee_h,\mo}\ee_h.
\]
Note that if for some $\aaa$, $\Gamma_{h,\aaa}=1$ for all $h\in S_1$, then $\ttt^*$ is defined to be the zero vector.
With $\ttt^*$, the row vector corresponding to $\rr^*=\sum_{h\in S_1:\Gamma_{h,\aaa}=0}\ee_h$ in the transformed $T$-matrix takes the following form
\[\begin{aligned}
& T_{\rr^*,\cdot}(\TT-\ttt^*\mo^\top) \\
& = 
\Big(\prod_{h\in S_1:\Gamma_{h,\aaa}=0}(\theta_{\ee_h,\aaa_0} - \theta_{\ee_h,\mo}),*,\ldots,*,
\prod_{h\in S_1:\Gamma_{h,\aaa}=0}(\theta_{\ee_h,\aaa} - \theta_{\ee_h,\mo}),0,\ldots,0\Big),
\end{aligned}\]
and satisfies that 
\[T_{\rr^*,\aaa}(\TT-\ttt^*\mo^\top)\neq 0;\quad
T_{\rr^*,\aaa'}(\TT-\ttt^*\mo^\top)= 0,\quad\forall \aaa'\npreceq_{S_1}\aaa.\]
From previous constructions we have
\[
\vv^\top_{\aaa}\cdot T(\bar\TT_{S_2}) = (\mz,\underbrace{1}_\text{column $\aaa$},\mz)^\top,
\]
and denote the value in column $\aaa$ of $\vv^\top_{\aaa}\cdot T(\TT_{S_2})$ by $b_{\vv,\aaa}$.
Consider any $j\in S_1\cup S_2$ such that $\Gamma_{j,\aaa}=1$, then obviously $\ee_j$ is not included in the sum in the previously defined response pattern $\rr^*$, because $\rr^*$ only contains those items that $\aaa$ is not capable of. So we have
\begin{equation}\label{eq-s3-1}
\begin{aligned}
& T_{\rr^*,\cdot}(\TT-\ttt^*\mo^\top)
\odot
\{\vv^\top_{\aaa}\cdot T(\TT_{S_2})\} \\
& =\Big( \mz^\top, 
\underbrace{b_{\vv,\aaa}\cdot\prod_{h\in S_1:\Gamma_{h,\aaa_k}=0}(\theta_{\ee_h,\aaa} - \theta_{\ee_h,\mo})}_\text{column $\aaa$},
 \mz^\top \Big),
\end{aligned}\end{equation}
\begin{equation}\label{eq-s3-2}
\begin{aligned}
& T_{\rr^*+\ee_j,\cdot}(\TT-\ttt^*\mo^\top)
\odot
\{\vv^\top_{\aaa}\cdot T(\TT_{S_2})\} \\
& =\Big( \mz^\top, 
\underbrace{\theta_{\ee_j,\mo}\cdot b_{\vv,\aaa}\cdot\prod_{h\in S_1:\Gamma_{h,\aaa}=0}(\theta_{\ee_h,\aaa} - \theta_{\ee_h,\mo})}_\text{column $\aaa$},
 \mz^\top \Big).
\end{aligned}\end{equation}
Similarly for $(\bar\TT,\bar\pp)$ we have
\begin{equation}\label{eq-s3-3}
\begin{aligned}
& T_{\rr^*,\cdot}(\bar\TT-\ttt^*\mo^\top)
\odot
\{\vv^\top_{\aaa}\cdot T(\bar\TT_{S_2})\} \\
& =\Big( \mz^\top, 
\underbrace{\prod_{h\in S_1:\Gamma_{h,\aaa}=0}(\bar\theta_{\ee_h,\aaa} - \theta_{\ee_h,\mo})}_\text{column $\aaa$},
 \mz^\top \Big),
\end{aligned}\end{equation}
\begin{equation}\label{eq-s3-4}
\begin{aligned}
& T_{\rr^*+\ee_j,\cdot}(\bar\TT-\ttt^*\mo^\top)
\odot
\{\vv^\top_{\aaa}\cdot T(\bar\TT_{S_2})\} \\
&=\Big( \mz^\top, \underbrace{\bar\theta_{\ee_j,\mo}\cdot\prod_{h\in S_1:\Gamma_{h,\aaa}=0}
(\bar\theta_{\ee_h,\aaa} - \theta_{\ee_h,\mo})}_\text{column $\aaa$}, \mz^\top \Big).
\end{aligned}\end{equation}
Equation (\ref{eq1}) implies $(\ref{eq-s3-1})\cdot\pp = (\ref{eq-s3-3})\cdot\bar\pp$, and since $(\ref{eq-s3-3})\cdot\bar\pp\neq 0$, we must also have $(\ref{eq-s3-1})\cdot\bar\pp\neq 0$, which indicates $b_{\vv,\aaa}\neq 0$.
The above four equations along with (\ref{eq1}) give that
\[
\theta_{\ee_j,\mo} = \theta_{\ee_j,\aaa} =
\frac{(\ref{eq-s3-2})\cdot\pp}{(\ref{eq-s3-1})\cdot\pp} = \frac{(\ref{eq-s3-4})\cdot\bar\pp}{(\ref{eq-s3-3})\cdot\bar\pp}
= \bar\theta_{\ee_j,\aaa} = \bar\theta_{\ee_j,\mo},\quad \forall j\in S_2.
\]
Note that the above equality $\theta_{\ee_{j},\mo} = \bar\theta_{\ee_{j},\mo}$ holds for any $\aaa$ and any item $j$ such that $\Gamma_{j,\aaa}=1$. 
Therefore we have shown $\theta_{\ee_{j},\mo} = \bar\theta_{\ee_{j},\mo}$ holds for any $j\in S_1\cup S_2$.
Similarly we also have $\theta_{\ee_{j},\aaa_0} = \bar\theta_{\ee_{j},\aaa_0}$. In summary,
\[\begin{aligned}
\theta_{\ee_{j},\aaa_0} = \bar\theta_{\ee_{j},\aaa_0},\quad
\theta_{\ee_{j},\mo} = \bar\theta_{\ee_{j},\mo},\quad 
\forall j\in S_1 \cup S_2. \\
\end{aligned}\]
For $\aaa=\aaa_0$ define
\[
\ttt^{*} = \sum_{h\in S_1} \theta_{\ee_h,\mo}\ee_h,
\]
then $T_{\sum_{h\in S_1}\ee_h}(\TT-\ttt^{*}\mo^\top)\pp
= T_{\sum_{h\in S_1}\ee_h}(\bar\TT-\ttt^{*}\mo^\top)\bar\pp$ gives
\[
\prod_{h\in S_1}(\theta_{\ee_h,\aaa_0} - \theta_{\ee_h,\mo})p_{\aaa_0} =
\prod_{h\in S_1}(\theta_{\ee_h,\aaa_0} - \theta_{\ee_h,\mo})\bar p_{\aaa_0},
\]
so $p_{\aaa_0}=\bar p_{\aaa_0}$.

\smallskip
 {\it Next we show $\theta_{\ee_j,\aaa}=\bar\theta_{\ee_j,\aaa}$ for any $\aaa$ and $j\in S_1\cup S_2$, where $\Gamma_{j,\aaa}=0$.}
We use the induction method to show that for any $\aaa\in\mc$,
\begin{equation}\label{eq-theta-p}
\forall j\in S_1\cup S_2,\quad
\theta_{j,\aaa} = \bar\theta_{j,\aaa},\quad p_{\aaa} = \bar p_{\aaa}.
\end{equation}
Firstly, we prove \eqref{eq-theta-p} hold for $\aaa=\aaa_1$, where $\aaa_1$ denotes the latent class with the smallest lexicographical order among $\mc\setminus\{\aaa_0\}$.
For $\aaa=\aaa_1$, define
\begin{equation}\label{eq-t-s3}
\ttt^* = \sum_{h\in S_1:\Gamma_{h,\aaa_1}=0}\theta_{\ee_h,\mo}\ee_h
+ \sum_{h\in S_1:\Gamma_{h,\aaa_1}=1}\theta_{\ee_h,\aaa_0}\ee_h,
\end{equation}
then the row vectors of $\rr^*=\sum_{h\in S_1}\ee_h$ in the transformed $T$-matrices only contain one nonzero element corresponding to column $\aaa_1$ as follows
\begin{align}\label{eq-new1}
&~ T_{\rr^*,\cdot}(\TT-\ttt^*\mo^\top)\\ \notag
=&~ \biggr(\mz^\top,
\prod_{h\in S_1:\Gamma_{h,\aaa_1}=0}(\theta_{\ee_h,\aaa_1} - \theta_{\ee_h,\mo})
\prod_{h\in S_1:\Gamma_{h,\aaa_1}=1}(\theta_{\ee_h,\aaa_1} - \theta_{\ee_h,\aaa_0}),
\mz^\top\biggr),
\end{align}
\begin{align}\label{eq-new2}
&~T_{\rr^*,\cdot}(\bar\TT-\ttt^*\mo^\top)\\ \notag
=&~ \biggr(\mz^\top,
\prod_{h\in S_1:\Gamma_{h,\aaa_1}=0}(\bar\theta_{\ee_h,\aaa_1} - \theta_{\ee_h,\mo})
\prod_{h\in S_1:\Gamma_{h,\aaa_1}=1}(\bar\theta_{\ee_h,\aaa_1} - \theta_{\ee_h,\aaa_0}),
\mz^\top\biggr),
\end{align}
and this is because for any other latent class $\aaa'\neq \aaa_1$, the $\aaa'$ is capable of at least one item in $S_1$ that $\aaa_1$ is not capable of. 
Now consider the row vector corresponding to response pattern $\rr+\ee_j$ for $j\in S_2$ in the transformed $T$-matrices, and we have
 \begin{align*}
& T_{\rr^*+\ee_j,\cdot}(\TT-\ttt^*\mo^\top)  \\ 
&= \Big(\mz^\top,
 \theta_{\ee_j,\aaa}\cdot
\prod_{h\in S_1:\Gamma_{h,\aaa_1}=0}(\theta_{\ee_h,\aaa_1} - \theta_{\ee_h,\mo})
\prod_{h\in S_1:\Gamma_{h,\aaa_1}=1}(\theta_{\ee_h,\aaa_1} - \theta_{\ee_h,\aaa_0}),
\mz^\top\Big),
\end{align*} 
and
 \begin{align*}
& T_{\rr^*+\ee_j,\cdot}(\bar\TT-\ttt^*\mo^\top)  \\
&= \Big(\mz^\top, \bar\theta_{\ee_j,\aaa_1}\cdot
\prod_{h\in S_1:\Gamma_{h,\aaa_1}=0}(\bar\theta_{\ee_h,\aaa_1} - \theta_{\ee_h,\mo})
\prod_{h\in S_1:\Gamma_{h,\aaa_1}=1}(\bar\theta_{\ee_h,\aaa_1} - \theta_{\ee_h,\aaa_0}),
\mz^\top\Big).
\end{align*} 
The above four equations along with Equation (\ref{eq1}) indicate for $j\in S_2$ we have
\[
\theta_{\ee_j,\aaa_1} = \bar\theta_{\ee_j,\aaa_1}.
\]
Similarly for $j\in S_1$ we also have $\theta_{\ee_{j},\aaa_1} = \bar\theta_{\ee_{j},\aaa_1}$. 
Plugging $\theta_{\ee_j,\aaa_1} = \bar\theta_{\ee_j,\aaa_1}$ into the equation $\eqref{eq-new1}\pp = \eqref{eq-new2}\bar \pp$ gives
$$
p_{\aaa_1} = \bar p_{\aaa_1}.
$$
So now we have shown \eqref{eq-theta-p} holds for $\aaa=\aaa_1$.

Then as the induction assumption,  suppose for any given $\aaa\in\mc$, we have
\[
\forall \aaa' \text{ s.t. } \aaa'\preceq_{S_1}\aaa,
\quad \forall j\in S_1\cup S_2,\quad \theta_{\ee_j,\aaa'} = \bar \theta_{\ee_j,\aaa'}
, \quad p_{\aaa'} = \bar p_{\aaa'}.
\]
Recall that $\aaa'\preceq_{S_1}\aaa$ if and only if $\aaa'\preceq_{S_2}\aaa$.
Define $\ttt^*$ as 
\[
\ttt^* = \sum_{h\in S_1:\Gamma_{h,\aaa}=0}\theta_{\ee_h,\mo}\ee_h
+ \sum_{h\in S_1:\Gamma_{h,\aaa}=1}\theta_{\ee_h,\aaa_0}\ee_h,
\]
then for $\rr^* := \sum_{h\in S_1}\ee_h$ we have 
\begin{align}\label{eq-induc-nn1}
T_{\rr^*,\cdot}(\TT-\ttt^*\mo^\top)& \pp
 = \sum_{\aaa'\preceq_{S_1}\aaa}t_{\rr^*,\aaa'}\cdot p_{\aaa'}\notag \\
 +& \prod_{h\in S_1:\Gamma_{h,\aaa}=0}(\theta_{\ee_h,\aaa} - \theta_{\ee_h,\mo})
\prod_{h\in S_1:\Gamma_{h,\aaa}=1}(\theta_{\ee_h,\aaa} - \theta_{\ee_h,\aaa_0})\cdot p_{\aaa},
 \end{align}

\begin{align}\label{eq-induc-nn2}
 T_{\rr^*,\cdot}(\bar\TT-\ttt^*\mo^\top)& \bar\pp
= \sum_{\aaa'\preceq_{S_1}\aaa}\bar t_{\rr^*,\aaa'}\cdot \bar p_{\aaa'} \notag\\
+& \prod_{h\in S_1:\Gamma_{h,\aaa}=0}(\bar\theta_{\ee_h,\aaa} - \theta_{\ee_h,\mo})
\prod_{h\in S_1:\Gamma_{h,\aaa}=1}(\bar\theta_{\ee_h,\aaa} - \theta_{\ee_h,\aaa_0})\cdot \bar p_{\aaa},
\end{align} 
where the notations $t_{\rr^*,\aaa'}$ and $\bar t_{\rr^*,\aaa'}$ are defined as
\[\begin{aligned}
t_{\rr^*,\aaa'} &= \prod_{h\in S_1:\Gamma_{h,\aaa}=0}(\theta_{\ee_h,\aaa'} - \theta_{\ee_h,\mo})
\prod_{h\in S_1:\Gamma_{h,\aaa}=1}(\theta_{\ee_h,\aaa} - \theta_{\ee_h,\aaa_0}), \\
\bar t_{\rr^*,\aaa'} &= \prod_{h\in S_1:\Gamma_{h,\aaa}=0}(\bar\theta_{\ee_h,\aaa'} - \theta_{\ee_h,\mo})
\prod_{h\in S_1:\Gamma_{h,\aaa}=1}(\bar\theta_{\ee_h,\aaa} - \theta_{\ee_h,\aaa_0}).
\end{aligned}\]
Note that by induction assumption we have $\theta_{\ee_h,\aaa} = \bar\theta_{\ee_h,\aaa}$ for any $\aaa'$ such that $\aaa'\preceq_{S_1}\aaa$. This implies $t_{\rr^*,\aaa'} = \bar t_{\rr^*,\aaa'}$ and further implies
\[
\sum_{\aaa'\preceq_{S_1}\aaa}t_{\rr^*,\aaa'}\cdot p_{\aaa'}
=
\sum_{\aaa'\preceq_{S_1}\aaa}\bar t_{\rr^*,\aaa'}\cdot\bar p_{\aaa'}.
\]
So (\ref{eq-induc-nn1}) = (\ref{eq-induc-nn2}) gives
\begin{equation}\label{eq-induc-nn3}
\begin{aligned}
&\prod_{h\in S_1:\Gamma_{h,\aaa}=0}(\theta_{\ee_h,\aaa} - \theta_{\ee_h,\mo})
\prod_{h\in S_1:\Gamma_{h,\aaa}=1}(\theta_{\ee_h,\aaa} - \theta_{\ee_h,\aaa_0})\cdot p_{\aaa} \\
=&
\prod_{h\in S_1:\Gamma_{h,\aaa}=0}(\bar\theta_{\ee_h,\aaa} - \theta_{\ee_h,\mo})
\prod_{h\in S_1:\Gamma_{h,\aaa}=1}(\bar\theta_{\ee_h,\aaa} - \theta_{\ee_h,\aaa_0})\cdot \bar p_{\aaa},
\end{aligned}\end{equation}
and the two terms on both hand sides of the above equation are nonzero.
Now consider any $j\notin S_1$ and similarly $T_{\rr^*+\ee_j,\cdot}(\TT-\ttt^*\mo^\top) \pp = T_{\rr^*+\ee_j,\cdot}(\bar\TT-\ttt^*\mo^\top)\bar \pp$ yields

\begin{equation}\label{eq-induc-nn4}
\begin{aligned}
&\theta_{\ee_j,\aaa}\cdot\prod_{h\in S_1:\Gamma_{h,\aaa}=0}(\theta_{\ee_h,\aaa} - \theta_{\ee_h,\mo})
\prod_{h\in S_1:\Gamma_{h,\aaa}=1}(\theta_{\ee_h,\aaa} - \theta_{\ee_h,\aaa_0})\cdot p_{\aaa} \\
=& ~\bar\theta_{\ee_j,\aaa}\cdot
\prod_{h\in S_1:\Gamma_{h,\aaa}=0}(\bar\theta_{\ee_h,\aaa} - \theta_{\ee_h,\mo})
\prod_{h\in S_1:\Gamma_{h,\aaa}=1}(\bar\theta_{\ee_h,\aaa} - \theta_{\ee_h,\aaa_0})\cdot \bar p_{\aaa}.
\end{aligned}\end{equation}
Taking the ratio of the above two equations \eqref{eq-induc-nn4} and \eqref{eq-induc-nn3} gives
\[
\theta_{\ee_j,\aaa}=\bar\theta_{\ee_j,\aaa},\quad \forall j\notin S_1.
\]
Redefining $\rr^* := \sum_{h\in S_2}\ee_h$ similarly as above we have $\theta_{\ee_j,\aaa}=\bar\theta_{\ee_j,\aaa}$ for any $j\in S_1$.
Plug $\theta_{\ee_j,\aaa}=\bar\theta_{\ee_j,\aaa}$ for all $j\in S_1$ into (\ref{eq-induc-nn3}), then we have 
$p_{\aaa} = \bar p_{\aaa}$.
Now we have shown \eqref{eq-theta-p} hold for this particular $\aaa$. 
Then the induction argument gives
\[
\forall \aaa\in\mc,\quad \forall j\in S_1\cup S_2,\quad
\theta_{\ee_j,\aaa} = \bar\theta_{\ee_j,\aaa},\quad p_{\aaa} = \bar p_{\aaa}.
\]
Combined with the results in Step 1 and 2, all the model parameters $(\TT,\pp)$ are identifiable and the proof of Theorem \ref{thm-order} is complete.
\end{proof}

\smallskip
\begin{proof}[Proof of Proposition \ref{new}]
Without loss of generality, assume $S_1=\{1,\ldots,M_1\}$ and $S_2=\{M_1+1,\ldots,M_1+M_2\}$. Recall that $\mb_{S_1} = \mb_{S_2}$ under condition (C3*).
The outline of the proof is as follows.
\vspace{4mm}
\newline
\textit{Step 1:}
$\theta_{\ee_j,\aaa_0} = \bar \theta_{\ee_j,\aaa_0}$ for $j>M_1+M_2$.
\newline
\textit{Step 2:}
$\theta_{\ee_j,\aaa} = \bar \theta_{\ee_j,\aaa}$ for $j>M_1+M_2$ and $\aaa\in\mb_{S_1}$.
\newline
\textit{Step 3:}
$\theta_{\ee_j,\aaa} = \bar \theta_{\ee_j,\aaa}$ and $p_{\aaa}=\bar p_{\aaa}$ for $1\leq j\leq M_1+M_2$, $\aaa=\aaa_0$ or $\aaa\in\mb_{S_1}$.
\newline
\textit{Step 4:}
$\theta_{\ee_j,\aaa} = \bar \theta_{\ee_j,\aaa}$ and $p_{\aaa} = \bar p_{\aaa}$ for $1\leq j\leq J$ and for all $\aaa$.

\vspace{4mm}
\noindent
Next we start the proof of the theorem.

\noindent
\textbf{Step 1}. The proof is exactly the same as Step 1 of Theorem \ref{thm-order}.
\vspace{2mm}

\noindent
\textbf{Step 2}. 
First consider basis latent classes $\aaa$ under both $S_1$ and $S_2$. For $\aaa\in\mathcal B_{S_1}$, define
\[\begin{aligned}
\ttt^* =& \sum_{j\in S_1:\Gamma_{j,\aaa}=1}\bar\theta_{\ee_j, \aaa_0}\ee_j
+ \sum_{j\in S_1:\Gamma_{j,\aaa}=0}\bar\theta_{\ee_j, \mo}\ee_j \\
&+ \sum_{j\in S_2:\Gamma_{j,\aaa}=1}\theta_{\ee_j, \aaa_0}\ee_j
+ \sum_{j\in S_2:\Gamma_{j,\aaa}=0}\theta_{\ee_j, \mo}\ee_j,
\end{aligned}\]
then the row vectors $\rr^*=\sum_{j=1}^{M_1+M_2}\ee_j$ in the transformed $T$-matrices only contain one potentially nonzero element, corresponding to $\aaa$, as follows
\begin{align}\label{eq-s2-1}
&T_{\rr^*,\cdot}(\TT-\ttt^*\mo^\top)
\notag\\
= &~ \Big(\mz^\top, 
 \prod_{j\in S_1:\Gamma_{j,\aaa}=1} (\theta_{\ee_j,\aaa} - \bar\theta_{\ee_j,\aaa_0})
\prod_{j\in S_1:\Gamma_{j,\aaa}=0} (\theta_{\ee_j,\aaa} - \bar\theta_{\ee_j,\mo})  \notag\\
&  \quad\quad\quad \times\prod_{j\in S_2:\Gamma_{j,\aaa}=1} (\theta_{\ee_j,\aaa} - \theta_{\ee_j,\aaa_0})
\prod_{j\in S_2:\Gamma_{j,\aaa}=0} (\theta_{\ee_j,\aaa} - \theta_{\ee_j,\mo}),~
 \mz^\top\Big),
\end{align} 
and
\begin{align}\label{eq-s2-2}
& T_{\rr^*,\cdot}(\TT-\ttt^*\mo^\top)
\notag\\
= &~ \Big(\mz^\top, 
  \prod_{j\in S_1:\Gamma_{j,\aaa}=1} (\bar\theta_{\ee_j,\aaa} - \bar\theta_{\ee_j,\aaa_0})
\prod_{j\in S_1:\Gamma_{j,\aaa}=0} (\bar\theta_{\ee_j,\aaa} - \bar\theta_{\ee_j,\mo})  \notag \\
&  \quad\quad\quad \times \prod_{j\in S_2:\Gamma_{j,\aaa}=1} (\bar\theta_{\ee_j,\aaa} - \theta_{\ee_j,\aaa_0})
\prod_{j\in S_2:\Gamma_{j,\aaa}=0} (\bar\theta_{\ee_j,\aaa} - \theta_{\ee_j,\mo}),
 	~\mz^\top\Big).
\end{align} 
Lemma \ref{lem-neq} implies the product elements in (\ref{eq-s2-1}) and (\ref{eq-s2-2}) are both nonzero. Then consider any $j>M_1+M_2$, the row vector corresponding to the response pattern $\rr^*+\ee_j$ in the transformed $T$-matrices are
\begin{align}\label{eq-s2-3}
 & T_{\rr^*+\ee_j,\cdot}(\TT-\ttt^*\mo^\top) 
 \notag\\
= &~ \Big(\mz^\top, 
\theta_{\ee_j,\aaa} \cdot \prod_{h\in S_1:\Gamma_{h,\aaa}=1} (\theta_{\ee_h,\aaa} - \bar\theta_{\ee_h,\aaa_0})
\prod_{h\in S_1:\Gamma_{h,\aaa}=0} (\theta_{\ee_h,\aaa} - \bar\theta_{\ee_h,\mo})  \notag\\
&    \quad \times \prod_{h\in S_2:\Gamma_{h,\aaa}=1} (\theta_{\ee_h,\aaa} - \theta_{\ee_h,\aaa_0})
\prod_{h\in S_2:\Gamma_{h,\aaa}=0} (\theta_{\ee_h,\aaa} - \theta_{\ee_h,\mo}),
 ~\mz^\top\Big),
\end{align} 
and
\begin{align}\label{eq-s2-4}
 & T_{\rr^*+\ee_j,\cdot}(\TT-\ttt^*\mo^\top)  \notag\\
=& \Big(\mz^\top, 
\bar\theta_{\ee_j,\aaa}\cdot \prod_{h\in S_1:\Gamma_{h,\aaa}=1} (\bar\theta_{\ee_h,\aaa} - \bar\theta_{\ee_h,\aaa_0})
\prod_{h\in S_1:\Gamma_{h,\aaa}=0} (\bar\theta_{\ee_h,\aaa} - \bar\theta_{\ee_h,\mo})  \notag\\
&   \quad \times \prod_{h\in S_2:\Gamma_{h,\aaa}=1} (\bar\theta_{\ee_h,\aaa} - \theta_{\ee_h,\aaa_0})
\prod_{h\in S_2:\Gamma_{h,\aaa}=0} (\bar\theta_{\ee_h,\aaa} - \theta_{\ee_h,\mo}),
 ~\mz^\top\Big).
\end{align} 
So we have
\[
\theta_{\ee_j,\aaa} = \frac{(\ref{eq-s2-3})\cdot\pp}{(\ref{eq-s2-1})\cdot\pp} = 
\frac{(\ref{eq-s2-4})\cdot\bar\pp}{(\ref{eq-s2-2})\cdot\bar\pp} = \bar \theta_{\ee_j,\aaa},
\quad \forall\aaa\in \mb_{S_1},\quad \forall j > M_1+M_2.
\]

\smallskip
\noindent
\textbf{Step 3}.
 {\it We first prove $\theta_{\ee_j,\mo} = \bar\theta_{\ee_j,\mo}$ for any $j\in S_1\cup S_2$.} 
Given $\aaa\in\mb_{S_1}$, define
\[
\ttt^* = \sum_{h\in S_1:\Gamma_{h,\aaa}=0}\theta_{\ee_h,\mo}\ee_h,
\]
then the row vector corresponding to $\rr^*=\sum_{h\in S_1:\Gamma_{h,\aaa}=0}\ee_h$ in the transformed $T$-matrix takes the following form
\[\begin{aligned}
& T_{\rr^*,\cdot}(\TT-\ttt^*\mo^\top) \\
& = 
\Big(\prod_{h\in S_1:\Gamma_{h,\aaa}=0}(\theta_{\ee_h,\aaa_0} - \theta_{\ee_h,\mo}),*,\ldots,*,
\prod_{h\in S_1:\Gamma_{h,\aaa}=0}(\theta_{\ee_h,\aaa} - \theta_{\ee_h,\mo}),0,\ldots,0\Big).
\end{aligned}\]
Condition (C4*) implies that $(\theta_{j,\aaa},j\in(S_1\cup S_2)^c)\neq (\theta_{j,\aaa_0},j\in(S_1\cup S_2)^c)$ for any basis latent class $\aaa\in\mb_{S_1}$.
So there exist a $C$-dimensional vector $\mm$ such that the element in $\mm^\top\cdot T(\TT_{(M_1+M_2+1):J})$ corresponding to $\aaa_0$ is 0 and the element corresponding to $\aaa$ is 1, i.e.,
\[\begin{aligned}
\mm^\top\cdot T(\TT_{(M_1+M_2+1):J}) &= (0,*,\ldots,*,\underbrace{1}_\text{column $\aaa$},*,\ldots,*),\\
\end{aligned}\]
and based on the conclusions of Step 2, we also have
\[\begin{aligned}
\mm^\top\cdot T(\bar\TT_{(M_1+M_2+1):J}) &= (0,*,\ldots,*,\underbrace{1}_\text{column $\aaa$},*,\ldots,*).\\
\end{aligned}\]
By Lemma \ref{lem-rk} $T(\bar\TT_{S_2})$ has full column rank $C$, hence there exists a vector $\vv$ such that
\[
\vv^\top\cdot T(\bar\TT_{S_2}) = (\mz,\underbrace{1}_\text{column $\aaa$},\mz)^\top,
\]
and denote the value in column $\aaa$ of $\vv^\top\cdot T(\TT_{S_2})$ by $b_{\vv,\aaa}$.
Consider any $j\in S_1\cup S_2$ such that $\Gamma_{j,\aaa}=1$, then obviously $\ee_j$ is not included in the sum in the previously defined response pattern $\rr^*$, because $\rr^*$ only contains those items that $\aaa$ is not capable of. So we have
\begin{equation}\label{eq-s3-1}
\begin{aligned}
& T_{\rr^*,\cdot}(\TT-\ttt^*\mo^\top)
\odot
\{\mm^\top\cdot T(\TT_{(M_1+M_2+1):J})\} 
\odot
\{\vv^\top\cdot T(\TT_{S_2})\} \\
=&~\Big( \mz^\top, 
\underbrace{b_{\vv,\aaa}\cdot\prod_{h\in S_1:\Gamma_{h,\aaa_k}=0}(\theta_{\ee_h,\aaa} - \theta_{\ee_h,\mo})}_\text{column $\aaa$},
 \mz^\top \Big),
\end{aligned}\end{equation}
\begin{equation}\label{eq-s3-2}
\begin{aligned}
& T_{\rr^*+\ee_j,\cdot}(\TT-\ttt^*\mo^\top)
\odot
\{\mm^\top\cdot T(\TT_{(M_1+M_2+1):J})\} 
\odot
\{\vv^\top\cdot T(\TT_{S_2})\} \\
=&~\Big( \mz^\top, 
\underbrace{\theta_{\ee_j,\mo}\cdot b_{\vv,\aaa}\cdot\prod_{h\in S_1:\Gamma_{h,\aaa}=0}(\theta_{\ee_h,\aaa} - \theta_{\ee_h,\mo})}_\text{column $\aaa$},
 \mz^\top \Big).
\end{aligned}\end{equation}
Similarly for $(\bar\TT,\bar\pp)$ we have
\begin{equation}\label{eq-s3-3}
\begin{aligned}
& T_{\rr^*,\cdot}(\bar\TT-\ttt^*\mo^\top)
\odot
\{\mm^\top\cdot T(\bar\TT_{(M_1+M_2+1):J})\} 
\odot
\{\vv^\top\cdot T(\bar\TT_{S_2})\} \\
=&~\Big( \mz^\top, 
\underbrace{\prod_{h\in S_1:\Gamma_{h,\aaa}=0}(\bar\theta_{\ee_h,\aaa} - \theta_{\ee_h,\mo})}_\text{column $\aaa$},
 \mz^\top \Big),
\end{aligned}\end{equation}
\begin{equation}\label{eq-s3-4}
\begin{aligned}
& T_{\rr^*+\ee_j,\cdot}(\bar\TT-\ttt^*\mo^\top)
\odot
\{\mm^\top\cdot T(\bar\TT_{(M_1+M_2+1):J})\} 
\odot
\{\vv^\top\cdot T(\bar\TT_{S_2})\} \\
=&~\Big( \mz^\top, \underbrace{\bar\theta_{\ee_j,\mo}\cdot\prod_{h\in S_1:\Gamma_{h,\aaa}=0}
(\bar\theta_{\ee_h,\aaa} - \theta_{\ee_h,\mo})}_\text{column $\aaa$}, \mz^\top \Big).
\end{aligned}\end{equation}
Equation (\ref{eq1}) implies $(\ref{eq-s3-1})\cdot\pp = (\ref{eq-s3-3})\cdot\bar\pp$, and since $(\ref{eq-s3-3})\cdot\bar\pp\neq 0$, we must also have $(\ref{eq-s3-1})\cdot\bar\pp\neq 0$, which indicates $b_{\vv,\aaa}\neq 0$.
The above four equations along with (\ref{eq1}) give that
\[
\theta_{\ee_j,\mo} = \theta_{\ee_j,\aaa} =
\frac{(\ref{eq-s3-2})\cdot\pp}{(\ref{eq-s3-1})\cdot\pp} = \frac{(\ref{eq-s3-4})\cdot\bar\pp}{(\ref{eq-s3-3})\cdot\bar\pp}
= \bar\theta_{\ee_j,\aaa} = \bar\theta_{\ee_j,\mo},\quad \forall j\in S_2.
\]
Note that the above equality $\theta_{\ee_{j},\mo} = \bar\theta_{\ee_{j},\mo}$ holds for any $\aaa$ and any item $j$ such that $\Gamma_{j,\aaa}=1$. 
Therefore we have shown $\theta_{\ee_{j},\mo} = \bar\theta_{\ee_{j},\mo}$ holds for any $j\in S_1\cup S_2$.
Similarly we also have $\theta_{\ee_{j},\aaa_0} = \bar\theta_{\ee_{j},\aaa_0}$. In summary,
\[\begin{aligned}
\theta_{\ee_{j},\aaa_0} = \bar\theta_{\ee_{j},\aaa_0},\quad
\theta_{\ee_{j},\mo} = \bar\theta_{\ee_{j},\mo},\quad 
\forall j\in S_1 \cup S_2. \\
\end{aligned}\]
For $\aaa=\aaa_0$ define
\[
\ttt^{*} = \sum_{h\in S_1} \theta_{\ee_h,\mo}\ee_h,
\]
then $T_{\sum_{h\in S_1}\ee_h}(\TT-\ttt^{*}\mo^\top)\pp
= T_{\sum_{h\in S_1}\ee_h}(\bar\TT-\ttt^{*}\mo^\top)\bar\pp$ gives
\[
\prod_{h\in S_1}(\theta_{\ee_h,\aaa_0} - \theta_{\ee_h,\mo})p_{\aaa_0} =
\prod_{h\in S_1}(\theta_{\ee_h,\aaa_0} - \theta_{\ee_h,\mo})\bar p_{\aaa_0},
\]
so we also have
$p_{\aaa_0}=\bar p_{\aaa_0}.$

\smallskip
 {\it Next we show $\theta_{\ee_j,\aaa}=\bar\theta_{\ee_j,\aaa}$ for any $\aaa\in\mb_{S_1}$ and $j\in S_1\cup S_2$, where $\Gamma_{j,\aaa}=0$.}
Given $\aaa$, define
\begin{equation}\label{eq-t-s3}
\ttt^* = \sum_{h\in S_1:\Gamma_{h,\aaa}=0}\theta_{\ee_h,\mo}\ee_h
+ \sum_{h\in S_1:\Gamma_{h,\aaa}=1}\theta_{\ee_h,\aaa_0}\ee_h,
\end{equation}
then the row vectors of $\rr^*=\sum_{h\in S_1}\ee_h$ in the transformed $T$-matrices only contain one nonzero element corresponding to column $\aaa$ as follows
\[
T_{\rr^*,\cdot}(\TT-\ttt^*\mo^\top) = \biggr(\mz^\top,
\prod_{h\in S_1:\Gamma_{h,\aaa}=0}(\theta_{\ee_h,\aaa} - \theta_{\ee_h,\mo})
\prod_{h\in S_1:\Gamma_{h,\aaa}=1}(\theta_{\ee_h,\aaa} - \theta_{\ee_h,\aaa_0}),
\mz^\top\biggr),
\]
\[
T_{\rr^*,\cdot}(\bar\TT-\ttt^*\mo^\top) = \biggr(\mz^\top,
\prod_{h\in S_1:\Gamma_{h,\aaa}=0}(\bar\theta_{\ee_h,\aaa} - \theta_{\ee_h,\mo})
\prod_{h\in S_1:\Gamma_{h,\aaa}=1}(\bar\theta_{\ee_h,\aaa} - \theta_{\ee_h,\aaa_0}),
\mz^\top\biggr).
\]
Now consider the row vectors of $\rr+\ee_j$ for $j\in S_2$ in the transformed $T$-matrices, we have
 \begin{align*}
& T_{\rr^*+\ee_j,\cdot}(\TT-\ttt^*\mo^\top)\\
&  = \Big(\mz^\top,~ \theta_{\ee_j,\aaa}\cdot
\prod_{h\in S_1:\Gamma_{h,\aaa}=0}(\theta_{\ee_h,\aaa} - \theta_{\ee_h,\mo})
\prod_{h\in S_1:\Gamma_{h,\aaa}=1}(\theta_{\ee_h,\aaa} - \theta_{\ee_h,\aaa_0}),
~\mz^\top\Big),
\end{align*} 
and
 \begin{align*}
& T_{\rr^*+\ee_j,\cdot}(\bar\TT-\ttt^*\mo^\top) \\
&= \Big(\mz^\top, ~ \bar\theta_{\ee_j,\aaa}\cdot
\prod_{h\in S_1:\Gamma_{h,\aaa}=0}(\bar\theta_{\ee_h,\aaa} - \theta_{\ee_h,\mo})
\prod_{h\in S_1:\Gamma_{h,\aaa}=1}(\bar\theta_{\ee_h,\aaa} - \theta_{\ee_h,\aaa_0}),
~\mz^\top\Big).
\end{align*} 
The above four equations along with Equation (\ref{eq1}) indicate for $j\in S_2$ we have
\[
\theta_{\ee_j,\aaa} = \bar\theta_{\ee_j,\aaa}.
\]
Similarly for $\aaa\in\mb_{S_1}$, $j\in S_1$ we also have $\theta_{\ee_{j},\aaa} = \bar\theta_{\ee_{j},\aaa}$. In summary, we have
\[
\theta_{\ee_j,\aaa} = \bar\theta_{\ee_j,\aaa},\quad \forall \aaa\in\mathcal B_{S_1},\quad \forall j\in S_1\cup S_2.
\]
Now for $\aaa\in\mb_{S_1}$ define
\[
\ttt^{*} = \sum_{h\in S_1:\Gamma_{h,\aaa}=0} \theta_{\ee_h,\mo}\ee_h,
\]
then $T_{\sum_{h\in S_1}\ee_h}(\TT-\ttt^{*}\mo^\top)\pp
= T_{\sum_{h\in S_1}\ee_h}(\bar\TT-\ttt^{*}\mo^\top)\bar\pp$ gives
\[\begin{aligned}
&\prod_{h\in S_1}(\theta_{\ee_h,\aaa_0} - \theta_{\ee_h,\mo})p_{\aaa_0} + \prod_{h\in S_1}(\theta_{\ee_h,\aaa} - \theta_{\ee_h,\mo})p_{\aaa}\\
 =
&\prod_{h\in S_1}(\theta_{\ee_h,\aaa_0} - \theta_{\ee_h,\mo})p_{\aaa_0} + \prod_{h\in S_1}(\theta_{\ee_h,\aaa} - \theta_{\ee_h,\mo})\bar p_{\aaa},
\end{aligned}\]
which implies $p_{\aaa}=\bar p_{\aaa}$. This completes the proof of Step 3.


\smallskip
\noindent
\textbf{Step 4}.
We use the induction method to prove the conclusions for those $\aaa\notin \mb_{S_1}$. In previous steps we already established 
\[
p_{\aaa_0} = \bar p_{\aaa_0},\quad \theta_{\ee_j,\aaa_0} = \bar \theta_{\ee_j,\aaa_0},\quad\forall j\in\{1,\ldots,J\},
\]
and
\[
p_{\aaa} = \bar p_{\aaa},\quad \theta_{\ee_j,\aaa} = \bar \theta_{\ee_j,\aaa},\quad \forall\aaa\in\mb_{S_1},\quad \forall j\in\{1,\ldots,J\}.
\]
So as the induction assumption, suppose for any given $\aaa\notin\mb_{S_1}$, we have
\[
p_{\aaa'} = \bar p_{\aaa'}, \quad \theta_{\ee_j,\aaa'} = \bar \theta_{\ee_j,\aaa'},\quad
\forall \aaa' \text{ s.t. } \aaa'\preceq_{S_1}\aaa
\quad \forall j\in\{1,\ldots,J\}.
\]
Recall that $\aaa'\preceq_{S_1}\aaa$ if and only if $\aaa'\preceq_{S_2}\aaa$.
Define $\ttt^*$ as that in (\ref{eq-t-s3})
\[
\ttt^* = \sum_{h\in S_1:\Gamma_{h,\aaa}=0}\theta_{\ee_h,\mo}\ee_h
+ \sum_{h\in S_1:\Gamma_{h,\aaa}=1}\theta_{\ee_h,\aaa_0}\ee_h,
\]
then the row vector corresponding to $\rr^*=\sum_{h\in S_1}\ee_h$ in the transformed $T$-matrix takes the form
\begin{align}\label{eq-induc1}
& T_{\rr^*+\ee_j,\cdot}(\TT-\ttt^*\mo^\top) \pp =  \sum_{\aaa'\preceq_{S_1}\aaa}t_{\rr^*,\aaa'}\cdot p_{\aaa'} \\
&\quad + \prod_{h\in S_1:\Gamma_{h,\aaa}=0}(\theta_{\ee_h,\aaa} - \theta_{\ee_h,\mo})
\prod_{h\in S_1:\Gamma_{h,\aaa}=1}(\theta_{\ee_h,\aaa} - \theta_{\ee_h,\aaa_0})\cdot p_{\aaa},\notag
\end{align} 
\begin{align}\label{eq-induc2}
&  T_{\rr^*+\ee_j,\cdot}(\bar\TT-\ttt^*\mo^\top) \bar\pp
=  \sum_{\aaa'\preceq_{S_1}\aaa}\bar t_{\rr^*,\aaa'}\cdot \bar p_{\aaa'} \\
&\quad+ \prod_{h\in S_1:\Gamma_{h,\aaa}=0}(\bar\theta_{\ee_h,\aaa} - \theta_{\ee_h,\mo})
\prod_{h\in S_1:\Gamma_{h,\aaa}=1}(\bar\theta_{\ee_h,\aaa} - \theta_{\ee_h,\aaa_0})\cdot \bar p_{\aaa},\notag
\end{align} 
where the notations $t_{\rr^*,\aaa'}$ and $\bar t_{\rr^*,\aaa'}$ are defined as
\[\begin{aligned}
t_{\rr^*,\aaa'} &= \prod_{h\in S_1:\Gamma_{h,\aaa}=0}(\theta_{\ee_h,\aaa'} - \theta_{\ee_h,\mo})
\prod_{h\in S_1:\Gamma_{h,\aaa}=1}(\theta_{\ee_h,\aaa} - \theta_{\ee_h,\aaa_0}), \\
\bar t_{\rr^*,\aaa'} &= \prod_{h\in S_1:\Gamma_{h,\aaa}=0}(\bar\theta_{\ee_h,\aaa'} - \theta_{\ee_h,\mo})
\prod_{h\in S_1:\Gamma_{h,\aaa}=1}(\bar\theta_{\ee_h,\aaa} - \theta_{\ee_h,\aaa_0}).
\end{aligned}\]
Note that by induction assumption we have $\theta_{\ee_h,\aaa} = \bar\theta_{\ee_h,\aaa}$ for any $\aaa'$ such that $\aaa'\preceq_{S_1}\aaa$. This implies $t_{\rr^*,\aaa'} = \bar t_{\rr^*,\aaa'}$ and further implies
\[
\sum_{\aaa'\preceq_{S_1}\aaa}t_{\rr^*,\aaa'}\cdot p_{\aaa'}
=
\sum_{\aaa'\preceq_{S_1}\aaa}\bar t_{\rr^*,\aaa'}\cdot\bar p_{\aaa'}.
\]
So (\ref{eq-induc1}) = (\ref{eq-induc2}) gives
\begin{equation}\label{eq-induc3}
\begin{aligned}
&\prod_{h\in S_1:\Gamma_{h,\aaa}=0}(\theta_{\ee_h,\aaa} - \theta_{\ee_h,\mo})
\prod_{h\in S_1:\Gamma_{h,\aaa}=1}(\theta_{\ee_h,\aaa} - \theta_{\ee_h,\aaa_0})\cdot p_{\aaa} \\
=&
\prod_{h\in S_1:\Gamma_{h,\aaa}=0}(\bar\theta_{\ee_h,\aaa} - \theta_{\ee_h,\mo})
\prod_{h\in S_1:\Gamma_{h,\aaa}=1}(\bar\theta_{\ee_h,\aaa} - \theta_{\ee_h,\aaa_0})\cdot \bar p_{\aaa}.
\end{aligned}\end{equation}
Consider any $j\notin S_1$ and similarly we have
\begin{equation}\label{eq-induc4}
\begin{aligned}
&\theta_{\ee_j,\aaa}\cdot\prod_{h\in S_1:\Gamma_{h,\aaa}=0}(\theta_{\ee_h,\aaa} - \theta_{\ee_h,\mo})
\prod_{h\in S_1:\Gamma_{h,\aaa}=1}(\theta_{\ee_h,\aaa} - \theta_{\ee_h,\aaa_0})\cdot p_{\aaa} \\
=& \bar\theta_{\ee_j,\aaa}\cdot
\prod_{h\in S_1:\Gamma_{h,\aaa}=0}(\bar\theta_{\ee_h,\aaa} - \theta_{\ee_h,\mo})
\prod_{h\in S_1:\Gamma_{h,\aaa}=1}(\bar\theta_{\ee_h,\aaa} - \theta_{\ee_h,\aaa_0})\cdot \bar p_{\aaa}.
\end{aligned}\end{equation}
Taking the ratio of the above two equations gives
\[
\theta_{\ee_j,\aaa}=\bar\theta_{\ee_j,\aaa},\quad \forall j\notin S_1.
\]
Similarly we have $\theta_{\ee_j,\aaa}=\bar\theta_{\ee_j,\aaa}$ for any $j\in S_1$.
Plug in $\theta_{\ee_j,\aaa}=\bar\theta_{\ee_j,\aaa}$ for all $j\in S_1$ into (\ref{eq-induc3}), then we have 
\[
p_{\aaa} = \bar p_{\aaa}.
\]
This completes the proof of Proposition \ref{new}.
\end{proof}

\smallskip
\begin{proof}[Proof of Theorem \ref{thm-order-gen}]
Without loss of generality, we show the generic identifiability statement holds on the  parameter space $\mathcal T$
\[
\mathcal T = 
\Big\{(\TT,\pp):\, \forall~j, \max_{\aaa:\Gamma_{j,\aaa}=1}\theta_{j,\aaa} = \min_{\aaa:\Gamma_{j,\aaa}=1} \theta_{j,\aaa}>\theta_{j,\aaa'} \geq \theta_{j,\aaa_0},~\forall~ \Gamma_{j,\aaa'}=0 \Big\}.
\] 
On $\mathcal T$, altering some entries of zero to one in the $\Gamma$-matrix is equivalently imposing more affine constraints on the parameters and force them to be in a subset 
$\mathcal T^*$ of $\mathcal T$. 
Since Condition (C3) holds for model parameters belonging to the space $\mathcal T^*$, the proof of Theorem \ref{thm-order} gives that the matrix $T(\TT_{S_i})$ has full column rank $C$ for $i=1,2$ for $(\TT_{S_i},\pp)\in\mathcal T^*$.
Note that saying the $2^{|S_i|}\times C$ matrix $T(\TT_{S_i})$ has full column rank is equivalently saying the map sending $T(\TT_{S_i})$ to all its $\binom{2^{|S_i|}}{C}$ possible $C\times C$ minors $A_1^i,A_2^i,\ldots, A^i_{2^{|S_i|}}$ yields at least one nonzero minor, where $A^i_1,A^i_2,\ldots, A^i_{2^{|S_i|}}$ are all polynomials of the item parameters $\TT_{S_i}$. Define 
\[\mathcal V = \bigcup_{i=1,2} \Big\{\bigcap_{l=1}^{2^{|S_i|}} \{(\TT,\pp)\in \mathcal T: A^i_l(\TT_{S_i}) = 0\}\Big\},
\]
then $\mathcal V$ is a algebraic variety defined by polynomials of the model parameters. Moreover, $\mathcal V$ is a proper subvariety of $\mathcal T$, since the fact $T(\TT_{S_i})$ has full column rank $C$ for $i=1,2$ for one particular set of  $(\TT,\pp)\in\mathcal T^*$ ensures that there exists one particular set of model parameters that give nonzero values when plugged into the polynomials defining $\mathcal V$, which indicates that the polynomials defining $\mathcal V$ are not all zero polynomials.

This implies for generic choices of $(\TT,\pp)$ in the space $\mathcal T$,  $T(\TT_{S_i})$ has full column rank for $i=1,2$. Together with the assumption that (C4) holds for $\Gamma$, 
we obtain  generic identifiability of the model parameters. This completes the proof of Theorem \ref{thm-order-gen}.
\end{proof}

\smallskip
\begin{proof}[Proof of Theorem \ref{thm-gen-q}]
In the following, we say some statement ``generically" holds, if the subset of the parameter space where the statement does not hold is of Lebesgue measure zero. 
Without loss of generality, assume $Q$ takes the form
\[
Q = 
\left(\begin{array}{c} 
Q_1 \\
Q_2 \\
Q'
\end{array} \right),
\]
where under the assumptions of Theorem \ref{thm-gen-q}, $Q_1$ and $Q_2$ are $K\times K$ square matrices with diagonal elements all equal to 1. With a slight abuse of notation, for a $J_i\times K$ submatrix $Q_i$ of $Q$, let $T(Q_i,\TT_{Q_i})$ denote the $2^{J_i}\times 2^K$ $T$-matrix.
We consider the saturated model where all the main effect and interaction effect terms are included in modeling the item parameters, namely the positive response probability for attribute profile $\aaa$ and item $j$ takes the form
\begin{align}\label{eq-tja}
\theta_{j,\aaa} 
=& f\Big(\beta_{j,0} + \sum_{k=1}^K\beta_{j,k}q_{jk}\alpha_k  + \sum_{k'=k+1}^K\sum_{k=1}^{K-1}\beta_{j,kk'}(q_{jk}\alpha_k)(q_{jk'}\alpha_{k'}) \\
&+ \cdots + \beta_{j,12\cdots K}\prod_k (q_{jk}\alpha_k)\Big),\notag
\end{align}
where $f(\Cdot)$ is the link function, which can be the identify link, log link, or the logistic link. Note that taking those $\beta$-coefficients of the interaction terms to be zero, one is left with a main-effect model. Since the following arguments only rely on the main effect coefficients, the conclusion of the theorem applies to any multi-parameter restricted latent class model.

First prove that under condition (C5), the $T$-matrices $T(Q_1,\TT_{Q_1})$ and $T(Q_2,\TT_{Q_2})$ corresponding to $Q_1$ and $Q_2$ are both generically of full rank $2^K$. To show  generic identifiability, it suffices to find one specific set of item parameters $\TT$ satisfying the constraints imposed by the $Q$-matrix that make the $T$-matrices $T(Q_1,\TT_{Q_1})$ and $T(Q_2,\TT_{Q_2})$  have full rank. In the following we focus on $T(Q_1,\TT_{Q_1})$ only. For $k=1,\ldots,K$, set the $k$'th main effect parameter of the $k$'th item to be 1, i.e., set $\beta_{k,k}=1$, and all the other main effect and interaction effect parameters to be zero, then the $T$-matrix $T(Q_1,\TT_{Q_1})$ now becomes exactly the same as the $T$-matrix $T(\mathcal I_K, \widetilde \TT_{\mathcal I_K})$ under the identity $Q$-matrix $\mathcal I_K$ with the item parameters being 
\[
\tilde\theta_{\ee_k,\mz} = \beta_{k,0}\quad \text{and}\quad \tilde\theta_{\ee_k,\ee_k} = \tilde\theta_{\ee_k,\mo} = \beta_{k,0}+\beta_{k,k}~~ \text{for}~~ k\in\{1,\ldots,K\}.
\]
Moreover, defining $\tilde\ttt^* = (\tilde\theta_{\ee_1,\mo},\ldots,\tilde\theta_{\ee_K,\mo})^T$ and following a similar argument as in the proof of Lemma \ref{lem-rk}, we have that $T({\mathcal I_K}, \widetilde \TT - \tilde\ttt^*\mo^\top)$ takes an botomn-left triangular form with nonzero diagonal entries, thus Proposition \ref{prop-ltrans} gives that $T(\mathcal I_K, \widetilde \TT_{\mathcal I_K})$ is full-rank. Therefore $T(Q_1,\TT_{Q_1})$ is generically full-rank. Similarly $T(Q_2,\TT_{Q_2})$ is also generically full-rank.

We next show that if condition (C6) additionally holds, then any two different columns indexed by attribute profiles $\aaa$ and $\aaa'$ of $T(Q', \TT_{Q'})$ are generically distinct. For distinct $\aaa$, $\aaa'\in\{0,1\}^K$, they at least differ in one attribute $k$. Without loss of generality, assume $\alpha_k=1>0=\alpha'_k$. Condition (C6) ensures that there exists some item $j>2K$ such that $q_{j,k}=1$.  Under the model considered here with $\theta_{j,\aaa}$ in the form of \eqref{eq-tja}, this implies $\theta_{j,\aaa}\neq\theta_{j,\aaa'}$ generically.  

Next we introduce a result of uniqueness of three-way tensor decomposition to facilitate our proof.
Following \cite{kruskal1977three}, the Kruskal rank of a matrix is the the largest number $I$ such that every $I$ columns of the matrix are independent. For a matrix $M$, let $\text{rank}_K(M)$ denote its Kruskal rank.
From \cite{kruskal1977three}, \cite{rhodes2010concise} has the following result. 
\begin{lemma}[Rhodes, 2010]\label{lem-kruskal}
For a matrix $M_i$, denote the $j$th column of it by $\mm_j^i$.
Given matrices $M_i$ of size $s_i\times c$, let the matrix triple product $[M_1, M_2, M_3]$ be an $s_1\times s_2\times s_3$ tensor defined as a sum of $r$ rank-1 tensors by
\[
[M_1, M_2, M_3]=\sum_{j=1}^c \mm_j^1\otimes\mm_j^2\otimes\mm_j^3.
\]
Suppose $\text{rank}_{K}(M_1)=\text{rank}_{K}(M_2)=c$ and $\text{rank}_{K}(M_3)\allowbreak \geq 2$; 
$N_1$, $N_2$, $N_3$ are matrices with $c$ columns and $[M_1, M_2, M_3]=[N_1,N_2,N_3]$. Then there exists some permutation matrix $P$ and invertible diagonal matrices $D_i$ with $D_1D_2D_3 = \mathcal I_c$ such that $N_i = M_iD_iP$.
\end{lemma}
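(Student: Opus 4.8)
\emph{Plan.} The statement is the special case of Kruskal's theorem in which two factor matrices have full column rank (observe that $\mathrm{rank}_K(M_i)=c$ for a matrix with exactly $c$ columns means precisely that $M_i$ has full column rank $c$). For this case the cleanest route is to convert the tensor identity into a \emph{simultaneous diagonalization} problem and then apply a generalized eigendecomposition. I would begin by recording the frontal slices of the tensor: for $k=1,\dots,s_3$ let $T^{(k)}\in\mathbb R^{s_1\times s_2}$ be the matrix with entries $T^{(k)}_{ab}=\sum_{j=1}^c (M_1)_{aj}(M_2)_{bj}(M_3)_{kj}$, so that $T^{(k)}=M_1\,\mathrm{diag}(\mathbf r_k)\,M_2^\top$, where $\mathbf r_k$ is the $k$th row of $M_3$; by hypothesis the same $T^{(k)}$ arise from $N_1,N_2,N_3$ with $\mathbf r_k$ replaced by the $k$th row $\mathbf s_k$ of $N_3$.

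\emph{Step 1 (column spaces and the form $N_i=M_iA$).} Each slice satisfies $\mathrm{col}(T^{(k)})\subseteq\mathrm{col}(M_1)$, so the mode-$1$ unfolding $[\,T^{(1)}\mid\cdots\mid T^{(s_3)}\,]$ has column space contained in $\mathrm{col}(M_1)$. For the reverse inclusion I would show that any $u$ with $u^\top T^{(k)}=0$ for all $k$ annihilates $M_1$: writing $v=M_1^\top u$, the relations give $\sum_j v_j (M_3)_{kj}\,\mathbf m^2_j=\mz$ for each $k$, and since the columns $\mathbf m^2_j$ are independent and no column of $M_3$ vanishes (a consequence of $\mathrm{rank}_K(M_3)\ge2$), this forces $v=\mz$. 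Hence $\mathrm{col}(M_1)$ equals the column space of the unfolding, which is $\subseteq\mathrm{col}(N_1)$; as $N_1$ has only $c$ columns, $\mathrm{col}(N_1)=\mathrm{col}(M_1)$ and $N_1$ has full column rank, so $N_1=M_1A$ with $A$ an invertible $c\times c$ matrix. Symmetrically $N_2=M_2B$ with $B$ invertible. Cancelling the left-invertible factor $M_1$ and the right-invertible factor $M_2^\top$ in $M_1\,\mathrm{diag}(\mathbf r_k)\,M_2^\top=M_1A\,\mathrm{diag}(\mathbf s_k)\,B^\top M_2^\top$ yields the pencil relation $\mathrm{diag}(\mathbf r_k)=A\,\mathrm{diag}(\mathbf s_k)\,B^\top$ for every $k$.

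\emph{Step 2 (generalized eigendecomposition).} For coefficient vectors $a,b\in\mathbb R^{s_3}$ I would form $P_a=\sum_k a_k\,\mathrm{diag}(\mathbf r_k)=\mathrm{diag}(M_3^\top a)$ and likewise $P_a=A\,\mathrm{diag}(N_3^\top a)\,B^\top$. Choosing $b$ so that all entries of $M_3^\top b$ are nonzero gives $P_aP_b^{-1}=A\big(\mathrm{diag}(N_3^\top a)\,\mathrm{diag}(N_3^\top b)^{-1}\big)A^{-1}$, while directly $P_aP_b^{-1}=\mathrm{diag}\!\big((M_3^\top a)_j/(M_3^\top b)_j\big)=:D_{ab}$. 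Thus $A^{-1}D_{ab}A$ is diagonal. If the diagonal entries of $D_{ab}$ are pairwise distinct, then $A$ conjugates a diagonal matrix of simple spectrum to another diagonal matrix, which forces $A$ to be a monomial (scaled permutation) matrix $A=PD_1$. Substituting back, $B^\top=\mathrm{diag}(N_3^\top a)^{-1}A^{-1}P_a$ is then also monomial with permutation $P^{-1}$, so $B=PD_2$ with the \emph{same} permutation $P$; and the pencil relation finally identifies the rows of $M_3$ as the $P$-matched, $D_1D_2$-scaled rows of $N_3$, i.e. $N_3=M_3D_3P$. Matching each rank-one term $\mathbf m^1_j\otimes\mathbf m^2_j\otimes\mathbf m^3_j$ with its image under $P$ shows the three scalings on each matched triple multiply to one, giving $D_1D_2D_3=\mathcal I_c$.

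\textbf{Main obstacle.} The crux is the distinct-eigenvalue requirement in Step 2, which is exactly where the hypothesis $\mathrm{rank}_K(M_3)\ge2$ enters. For each pair $j\neq j'$ the equality $(M_3^\top a)_j(M_3^\top b)_{j'}=(M_3^\top a)_{j'}(M_3^\top b)_j$ is a polynomial condition on $(a,b)$ that is nontrivial precisely when the columns $\mathbf m^3_j,\mathbf m^3_{j'}$ are linearly independent; since $\mathrm{rank}_K(M_3)\ge2$ guarantees this for every pair, a choice of $(a,b)$ outside the resulting finite union of proper algebraic sets (and with $M_3^\top b$ nowhere zero) makes $D_{ab}$ have simple spectrum. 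I would need to argue carefully that such $(a,b)$ exists and that the conclusion, although reached through one such choice, pins down \emph{every} competing decomposition $(N_1,N_2,N_3)$; the remaining work is bookkeeping for the monomial-conjugation deduction and for verifying that a single permutation $P$ governs all three factors. Note that neither $M_3$ nor $N_3$ need have full column rank anywhere in the argument, which is why the weaker third-mode hypothesis $\mathrm{rank}_K(M_3)\ge2$ suffices.
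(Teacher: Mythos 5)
Your proposal is correct, but it cannot be compared to an internal argument because the paper never proves this lemma: it is quoted verbatim as a known result, namely the consequence of Kruskal's theorem \citep{kruskal1977three} in the form given by \citet{rhodes2010concise}. Your route is therefore genuinely different from the source the paper leans on. Rhodes' concise proof establishes uniqueness under the full Kruskal condition $\mathrm{rank}_K(M_1)+\mathrm{rank}_K(M_2)+\mathrm{rank}_K(M_3)\geq 2c+2$ and needs a correspondingly more delicate combinatorial/rank argument; you instead exploit the stronger hypothesis of this special case (two factors of full column rank, third of Kruskal rank $\geq 2$, which sits exactly on the Kruskal boundary $c+c+2=2c+2$) to reduce everything to a simultaneous-diagonalization / generalized-eigenvector argument in the spirit of Harshman and Leurgans--Ross--Abel. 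What your approach buys is an elementary, self-contained proof of precisely the statement the paper uses (slice the tensor, identify column spaces to get $N_1=M_1A$, $N_2=M_2B$ with $A,B$ invertible, cancel to obtain the pencil relation $\mathrm{diag}(\mathbf r_k)=A\,\mathrm{diag}(\mathbf s_k)B^\top$, and force $A$ to be monomial by conjugating a simple-spectrum diagonal matrix); what it gives up is generality, since it cannot recover Kruskal's theorem away from this boundary case. Your treatment of the one genuine obstacle is also sound: the pairwise-distinctness of the entries of $D_{ab}$ is a nontrivial polynomial condition on $(a,b)$ exactly because every pair of columns of $M_3$ is independent, the avoided set depends only on $M_3$ (not on the competing decomposition), so a single generic $(a,b)$ handles all $(N_1,N_2,N_3)$ simultaneously. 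Two small points would tighten the write-up: (i) record that the invertibility of $\mathrm{diag}(N_3^\top b)$, which you derive from $P_b=A\,\mathrm{diag}(N_3^\top b)B^\top$, in particular shows every column of $N_3$ is nonzero, which is what rules out degenerate matchings; and (ii) the final scaling identity $D_1D_2D_3=\mathcal I_c$ follows purely algebraically from substituting $A=M_1^{-\dagger}N_1$-type monomial forms into the pencil relation (giving $\mathbf r_k = D_1D_2\,P\,\mathbf s_k$ for every $k$, hence $N_3=M_3(D_1D_2)^{-1}P$), so no separate ``matching of rank-one terms'' or linear-independence-of-tensors argument is actually needed.
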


Now we consider three $T$-matrices, $T(Q_1,\allowbreak \TT_{Q_1})$, $T(Q_2,\allowbreak \TT_{Q_2})$ and $T(Q', \allowbreak\TT_{Q'})$, which are of size $2^K\times 2^K$, $2^K\times 2^K$ and $2^{J-2K}\times 2^K$. The rows of the three matrices are indexed by possible item combinations in the three item sets $\{1,\ldots,K\}$, $\{K+1,\ldots,2K\}$ and $\{2K+1,\ldots,J\}$ respectively. We use $\text{Diag}(\pp)$ to denote a diagonal matrix with the diagonal entries being elements of $\pp$, then it is not hard to see that $T(\TT)\pp$ is given by the matrix triple product $[T(Q_1, \TT_{Q_1}),\allowbreak T(Q_2, \TT_{Q_2}),\allowbreak T(Q', \allowbreak\TT_{Q'})\cdot\text{Diag}(\pp)]$, namely the matrix triple product of the three matrices exactly characterizes the distribution of the response vector $\RR$.
Clearly if a matrix has full column rank, then its Kruskal rank equals its rank, thus our previous arguments already established that
$\text{rank}_{K} \{T(Q_1, \TT_{Q_1})\} = \text{rank}_{K} \{T(Q_2, \TT_{Q_2})\} =2^K$ and $\text{rank}_K \{T(Q', \TT_{Q'})\}  \geq  2$ hold generically.
Moreover, 
we claim $\text{rank}_K \{T(Q',\allowbreak \TT_{Q'}) \allowbreak\cdot\text{Diag}(\pp)\}\geq 2$ also holds generically. This is because if all the entries of $\pp$ are positive, which is a generic requirement, then multiplying the invertible diagonal matrix $\text{Diag}(\pp)$ by the matrix $T(Q',\allowbreak \TT_{Q'})$ would not change the Kruskcal rank of the latter. Now apply Lemma \ref{lem-kruskal} and follow a similar argument as the proof of Theorem 4 in \cite{allman2009}, we have the conclusion that the model is generically identifiable up to label swapping. 
Specifically, the label swapping would happen only between those latent classes which have identical ideal response vectors, namely the labels of $\aaa_1$ and $\aaa_2$ could possibly be swapped \textit{only if} $\Gamma_{\Cdot,\aaa_1} = \Gamma_{\Cdot,\aaa_2}$. This is because otherwise, the constraints (2.1)
introduced by the $\Gamma$-matrix would fail to hold.
\end{proof}

\begin{proof}[Proof of Theorem \ref{prop-vio-gen}]
We first prove the conclusion of the part (a), then that of the part (b).

\vspace{3mm}
\noindent
\textbf{Proof of   (a):}
Without loss of generality assume the $Q$-matrix takes the following form
\begin{equation}\label{eq-attr1-nogen}
Q = \left(
\begin{array}{cc}
1 & \vv \\
\mz & Q^*
\end{array}
\right),
\end{equation}
then given any set of valid parameters $(\TT,\pp)$, one can construct another set of model parameters $(\bar\TT,\bar\pp)$ as follows. First set all the item parameters associated items $j\geq 2$ to be the same as the true parameters for this second set of parameters.  For any $\aaa':=\aaa_{2:K}\in\{0,1\}^{K-1}$, choose $\bar\theta_{1,(1,\aaa')}\neq \theta_{1,(1,\aaa')}$ to be any reasonable value in a small neighborhood of $\theta_{1,(1,\aaa')}$. Set $\bar\theta_{1,(0,\aaa')} = \theta_{1,(0,\aaa')}$ and
\[
\begin{cases}
\bar p_{(0,\aaa')} = p_{(0,\aaa') }+ \left(1 - \frac{\theta_{1,(1,\aaa')}}{\bar\theta_{1,(1,\aaa')}} \right) p_{(1,\aaa')}; \\
\bar p_{(1,\aaa')} = \frac{\theta_{1,(1,\aaa')}}{\bar\theta_{1,(1,\aaa')}} p_{(1,\aaa')},
\end{cases}
\]
then we have 
\[\begin{cases}
\bar p_{(0,\aaa')} + \bar p_{(1,\aaa')} = p_{(0,\aaa')} +p_{(1,\aaa')};\\
\bar\theta_{1,(0,\aaa')}\bar p_{(0,\aaa')} + \bar\theta_{1,(1,\aaa')} \bar p_{(1,\aaa')} = \theta_{1,(0,\aaa')} p_{(0,\aaa')} + \theta_{1,(1,\aaa')} p_{(1,\aaa')}.
\end{cases}\]
With these two equations, for any $\rr\in\{0,1\}^J$ 
\[\begin{aligned}
& T_{\rr,\Cdot}(\bar\TT)\bar\pp \\
=& \sum_{\aaa'\in\{0,1\}^{K-1}}\prod_{j>1:r_j = 1}\bar\theta_{j,(0,\aaa')}^{r_j}\Big[ \bar\theta_{1,(0,\aaa')}^{r_1}\bar p_{(0,\aaa')} + \bar\theta_{1,(1,\aaa')}^{r_1}\bar p_{(1,\aaa')} \Big] \\
=&
\sum_{\aaa'\in\{0,1\}^{K-1}}\prod_{j>1:r_j = 1}\theta_{j,(0,\aaa')}^{r_j} \Cdot
\begin{cases}
\big[ \bar p_{(0,\aaa')} + \bar p_{(1,\aaa')} \big]  & \mbox{if }  r_1 = 0; \\
\big[ \bar\theta_{1,(0,\aaa')}\bar p_{(0,\aaa')} + \bar\theta_{1,(1,\aaa')} \bar p_{(1,\aaa')} \big]  & \mbox{if } r_1 = 1, \\
\end{cases} \\
=& ~ T_{\rr,\Cdot}(\TT)\pp.
\end{aligned}\]
This proves the model associated with $Q$ in the form of \eqref{eq-attr1-nogen} is not generically identifiable, since for any valid set of true parameters there exist another set of parameters resulting in the same distribution of the observed responses $\RR$.

\vspace{3mm}
\noindent
\noindent
\textbf{Proof of   (b):}
Part of the proof idea is similar to that of Theorem \ref{thm_cond}.
Since the $(J-2)\times(K-1)$ sub-matrix $Q'$ satisfies conditions (C5) and (C6), Theorem \ref{thm-gen-q} gives that, for generic choice of true parameters $(\TT,\pp)$ in the parameter space, if another set of parameters $(\bar\TT,\bar\pp)$ satisfy $T(\TT)\pp = T(\bar\TT)\bar\pp$, then 
\[\forall j\geq 3,\quad
\theta_{j,\,(0,\aaa_{2:K})} = \bar\theta_{j,\,(0,\aaa_{2:K})},\quad p_{(0,\aaa_{2:K})} + p_{(1,\aaa_{2:K})} = \bar p_{(0,\aaa_{2:K})} + \bar p_{(1,\aaa_{2:K})} .
\]
For any response pattern $\rr=(r_1,r_2,r_3,\ldots,r_j)\in\{0,1\}^J$, \eqref{eq-tpr} for $\rr$ can be equivalently written as
\begin{align}\label{eq-qprime-nn0}
&\sum_{\aaa_{2:K}\in\{0,1\}^{K-1}}\prod_{j>2:\,r_j=1}\theta_{j,\,(0,\aaa_{2:K})}\cdot \mathbb P(R_1\geq r_1,\,R_2\geq r_2,\, \ba_{2:K}=\aaa_{2:K})\\
=
&\sum_{\aaa_{2:K}\in\{0,1\}^{K-1}}\prod_{j>2:\,r_j=1}\bar\theta_{j,\,(0,\aaa_{2:K})}\cdot\overline {\mathbb P}(R_1\geq r_1,\,R_2\geq r_2,\,\ba_{2:K}=\aaa_{2:K}).\notag
\end{align}
Note that the difference of \eqref{eq-qprime-nn0} and \eqref{eq-qprime-n0} is that $\mathcal R^{Q'}$ is replaced by $\{0,1\}^{K-1}$, which is because when considering generic identifiability of multi-parameter $Q$-restricted models, all the $2^{K-1}$ possible proportion parameters resulting from the $Q'$ part are generically identifiable under conditions (C5) and (C6).

Following the same reasoning as in the proof of Theorem \ref{thm_cond}, part (B.2), we have
\begin{eqnarray}\label{eq-nf}
&&\mathbb  P(R_1\geq r_1,\,R_2\geq r_2,\, \ba_{2:K}=\aaa_{2:K})\\
&&= \overline{ \mathbb P}(R_1\geq r_1,\,R_2\geq r_2,\, \ba_{2:K}=\aaa_{2:K}),\notag
\end{eqnarray}
and this yields that for any $\aaa':=\aaa_{2:K}\in\{0,1\}^{K-1}$,
\begin{align}\label{eq-allv}
\begin{cases}
 p_{(0,\aaa')}+  p_{(1,\aaa')} = \bar p_{(0,\aaa')}+\bar  p_{(1,\aaa')};\\
\theta_{1,(0,\aaa')}\cdot p_{(0,\aaa')} + \theta_{1,(1,\aaa')}\cdot p_{(1,\aaa')} = \bar\theta_{1,(0,\aaa')}\cdot\bar p_{(0,\aaa')} + \bar \theta_{1,(1,\aaa')}\cdot\bar p_{(1,\aaa')};\\
\theta_{2,(0,\aaa')}\cdot p_{(0,\aaa')} + \theta_{2,(1,\aaa')}\cdot p_{(1,\aaa')} = \bar\theta_{2,(0,\aaa')}\cdot\bar p_{(0,\aaa')} + \bar \theta_{2,(1,\aaa')}\cdot\bar p_{(1,\aaa')};\\
\theta_{1,(0,\aaa')}\theta_{2,(0,\aaa')}\cdot p_{(0,\aaa')} + \theta_{1,(1,\aaa')}\theta_{2,(1,\aaa')}\cdot p_{(1,\aaa')} 
\\
\qquad\qquad\qquad = \bar\theta_{1,(0,\aaa')}\bar\theta_{2,(0,\aaa')}\cdot\bar p_{(0,\aaa')} + \bar \theta_{1,(1,\aaa')}\bar\theta_{2,(1,\aaa')}\cdot\bar p_{(1,\aaa')}.\\
\end{cases}
\end{align}
First we show that if 
there exist $\aaa'_1$, $\aaa'_2\in\{0,1\}^{K-1}$, $\aaa'_1\neq\aaa'_2$ such that  
\begin{align}\label{eq-s1s2}
\begin{cases}
\theta_{j,(\alpha_1,\aaa_1')} = \theta_{j,(\alpha_1,\aaa_2')}
\mbox{ and }
\bar\theta_{j,(\alpha_1,\aaa_1')} = \bar\theta_{j,(\alpha_1,\aaa_2')},\quad\forall j=1,2,~~\forall\alpha_1 = 0,1;\\[2mm]
\frac{p_{(1,\aaa_1')}}{p_{(0,\aaa_1')}}:= s_1 \neq
s_2 =: \frac{p_{(1,\aaa_2')}}{p_{(0,\aaa_2')}}.
\end{cases}
\end{align}
then one must have
\begin{equation}\label{eq-cg12}
\theta_{j,(\alpha_1,\aaa_1')} = \bar\theta_{j,(\alpha_1,\aaa_1')},\quad\forall j=1,2,~~\forall\alpha_1 = 0,1.
\end{equation}
After some transformations, the system of equations \eqref{eq-allv} yields
\[\begin{cases}
(\theta_{1,(0,\aaa')} - \theta_{1,(1,\aaa')})\cdot(\theta_{2,(0,\aaa')} - \bar\theta_{2,(1,\aaa')})\cdot p_{(0,\aaa')}\\
\qquad\qquad\qquad=
(\bar \theta_{1,(0,\aaa')} - \theta_{1,(1,\aaa')})\cdot(\bar \theta_{2,(0,\aaa')} - \bar\theta_{2,(1,\aaa')})\cdot \bar p_{(0,\aaa')};\\[1mm]
(\theta_{2,(0,\aaa')} - \bar\theta_{2,(1,\aaa')})\cdot  p_{(0,\aaa')}  + (\theta_{2,(1,\aaa')} - \bar\theta_{2,(1,\aaa')})\cdot \bar p_{(1,\aaa')}\\
\qquad\qquad\qquad= (\bar\theta_{2,(0,\aaa')} - \bar\theta_{2,(1,\aaa')})\cdot \bar p_{(0,\aaa')}.
\end{cases}\]
Under a multi-parameter model, $q_{1,1}=q_{2,1}=1$ yields that for generic parameters, $\theta_{i,(0,\aaa')} \neq \bar\theta_{i,(1,\aaa')}$, $i=1,2$, so the left (right) hand side of the first equation above is nonzero. And obviously the right hand side of the second equation above is nonzero.
Taking the ratio of the above two equations gives
\[\begin{aligned}
& \frac{(\theta_{1,(0,\aaa')} - \theta_{1,(1,\aaa')})\cdot(\theta_{2,(0,\aaa')} - \bar\theta_{2,(1,\aaa')})}{(\theta_{2,(0,\aaa')} - \bar\theta_{2,(1,\aaa')})  + (\theta_{2,(1,\aaa')} - \bar\theta_{2,(1,\aaa')})\cdot  p_{(1,\aaa')} /  p_{(0,\aaa')}} \\
=&~ (\bar \theta_{1,(0,\aaa')} - \theta_{1,(1,\aaa')})
:= f(\aaa').
\end{aligned}\]
The right hand side of the above equation does not involve any proportion parameter $\pp$ or $\bar\pp$. So for $\aaa'_1$, $\aaa'_2$ satisfying \eqref{eq-s1s2}, $f(\aaa'\aaa'_1)=f(\aaa'_2)$. Note that the left hand side of the above equation involves a ratio $p_{(1,\aaa')} /  p_{(0,\aaa')}$ depending on $\aaa'$. 
Equality $f(\aaa_1')=f(\aaa'_2)$ along with \eqref{eq-s1s2} imply
\begin{align*}
&(\theta_{2,(1,\aaa_1')} - \bar\theta_{2,(1,\aaa_1')})\cdot  \frac{p_{(1,\aaa_1')}}{p_{(0,\aaa_1')}} 
~=~ (\theta_{2,(1,\aaa_2')} - \bar\theta_{2,(1,\aaa_2')})\cdot  \frac{p_{(1,\aaa_2')}}{p_{(0,\aaa_2')}} \\
=&~ (\theta_{2,(1,\aaa_1')} - \bar\theta_{2,(1,\aaa_1')})\cdot  \frac{p_{(1,\aaa_2')}}{p_{(0,\aaa_2')}},\end{align*} 
and
 \begin{align*}
(\theta_{2,(1,\aaa_1')} - \bar\theta_{2,(1,\aaa_1')}) \cdot \left(\frac{p_{(1,\aaa_1')}}{p_{(0,\aaa_1')}} - \frac{p_{(1,\aaa_2')}}{p_{(0,\aaa_2')}}\right) &= 0,
\end{align*} 
then since $p_{(1,\aaa_1')} /  p_{(0,\aaa_1')} =s_1 \neq s_2=p_{(1,\aaa_2')} /  p_{(0,\aaa_2')}$, by assumption \eqref{eq-s1s2}, we have 
\[
\theta_{2,(1,\aaa_1')} - \bar\theta_{2,(1,\aaa_1')}=0.
\]
By symmetry of the four item parameters $\theta_{1,(0,\aaa')}$, $\theta_{1,(1,\aaa')}$, $\theta_{2,(0,\aaa')}$ and $\theta_{2,(1,\aaa')}$ in \eqref{eq-allv}, equalities \eqref{eq-cg12} therefore hold   following a similar argument.

Next we show that under the condition of the theorem, the  conclusions obtained so far give the generic identifiability of all the item parameters associated with the first two items, and hence proved the generic identifiability of all the model parameters. Since $\vv_1\vee\vv_2\neq\mo$, there must exist some attribute $k$, $k\neq 1$, that is not required by the first two items. Then for any item parameter $\theta_{j,\aaa}$ corresponding to item $j$, $j=1,2$ and attribute profile $\aaa=(\alpha_1,\alpha_2,\ldots,\alpha_K)$, define $\aaa'_1=(\alpha_2,\ldots,\alpha_K)$, and $\aaa'_2=(\alpha'_2,\ldots,\alpha'_K)$, $\alpha_l'=\alpha_l$ for any $l\neq k$ and $\alpha_k' = 1 - \alpha_k$, then
\[
\theta_{j,(\alpha_1,\aaa_1)} = \theta_{j,(\alpha_1,\aaa_2')}
\mbox{ and }
\bar\theta_{j,(\alpha_1,\aaa_1')} = \bar\theta_{j,(\alpha_1,\aaa_2')},\quad\forall j=1,2,~~\forall\alpha_1 = 0,1.
\]
This means we have found $\aaa_1'\neq \aaa'_2$ that satisfy the first equation in \eqref{eq-s1s2}, then as long as $ {p_{(1,\aaa_1')}}/{p_{(0,\aaa_1')}} \neq  {p_{(1,\aaa_2')}}/{p_{(0,\aaa_2')}}$ then $\theta_{j,\aaa} = \bar\theta_{j,\aaa}$ follows for $j=1,2$. Since this inequality constraint of the true parameters is a generic constraint, i.e. the parameters not satisfying this constraint falls in a Lebesgue measure zero set of the parameter space,   the generic identifiability of all the item parameters holds. 
Considering the fact $\theta_{j,(0,\aaa')}\neq \theta_{j,(1,\aaa')}$ generically, identifiability of the item parameters combined with \eqref{eq-allv} further gives the generic identifiability of   the proportion parameters $\pp$.
This completes the proof of part (b).
\end{proof}

\begin{proof}[Proof of Proposition \ref{prop-con1} and Proposition \ref{prop-con2}]
To prove Proposition \ref{prop-con2},  suppose the identifiability conditions for $\pp$-partial identifiability are satisfied.
  We introduce  a $2^J$-dimensional empirical response vector 
\begin{eqnarray*}
	\boldsymbol\gamma
	&= & \biggr\{
1,~
{N}^{-1}\sum_{i=1}^NI(\RR_i \succeq \ee_1),~
\cdots,~
{N}^{-1}\sum_{i=1}^NI(\RR_i \succeq \ee_J),\\
&&\quad 
{N}^{-1}\sum_{i=1}^NI(\RR_i \succeq \ee_1+\ee_2),~
\cdots,~
{N}^{-1}\sum_{i=1}^NI(\RR_i \succeq \sum_{j=1}^J\ee_j)\biggr\}^\top,
\end{eqnarray*} 
where elements of $\boldsymbol\gamma$ are indexed by  all the $|\{0,1\}^J|=2^J$ possible response patterns and they are in the same order as that of the columns of the $T$-matrix.
 First, by the definition of the $T$-matrix and the strong law of large numbers, we have $\boldsymbol\gamma\to T(\TT^0) \pp^0$ almost surely as $N\to\infty$. Second, the maximum likelihood estimators $\widehat\TT$ and $\widehat\pp$ satisfy 
$\|\boldsymbol\gamma - 
T(\widehat\TT) \widehat\nnu\| \to 0 , $ where $\|\cdot\|$ denotes the $L_2$ norm.
Therefore, combining these two gives
$$\|T(\TT^0) \nnu^0 - T(\widehat\TT) \widehat\nnu\| \to 0$$  
almost surely as $N\to\infty$.  
Then since the identifiability conditions are satisfied, we have that $T(\TT^0) \nnu^0 = T(\widehat\TT)\widehat\nnu$ indicates $(\TT^0,\nnu^0) = (\widehat\TT,\widehat\nnu)$. Therefore we obtain the consistency result  that $(\widehat\TT,\widehat\nnu)\to (\TT^0,\nnu^0)$ almost surely as $N\to\infty$. This proves Proposition \ref{prop-con1}.

To prove Proposition \ref{prop-con2}, suppose the identifiability conditions for generic identifiability are satisfied. Then according to Definition \ref{def-gen} of generic identifiability, there exists a proper algebraic subvariety $\mathcal V$ of $\mathcal T$, such that  $(\TT,\pp)$ are strictly identifiable on $\mathcal T\setminus \mathcal V$, and subvariety $\mathcal V$ has Lebesgue measure zero in the parameter space. If the true parameters $(\TT^0,\pp^0)$ belong to $\mathcal T\setminus \mathcal V$, then for any other valid set of parameters $(\bar\TT,\bar\pp)$, the equalities $T(\TT^0)\pp^0 = T(\bar\TT)\bar\pp$ indicate $(\TT^0,\pp^0) = (\bar\TT,\bar\pp)$. Similarly to the proof of Proposition \ref{prop-con1} in the last paragraph, we have
$$\|T(\TT^0) \pp^0 - T(\widehat\TT) \widehat\pp\| \to 0$$
almost surely as $N\to\infty$. And the identifiability of $(\TT^0,\pp^0)\in\mathcal T\setminus \mathcal V$ guarantees that $(\widehat\TT,\widehat\pp)\to (\TT^0,\pp^0)$ almost surely as $N\to\infty$. This proves Proposition \ref{prop-con2}.
\end{proof}

\medskip
\section*{Section D: Proof of Results in Section 5}\label{sec-lemma}
\medskip
\begin{proof}[Proof of Corollary \ref{prop-ao}]
If the $\Gamma$-matrix constructed as in the corollary is separable and contains distinct columns, 
then each attribute pattern $\aaa\in\ma$ corresponds to a unique equivalence class and $\nnu = \pp$, where $\nnu$ represents the grouped proportion parameters of $\Gamma$-matrix-induced equivalence classes introduced in Section \ref{sec-two-para}.
Further, the general constraints \eqref{eq-constraints} are satisfied for each item $j$.
Note that the proof of Theorem 1 only use the information that each item $j$ has two levels of item parameters $\theta_j^+$, $\theta_j^-$ which satisfy \eqref{eq-constraints}, and that proof does not depend on whether each item is specified as conjunctive (DINA) or disjunctive (DINO). 
Therefore Theorem 1 can be directly applied here. Given that $\Gamma$ is separable, conditions (C1) and (C2) lead to strict identifiability of the model parameters $(\ttt^+,\ttt^-,\pp)$. This concludes the proof.
\end{proof}


\begin{proof}[Proof of Corollary \ref{prop-mix-multi}  $(a)$]

To prove part (a), we first point out that the $\Gamma$-matrix defined in part (a) ensures the model parameters $(\TT,\pp)$ satisfy the general constraints \eqref{eq-constraints} for each item $j\in\ms$. The constraint set $\mc_j$ is just defined as $\mc_j = \{\aaa\in\ma:\, \Gamma_{j,\aaa}=1\}$. Then because the proofs of Theorem \ref{thm-order} and Proposition \ref{new} do not depend on the specific model assumption of each item, but only use the information that the constraints \eqref{eq-constraints} are satisfied for each $j$, the conclusions of Theorem \ref{thm-order} and Proposition \ref{new} still hold in the currently considered scenario. This proves part (a).
\end{proof}

\begin{proof}[Statement and Proof of Corollary \ref{prop-mix-multi} $(b)$]

We first introduce  the condition (E2)  needed in part (b).
For a binary vector $\boldsymbol a$, we say another binary vector $\boldsymbol b$ of the same length is a unit-shrinkage of $\boldsymbol a$, if $\boldsymbol b\preceq \boldsymbol a$ and $\boldsymbol b = \ee_k$ for some $k$.
Further, for a binary matrix $Q$, we say another binary matrix $\widetilde Q$ of the same size is a unit-shrinkage of $Q$, if for each $j$, the $j$th row vector of $\widetilde Q$ is either equal to, or a unit-shrinkage of, the $j$th row vector of $Q$.
The following condition (E2)  ensures the generic identifiability of  $(\TT,\pp)$.
\begin{itemize}

\item[(E2)] 
There exists a decomposition of $Q_{mult} = (Q_{mult,1}^\top, Q_{mult,2}^\top)^\top $ such that the submatrices $Q_{mult,1}$ and  $Q_{mult,2}$ satisfy the following conditions.
\begin{itemize}
\item[(E2.a)] There exists a ``unit-shrinkage" $\widetilde{Q}_{mult,1}$ of $Q_{mult,1}$ such that the    matrix
$\widetilde \Gamma = (\Gamma^{disj}(Q_{disj},\ma)^\top,~ \allowbreak\Gamma^{conj}(Q_{conj},\ma)^\top,~ \allowbreak\Gamma^{conj}(\widetilde Q_{mult,1},\ma)^\top)$  contains two disjoint separable submatrices $\Gamma_1$ and $ \Gamma_2$.

\item[(E2.b)] Each attribute is required by at least one item in $Q_{mult,2}$.
\end{itemize}
\end{itemize}


 Before proving Corollary \ref{prop-mix-multi} (b), we use an example to illustrate how to check its conditions (E2). 
 \begin{example}
 \normalfont{
 Consider  the following $Q$-matrix with items 1, 4 being two-parameter conjunctive, items 2, 5 being two-parameter disjunctive, and items 3, 6, 7 being multi-parameter. 
 \begin{equation*}
 Q = 
\begin{blockarray}{ccc}
\begin{block}{c(cc)}
conj & 1 & 0 \\
disj & 1 & 1 \\
mult & 1 & 1 \\
\cline{2-3}
conj & 1 & 0 \\
disj & 1 & 1 \\
mult & 1 & 1 \\
\cline{2-3}
mult & 1 & 1 \\
\end{block}
\end{blockarray}~\Rightarrow{}
\widetilde{Q} = 
\begin{blockarray}{cc}
\begin{block}{(cc)}
 1 & 0 \\
 1 & 1 \\
 \mathbf0 & 1 \\
\cline{1-2}
 1 & 0 \\
 1 & 1 \\
 \mathbf0 & 1 \\
\cline{1-2}
 1 & 1 \\
\end{block}
\end{blockarray}
~~\Rightarrow{}
\widetilde\Gamma=
\begin{blockarray}{cccc}
 (0,0) & (0,1) & (1,0) & (1,1) \\
\begin{block}{(cccc)}
  0 & 0 & 1 & 1\\
  0 & 1 & 1 & 1\\
  0 & 1 & 0 & 1\\
  \cline{1-4}
  0 & 0 & 1 & 1\\
  0 & 1 & 1 & 1\\
  0 & 1 & 0 & 1\\
  \cline{1-4}
  0 & 0 & 0 & 1\\
\end{block}
\end{blockarray}~.
\end{equation*}
  Then $\widetilde Q$ is a unit-shrinkage of $Q$, and $\widetilde\Gamma$ corresponds to $\widetilde Q$.
We can see that in $\widetilde Q$ items 1, 2, 3 give a separable $\Gamma_1$; items 4, 5, 6 give a separable $\Gamma_2$; and item 7 alone forms $Q_{multi,2}$ which requires both attributes. So (E2.a) and (E2.b) are satisfied and $(\TT,\pp)$ are generically identifiable.
}
\end{example}

\textsc{Proof of Corollary \ref{prop-mix-multi} $(b)$.}
First consider those multi-parameter items in the model. If item $j$ conforms to a multi-parameter model, then by our definition in the end of Section \ref{sec-cdm}, it could be a main-effect model or an all-effect model. Whichever multi-parameter model item $j$ follows, the item parameters $\theta_{j,\aaa}$ depend on the main effects of those required attributes of item $j$, so $\theta_{j,\aaa}$ can be written in the form of  \eqref{eq-tja} with some link function $f$. Now under condition (E2.a), since $\widetilde{Q}_{multi,1}$ is a unit-shrinkage of $Q_{multi,1}$, we denote 
\begin{align*}
S_u=\{j\in\ms:~
&j~\mbox{belongs to the}~ \widetilde Q_{multi,1}~\mbox{ part;}\\
&\widetilde \bq_j~\mbox{in}~ \widetilde Q_{multi,1}~\mbox{ is a unit-shrinkage of}~\bq_j\}.
\end{align*}
Then for each $j\in S_u$, there exists some $k_j\in\{1,\ldots,K\}$ such that $\widetilde{q}_{j,k_j}  = q_{j,k_j} = \ee_{k_j}$.
We claim that the $J\times |\ma|$ matrix $\widetilde{\TT} = (\widetilde{\ttt}_{j,\aaa})$ 
defined as follows actually give item parameters that form a submodel of the original model being considered.
\begin{align}\label{eq-trans}
\widetilde\theta_{j,\aaa} = 
\begin{cases}
f(\beta_{j,0} + \sum_{k=1}^K \beta_{j,k}\, \widetilde{q}_{j,k} \,\alpha_k) 
= f(\beta_{j,0} +  \beta_{j,k_j}  \,\alpha_{k_j}),& j\in S_u,~\aaa\in\ma;\\
\theta_{j,\aaa},&j\not\in S_u,~\aaa\in\ma.
\end{cases}
\end{align}
 In other words, $(\widetilde{\TT},\pp)$ are a valid set of parameters under the original $Q$-matrix and original model assumption. This is because setting all the interaction-effect coefficients and all the main-effect coefficients in \eqref{eq-tja} other than $\{\beta_{j,k_j}:\, j\in S_u\}$ to zero gives \eqref{eq-trans}. Note that for each item $j$ with $\bq$-vector $\widetilde\bq_j = \ee_{k_j}$, \eqref{eq-trans} actually defines a two-parameter conjunctive model for item $j$, with the two levels of item parameters being $\widetilde \theta^+_j = f(\beta_{j,0}+\beta_{j,k_j})$ and $\widetilde \theta^-_j = f(\beta_{j,0})$. 
 Now we claim that given the $\widetilde\TT$ constructed in \eqref{eq-trans}, and given the two separable matrices $\Gamma_1$ and $\Gamma_2$ described in (E2.a),  $\wt \TT_{\Gamma_1}$ and  $\wt \TT_{\Gamma_2}$ both have full column rank.
 
 In summary, the above reasoning from (E2.a) indicates that the two $T$-matrices $T(\TT_{\Gamma_1})$ and $T(\TT_{\Gamma_2})$ are both generically full-column-rank. Combining condition (E2.b) that each column contains at least one entry of ``1" in the submatrix $Q_{multi,2}$, a similar argument as that in the proof of Theorem \ref{thm-gen-q}  gives that the entire model is generically identifiable. 
\end{proof}

\begin{proof}[Proof of Proposition \ref{prop-poly}]
We introduce a useful lemma before proving the proposition.
\begin{lemma}\label{lem-high}
Under a restricted latent class model with categorial responses $\RR\in\prod_{j=1}^J \{0,1,\ldots,L_j-1\}$, if two sets of parameters $(\TT^{\text{cat}},\pp)$ and $(\bar\TT^{\text{cat}},\bar\pp)$ satisfy
\begin{equation}\label{eq-catdef}
\mathbb P(\RR\mid \TT^{\text{cat}},\pp) = \mathbb P(\RR\mid \bar\TT^{\text{cat}},\bar\pp),
\end{equation}
then for any response pattern $\rr^{H}=(r^H_1,\ldots,r^H_J)\in\prod_{j=1}^J \{1,\ldots,L_j-1\}$ that consists of higher-level responses (higher than the basic level-0) to all the items, we have the following $2^J$ equalities
\begin{equation}\label{eq-high}
\sum_{\aaa\in\ma} p_{\aaa} \prod_{j:\, r_j= r_j^H}\theta^{(r_j^H)}_{j,\aaa}=
\sum_{\aaa\in\ma}\bar p_{\aaa} \prod_{j:\, r_j = r_j^H}\bar\theta^{(r_j^H)}_{j,\aaa},\quad \forall \rr\in\prod_{j=1}^J\{0, r^H_{j}\}.
\end{equation}
\end{lemma}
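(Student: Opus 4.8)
The plan is to recognize that the left-hand side of \eqref{eq-high} is simply a marginal probability of the joint categorical distribution, so that the asserted $2^J$ equalities fall out immediately by marginalizing the identity \eqref{eq-catdef}. First I would write out the joint pmf under the local independence assumption: for any $\rr=(r_1,\ldots,r_J)\in\prod_{j=1}^J\{0,\ldots,L_j-1\}$,
\[
\mathbb P(\RR=\rr\mid \TT^{\text{cat}},\pp) = \sum_{\aaa\in\ma} p_{\aaa}\prod_{j=1}^J \theta_{j,\aaa}^{(r_j)},
\]
and recall the normalization built into the model, namely $\ttt^{(0)}_{j}=\mathbf 1-\sum_{l>0}\ttt^{(l)}_{j}$, which gives $\sum_{l=0}^{L_j-1}\theta_{j,\aaa}^{(l)}=1$ for every item $j$ and class $\aaa$.

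Next, I would fix a higher-level pattern $\rr^H\in\prod_{j=1}^J\{1,\ldots,L_j-1\}$ together with a pattern $\rr\in\prod_{j=1}^J\{0,r_j^H\}$, and let $S=\{j:\,r_j=r_j^H\}$ denote the set of items whose coordinate in $\rr$ equals the prescribed higher level. I would then sum the joint pmf over all response vectors $\rr'$ that agree with $\rr^H$ on $S$ and take arbitrary values on the complement $S^{c}$. By local independence and the normalization constraint, the factors indexed by $S^{c}$ collapse to one:
\[
\sum_{\rr':\, r'_j=r_j^H\,\forall j\in S} \mathbb P(\RR=\rr'\mid \TT^{\text{cat}},\pp) = \sum_{\aaa\in\ma}p_{\aaa}\prod_{j\in S}\theta_{j,\aaa}^{(r_j^H)}\prod_{j\notin S}\Big(\sum_{l=0}^{L_j-1}\theta_{j,\aaa}^{(l)}\Big) = \sum_{\aaa\in\ma}p_{\aaa}\prod_{j:\,r_j=r_j^H}\theta_{j,\aaa}^{(r_j^H)}.
\]
This identifies the left-hand side of \eqref{eq-high} as the marginal probability $\mathbb P(R_j=r_j^H \text{ for all } j\in S)$ under $(\TT^{\text{cat}},\pp)$.

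Finally, applying the identical marginalization to $(\bar\TT^{\text{cat}},\bar\pp)$ produces the right-hand side of \eqref{eq-high}, and since \eqref{eq-catdef} asserts that the two joint pmfs coincide on every response vector $\rr'$, the two marginal sums must be equal. Letting $\rr$ range over the $2^J$ choices in $\prod_{j=1}^J\{0,r_j^H\}$ (equivalently, letting $S$ range over all subsets of $\{1,\ldots,J\}$) yields all $2^J$ equalities, which completes the argument. I do not anticipate any genuine obstacle here; the only point requiring care is the bookkeeping of the marginalization over $S^{c}$ and the correct use of the normalization $\sum_{l=0}^{L_j-1}\theta_{j,\aaa}^{(l)}=1$ (including the level-$0$ probability $\theta^{(0)}_{j,\aaa}$), since that is precisely the device that makes the off-$S$ factors disappear and leaves the desired product over $S$.
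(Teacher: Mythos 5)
Your argument is correct. It rests on the same basic device as the paper's proof---fixing the subset $S=\{j:\,r_j=r_j^H\}$ and marginalizing the pmf identity \eqref{eq-catdef} over the coordinates outside $S$---but you execute the marginalization differently, and your version is in fact more direct. The paper sums only over response patterns with $r_j'\neq r_j^H$ for $j\notin S$, which produces the collapsed binary pmf identity
\[
\sum_{\aaa} p_{\aaa}\prod_{j\in S}\theta^{(r_j^H)}_{j,\aaa}\prod_{j\notin S}\bigl(1-\theta^{(r_j^H)}_{j,\aaa}\bigr)
=\sum_{\aaa}\bar p_{\aaa}\prod_{j\in S}\bar\theta^{(r_j^H)}_{j,\aaa}\prod_{j\notin S}\bigl(1-\bar\theta^{(r_j^H)}_{j,\aaa}\bigr),
\]
and then needs a second step---the standard equivalence between equality of pmfs and equality of the marginal ($T$-matrix) representation, via the invertible lower-triangular transformation---to pass from that to \eqref{eq-high}. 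You instead sum over \emph{all} values of $r_j'$ for $j\notin S$, so the normalization $\sum_{l=0}^{L_j-1}\theta^{(l)}_{j,\aaa}=1$ collapses the off-$S$ factors to one and you land on \eqref{eq-high} in a single step, with no appeal to the pmf-versus-marginal equivalence. Both routes are valid; yours trades the paper's two-stage argument for slightly heavier reliance on the level-$0$ normalization (which the model does guarantee), and is the cleaner of the two for this particular lemma.
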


We now continue with the proof of Proposition \ref{prop-poly}.
Given any higher-level response pattern $\rr^H$, we can define a generalized $T$-matrix $T^{\rr^H}$ of size $2^J\times m$, with the $(\rr,\aaa)$th entry being
$$
\{T^{\rr^H}(\TT^{\text{cat}})\}_{\rr,\aaa} = \sum_{\aaa\in\ma} p_{\aaa} \prod_{j:\, r_j= r_j^H}\theta^{(r_j^H)}_{j,\aaa},\quad
\rr\in\prod_{j=1}^J\{0, r^H_{j}\}.
$$
Then \eqref{eq-high} in Lemma \ref{lem-high} can be rewritten as
\begin{equation}\label{eq-tcat}
T^{\rr^H}(\TT^{\text{cat}})\pp = T^{\rr^H}(\bar\TT^{\text{cat}})\bar\pp.
\end{equation}
which has the same form as \eqref{eq1}, $T(\TT)\pp=T(\bar\TT)\bar\pp$. Now consider all the proposed sufficient conditions for strict (or $\pp$-partial, generic) identifiability in Sections \ref{sec-two-para} and \ref{sec-multi-para}. In those proofs, we always start with assuming \eqref{eq1} holds and then show $(\TT,\pp)=(\bar\TT,\bar\pp)$ under those sufficient conditions. In the current case of categorical responses, under the same set of sufficient conditions as those in Sections  \ref{sec-two-para} and \ref{sec-multi-para}, assuming \eqref{eq-tcat} holds leads to $\pp=\bar\pp$ and
$$
\theta^{(r_j^H)}_{j,\aaa}=\bar \theta^{(r_j^H)}_{j,\aaa},\quad \forall \aaa\in\ma,~ j\in\{1,\ldots,J\},
$$
for the specific $\rr^H$. Since $\rr^H$ is arbitrary, we obtain 
$$\theta^{(r_j)}_{j,\aaa}=\bar \theta^{(r_j)}_{j,\aaa},
\quad \forall \aaa\in\ma,~ j\in\{1,\ldots,J\},~r_j\in\{1,\ldots,L_j-1\}.
$$
This further gives $\theta^{(0)}_{j,\aaa}=1-\sum_{l>0}\theta^{(l)}_{j,\aaa}=1-\sum_{l>0}\bar\theta^{(l)}_{j,\aaa}=\bar \theta^{(0)}_{j,\aaa}$ for any item $j$. By far we have shown if \eqref{eq-catdef} holds and the previously proposed sufficient identifiability conditions are satisfied, then $(\TT^{\text{cat}},\pp)=(\bar\TT^{\text{cat}},\bar\pp)$ hold. This concludes the proof of the proposition.
\end{proof}

\begin{proof}[Proof of Proposition \ref{prop-rbm}]
We rewrite the probability distribution function of a RBM as
\begin{align}\label{eq-rbmg}
\mathbb P(\RR, \aaa^{(1)},\cdots) = \frac{1}{Z}\exp\Big( 
- \RR^\top \bo W^Q \aaa^{(1)} - (\aaa^{(1)})^\top \bo U \aaa^{(2)} - \cdots \Big),
\end{align}
where the ``$\cdots$" part denote deeper latent layers $\aaa^{(2)}$, $\aaa^{(3)}$, etc.
The conditional distribution of $R_j$ given $\aaa^{(1)}$ can be written as
\begin{align}\label{eq-cond}
\mathbb P(R_j=1\mid\aaa^{(1)}) 
=&~ \frac{\exp\Big(W^Q_{j,\Cdot}\aaa^{(1)} + a_j\Big)}{1+\exp\Big(W^Q_{j,\Cdot}\aaa^{(1)} + a_j\Big)} \\ \notag
=&~ \sigma\Big(W^Q_{j,\Cdot}\aaa^{(1)} + a_j\Big),
\end{align}
where $\sigma(x)=e^x/(1+e^x)$ denotes the sigmoid function.
Denote the length of $\aaa^{(1)}$ by $K_1$. Since $\aaa^{(1)}\in\{0,1\}^{K_1}$ can be viewed as a latent  attribute pattern, we  denote $\aaa^{(1)}=(\alpha^{(1)}_1,\ldots,\alpha^{(1)}_{K_1})$ and further write \eqref{eq-cond} as
$$
\theta_{j,\aaa^{(1)}} = \sigma\Big(\sum_{k:\,W^Q_{j,k}\neq 0} W^Q_{j,k} \alpha^{(1)}_k\Big).
$$
Now it is clear from the above display that the RBM defined in \eqref{eq-rbmg} can be viewed as a multi-parameter main-effect restricted latent class model with $J$ items and $K_1$ latent attributes, with a $Q$-matrix resulting from the sparse bipartite structure $\bo W^Q$. Therefore, part (a) of the theorem follows from the generic identifiability result of the unrestricted latent class models \citep{allman2009} that $J\geq 2K_1+1$ suffices for generic identifiability of the item parameters $\TT$, and hence $\bo W^Q$. Also, part (b) of the theorem holds because when $Q$ satisfies the sufficient conditions for strict or generic identifiability under a multi-parameter restricted latent class model, the item parameters $\TT=(\theta_{j,\aaa})$ are strictly or generically identifiable.
This completes the proof of the theorem.
\end{proof}

\color{black}

\medskip
\section*{Section E: Proof of Technical Lemmas}\label{sec-lemma}
\medskip
\begin{proof}[Proof of Lemma \ref{lem-rk} (on page \pageref{lem-rk}, Section B)]
Without loss of generality, assume $\Gamma^S$ is separable with the item set $S=\{1,\ldots,J\}$.
Define $\ttt^* = \sum_{j\in S}\theta_{\ee_j,\mo}$.
The aim is to find response patterns $\rr_1,~\ldots,~\rr_{m-1}$ such that the corresponding row vectors of $\rr_0:=\mathbf0,~\rr_1,~\ldots,~\rr_{m-1}$ in the transformed $T(\TT-\ttt^*\mo^\top)$ form a $m\times m$ lower triangular matrix 
with nonzero diagonal elements, which will prove the conclusion that $T(\TT)$ has full column rank $m$. 

Since $\Gamma^S$ is separable and every two different column vectors of it are distinct,
without loss of generality, assume the $m$ column vectors in the ideal response matrix $\Gamma^{S}$ are arranged in a lexicographic order, where the first column is an all-zero column corresponding to the universal least capable class $\aaa_0$. In other words, for any $0\leq k < h\leq m-1$, $\Gamma_{\cdot,\,\aaa_k}^{S}$ is of smaller lexicographical order than $\Gamma_{\cdot,\,\aaa_h}^{S}$. In the following proof denote $\Gamma:=\Gamma^{S}$ to simplify notations. Define response patterns $\rr_1,\ldots,\rr_{m-1}$ to be 
\[
\rr_k = \sum_{j:\,\Gamma_{j,\,\aaa_k} = 0}\ee_j,\quad k = 1,\ldots,C-1,
\]
and define a sub-matrix $T^{sub}$ of $T(\TT)$ whose $m$ rows corresponding to response patterns $\rr_0,\rr_1,\ldots,\rr_{m-1}$ and $m$ columns corresponding to class profiles $\aaa_0,\aaa_1,\ldots,\aaa_{m-1}$. We claim that $T^{sub}(\TT-\ttt^*\mo^\top)$ is a lower triangular square matrix of full rank $m$. This is because for any $0\leq k \leq m-1$, the row vector corresponding to $\rr_k$ in $T^{sub}(\TT-\ttt^*\mo^\top)$ is
\begin{equation}\label{eq-lem2}
T^{sub}_{\rr_k,\Cdot}(\TT-\ttt^*\mo^\top)
 = \bigodot_{j:\,\Gamma_{j,\,\aaa_k}=0} T_{\ee_j,\Cdot}\biggr(\TT-\Big(\sum_{j=1}^J\theta_{\ee_j,\mo}\ee_j\Big)\mo^\top\biggr).
\end{equation}
For any $h>k$, there must exist an item $j$ such that $\Gamma_{j,\,\aaa_k}=0$ and $\Gamma_{j,\,\aaa_h}=1$. Existence of such $j$ means that $\aaa_h$ is capable of at least one item not mastered by $\aaa_k$, and guarantees that the $\aaa_h$-entry of the above row vector (\ref{eq-lem2}) is zero. We have shown $T^{sub}_{\rr_k,\aaa_h}(\TT-\ttt^*\mo^\top)=0$ for arbitrary $0\leq k < h \leq m-1$, so $T^{sub}(\TT-\ttt^*\mo^\top)$ is a lower triangular square matrix. Moreover, the diagonal entries are
\[\begin{aligned}
T^{sub}_{\rr_k,\aaa_k}(\TT-\ttt^*\mo^\top)
&= \prod_{j:\,\Gamma_{j,\aaa_k}=0} (\theta_{\ee_j,\aaa_k} - \theta_{\ee_j,\mo})
\neq 0,
\end{aligned}\]
so $T^{sub}(\TT-\ttt^*\mo^\top)$ is of rank $m$, with the shape
$$
{\small \left|\begin{array}{c ccc}
\prod\limits_{j\in S_{\aaa_0}} (\theta_{\ee_j,\aaa_0} - \theta_{\ee_j,\mo}) & 0 & \cdots & 0 \\
\prod\limits_{j\in S_{\aaa_1}} (\theta_{\ee_j,\aaa_0} - \theta_{\ee_j,\mo})
& \prod\limits_{j\in S_{\aaa_1}} (\theta_{\ee_j,\aaa_1} - \theta_{\ee_j,\mo})
 & \cdots & 0 \\
\vdots & \vdots & \ddots & \vdots \\
\prod\limits_{j\in S_{\aaa_{m-1}}} (\theta_{\ee_j,\aaa_0} - \theta_{\ee_j,\mo})
 & * & \cdots & \prod\limits_{j\in S_{\aaa_{m-1}}} (\theta_{\ee_j,\aaa_{m-1}} - \theta_{\ee_j,\mo}) \\
\end{array}\right| 
}$$
where $S_{\aaa_i} := \{j:\,\Gamma_{j,\aaa_i}=0\}$ for $i=0,1,\ldots,m-1$.
The proof of Lemma \ref{lem-rk} is complete.
\end{proof}

\smallskip
\begin{proof}[Proof of Lemma \ref{lemmaS2} (on page \pageref{lemmaS2}, Section B)]
Since $\theta^+_j > \theta^-_j$ for each item $j$ from the definition constraints of restricted latent class models, 
$T_{\ee_j,\Cdot}(\TT)\pp =  T_{\ee_j,\Cdot}(\bar \TT)\bar\pp$ indicates
\[
\theta_j^+ = \sum_{\aaa\in\ma} \theta_j^+ p_{\aaa} \geq 
\sum_{\aaa\in\ma} \theta_{j,\aaa} p_{\aaa} = \sum_{\aaa\in\ma} \bar\theta_{j,\aaa}\bar p_{\aaa} \geq \sum_{\aaa\in\ma}\bar\theta_j^-\bar p_{\aaa} = \bar\theta_j^-,
\]
where among the two ``$\geq$" there is at least a strict ``$>$". This is because the first ``$\geq$" is an equality sign only if all the latent classes are capable of item $j$, namely $\Gamma_{j,\aaa}=1$ for all $\aaa\in\ma$, and in this case, $\sum_{\aaa\in\ma} \bar\theta_{j,\aaa}\bar p_{\aaa} = \bar\theta_j^+ > \bar\theta_j^-$ and therefore  the second ``$\geq$" must be a strict ``$>$". Similarly,   the second ``$\geq$" is an equality sign only if all the latent classes are incapable of item $j$, and in this case, $\theta_j^+>\theta_j^-=\sum_{\aaa\in\ma} \theta_{j,\aaa} p_{\aaa}$ and therefore the first ``$\geq$" must be a strict ``$>$".
This proves that $\theta_j^+ > \bar\theta_j^-$ for all $j$, and similarly we have $\theta_j^- < \bar\theta_j^+$ for all $j$.
\end{proof}

\smallskip
\begin{proof}[Proof of Lemma \ref{lem-neq} (on page \pageref{lem-neq}, Section C)]
Since the sub-matrix  $T(\TT_{S_1})$ has full column rank $m$, there exists a vector $\mm_{\aaa}$ such that
\[
\mm_{\aaa}^\top\cdot T(\TT_{S_1}) = (\mz, \underbrace{1}_\text{column $\aaa$},\mz),
\]
On the other hand, Equation (\ref{eq1}) implies $\mm_{\aaa}^\top\cdot T(\TT_{S_1})\pp = \mm_{\aaa}^\top\cdot T(\bar\TT_{S_1})\bar\pp$, which further indicates the row vector $\mm_{\aaa}^\top\cdot T(\bar\TT_{S_1})$ also contains at least one nonzero element in some column. Denote such a column by $\aaa^*$ and the nonzero value by $\bar x_{\aaa^*}$. Since the sub-matrix  $T(\bar\TT_{S_2})$ also has full column rank, there exists another vector $\nn_{\aaa^*}$ such that
\[
\nn_{\aaa^*}^\top\cdot T(\bar\TT_{S_2}) = (\mz, \underbrace{1}_\text{column $\aaa^*$},\mz),
\]
Again from Equation (\ref{eq1}) we have that the $\aaa$-th entry of the row vector $\nn_{\aaa^*}^\top T(\TT_{S_2})$ is also nonzero. We denote this nonzero value by $y_{\aaa}$. Then we have
\[\begin{aligned}
\{\mm_{\aaa}^\top\cdot T(\TT_{S_1})\}\odot \{\nn_{\aaa^*}^\top\cdot T(\TT_{S_2})\} = (\mz,\underbrace{y_{\aaa}}_\text{column $\aaa$},\mz), \\
\{\mm_{\aaa}^\top\cdot T(\bar\TT_{S_1})\}\odot \{\nn_{\aaa^*}^\top\cdot T(\bar\TT_{S_2})\} = (\mz,\underbrace{\bar x_{\aaa^*}}_\text{column $\aaa^*$},\mz).
\end{aligned}\]
Now consider one more row $\ee_j$ for an arbitrary $j>M_1+M_2$ in the $T$-matrix, we have
\[\begin{aligned}
T_{\ee_j,\cdot}(\TT) \odot \{\mm_{\aaa}^\top\cdot T(\TT_{S_1})\}\odot \{\nn_{\aaa^*}^\top \cdot T(\TT_{S_2})\} &= (\mz,\underbrace{\theta_{\ee_j,\aaa} \cdot y_{\aaa}}_\text{column $\aaa$},\mz), \\
T_{\ee_j,\cdot}(\bar\TT) \odot \{\mm_{\aaa}^\top\cdot T(\bar\TT_{S_1})\}\odot \{\nn_{\aaa^*}^\top \cdot T(\bar\TT_{S_2})\} &= (\mz,\underbrace{\bar\theta_{\ee_j,\aaa^*}\cdot \bar x_{\aaa^*}}_\text{column $\aaa^*$},\mz).
\end{aligned}\]
The above four equations along with Equation (\ref{eq1}) imply 
\begin{equation}\label{eq-permu}
\theta_{\ee_j,\aaa} = \bar\theta_{\ee_j,\aaa^*},\quad \forall j>M_1+M_2.
\end{equation}
Now that $(\theta_{\ee_j,\aaa},j>M_1+M_2) = (\bar\theta_{\ee_j,\aaa^*},j>M_1+M_2)$, condition (C4) implies that there exists a vector $\boldsymbol s_{\aaa}$ such that
\[\begin{aligned}
\boldsymbol s_{\aaa}^\top\cdot T(\TT_{(M_1+M_2+1):J}) &= (0,*,\ldots,*,\underbrace{1}_\text{column $\aaa$},*,\ldots,*),\\
\boldsymbol s_{\aaa}^\top\cdot T(\bar\TT_{(M_1+M_2+1):J}) &= (0,*,\ldots,*,\underbrace{1}_\text{column $\aaa^*$},*,\ldots,*).\\
\end{aligned}\]
Next redefine 
\[
\ttt^* = \sum_{h\in S_1:\Gamma_{h,\aaa}=0}\theta_{\ee_h,\mo}\ee_h \mbox{ and } \rr^* = \sum_{h\in S_1:\Gamma_{h,\aaa}=0}\ee_h,
\]
then we have
\begin{align}\label{eq7}
  \{\boldsymbol s_{\aaa}^\top\cdot T(\TT_{(M_1+M_2+1):J})\}
&\odot \{ \nn_{\aaa^*}^\top\cdot T(\TT_{S_2}) \}
\odot \{ T_{\rr^*,\cdot} (\TT_{S_1} - \ttt^*\mo^\top) \} \notag\\
&=   \Big( \mz, \underbrace{y_{\aaa}\prod_{h\in S_1:\Gamma_{h,\aaa}=0}(\theta_{\ee_h,\aaa} - \theta_{\ee_j,\mo})}_\text{column $\aaa$}, \mz \Big),
 \end{align}
and
\begin{align}\label{eq8}
 \{\boldsymbol s_{\aaa}^\top\cdot T(\bar\TT_{(M_1+M_2+1):J})\}
 &\odot \{ \nn_{\aaa^*}^\top\cdot T(\bar\TT_{S_2}) \}
\odot \{ T_{\rr^*,\cdot} (\bar\TT_{S_1} - \ttt^*\mo^\top) \} \notag\\
&=  \Big( \mz, \underbrace{\prod_{h\in S_1:\Gamma_{h,\aaa}=0}(\bar\theta_{\ee_h,\aaa} - \theta_{\ee_j,\mo})}_\text{column $\aaa^*$}, \mz \Big).
\end{align} 
Since the $\aaa$-entry of (\ref{eq7}) is nonzero, the $\aaa^*$-entry of (\ref{eq8}) must also be nonzero since by (\ref{eq1}) we have $(\ref{eq7})\cdot\pp = (\ref{eq8})\cdot\bar\pp$.
Further consider row $j\in S_1$ such that $\Gamma_{j,\aaa}=1$. Obviously $\ee_j$ does not appear in the summation of the previously defined $\rr^*$, so we have
\begin{align}\label{eq9}
 \{\boldsymbol s_{\aaa}^\top\cdot T(\TT_{(M_1+M_2+1):J})\}
 &\odot \{ \nn_{\aaa^*}^\top\cdot T(\TT_{S_2}) \}
\odot \{ T_{\rr^*+\ee_j,\cdot} (\TT_{S_1} - \ttt^*\mo^\top) \} \notag\\
&=   \Big( \mz, \underbrace{\theta_{\ee_j,\aaa} y_{\aaa}\prod_{h\in S_1:\Gamma_{h,\aaa}=0}(\theta_{\ee_h,\aaa} - \theta_{\ee_j,\mo})}_\text{column $\aaa$}, \mz \Big),  
\end{align}
and
\begin{align}\label{eq10}
  \{\boldsymbol s_{\aaa}^\top\cdot T(\bar\TT_{(M_1+M_2+1):J})\}
 &\odot \{ \nn_{\aaa^*}^\top\cdot T(\bar\TT_{S_2}) \}
\odot \{ T_{\rr^*+\ee_j,\cdot} (\bar\TT_{S_1} - \ttt^*\mo^\top) \} \notag\\
&=   \Big( \mz, \underbrace{\bar\theta_{\ee_j,\aaa^*}\prod_{h\in S_1:\Gamma_{h,\aaa}=0}(\bar\theta_{\ee_h,\aaa} - \theta_{\ee_j,\mo})}_\text{column $\aaa^*$}, \mz \Big),
\end{align} 
and therefore
\[
\theta_{\ee_j,\mo} = \theta_{\ee_j,\aaa} = \frac{(\ref{eq9})\cdot\pp}{(\ref{eq7})\cdot\pp} = \frac{(\ref{eq10})\cdot\bar\pp}{(\ref{eq8})\cdot\bar\pp} = 
\bar\theta_{\ee_j,\aaa^*}, \quad \forall j\in S_1\text{ s.t. } \Gamma_{j,\aaa}=1 .
\]
Therefore for any $j\in S_1$ and any $\aaa'$ such that $\Gamma_{j,\aaa'}=0$, as long as there exists some $\aaa$ such that $\Gamma_{j,\aaa}=1$, we have $\theta_{\ee_j,\mo} = \bar\theta_{\ee_j, \aaa^*}$ from the above proof. Then the following inequality holds
\[
\forall j\in S_1,\quad \forall \aaa', \text{ s.t. } \Gamma_{j,\aaa'=0},
\quad
\theta_{\ee_j,\aaa'} < \theta_{\ee_j, \aaa} = \theta_{\ee_j,\mo} = \bar\theta_{\ee_j, \aaa^*}
\leq \bar \theta_{\ee_j, \mo}.
\]
Similarly we also have $\theta_{\ee_j,\aaa'} < \bar\theta_{\ee_j,\mo}$ for any $j\in S_2$ and $\Gamma_{j,\aaa'}=0$; and $\theta_{\ee_j,\mo} > \bar\theta_{\ee_j,\aaa'}$ for any $j\in S_1\cup S_2$ and $\Gamma_{j,\aaa'}=0$.
The proof of Lemma \ref{lem-neq} is complete.
\end{proof}

\smallskip
\begin{proof}[Proof of Lemma \ref{lem-order-a} (on page \pageref{lem-order-a}, Section C)]
We focus on $T(\TT_{S_2})$ first.
As shown in the proof of Lemma \ref{lem-neq}, for any $\aaa$ there exists some $\aaa^*$, which depends on $\aaa$, such that
\[\begin{aligned}
&\mm_{\aaa}^\top\cdot T(\TT_{S_1}) = (\mz, \underbrace{1}_\text{column $\aaa$},\mz),\\
&\nn_{\aaa^*}^\top\cdot T(\bar\TT_{S_2}) = (\mz, \underbrace{1}_\text{column $\aaa^*$},\mz),\\
&\{\mm_{\aaa}^\top\cdot T(\TT_{S_1})\}\odot \{\nn_{\aaa^*}^\top\cdot T(\TT_{S_2})\} = (\mz,\underbrace{y_{\aaa}}_\text{column $\aaa$},\mz),\quad y_{\aaa}\neq 0 \\
&\{\mm_{\aaa}^\top\cdot T(\bar\TT_{S_1})\}\odot \{\nn_{\aaa^*}^\top\cdot T(\bar\TT_{S_2})\} = (\mz,\underbrace{\bar x_{\aaa^*}}_\text{column $\aaa^*$},\mz),\quad \bar x_{\aaa^*}\neq 0
\end{aligned}\]
for some vectors $\mm_{\aaa}$ and $\nn_{\aaa^*}$. And based on these constructions we proved 
\[\theta_{\ee_j,\aaa} = \bar\theta_{\ee_j,\aaa^*},\quad \forall j>M_1+M_2.\]
Clearly from the constructions we have
\[
\{\nn_{\aaa^*}^\top \cdot T(\TT_{S_2})\}_{\aaa} \neq 0,
\]
then we furthermore claim that under condition (C4*), $\nn_{\aaa^*}$ also has the following property
\begin{equation}\label{eq-claim}
\{\nn_{\aaa^*}^\top \cdot T(\TT_{S_2})\}_{\aaa'} = 0,\quad \forall\aaa'\precneqq_{S_1}\aaa.
\end{equation}
Since otherwise if $\{\nn_{\aaa^*}^\top \cdot T(\TT_{S_2})\}_{\aaa'} = 0$ for some $\aaa'\preceq_{S_1}\aaa$, we would have
\begin{equation}\label{eq-t7-1}
\{\mm_{\aaa'}^\top\cdot T(\TT_{S_1})\}\odot \{\nn_{\aaa^*}^\top\cdot T(\TT_{S_2})\}
= (\mz,\underbrace{s_{\aaa'}}_\text{column $\aaa'$},\mz),\quad s_{\aaa'}\neq 0,
\end{equation}
\begin{equation}\label{eq-t7-2}
\{\mm_{\aaa'}^\top\cdot T(\bar\TT_{S_1})\}\odot \{\nn_{\aaa^*}^\top\cdot T(\bar\TT_{S_2})\}
= (\mz,\underbrace{\bar t_{\aaa^*}}_\text{column $\aaa^*$},\mz), \quad \bar t_{\aaa^*}\neq 0,
\end{equation}
then using similar argument as that in Lemma \ref{lem-neq}, for any $j\in (S_1\cup S_2)^c$  we would have
\[
\theta_{\ee_j,\aaa'} = 
\frac{\theta_{\ee_j,\aaa'}\cdot(\ref{eq-t7-1})\cdot\pp}{(\ref{eq-t7-1})\cdot\pp}
= \frac{\bar\theta_{\ee_j,\aaa'}\cdot(\ref{eq-t7-2})\cdot\bar\pp}{(\ref{eq-t7-2})\cdot\bar\pp}
= \bar\theta_{\ee_j,\aaa^*} = \theta_{\ee_j,\aaa},
\]
which contradicts Condition (C4) that $\Gamma^{(S_1\cup S_2)^c}_{\Cdot,\aaa} = \Gamma^{(S_1\cup S_2)^c}_{\Cdot,\aaa'}$ for any $\aaa'\precneqq_{S_1} \aaa$, since (C4) naturally leads to 
$(\theta_{j,\aaa},j\in (S_1\cup S_2)^c)\neq (\theta_{j,\aaa'},j\in (S_1\cup S_2)^c)$ for any $\aaa'\precneqq_{S_1} \aaa$.
So the claim (\ref{eq-claim}) must hold. By far we have found $\vv_{\aaa}:=\nn_{\aaa^*}$ that satisfies the first equation in (\ref{eq-order}) for each $\aaa$. By symmetry between $(\TT,\pp)$ and $(\bar\TT,\bar\pp)$, using exactly the same techniques will lead to $\uu_{\aaa}$ for each $\aaa$ that satisfies the second equation in (\ref{eq-order}). 
\end{proof}

\begin{proof}[Proof of Lemma \ref{lem-high} (on page \pageref{lem-high}, Section D)]
Equation \eqref{eq-high} for $\RR=\rr$ can be written as 
\begin{align}\label{eq-rh}
&\sum_{\aaa\in\ma} p_{\aaa} \prod_{j:\, r_j=r_j^H} \theta^{(r_j^H)}_{j,\aaa}\prod_{j:\, r_j\neq r_j^H}\theta^{(r_j)}_{j,\aaa}=
\sum_{\aaa\in\ma} \bar p_{\aaa} \prod_{j:\, r_j=r_j^H}  \bar \theta^{(r_j^H)}_{j,\aaa}\prod_{j:\, r_j\neq r_j^H} \bar \theta^{(r_j)}_{j,\aaa}.
\end{align}
We denote $\{0,\ldots,L_j-1\}$ by $[L_j]$ for simplicity.
Now consider an arbitrary item set $S\subseteq\ms$, and we write $\rr_{S} = \rr^H_S$ if $r_j=r^H_j$ for any $j\in S$. 
 For this $S$, we sum \eqref{eq-rh} over all response patterns $\rr$ for which $r_j= r_j^H$ if and only if $j\in S$ (i.e., $\rr$ satisfies $\rr_S = \rr_S^H$ and $\rr_{S^c}\in  \prod_{j\notin S}[L_j]\setminus[r_j^H]$), then the left hand side (LHS) of the new equation is
\begin{align*}
&~ \sum_{\rr: \,\rr_S = \rr^H_S,\, \atop\rr_{S^c}\in \prod_{j\neq S}[L_j]\setminus[r_j^H]} \Big( \sum_{\aaa\in\ma} p_{\aaa} \prod_{j:\, r_j=r_j^H} \theta^{(r_j^H)}_{j,\aaa}\prod_{j:\, r_j\neq r_j^H}\theta^{(r_j)}_{j,\aaa}\Big) \\
=&~ \sum_{\aaa\in\ma} p_{\aaa} 
\prod_{j\in  S} \theta^{(r_j^H)}_{j,\aaa}
\sum_{\rr: \,\rr_S = \rr^H_S,\, \atop\rr_{S^c}\in \prod_{j\neq S}[L_j]\setminus[r_j^H]}
\prod_{j\notin S}\theta^{(r_j)}_{j,\aaa} \\
=&~ \sum_{\aaa\in\ma} p_{\aaa} 
\prod_{j\in  S} \theta^{(r_j^H)}_{j,\aaa}
\prod_{j\notin S}\Big(\sum_{r_j\neq r_j^H} \theta^{(r_j)}_{j,\aaa} \Big)\\
=&~ \sum_{\aaa\in\ma} p_{\aaa} 
\prod_{j\in  S} \theta^{(r_j^H)}_{j,\aaa}
\prod_{j\notin S}\Big(1 - \theta^{(r_j^H)}_{j,\aaa} \Big),
\end{align*}
so from \eqref{eq-rh} we have 
\begin{equation}\label{eq-subs}
\sum_{\aaa\in\ma} p_{\aaa} 
\prod_{j\in  S} \theta^{(r_j^H)}_{j,\aaa}
\prod_{j\notin S}\Big(1 - \theta^{(r_j^H)}_{j,\aaa} \Big)
=
\sum_{\aaa\in\ma}\bar p_{\aaa} 
\prod_{j\in  S} \bar \theta^{(r_j^H)}_{j,\aaa}
\prod_{j\notin S}\Big(1 - \bar \theta^{(r_j^H)}_{j,\aaa} \Big)
\end{equation}
holds for any $S\subseteq\ms$. By far we have shown the system of $2^J$ equations \eqref{eq-subs} hold for any $\rr^H$.
Note that \eqref{eq-subs} can be viewed as probability of a response pattern consisting of binary responses, where for each item $j$ and each latent class $\aaa$, there are two possible responses with probabilities $\theta^{(r_j^H)}_{j,\aaa}$ and $1 -  \theta^{(r_j^H)}_{j,\aaa}$ respectively.
Then similar to the proof of Proposition \ref{prop1} which establishes equivalence between equality of probability mass functions and equality of marginal probabilities, \eqref{eq-subs} is equivalent to \eqref{eq-high} in the lemma.
This completes the proof of Lemma \ref{lem-high}.
\end{proof}

\end{appendix}

\end{document}